\documentclass{article}
\usepackage{appendix}
\usepackage{mathrsfs}
\usepackage{amsmath}
\usepackage{amssymb, amsmath}
\usepackage{graphicx}
\usepackage{fancyhdr}
\usepackage [latin1]{inputenc}
\usepackage{enumerate}
\usepackage{geometry}
\catcode`\@=11 \@addtoreset{equation}{section}

\catcode`\@=12
\usepackage{color}

\usepackage[colorlinks,
            linkcolor=blue,
            anchorcolor=blue,
            citecolor=red,
            ]{hyperref}

\DeclareFontFamily{U}{mathx}{}
\DeclareFontShape{U}{mathx}{m}{n}{ <-> mathx10 }{}
\DeclareSymbolFont{mathx}{U}{mathx}{m}{n}
\DeclareFontSubstitution{U}{mathx}{m}{n}

\DeclareMathAccent{\widecheck}{0}{mathx}{"71}

\makeatletter
\newcommand{\wcheck}[1]{\mathpalette\wcheck@{#1}}
\newcommand{\wcheck@}[2]{%
  \begingroup
  \edef\wcheck@font{\the
    \ifx#1\displaystyle\textfont\else\ifx#1\textstyle\textfont
    \else\ifx#1\scriptstyle\scriptfont\else\scriptscriptfont\fi\fi\fi\@ne
  }%
  \sbox\z@{\wcheck@font\mbox{#2}\mbox{\char\the\skewchar\font}}%
  \sbox\tw@{\wcheck@font#2\char\the\skewchar\font}%
  \dimen@=\dimexpr\wd\tw@-\wd\z@\relax
  {\,\kern2\dimen@\widecheck{\!\kern-2\dimen@#2\!}\,}%
  \endgroup
}
\makeatother

\newtheorem{Theorem}{Theorem}[section]
\newtheorem{Proposition}{Proposition}[section]
\newtheorem{Lemma}{Lemma}[section]
\newtheorem{Corollary}{Corollary}[section]

\newtheorem{Remark}{Remark}[section]

\newcommand{\newcom}{\newcommand}
\newcommand{\bTheorem}[1]{
\begin{Theorem} \label{T#1} }
\newcommand{\eT}{\end{Theorem}}

\newcommand{\bProposition}[1]{
\begin{Proposition} \label{P#1}}
\newcommand{\eP}{\end{Proposition}}

\newcommand{\bLemma}[1]{
\begin{Lemma} \label{L#1} }
\newcommand{\eL}{\end{Lemma}}

\newcommand{\bCorollary}[1]{
\begin{Corollary} \label{C#1} }
\newcommand{\eC}{\end{Corollary}}

\newcommand{\beq}{\begin{equation}}
\newcommand{\eeq}{\end{equation}}
\newcom{\ben}{\begin{eqnarray}}
\newcom{\een}{\end{eqnarray}}
\newcom{\beno}{\begin{eqnarray*}}
\newcom{\eeno}{\end{eqnarray*}}
\newcom{\bali}{\begin{aligned}}
\newcom{\eali}{\end{aligned}}

\newcommand{\bFormula}[1]{
\begin{equation} \label{#1}}
\newcommand{\eF}{\end{equation}}

\newcommand{\les}{ \lesssim }

\newcommand{\Mp}[1]{\left\{#1\right\}}
\renewcommand{\sp}[1]{\left(#1\right)}
\newcommand{\n}[1]{{\left\|#1\right\|}}
\newcommand{\tsgn}{\text{sgn}}
\newcommand{\Zd}{\widetilde{\mathbb{Z}}^d}
\newcommand{\De}{\Delta}
\newcommand{\f}{\frac}
\newcommand{\df}{\dfrac}

\newcommand{\p}{\partial}

\newcommand{\vae}{a_\ep}
\newcommand{\vbe}{b_\ep}
\newcommand{\Td}{\mathbb{T}_\mathbf{a}^d}
\newcommand{\T}{|\mathbb{T}_\mathbf{a}^d|}
\newcommand{\vthe}{\vartheta_\ep}

\newcommand{\vue}{\vu_\ep}
\newcommand{\vUe}{\vU_\ep}
\newcommand{\vVe}{\vV_\ep}
\newcommand{\vWe}{\vW_\ep}
\newcommand{\vZe}{\vZ_\ep}

\newcommand{\vA}{\vc{A}}
\newcommand{\vB}{\vc{B}}

\newcommand{\vE}{\vc{E}}
\newcommand{\vF}{\vc{F}}
\newcommand{\vu}{\vc{u}}

\newcommand{\vZ}{\vc{Z}}
\newcommand{\vU}{\vc{U}}

\newcommand{\vv}{\vc{v}}
\newcommand{\vV}{\vc{V}}

\newcommand{\vw}{\vc{w}}
\newcommand{\vW}{\vc{W}}

\newcommand{\vc}[1]{{\boldsymbol #1}}

\newcommand{\Div}{{\rm div\,} }
\newcommand{\Grad}{\nabla}

\newcommand{\dx}{\,{\rm d} x}

\newcommand{\dt}{\,{\rm d} t }
\newcommand{\dta}{\,{\rm d} \tau }

\newcommand{\intTd}[1]{\int_{\mathbb{T}_\mathbf{a}^d} #1  \,\dx}

\newcommand{\bProof}{{\bf Proof: }}

\newcommand{\ep}{\varepsilon}

\newcommand\Cbox[2]{%
    \newbox\contentbox%
    \newbox\bkgdbox%
    \setbox\contentbox\hbox to \hsize{%
        \vtop{
            \kern\columnsep
            \hbox to \hsize{%
                \kern\columnsep%
                \advance\hsize by -2\columnsep%
                \setlength{\textwidth}{\hsize}%
                \vbox{
                    \parskip=\baselineskip
                    \parindent=0bp
                    #2
                }%
                \kern\columnsep%
            }%
            \kern\columnsep%
        }%
    }%
    \setbox\bkgdbox\vbox{
        \color{#1}
        \hrule width  \wd\contentbox %
               height \ht\contentbox %
               depth  \dp\contentbox
        \color{black}
    }%
    \wd\bkgdbox=0bp%
    \vbox{\hbox to \hsize{\box\bkgdbox\box\contentbox}}%
    \vskip\baselineskip%
}

\usepackage{graphicx} % Required for inserting images

\title{\bf{Low Mach number limit for a compressible two-fluid model with algebraic closure and ill-prepared initial data in critical Besov spaces} }

\author{
Sai Li\thanks{Department of Applied Mathematics, Nanjing Forestry University, Nanjing--210037, People's Republic of China, E-mail: \texttt{lsmath@njfu.edu.cn}}
\qquad  \qquad
Yang Li\thanks{School of Mathematical Sciences, Anhui University, Hefei--230601, People's Republic of China, E-mail: \texttt{lynjum@163.com}} 
}

\begin{document}

\maketitle 
\abstract{
In this paper, we study the low Mach number limit for a compressible two-fluid model with algebraic closure in the $d$-dimensional torus with $d \geq 2$. For large and ill-prepared initial data in critical Besov spaces, we prove that, provided that the Mach number is sufficiently small, the rescaled compressible two-fluid flow exists in critical Besov spaces for any finite time not exceeding the lifespan of the incompressible flow. Moreover, the rescaled compressible two-fluid flow converges to the incompressible Navier--Stokes flow as the Mach number tends to zero. The proof is based on a high-middle-low frequency analysis of the densities and velocity field, combined with a filtering technique involving wave operators. The main novelty is the derivation of new a priori estimates for the high-middle frequency part of the solution to the compressible two-fluid model, depending explicitly on time, the frequency parameter and the Mach number. 
To the best of our knowledge, this is the first work that proves (almost) global convergence for large and ill-prepared initial data in the low Mach number limit for compressible two-fluid model in critical framework. 
}

\medskip

\noindent {\bf Keywords:}  Two-fluid model, algebraic closure, large initial data, ill-prepared data, Besov space
\vspace{2mm}

\noindent {\bf2020 Mathematics Subject Classification:} 76T17, 30H25, 35B25

\tableofcontents

%%%%%%%%%%%%%%%%%%%%%%%%%%%%%%%%%%%%%%%%%%%%%%%%%%%%%%%%%%%%%%%%%%%%%%%%%%%
\section{Introduction}

\subsection{Background}
In this paper, we consider the evolution of two immiscible compressible viscous fluids in the $d$-dimensional periodic domain with $d\geq 2$. We assume that the two fluids share the same velocity field and admit the algebraic closure of pressure. In Eulerian coordinates, the governing equations read as
\begin{equation}\label{eq:TF-orig}
\left\{\begin{aligned}
& \partial_t (\alpha_{\pm} \varrho_{\pm})
        + \Div (\alpha_{\pm} \varrho_{\pm} \vu)
        = 0, \\
& \p_t[ (\alpha_{+} \varrho_{+} +\alpha_{-} \varrho_{-}  )   \vu ] 
         + \Div[ (\alpha_{+} \varrho_{+} +\alpha_{-} \varrho_{-}  )   \vu \otimes \vu  ]
         + \nabla p= \mu \Delta \vu +(\mu+\lambda) \nabla \Div \vu 
         , \\
& \alpha_{+} + \alpha_{-}=1, \quad \alpha_{\pm}  \geq 0
         , \\
        &  p=p_{+}=p_{-}. 
\end{aligned}\right.
\end{equation}
Here, $\alpha_\pm=\alpha_\pm(t,x)$ denote the volume fractions of the two phases, $\varrho_\pm=\varrho_\pm(t,x)$ their mass densities, $\vu =\vu (t,x)$ the common velocity field, and $p$ the common pressure. The constants $\mu$ and $\lambda$ are the shear and bulk viscosity coefficients, respectively. For more physical explanations about the two-fluid model above, we refer to \cite{Bouchut,IsHi} among many others.

Following Bresch et al. \cite{BMZ19}, it is convenient to introduce the conservative variables
\begin{align}
 R=\alpha_{+} \varrho_{+},\qquad 
 Q=\alpha_{-} \varrho_{-} , \qquad 
 Z =  \varrho_{+}.
\end{align}
Then system \eqref{eq:TF-orig} can be rewritten as
\begin{equation}
\left\{\begin{aligned}
& \partial_t R + \Div (R \vu  )
        = 0, \\
& \partial_t Q + \Div (Q \vu )
        = 0, \\
& \partial_t [(R+Q) \vu ] + \Div [(R+Q) \vu \otimes \vu ]
        +\nabla p= \mu \Delta \vu  +(\mu+\lambda) \nabla \Div \vu .
\end{aligned}\right.
\end{equation}
The algebraic pressure closure means that the phase pressures coincide. Assuming both fluids are isentropic, we have
\begin{align}
p_{+}=( \varrho_{+})^{ \gamma_{+} } =
p_{-}=( \varrho_{-})^{ \gamma_{-} }, \quad \gamma_\pm>1.
\end{align}
As a consequence, the common pressure can be expressed as
\begin{align}
    p(Z)=Z^{\gamma_{+}},
\end{align}
where $Z$ is determined implicitly by $(R,Q)$ through
\begin{align}\label{eq:press-strc}
       Q=\sp{ 1-\f{R} {Z }  } Z^{\gamma}
       , \qquad \gamma=\frac{ \gamma_{+}  }{ \gamma_{-}  }, \qquad 
       R \leq Z. 
\end{align}
Bresch et al. \cite{BMZ19} showed that \eqref{eq:press-strc} uniquely determines $Z$, and so there exists some function $\mathcal{Z}(\cdot,\cdot)$ such that $Z=\mathcal{Z}(R,Q)$.

Now we recall some previous results closely related to \eqref{eq:TF-orig}. In a semi-stationary Stokes regime, Bresch et al. \cite{BMZ19} proved the global existence of finite energy weak solutions with periodic boundary conditions. For a general compressible viscous two-fluid system, including in particular \eqref{eq:TF-orig}, Novotn\'{y} and Pokorn\'{y} \cite{NM20} proved the global existence of finite energy weak solutions. For strong solutions, Piasecki and Zatorska \cite{PZ1} proved the local well-posedness for large initial data and global well-posedness for small initial data. For the weak--strong uniqueness principle of \eqref{eq:TF-orig}, Li and Zatorska \cite{LZ22} obtained a conditional result; by introducing a new relative entropy functional, this result was recently improved to an unconditional one \cite{LLPZ26}. For the inviscid two-fluid model, i.e., \eqref{eq:TF-orig} with $\mu=\lambda=0$, Li and Zatorska \cite{LiZa} proved the existence of infinitely many weak solutions in a $3$D bounded domain. In the simplified $1$D case, Li et al. \cite{LSZ1} proved the existence, uniqueness and large time behavior of global weak solutions. We emphasize that there are many fruitful results on other types of two-fluid models, and we refer to \cite{LLZ26} for more discussions.

Very recently, there are some progress on the low Mach number limit of the compressible two-fluid system \eqref{eq:TF-orig}. For well-prepared initial data and a bounded smooth domain of $\mathbb{R}^3$, Lebot \cite{Leb26} considered the rescaled compressible two-fluid system \eqref{eq:TF-orig}, and identified the inhomogeneous incompressible Navier--Stokes equations as the limit system. Moreover, as remarked in \cite{Leb26}, it is possible to extend their results to periodic domains. However, the convergence rates are not clear in \cite{Leb26}. For well-prepared initial data and the $3$D periodic domain, Li et al. \cite{LLZ26} identified the incompressible Navier--Stokes equations as the limit system and further obtained the explicit convergence rates. The analysis for ill-prepared initial data appears more delicate due to the acoustic waves and the complicated structure of the two-fluid system \eqref{eq:TF-orig}. \emph{As far as we know, there is no previous study for low Mach number limit of the rescaled compressible two-fluid system \eqref{eq:TF-orig} with ill-prepared initial data}. We stress that the consideration of ill-prepared initial data was listed as one of the open problems in Lebot \cite{Leb25}. This is the motivation of the present work.

\subsection{Heuristic analysis}  
According to the previous discussions, we consider the low Mach number limit of the two-phase flow on the torus $\Td \, (d\geq 2)$ with periodic parameter $\mathbf{a}=(\mathbf{a}^1, \mathbf{a}^2, \cdots, \mathbf{a}^d)$, where $\mathbf{a}^j > 0$, $1 \leq j\leq d$. The rescaled two-fluid model reads as 
\begin{equation}
\left\{\begin{aligned}
&\partial_t R_\ep + \Div (R_\ep \vue) = 0, \\
&\partial_t Q_\ep + \Div (Q_\ep \vue) = 0, \\
&\partial_t [(R_\ep + Q_\ep) \vue] + \Div [(R_\ep + Q_\ep) \vue \otimes \vue] + \dfrac{\nabla p(Z_\ep)}{\ep^2} = \mu_\ep \Delta \vue + (\mu_\ep + \lambda_\ep) \nabla \Div\vue.
\end{aligned}\right.
\end{equation}
Here, $R_\ep (t,x)$ and $Q_\ep (t,x)$ are densities of the two phases, $\vue (t,x)$ is the velocity field, all of which are $2\pi\mathbf{a}^j$-periodic in $x_j$ for $1\leq j\leq d$.
The pressure is given by $p(Z_\ep) = Z_\ep^{\gamma_+}$, and $Z_\ep=\mathcal{Z} (R_\ep, Q_\ep)$ through 
\begin{align}\label{eq:press}
    Q_\ep = \left(1 - \frac{R_\ep}{Z_\ep}\right) Z_\ep^\gamma, \qquad 
\gamma = \frac{\gamma_+}{\gamma_-}, \qquad 
R_\ep \le Z_\ep. 
\end{align}
For simplicity, throughout this paper we assume that \(\mu_\ep = \mu\) and \(\lambda_\ep = \lambda\) are constants satisfying
\[
\mu > 0, \qquad \nu:=2\mu + \lambda > 0.
\] 
To proceed, we consider the following equivalent system
\begin{equation}\label{eq:res-two-fluid}
\left\{\begin{aligned}
&
\partial_t R_\ep 
+ 
\Div (R_\ep \vue) 
= 0, \\[4pt]
&
\partial_t Q_\ep 
+ 
\Div  (Q_\ep \vue)
= 0, \\[4pt]
&
\partial_t \vue
+ 
\vue \cdot \nabla \vue
+ 
\dfrac{\nabla p(Z_\ep)}{\ep^2(R_\ep + Q_\ep)} 
= 
\dfrac{\mu}{R_\ep + Q_\ep} \Delta \vue
+ \dfrac{\mu + \lambda}{R_\ep + Q_\ep} \nabla \Div \vue.
\end{aligned} \right.
\end{equation}
We set the equilibrium states of $R_\ep$ and $Q_\ep$ as positive constants $R^0$ and $Q^0$; hence we consider the initial data
\[
(R_{\ep}, Q_{\ep}, \vu_{\ep})(0,\cdot)=(R^0+\ep a_{0, \ep}, Q^0+\ep b_{0,\ep}, \vu_{0, \ep}).
\]
Correspondingly, we introduce the unknowns
\[
a_\ep:=\dfrac{R_\ep-R^0}{\ep}, \qquad 
b_\ep:=\dfrac{Q_\ep-Q^0}{\ep}.
\]
Then, we see that $(a_\ep, b_\ep, \vu_\ep)$ should solve 
\begin{equation}\label{1.1.1}
\left\{\begin{aligned}
&\partial_t a_\ep + \dfrac{R^0 \Div \vu_\ep}{\ep}=-\Div(a_\ep \vu_\ep), \\
&\partial_t b_\ep + \dfrac{Q^0 \Div \vu_\ep}{\ep}=-\Div(b_\ep \vu_\ep), \\
&\partial_t \vu_\ep + \dfrac{\nabla p(Z_\ep)}{\ep^2(R_\ep + Q_\ep)} = \mathcal{A}\vu_\ep+I(\ep a_\ep+\ep b_\ep)\mathcal{A}\vu_\ep- \vu_\ep \cdot \nabla \vu_\ep,
\end{aligned}\right.
\end{equation}
where
\[
\mathcal{A}\mathbf{v}:=\dfrac{1} {R^0+Q^0}\mu\Delta \mathbf{v}+\dfrac{1} {R^0+Q^0}(\mu+\lambda)\Grad \Div \mathbf{v}, \qquad 
I(c):=\dfrac{R^0+Q^0} {R^0+Q^0+c}-1. 
\]

In a periodic domain with ill-prepared initial data, $a_\ep$ and $b_\ep$
will undergo rapid oscillations. Therefore, to determine the asymptotic limit of solutions to \eqref{1.1.1}, we need a higher-order Taylor expansion of the pressure term. Observe that
\[
 \dfrac{\nabla p(Z_\ep)}{\ep^2(R_\ep + Q_\ep)}=\dfrac{p'(Z_\ep) \p_R{Z_\ep}}{R_\ep + Q_\ep} \dfrac{\Grad a_\ep}{\ep}+\dfrac{p'(Z_\ep) \p_Q{Z_\ep}}{R_\ep + Q_\ep} \dfrac{\Grad b_\ep}{\ep}.
\]
We define the functions $\widetilde{p}(\cdot, \cdot)$ and $\widecheck{p}(\cdot, \cdot)$ on $\mathbb{R}^2$ as 
\begin{align*}
&
\widetilde{p}(x, y)
:=
\f
{p'( \mathcal{Z} (x+ R^0, y+Q^0)) \, \p_R \mathcal{Z}  (x+R^0, y+Q^0) }
{R^0+x + Q^0+y},
\\
&
\widecheck{p}(x, y)
:=
\f
{p'( \mathcal{Z} (x+ R^0, y+Q^0)) \, \p_Q \mathcal{Z} (x+R^0, y+Q^0)  }
{R^0+x + Q^0+y}.
\end{align*}
Then we expand the pressure as 
\begin{align}\label{Taylor}
\dfrac{\nabla p(Z_\ep)}{\ep^2(R_\ep + Q_\ep)}
&
=
c_1 \dfrac{\Grad a_\ep}{\ep}
+
c_2\dfrac{\Grad b_\ep}{\ep}
+
c_3 a_\ep\Grad a_\ep+c_4 b_\ep\Grad a_\ep
+
c_5 a_\ep\Grad b_\ep+c_6 b_\ep\Grad b_\ep
\\
&
\quad
+
a_\ep \Grad a_\ep
\int_0^1
(\p_x \widetilde{p})(t\ep a_\ep, t\ep b_\ep)-(\p_x \widetilde{p})(0, 0)\dt
+
b_\ep \Grad a_\ep
\int_0^1
(\p_y \widetilde{p})(t\ep a_\ep, t\ep b_\ep)-(\p_y \widetilde{p})(0, 0)\dt
\nonumber
\\
&
\quad
+
a_\ep \Grad b_\ep
\int_0^1
(\p_x \widecheck{p})(t\ep a_\ep, t\ep b_\ep)-(\p_x \widecheck{p})(0, 0)\dt
+
b_\ep \Grad b_\ep
\int_0^1
(\p_y \widecheck{p})(t\ep a_\ep, t\ep b_\ep)-(\p_y \widecheck{p})(0, 0)\dt
\nonumber,
\end{align}
where
\begin{align*}
&
c_1
:=
\widetilde{p}(0, 0),
\qquad 
c_2
:=
\widecheck{p}(0, 0),
\qquad
c_3
:=
(\p_x \widetilde{p})(0, 0),
\\[4pt]
&
c_4
:=
(\p_y \widetilde{p})(0, 0),
\qquad
c_5
:=
(\p_x \widecheck{p})(0, 0),
\qquad
c_6
:=
(\p_y \widecheck{p})(0, 0).
\end{align*}
Using the expansion of the pressure, we rewrite \eqref{1.1.1} as 
\begin{equation}\label{1.1.3}
\left\{\begin{aligned}
&
\partial_t a_\ep 
+ 
\dfrac{R^0 \Div \vu_\ep}{\ep}
=
-
\operatorname{div}(a_\ep \vu_\ep), 
\\
&
\partial_t b_\ep
+
\dfrac{Q^0 \Div \vu_\ep}{\ep}
=
-
\operatorname{div}(b_\ep \vu_\ep), 
\\
&
\partial_t \vu_\ep
+
c_1
\dfrac{\Grad a_\ep}{\ep}
+
c_2
\dfrac{\Grad b_\ep}{\ep}
+
c_3 
a_\ep\Grad a_\ep
+
c_4
b_\ep\Grad a_\ep
+
c_5 
a_\ep\Grad b_\ep
+
c_6
b_\ep\Grad b_\ep
\\
&
\qquad  
= 
\mathcal{A}\vu_\ep
+
I(\ep a_\ep + \ep b_\ep)\mathcal{A}\vu_\ep
- 
\vu_\ep \cdot \nabla \vu_\ep
\\
&
\qquad \qquad \qquad 
-
\left[
\dfrac{\nabla p(Z_\ep)}{\ep^2(R_\ep + Q_\ep)}
-
\left(
c_1 \dfrac{\Grad a_\ep}{\ep}+c_2\dfrac{\Grad b_\ep}{\ep}
+
c_3 
a_\ep\Grad a_\ep
+
c_4 
b_\ep\Grad a_\ep
+
c_5 
a_\ep\Grad b_\ep
+
c_6 
b_\ep\Grad b_\ep
\right)
\right].
\end{aligned}\right.
\end{equation}

We define the linear operator $L$ as 
\[
L\left(
\begin{array}{l}
a\\
b\\
\vu
\end{array}
\right)
=\left(
\begin{array}{l}
R^0\Div\vu\\
Q^0\Div\vu\\
c_1\Grad a+c_2\Grad b\\
\end{array}
\right).
\]
Direct calculation gives the image and kernel spaces of $L$ as 
\begin{align*}
& 
\text{Im} \,L=\left\{ \sp{g,\f{Q^0}{R^0}g,\Grad f}^T: \int_{\Td}  g \dx=0 \right\},  \\
& 
\text{Ker} \, L=\left\{  \sp{ g+c^*,-\f{c_1}{c_2}g+c^{**},\Phi }^T: \int_{\Td}  g  \dx=0, \quad \Div \Phi=0 \right\},
\end{align*} 
where $c^*$ and $c^{**}$ are arbitrary constants. Inspired by \cite{CGHJ24}, we construct an equivalent inner product $\langle \cdot, \cdot \rangle_\mathbb{H}$ on $L^2(\Td)$ such that the operator $L$ is skew-symmetric with respect to this new inner product, i.e.,
\begin{align}\label{1.1.2}
\langle L(a,b,\vu)^T, (\tilde{a},\tilde{b},\tilde{\vu}) \rangle_\mathbb{H} = - \langle (a,b,\vu)^T, L(\tilde{a},\tilde{b},\tilde{\vu})  \rangle_\mathbb{H}. 
\end{align}
We define
\[
\langle (a,b,\vu)^T, (\tilde{a},\tilde{b},\tilde{\vu}) \rangle_\mathbb{H} 
:= \lambda_1 \int_{\Td} a \bar{\tilde{a}} \dx 
+ \lambda_2 \int_{\Td} b \bar{\tilde{b}} \dx 
+ \lambda_3 \int_{\Td} \vu \bar{\tilde{\vu}} \dx,
\]
where the positive constants $\lambda_1, \lambda_2, \lambda_3$ will be determined later. Notice that
\begin{align*}
&
\langle 
L(a,b,\vu)^T, 
(\tilde{a},\tilde{b},\tilde{\vu}) 
\rangle_\mathbb{H} 
\\
& \quad
=
\lambda_1 \int_{\Td} R^0 \Div \vu \bar{\tilde{a}} \dx
+ 
\lambda_2 \int_{\Td} Q^0\Div \vu \bar{\tilde{b}} \dx
+ \lambda_3 \int_{\Td} (c_1\Grad a+c_2\Grad b) \bar{\tilde{\vu}} \dx
\\
&\quad
=
-\lambda_1  R^0 
\int_{\Td}   \vu \Grad\bar{\tilde{a}} \dx
-\lambda_2 Q^0
\int_{\Td}  \vu \Grad\bar{\tilde{b}} \dx
-\lambda_3 c_1
\int_{\Td}  a\Div\bar{\tilde{\vu}} \dx
-\lambda_3 c_2
\int_{\Td}  b\Div\bar{\tilde{\vu}} \dx,
\\[4pt]
&
\langle 
(a,b,\vu)^T, 
L(\tilde{a},\tilde{b},\tilde{\vu}) 
\rangle_\mathbb{H}
\\
&\quad
= 
\lambda_1 R^0 
\int_{\Td}  a \Div\bar{\tilde{\vu}} \dx
+
\lambda_2 Q^0
\int_{\Td}  b \Div\bar{\tilde{\vu}} \dx
+
\lambda_3 c_1
\int_{\Td} \vu \Grad \bar{\tilde{a}}  \dx.
+
\lambda_3 c_2
\int_{\Td} \vu \Grad \bar {\tilde{b}} \dx.
\end{align*}
For the purpose of \eqref{1.1.2}, it suffices to assume that $\lambda_1, \lambda_2, \lambda_3$ satisfy
\[
\lambda_3 c_1 = \lambda_1 R^0, \qquad \lambda_3 c_2 = \lambda_2 Q^0.
\]
The solutions $\lambda_1, \lambda_2, \lambda_3$ obtained here are not unique. However, to ensure that they are all positive so that we can construct the equivalent inner product  $\langle \cdot, \cdot \rangle_\mathbb{H}$, we need $c_1, c_2>0$.
From $\mathcal{Z} (R^0, Q^0)\geq R^0$ and
\[
\p_{R} \mathcal{Z} (R^0, Q^0) 
 =
\f{ \mathcal{Z} (R^0, Q^0)^{\gamma-1} }
{
\gamma \mathcal{Z} (R^0, Q^0)^{\gamma-1} - R^0 (\gamma-1) \mathcal{Z} (R^0, Q^0)^{\gamma-2}},
\quad  
\p_{Q} \mathcal{Z} (R^0, Q^0) 
=
\f{ 1 }
{
\gamma \mathcal{Z} (R^0, Q^0)^{\gamma-1} - 
R^0 (\gamma-1) 
\mathcal{Z} (R^0, Q^0)^{\gamma-2}
} ,
\]
we conclude that $c_1, c_2>0$. Hence, we take
\[
\lambda_1 = c_1 Q^0, \qquad \lambda_2 = c_2 R^0, \qquad \lambda_3 = R^0 Q^0.
\]
From the skew-symmetry of $L$ with respect to the inner product $\langle \cdot, \cdot \rangle_{\mathbb{H}}$, we obtain the following direct sum decomposition
\[
L^2(\Td)= \text{Im} \, L \oplus \text{Ker}\,  L.
\]
For any function $f$, we set $\underline{f}:=f-\f{\widehat{f}_0}{\sqrt{|\Td|}}$ and 
$\De^{-1}f
:=
-\sum\limits_{k\neq0}\df{\widehat{f}_k}{|k|^2} \df{\text{e}^{ik\cdot x}}{\sqrt{|\Td|}}
$,
where $\widehat{f}_k$ are the Fourier coefficients (see the definition in \eqref{fourier}). Let $\mathbb{P}^\perp$ and $\mathbb{P}$ be the projection operators onto $\operatorname{Im} L$ and $\operatorname{Ker} L$ respectively. Then, for any $(a,b,\vu)$ we have
\begin{align*}
\mathbb{P}^\perp\left(
\begin{array}{l}
a
\\
b
\\
\vu
\end{array}
\right)
=\left(
\begin{array}{l}
\dfrac{c_1\underline{a}+c_2\underline{b}}{c_1+c_2\f{Q^0}{R^0}}
\\[15pt]
\dfrac{Q^0}{R^0}\dfrac{c_1\underline{a}+c_2\underline{b}}{c_1+c_2\f{Q^0}{R^0}}
\\[15pt]
\mathcal{P}^\perp \vu
\\
\end{array}
\right), 
\qquad 
\mathbb{P}\left(
\begin{array}{l}
a
\\
b
\\
\vu
\end{array}
\right)
=
\left(
\begin{array}{l}
\dfrac{Q^0\underline{a}-R^0\underline{b}}{Q^0+R^0\f{c_1}{c_2}}
\\[15pt]
-\dfrac{c_1}{c_2}\dfrac{Q^0\underline{a}-R^0\underline{b}}{Q^0+R^0\f{c_1}{c_2}}
\\[15pt]
\mathcal{P}\underline{\vu}
\\
\end{array}
\right)+\mathbb{P}_0\left(
\begin{array}{l}
a
\\[15pt]
b
\\[15pt]
\vu
\\
\end{array}
\right).
\end{align*}
Here, we denote by 
\begin{align*}
&
\mathbb{P}_0(a, b, \vu)^T
:=
\sp{ \f{\widehat{a}_0}{\sqrt{|\Td|}}, \f{\widehat{b}_0}{\sqrt{|\Td|}}, \f{\widehat{\vu}_0}{\sqrt{|\Td|}} }^T, 
\qquad 
\mathcal{P}^\perp\vu
:=\Grad\Delta^{-1}\Div \vu, 
\qquad 
\mathcal{P}\vu:=\vu-\Grad\Delta^{-1}\Div \vu. 
\end{align*}
Applying the operator $\mathbb{P}^\perp$ to \eqref{1.1.3} gives 
\begin{align}\label{1.1}
\p_t \mathbb{P}^\perp(a_\ep,\vbe,\vue)^T 
+
\dfrac{L} {\ep}\mathbb{P}^\perp(a_\ep,\vbe,\vue)^T
+
\mathbb{P}^\perp
\mathcal{Q}\left((a_\ep,\vbe,\vue)^T, (a_\ep,\vbe,\vue)^T\right)
- 
\mathcal{D}(\mathbb{P}^\perp(a_\ep,\vbe,\vue)^T)
=
\mathbb{P}^\perp
r_\ep , 
\end{align}
where we defined for $\vA=(a, b, \vu)^T$ that 
\begin{align*}
\mathcal{Q}(\vA,\vA)&=
\begin{pmatrix}
\Div (a\vu)  \\
\Div(b \vu) \\
\vu \cdot \nabla \vu+c_3 a\Grad a+c_4 b\Grad a+c_5 a\Grad b+c_6 b\Grad b
\end{pmatrix}
, \\
\mathcal{D}(\vA)& 
=
(0,0,\mathcal{A}\vu)^{T}, \\ 
r_\ep
& 
=
(0,0,I(\ep a_\ep+\ep b_\ep)\mathcal{A}\vue )^{T} 
\\
& 
\qquad 
- 
\left( 
0,0, \dfrac{\nabla p(Z_\ep)}{\ep^2(R_\ep + Q_\ep)}-\left(c_1 \dfrac{\Grad a_\ep}{\ep}+c_2\dfrac{\Grad b_\ep}{\ep}+c_3 a_\ep\Grad a_\ep+c_4 b_\ep\Grad a_\ep+c_5 a_\ep\Grad b_\ep+c_6 b_\ep\Grad b_\ep\right)
\right)^{T}
\\
&
=
(0, 0, r^3_\ep)^T.
\end{align*}
To proceed, we set
\[
\mathcal{Q}(\vA_1,\vA_2)
=
\dfrac{1}{2}\Big(
Q(\vA_1+\vA_2, \vA_1+\vA_2)-Q(\vA_1,\vA_1)-Q(\vA_2,\vA_2)
\Big),
\qquad 
\vUe=\mathbb{P}(a_\ep,\vbe,\vue)^T.
\]
Applying $\mathbb{P}$ to \eqref{1.1.3}, we deduce\footnote{
Here, we use the fact that $\mathcal{L}\left(\df{t}{\ep}\right)\mathcal{L}\left(-\df{t}{\ep}\right) \vA= \mathcal{L}\left(-\df{t}{\ep}\right)\mathcal{L}\left(\df{t}{\ep}\right) \vA=\vA$, for any $\vA=(a, b, \vu)^T$ and $t\in \mathbb{R}$.
}
\begin{align}\label{1.2}
\p_t \vUe
&
+
\mathbb{P}\mathcal{Q}(\underline{\vUe}, \underline{\vUe})
+
2\mathbb{P}\mathcal{Q}(\mathbb{P}_0\vUe, \underline{\vUe})
- 
\mathcal{D}(\vUe) 
\nonumber
\\
&
=
\mathbb{P} r_\ep
-
2\mathbb{P}\mathcal{Q}\left(\underline{\vUe}, \mathcal{L}\left(\df{t}{\ep}\right) \vVe\right)
-
2\mathbb{P}\mathcal{Q}\left(\mathbb{P}_0\vUe, \mathcal{L}\left(\df{t}{\ep}\right) \vVe\right),
\end{align}
where we used  
$
\mathbb{P}\mathcal{Q}\left(\mathcal{L}\left(\df{t}{\ep}\right)\vVe, \mathcal{L}\left(\df{t}{\ep}\right)\vVe\right)=\mathcal{Q}(\mathbb{P}_0\vUe, \mathbb{P}_0\vUe)=0.
$
Let $\mathcal{L}(t)=\text{e}^{-tL}$ be the semigroup of solutions associated with $L$ and set  
$\vVe(t)=\mathcal{L}\left(-\dfrac{t}{\ep}\right)\mathbb{P}^\perp(a_\ep,\vbe,\vue)^T$.
Applying $\mathcal{L}\left(-\dfrac{t}{\ep}\right)$ to \eqref{1.1} yields 
\begin{align}\label{1.6}
\p_t \vVe 
+
&
\mathcal{Q}_1^\ep(\vVe, \vVe)+\mathcal{Q}_2^\ep(\underline{\vUe}, \vVe)
+
\mathcal{Q}_2^\ep(\mathbb{P}_0\vUe, \vVe)
- 
\mathcal{D}^\ep(\vVe)
\nonumber
\\
&
= 
\mathcal{L}\left(-\dfrac{t}{\ep}\right) \mathbb{P}^\perp r_\ep
-
\mathcal{L}\left(-\dfrac{t}{\ep}\right) \mathbb{P}^\perp\mathcal{Q}(\underline{\vUe}, \underline{\vUe})
-
2\mathcal{L}\left(-\dfrac{t}{\ep}\right) \mathbb{P}^\perp\mathcal{Q}(\mathbb{P}_0 \vUe, \underline{\vUe}),
\end{align}
where we set for $\vA, \vB\in \text{Im}\,L$ and $\vE\in \text{Ker}\, L$ 
\begin{align*}
& \underline{\vE}
:= \vE-\mathbb{P}_0\vE,
\qquad 
\mathcal{Q}_1^\ep(\vA, \vB)
:=
2\mathcal{L}\left(-\dfrac{t}{\ep}\right) 
\mathbb{P}^\perp
\mathcal{Q}
\left(\mathcal{L}\left(\dfrac{t}{\ep}\right)\vA, \mathcal{L}\left(\dfrac{t}{\ep}\right) \vB\right), \\
&
\mathcal{Q}_2^\ep(\vE, \vA)
:=
2\mathcal{L}\left(-\dfrac{t}{\ep}\right) 
\mathbb{P}^\perp
\mathcal{Q}\left(\vE, \mathcal{L}\left(\dfrac{t}{\ep}\right) \vA\right), 
\qquad
\mathcal{D}^\ep(\vA)=\mathcal{L}\left(-\dfrac{t}{\ep}\right) \mathcal{D}\left(\mathcal{L}\left(\dfrac{t}{\ep}\right) \vA\right).
\end{align*}

If $(a_{0,\ep}, b_{0, \ep}, \vu_{0, \ep})\rightarrow (a_0, b_0, \vu_0)$ in some sense, we can expect that $\vVe \rightarrow \vV$ and $\vUe  \rightarrow \vU$,  where $\vV\in \text{Im} \, L$ and $\vU \in \text{Ker} \, L$. Here, $\vU$ should solve 
\begin{equation}\label{1.3}
\left\{\begin{aligned} 
&
\p_t \vU
+ 
\mathbb{P}\mathcal{Q}(\underline{\vU}, \underline{\vU})
+
2\mathbb{P}\mathcal{Q}(\mathbb{P}_0\vU, \underline{\vU})
-
\mathcal{D}\vU
=0,
\\   
&
\vU(0)=\mathbb{P} (a_0, b_0, \vu_0)^T.
\end{aligned}
\right.
\end{equation}
Writing $\vU=(\vU^1, \vU^2, \vU^3)$, we see that
$\vU^1$ and $\vU^2$ satisfy the transport equations, while $\vU^3$ satisfies the incompressible Navier--Stokes equations, see \eqref{5.3}. Indeed, from \eqref{5.2} we know that $\vU$ takes 
\begin{align*}
\vU
=
(\vU^1, \vU^2, \vU^3)
=
\sp{ \rho, -\df{c_1}{c_2}\rho, \vw }^T
+
\sp{ 
\df{\widehat{(a_0)}_0}{\sqrt{|\Td|}},
\df{\widehat{(b_0)}_0}{\sqrt{|\Td|}},
\df{\widehat{(\vu_0)}_0}{\sqrt{|\Td|}}
}^T,    
\end{align*}
with  $\widehat{\rho}_0=\widehat{\vw}_0=0$ and $\Div \vw=0$. On the other hand, $\vV$ should solve 
\begin{equation}\label{1.4}
\left\{\begin{aligned}
&
\p_t \vV 
+
\mathcal{Q}_1(\vV, \vV)
+
\mathcal{Q}_2(\underline{\vU}, \vV)
+
\mathcal{Q}_2(\mathbb{P}_0 \vU, \vV)
- 
\overline{\mathcal{D}}(\vV)=0,
\\
&
\vV(0)=\mathbb{P}^\perp (a_0, b_0, \vu_0)^T,
\end{aligned}
\right.
\end{equation}
where $\vU$ satisfies \eqref{1.3} and $\mathcal{Q}_1$, $\mathcal{Q}_2$ and  $\overline{\mathcal{D}}$ are the limit operators of $\mathcal{Q}^\ep_1, \mathcal{Q}^\ep_2$ and $\mathcal{D}^\ep$, respectively. The limits of these operators will be derived in Section \ref{decay}.

\subsection{The main theorem}

For brevity, we assume that the initial data $(a_{0,\ep}, b_{0, \ep}, \vu_{0, \ep})$ are independent of $\ep$, i.e., $(a_{0,\ep}, b_{0, \ep}, \vu_{0, \ep})\equiv (a_0, b_0, \vu_0)$. We emphasize that similar result holds if $(a_{0,\ep}, b_{0, \ep}, \vu_{0, \ep})$ converge to $(a_0, b_0, \vu_0)$ suitably.

We introduce some notations. It follows from \eqref{1.1.3}$_1$ and \eqref{1.1.3}$_2$ that
\begin{align*}
\widehat{(\vae)}_0 \equiv \widehat{(a_0)}_0,
\qquad 
\widehat{(\vbe)}_0 \equiv \widehat{(b_0)}_0.
\end{align*}
By setting
\begin{align*}
\vthe
:=
\dfrac{c_1\underline{a_\ep}+c_2\underline{b_\ep}}{c_1+c_2\f{Q^0}{R^0}},
\qquad 
\rho_\ep
:=
\dfrac{Q^0\underline{a_\ep}-R^0\underline{b_\ep}}{Q^0+R^0\f{c_1}{c_2}},
\end{align*}
we obtain
\begin{align}
&
\mathbb{P}(a_\ep, b_\ep, \vue)^T
= 
\sp{ \rho_\ep, -\df{c_1}{c_2}\rho_\ep, \mathcal{P}\underline{\vue}  }^T
+
\mathbb{P}_0(a_\ep, b_\ep, \vue)^T
=
\sp{ \rho_\ep, -\df{c_1}{c_2}\rho_\ep, \mathcal{P}\underline{\vue}  }^T
+
\sp{ \df{\widehat{(a_0)}_0}{\sqrt{|\Td|}}, \df{\widehat{(b_0)}_0}{\sqrt{|\Td|}}, \df{\widehat{(\vue)}_0}{\sqrt{|\Td|}}  }^T,
\label{decom1}
\\
& 
\mathbb{P}^\perp(a_\ep, b_\ep, \vue)^T
=
\sp{ \vthe, \df{Q^0}{R^0}\vthe, \mathcal{P}^\perp \vue }^T.
\label{decom2}
\end{align}
Our main theorem is stated as follows, where the definition of the truncated norm is given in Section \ref{preli} and $\beta_0$ is the positive constant from Proposition \ref{Pro6.2}.

\begin{Theorem}\label{main}
Let $0<\theta<1$ and $(a_0, b_0, \vu_0)\in B^{\f d 2}_{2, 1} \times B^{\f d 2}_{2, 1}\times B^{\f d 2-1}_{2, 1}$. Then there exists $0<T^*_0\leq \infty$ such that \eqref{1.3} admits a unique solution $\vU \in \text{Im}\, L$ on $[0, T^*_0]$ satisfying
\begin{align*}
\vU
\in 
\widetilde{C}_{T^*_0}(B^{\f d 2}_{2, 1})
\times
\widetilde{C}_{T^*_0}(B^{\f d 2}_{2, 1})
\times
\sp{
\widetilde{C}_{T^*_0}(B^{\f d 2-1}_{2, 1})
\cap
L_{T^*_0}^1(\underline{B}^{\f d 2+1}_{2, 1})
},
\end{align*}
and \eqref{1.4} admits a unique solution $\vV \in \text{Ker} \,L$ on $[0, T^*_0]$ satisfying
\begin{align*}
\vV \cap \widetilde{C}_{T^*_0}(B^{\f d 2-1}_{2, 1})\cap L^1_{T^*_0}(B^{\f d 2+1}_{2, 1}).
\end{align*}
For any $0<T_0< \infty$ with $T_0\leq T^*_0$, there exists $\ep_0=\ep_0( T_0, a_0, b_0, \vu_0, \vV, \vU)>0$ such that for any $0<\ep \leq \ep_0$, \eqref{1.1.3} admits a unique solution on $[0, T_0]$ satisfying 
\begin{align*}
(\vae, \vbe, \vue) \in
\widetilde{C}_{T_0}(B^{\f d 2}_{2, 1})
\times
\widetilde{C}_{T_0}(B^{\f d 2}_{2, 1})
\times
\sp{
\widetilde{C}_{T_0}(B^{\f d 2-1}_{2, 1})
\cap
L_{T_0}^1(B^{\f d 2+1}_{2, 1})
}.
\end{align*}
Moreover, we have
\begin{enumerate}[(1)]
\item {\bf{Uniform Bounds:}} %The following estimates hold: 
\begin{align}\label{uni1}
&
\ep
\n{\vthe}^{h;\f{\beta_0}{\ep}}_{\widetilde{L}^\infty_{T_0}(B^{\f d 2}_{2, 1})}
+
\df{1}{\ep}
\n{\vthe}^{h;\f{\beta_0}{\ep}}_{L^1_{T_0}(B^{\f d 2}_{2, 1})}
+
\n{\vthe}^{l;\f{\beta_0}{\ep}}_{\widetilde{L}^\infty_{T_0}(B^{\f d 2-1}_{2, 1})\cap L^1_{T_0}(B^{\f d 2+1}_{2, 1})}
+
\n{\rho_\ep}_{\widetilde{L}^\infty_{T_0}(B^{\f d 2}_{2, 1})}
+
\n{\vue}_{\widetilde{L}^\infty_{T_0}(\underline{B}^{\f d 2-1}_{2, 1})\cap L^1_{T_0}(\underline{B}^{\f d 2+1}_{2, 1})}
\nonumber
\\
&
\qquad
+
|\widehat{(a_0)}_0|
+
|\widehat{(b_0)}_0|
+
\n{\widehat{(\vue)}_0}_{L^\infty(0, T_0)}
\\
&
\quad
\leq
C
\sp{
\n{\vV}_{\widetilde{L}_{T^*_0}(B^{\f d 2-1}_{2, 1})
\cap
L_{T^*_0}^1(B^{\f d 2+1}_{2, 1})}
+
\n{\vw}_{\widetilde{L}_{T^*_0}(B^{\f d 2-1}_{2, 1})
\cap
L_{T^*_0}^1(B^{\f d 2+1}_{2, 1})}
+
\n{\rho}_{\widetilde{L}_{T^*_0}(B^{\f d 2}_{2, 1})}
+
|\widehat{(a_0)}_0|
+
|\widehat{(b_0)}_0|
+
|\widehat{(\vu_0)}_0|
},
\nonumber
\\
&
\f{R^0}{2}\leq R^0+\ep\vae(t, x) \leq \f{3R^0}{2},
\quad 
\f{Q^0}{2}\leq Q^0+\ep\vbe(t, x) \leq \f{3Q^0}{2},
\quad 
\text{ for all } (t,x)\in[0,T_0] \times \Td.
\nonumber
\end{align}

\item {\bf{Decay Properties:}} %The following decay estimates hold:
\begin{align*}
&
\df{
\n{\vVe-\vV}_{
\widetilde{L}^\infty_{T_0}(B^{\f d 2-1-\theta}_{2, 2})
\cap
L^2_{T_0}(B^{\f d 2-\theta}_{2, 2})
}
}
{\tau^*(\ep)}
\\
&
\qquad
+
\df{\n{\rho_\ep-\rho}_{
\widetilde{L}^\infty_{T_0}(B^{\f d 2-1-\theta}_{2, 1})}
+
\n{\mathcal{P}\vue-\vw}_{
\widetilde{L}^\infty_{T_0}(\underline{B}^{\f d 2-1-\theta}_{2, 1})
\cap
L^1_{T_0}(\underline{B}^{\f d 2+1-\theta}_{2, 1})
}
+
\n{\widehat{(\vue)}_0-\widehat{(\vu_0)}_0}_{L^\infty(0, T_0)}
}
{\ep^{\f{\theta}{3+\theta}}}
\\
& \quad 
\leq
C(a_0, b_0, \vu_0, T_0, \vV, \vU),
\end{align*}
where $\tau^*(\ep)$ increases monotonically in $\ep$ and satisfies $\lim\limits_{\ep \to 0^{+}} \tau^*(\ep)=0$.

\item {\bf{Critical Convergence:}} %The following critical convergence result holds: 
\begin{align*}
&
\lim\limits_{\ep \to 0^+}
\ep
\sp{
\n{a_\ep}_{\widetilde{L}^\infty_T(B^{\f d 2}_{2,1})}
+
\n{b_\ep}_{\widetilde{L}^\infty_T(B^{\f d 2}_{2,1})}
}
+
\n{\vVe-\vV}_{
\widetilde{L}^\infty_{T_0}(B^{\f d 2-1}_{2, 1})
\cap
\widetilde{L}^2_{T_0}(B^{\f d 2}_{2, 1})
}
\\
&
\quad
+
\n{\rho_\ep-\rho}_{
\widetilde{L}^\infty_{T_0}(B^{\f d 2}_{2, 1})}
+
\n{\mathcal{P}\vue-\vw}_{
\widetilde{L}^\infty_{T_0}(\underline{B}^{\f d 2-1}_{2, 1})
\cap
L^1_{T_0}(\underline{B}^{\f d 2+1}_{2, 1})
}
+
\n{\widehat{(\vue)}_0-\widehat{(\vu_0)}_0}_{L^\infty(0, T_0)}
=
0.
\end{align*}

\end{enumerate}

\end{Theorem}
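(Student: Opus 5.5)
The plan is a bootstrap/continuity argument around the limit profile $(\vU,\vV)$, in the spirit of the filtering method of Schochet, Gallagher and Danchin, adapted to the critical framework.

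\textbf{Step 1: the limit systems.} By \eqref{decom1}, $\vU=(\rho,-\frac{c_1}{c_2}\rho,\vw)$ plus the constant modes. Here $\vw$ solves the incompressible Navier--Stokes equations with mean-zero, divergence-free data in $B^{\frac d2-1}_{2,1}$, and $\rho$ solves a transport equation driven by $\vw$ (and the constant velocity mode). Classical critical Besov theory gives a maximal time $T^*_0\in(0,\infty]$, with $T^*_0=\infty$ when the data are small. On that interval, $\vw\in\widetilde C(B^{\frac d2-1}_{2,1})\cap L^1(\underline B^{\frac d2+1}_{2,1})$. Since $\int_0^T\|\Grad\vw\|_{B^{d/2}_{2,1}}<\infty$, the transport estimate propagates $\rho\in\widetilde C(B^{\frac d2}_{2,1})$.

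The next task is to compute the limits $\mathcal{Q}_1,\mathcal{Q}_2,\overline{\mathcal D}$ from Section \ref{decay}. In Fourier variables $\mathcal{L}(t/\ep)$ acts by oscillating factors $e^{\pm i t\omega|k|/\ep}$. Only the resonant interactions survive the weak limit, so $\overline{\mathcal D}$ is a damping with symbol $\sim -\nu|k|^2$. Using the torus structure (Gallagher's resonance lemma), I would show that $\mathcal{Q}_1$ behaves like a Burgers-type term satisfying an $L^2$-type cancellation. Step 1 ends with a linear-in-$\vV$ energy estimate in $\widetilde L^\infty(B^{\frac d2-1}_{2,1})\cap L^1(B^{\frac d2+1}_{2,1})$, globally on $[0,T^*_0]$, for \eqref{1.4}.

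\textbf{Step 2: a priori estimates for \eqref{1.1.3}.} Fix $T_0<\infty$ with $T_0\le T^*_0$. Local existence for the compressible system is standard, so the work is in uniform a priori estimates. I split frequencies at $\beta_0/\ep$ (Proposition \ref{Pro6.2}).

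On low frequencies the linearized operator $\partial_t+L/\ep-\mathcal D$ is parabolic-like for $\vartheta_\ep,\mathcal P^\perp\vue$. There I would use the skew-symmetry \eqref{1.1.2} in the $\mathbb H$ inner product together with a Danchin-type cross term $\langle\Grad\vartheta,\mathcal P^\perp\vu\rangle$, which yields $L^1(B^{\frac d2+1})$ smoothing.

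On high frequencies the compressible part $\vartheta_\ep$ is damped at rate $1/\ep$. This gives the $\ep\|\cdot\|_{L^\infty B^{d/2}}$ and $\ep^{-1}\|\cdot\|_{L^1B^{d/2}}$ bounds in \eqref{uni1}, obtained via an effective-velocity or Lagrangian-type commutator estimate. The bound on $\rho_\ep$ comes from its transport structure.

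\textbf{Step 3: closing by comparison, the main obstacle.} Uniform bounds for large data cannot close by smallness alone, so I compare with the limit. I write $\vue$ and the densities as the profile $\vU+\mathcal L(t/\ep)\vV$ plus a remainder. The remainder equation involves the nonlinear terms evaluated at the profile, minus their limits. These differences are non-resonant oscillatory terms $e^{it\omega/\ep}F$, which I would handle by integration by parts in time (normal-form/filtering). That gains a factor $\ep$ at the cost of a derivative, whose frequency growth is cut at an intermediate frequency $N_\ep$. Balancing the error against $\ep N_\ep^{3}$ and the tail $N_\ep^{-\theta}$ gives the rate $\ep^{\theta/(3+\theta)}$ in lower-regularity norms.

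The key difficulty, and what I expect to be hardest, is the middle range $N_\ep\le|k|\le\beta_0/\ep$. There one must derive estimates whose constants depend explicitly on $T_0$, $N_\ep$ and $\ep$, and in particular control the two-density structure in which $\rho_\ep$ has no damping. I would attempt a Gronwall argument weighted by $\exp(C\int_0^t\|\vU\|+\|\vV\|)$, which is finite on $[0,T_0]$.

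A continuity argument then gives existence on $[0,T_0]$ for $\ep\le\ep_0$, the bounds \eqref{uni1}, and, since $\ep a_\ep$ is small in $L^\infty$, the pointwise density bounds.

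\textbf{Critical convergence.} Finally, critical convergence (item (3)) follows by interpolation. The low-regularity rates from Step 3 combine with uniform smallness of the high-frequency tails $\|\cdot\|^{h;M}$. That smallness holds uniformly in $\ep$ because the data lie in $\ell^1$-Besov spaces and the tails are controlled by the data tails (a compactness/equi-integrability argument). Letting first $\ep\to0$ and then $M\to\infty$ gives the limit.
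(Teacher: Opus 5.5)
Your architecture matches the paper's: limit systems, filtering decay of $\vVe-\vV$ and $\vUe-\vU$ in lower regularity, an effective velocity above $\beta_0/\ep$ and a hypocoercive cross term below it, a continuity argument, and a tail argument. However, Step~3 as you set it up does not close.

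\textbf{The energy split cannot be the filtering cutoff.} You take the frequency below which the solution is controlled by comparison with $\vU+\mathcal{L}(t/\ep)\vV$ to be the normal-form truncation $N_\ep\sim\ep^{-1/(3+\theta)}$. The comparison only gives rates in weaker norms: $B^{\f d2-1-\theta}$ for $\rho_\ep-\rho$ and $\mathcal{P}\vue-\vw$, and $B^{\f d2-1-\theta}_{2,2}$ for $\vVe-\vV$. The energy estimates above the threshold, however, need the solution just below it in critical norms. The paraproducts $T_{\Grad\rho_\ep}\vue$, $T_{\rho_\ep}\Div\mathcal{P}^\perp\vue$ and the quadratic terms $\rho_\ep\Grad\rho_\ep$, $\rho_\ep\Grad\vthe$ bring in blocks such as $\n{\mathcal{P}\vue}^{m;\f{\zeta}{4},\zeta}_{L^1_t(B^{\f d2+1}_{2,1})}$ and $\n{\rho_\ep}^{m;\f{\zeta}{16},\zeta}_{B^{\f d2}_{2,1}}$ (see \eqref{6.3.4}, \eqref{6.3.8}, \eqref{6.3.13}). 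Upgrading on $|k|<N$ costs at least $N^{1+\theta}$; this is the $\zeta^{1+2\theta}$ in \eqref{new8.16}. Since $N_\ep^{1+\theta}\ep^{\theta/(3+\theta)}=\ep^{-1/(3+\theta)}\to\infty$, the comparison gives no bound at all on the low-frequency part of $E^\ep_T$.

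The paper decouples the two parameters. The truncation $M$ lives only inside Propositions~\ref{Pro4.1}--\ref{Pro4.2}. The energy split $\zeta_0$ is fixed independently of $\ep$, chosen so that $\n{a_0}^{h;\zeta_0}_{B^{\f d2}_{2,1}}+\n{b_0}^{h;\zeta_0}_{B^{\f d2}_{2,1}}+\n{\vu_0}^{h;\zeta_0}_{B^{\f d2-1}_{2,1}}$ and the tails of $(\vV,\vw,\rho)$ beyond $\zeta_0/16$ are small. This vanishing property \eqref{6.3.0}, not Gr\"onwall, is what makes the middle/high part small. Only afterwards is $\ep_0$ chosen so that $\zeta_0\ep_0$ and $\zeta_0^{1+2\theta}\tau^*(\ep_0)$ are small. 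The argument is a two-level bootstrap:
\begin{itemize}
\item $E^\ep_T\le 2C(\vV_{T^*_0}+\vU_{T^*_0})$, large but fixed, which bounds the Gr\"onwall factor $e^{C[T](1+E^\ep_T+(E^\ep_T)^2)}$ of Proposition~\ref{Pro6.4};
\item $E^{\ep,\zeta_0}_T\le 2\delta_0$, small;
\end{itemize}
with \eqref{cons5} recovering the first bound from the second.

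\textbf{Further points.}
\begin{itemize}
\item The smallness of $\ep\n{(\vae,\vbe)}_{L^\infty}$, which you take for granted, is not automatic. Above $\beta_0/\ep$ the energy only controls $\ep\n{\vthe}^{h;\f{\beta_0}{\ep}}_{\widetilde L^\infty_T(B^{\f d2}_{2,1})}$, which is $O(1)$. It is small only because it is built into $E^{\ep,\zeta}_T$, and \eqref{cons3} turns $E^{\ep,\zeta}_T+\zeta\ep E^\ep_T$ into the $L^\infty$ bound.
\item For the same reason, the uniform-in-$\ep$ tail smallness you invoke for critical convergence is not a soft compactness fact. It is Proposition~\ref{Pro6.4} rerun with $(\zeta_\delta,\ep_\delta)$, combined with \eqref{cons3} and \eqref{cons21}.
\item The rate $\ep^{\theta/(3+\theta)}$ holds for $\vUe-\vU$, where the divisors $c_0\lambda_k^\alpha$ stay away from zero, but not for $\vVe-\vV$. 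On a general torus the non-resonant phases $\lambda_k^\alpha-\lambda_l^\beta-\lambda_m^\gamma$ and $\lambda_k^\alpha-\lambda_m^\gamma$ admit no lower bound polynomial in the truncation parameter (the constants $\widetilde C_M$, $\widecheck{C}_M$ of \eqref{xishu1}--\eqref{xishu2}). Filtering therefore yields only some $\tau^*(\ep)\to0$, which is why the theorem is stated that way.
\item Your global bound for $\vV$ on $[0,T^*_0]$ needs $\mathcal{Q}_2(\underline{\vU},\cdot)$ and $\mathcal{Q}_2(\mathbb{P}_0\vU,\cdot)$ to drop out of the $\Delta_j$-localized energy identity (\eqref{5.5}, \eqref{5.7}, using $|k|=|m|$ at resonance). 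Otherwise $\rho$, which has no time integrability, would enter the Gr\"onwall exponent.
\end{itemize}
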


Several remarks are in order. 
\begin{Remark} 
Li et al. \cite{LLZ26} considered the low Mach number limit of \eqref{eq:res-two-fluid} with well-prepared initial data: 
\begin{align}\label{data}
R_{0, \ep}=1+O(\ep^2), 
\qquad
Q_{0, \ep}=1+O(\ep^2),
\qquad
\vu_{0, \ep}=\vu_0+O(\ep) \quad \text{ with } \quad \Div \vu_0=0, 
\end{align}
and the $3$D incompressible Navier--Stokes equations was identified as the limit system; in particular, the quantities $\vV$ and
$\rho$ do not appear. Now we explain the compatibility with the present work. We infer from \eqref{data} that 
\begin{align*}
a_{0, \ep}=O(\ep), 
\quad 
b_{0, \ep}=O(\ep),
\quad 
\mathbb{P}^\perp (a_{0, \ep}, b_{0, \ep}, \vu_{0, \ep})
=
(O(\ep), O(\ep), O(\ep)),
\quad 
\mathbb{P} (a_{0, \ep}, b_{0, \ep}, \vu_{0, \ep})
=
(O(\ep), O(\ep), \vu_{0}+O(\ep)), 
\end{align*}
which ensures 
\begin{align*}
\lim\limits_{\ep \to 0^+}
\mathbb{P}^\perp (a_{0, \ep}, b_{0, \ep}, \vu_{0, \ep})
=
(0, 0, 0),
\qquad
\mathbb{P}^\perp (a_{0, \ep}, b_{0, \ep}, \vu_{0, \ep})
=
(0, 0, \vu_0).
\end{align*}
Thus, the initial data for \eqref{1.4} is $(0, 0, 0)$, which implies $\vV \equiv 0$, while the initial data for \eqref{1.3} is $(0,0,\vu_0)$, which implies that the solution takes the form $\vU=(\vU^1,\vU^2,\vU^3)=(0,0,\vw+\widehat{(\vu_0)}_0/\sqrt{\T})$ with $\widehat{\vw}_0=0$ and $\Div \vw=0$. In particular, since $\vU^1 \equiv 0$, we also have $\rho \equiv 0$. Consequently, only the solution to the $3$D incompressible Navier--Stokes equations is observed in the limit.
\end{Remark}

\begin{Remark}
It seems difficult to give an explicit expression of $\tau^*(\ep)$, as the coefficients $\widetilde{C}_M$ and $\widecheck{C}_M$ (defined in \eqref{xishu1} and \eqref{xishu2}, respectively) cannot be determined in general. These coefficients depend on the periodic parameter $\mathbf{a}$, and this may cause the decay rate of $\tau^*(\ep)$ slower than any exponential decay in $\ep$. For certain suitable choices of $\mathbf{a}$, such as $\mathbf{a}=(1, 1, \cdots, 1)$, the coefficients $\widetilde{C}_M$ and $\widecheck{C}_M$ can be explicitly determined, and consequently $\tau^*(\ep)$ can be calculated explicitly. However, the loss of decay rate of $\tau^*(\ep)$ does not have any influence on the validity of Theorem \ref{main}. The only property that matters is $\lim\limits_{\ep \to 0^{+}} \tau^*(\ep)=0$. 
\end{Remark}

\begin{Remark}
By \eqref{decom1} and \eqref{decom2}, we have
\begin{align*}
(\vae, \vbe, \vue)^T
-
\vU
&
=
\mathbb{P}^\perp (a_\ep, b_\ep, \vue)^T
+
\mathbb{P}(a_\ep, b_\ep, \vue)^T
-
\vU
\\
&
=
\mathcal{L}\left(\dfrac{t}{\ep}\right)
\sp{
\vVe
-
\vV
}
+
\mathcal{L}\left(\dfrac{t}{\ep}\right)
\vV
+
\sp{ \rho_\ep-\rho, -\df{c_1}{c_2}(\rho_\ep-\rho), \mathcal{P}\underline{\vue}-\vw }^T
+
\sp{0, 0,  \df{\widehat{(\vue)}_0-\widehat{(\vu_0)}_0}{\sqrt{|\Td|}} }^T
\\
&
=:
I_1+I_2+I_3+I_4.
\end{align*}
It follows from Theorem \ref{main} that the terms $I_1$, $I_3$ and $I_4$ converge strongly to zero, whereas $I_2$ converges only weakly to zero. Consequently, $(\vae, \vbe, \vue)^T$  converges weakly to 
$\vU$. In the special case $\vV=0$, we have $I_2=0$; thus the convergence becomes strong. This explains why the strong convergence result can be obtained in case of well-prepared initial data, see \cite{LLZ26}. 
\end{Remark}

\begin{Remark}
The approach developed in this paper is robust enough to treat the low Mach number limit of other two-fluid models with periodic boundary conditions and ill-prepared initial data, such as the fluid-particle two-phase model \cite{Kwo-Li-19} and the liquid-gas two-phase model \cite{Yao-Zhu-Zi-12}. 
\end{Remark}

\begin{Remark}
The estimates for $\vae$ and $\vbe$ can be obtained from Theorem \ref{main} and the following equivalent relations:
\begin{align*}
&
\n{a_\ep}_{\widetilde{L}^q_T(B^{s}_{p,r})} 
\approx \n{a_\ep}_{\widetilde{L}^q_T(\underline{B}^{s}_{p,r})}
+
T^{\f 1 q}|\widehat{(a_0)}_0|,
\ \ 
\n{b_\ep}_{\widetilde{L}^q_T(B^{s}_{p,r})} 
\approx
\n{b_\ep}_{\widetilde{L}^q_T(\underline{B}^{s}_{p,r})}
+
T^{\f 1 q}|\widehat{(b_0)}_0|,    
\\
&
\n{a_\ep}_{\widetilde{L}^q_T(\underline{B}^{s}_{p,r})}+\n{b_\ep}_{\widetilde{L}^q_T(\underline{B}^{s}_{p,r})}
\approx \n{\rho_\ep}_{\widetilde{L}^q_T(B^{s}_{p,r})}
+
\n{\vthe}_{\widetilde{L}^q_T(B^{s}_{p,r})},
\end{align*}
where $0<T<\infty$, $1\leq p, q, r\leq \infty$ and $s\in \mathbb{R}$.
\end{Remark}

\subsection{Difficulties, ideas and remarks  on \texorpdfstring{$T_0=T^*_0=\infty$}{}}
In the proof of Theorem \ref{main}, three main difficulties arise: the size of the initial data, the growth of the linear estimate, and the growth of the nonlinear estimate. These growth issues are fundamentally driven by $(\widehat{(a_0)}_0, \widehat{(b_0)}_0) \neq (0, 0)$ and the term $\rho_\ep$. Since the limit of  $\rho_\ep$
is $\rho$, and $\rho$ satisfies only the transport equation (see \eqref{5.3}$_1$), the only uniform bound available for $\rho$ is in $\widetilde{L}^\infty_t(B^{\f d 2}_{2, 1})$. Consequently, the same is true for $\rho_\ep$: only a uniform bound in $\widetilde{L}^\infty_t(B^{\f d 2}_{2, 1})$ can be established. Hence, for any $\widetilde{L}^q_t(B^{\f d 2-s}_{2, 1})$ norm  with $1\leq q<\infty$ and $0\leq s<\infty$, the estimate on $\rho_\ep$ suffers from a 
$t^{\f 1 q}$-growth:
\begin{align*}
\n{\rho_\ep}_{
\widetilde{L}^q_t(B^{\f d 2-s}_{2, 1})}
\les
t^{\f 1 q}
\n{\rho_\ep}_{
\widetilde{L}^\infty_t(B^{\f d 2}_{2, 1})}.
\end{align*}
In the linear estimate, the term
\begin{align*}
L_1:=
\sp{
\f{c_3 \widehat{(a_0)}_0}{\sqrt{\T}}
+
\f{c_4 \widehat{(b_0)}_0}{\sqrt{\T}}
+
\f{c_5 Q^0 \widehat{(a_0)}_0}{R^0\sqrt{\T}}
+
\f{c_6 Q^0\widehat{(b_0)}_0}{R^0\sqrt{\T}}
}
\Grad \rho_\ep
\end{align*}
arises, and we are forced to estimate $\n{L_1}^{m;\zeta;\f{\beta_0}{\ep}}_{L^1_t(B^{\f d 2-1}_{2, 1})}$, which introduces a $t$-growth.
In the nonlinear estimate, the term
\begin{align*}
N_1:=
\sp{
c_3+\f{c_6 c^2_1}{c^2_2}-\f{c_1c_4}{c_2}-\f{c_1c_5}{c_2}
}
\rho_\ep
\Grad \rho_\ep
\end{align*}
arises, and we are forced to estimate $\n{N_1}^{m;\zeta;\f{\beta_0}{\ep}}_{L^1_t(B^{\f d 2-1}_{2, 1})}$, which also introduces a $t$-growth.

However, we must establish a uniform  estimate for $E_t^\ep$, with respect to both $t$ and $\ep$, which is precisely the left-hand side of \eqref{uni1}. To this end, we work within the high-middle-low frequency decomposition method developed by Fujii \cite{F24} for single phase Navier--Stokes system in $\mathbb{R}^d$, and follow the idea of Li \cite{L26}, establishing two fundamental properties: a vanishing property for the high-middle-frequency parts, and a decay property for the low-frequency part. Guided by this idea, we design the functional $E^{\ep, \zeta}_t$, whose definition is given in Section \ref{construct}. Specifically, we employ the filtering technique to derive the decay estimates of $\vZe$ and $\vWe$ (see Section \ref{construct} for their definitions), as stated in Propositions \ref{Pro4.1} and \ref{Pro4.2}. Then, by virtue of \eqref{new8.16}, a decay property for the low-frequency part is established. For the high-middle-frequency parts of the solution to the compressible two-fluid model, we establish new a priori estimates that depend explicitly on time, the frequency parameter, and the Mach number, see Proposition \ref{Pro6.4}. As discussed above, these a priori estimates inevitably grow in time, yet they still imply the vanishing property of the high-middle-frequency parts when $T_0<\infty$. When establishing the a priori estimates for the high-middle-frequency parts, two main difficulties arise: one is the coupling with the low-frequency part, the other being the composite function structure of the pressure term; these are discussed in detail in Subsection \ref{nolin}. With the aid of \eqref{cons3} and delicate computations that fully exploit the high-middle-frequency energy structure, we are able to overcome these difficulties. Eventually, combining \eqref{cons5} with $T_0<\infty$ yields the desired uniform estimate for $E^\ep_t$.

Finally, in the case that $T_0=T^*_0=\infty$, both the growth of linear estimate and nonlinear estimate tend to infinity. Of course, one may impose the condition $(\widehat{(a_0)}_0, \widehat{(b_0)}_0) = (0, 0)$ on the initial data to circumvent the difficulty caused by the growth of linear estimate. However, the growth of nonlinear estimate persists and cannot be handled by our method. On the other hand, one might ask whether, without adopting the approach of the present paper, it is possible to eliminate the nonlinear growth by exploiting the specific structure of the equations. This appears a delicate issue, since the pressure term does not admit an explicit expression due to the algebraic closure.

%\subsection{Notations}
Before ending this section, we introduce some notations. Throughout this paper, the letter $C$ denotes a generic positive constant changing from line to line. We write $C(a_1,a_2,\cdots,a_n)$ to
indicate a positive constant that depends only on $a_1,a_2,\cdots,a_n$.  The notation $a_1\lesssim  a_2$  means that $a_1\leq Ca_2$, and  $a_1\approx a_2$ indicates that $C^{-1}a_1\leq a_2\leq Ca_1$. For two Banach spaces $B_1$ and $B_2$, the norm on their intersection is defined by $\|\cdot\|_{B_1\cap B_2}:=\|\cdot\|_{B_1}+\|\cdot\|_{B_2}$. For two operators $A$ and $B$, the commutator is defined as $[A, B]f:=(AB)f-B(Af)$.

%%%%%%%%%%%%%%%%%%%%%%%%%%%%%%%%%%%%%%%%%%%%%%%%%%%%%%%%%%%%%%%%%%%%%%%%%%%
\section{Preliminaries}\label{preli}

\subsection{Function Spaces}
Let $\varphi: \mathbb{R} \rightarrow [0,1]$ be a radial smooth function supported in $\{\xi\in\mathbb{R}:3/4\leq|\zeta|\leq 8/3\}$ satisfying 
%the dyadic partition of unity condition:
\[
\sum_{j\in\mathbb{Z}}\varphi(2^{-j}\xi)=1 \quad 
\text{ for any } \xi\in\mathbb{R}\backslash\{0\}.
\]
The construction of such $\varphi$ is classical, see for instance \cite{BCD11}. For $g\in\mathcal{S}'$, $k \in \widetilde{\mathbb{Z}}^d$ and $j\in\mathbb{Z}$, we define
\begin{align}\label{fourier} 
 \widehat{g}_k := \left< g,\f{\text{e}^{-i k \cdot x}} {\sqrt{|\Td|}} \right>_{\mathcal{S}'(\Td) \times \mathcal{S}(\Td)}, 
 \quad 
 \Delta_j g  := \sum\limits_{k \in \widetilde{\mathbb{Z}}^d}\varphi(2^{-j}|k|)\widehat{g}_k\f{\text{e}^{i k \cdot x}} {\sqrt{|\Td|}}, 
 \quad 
 S_j g := \f{\widehat{g}_0}{\sqrt{|\Td|}}+\sum\limits_{j'\leq j-1}\Delta_{j'} g,  
\end{align}
where  $\mathcal{S}$ and $\mathcal{S}'$ denote the space of smooth functions which are $2\pi \mathbf{a}^j$-periodic ($1\leq j\leq d$) in the $j$th variable and its dual space, $\widetilde{\mathbb{Z}}^d:=\mathbb{Z}/\mathbf{a}^1\times \mathbb{Z}/\mathbf{a}^2\times\cdots\times\mathbb{Z}/\mathbf{a}^d$ and $|\Td|:=2\pi \mathbf{a}^1\cdot 2\pi \mathbf{a}^2\cdots 2\pi \mathbf{a}^d$. This yields the decomposition:
\[
g=\sum\limits_{k \in \widetilde{\mathbb{Z}}^d} {{\widehat g}_k}\,\f{e^{ ik \cdot x}}{\sqrt{\T}}=\f{\widehat{g}_0}{\sqrt{|\Td|}}+\sum\limits_{j\in \mathbb{Z}} \Delta_j g.
\]
We set $j_\mathbf{a}:=\max\{j\in \mathbb{Z}:2^{-j}\cdot \min\{\f1 {\mathbf{a}^1},\cdots,\f1 {\mathbf{a}^d}\}\geq \f 8 3\}$ and observe that 
\[
\varphi(2^{-j}|k|)=0,\quad  \text{ if } j\leq j_\mathbf{a},\quad  k \in \widetilde{\mathbb{Z}}^d.
\]
It follows that
\beq\label{zero}
\Delta_j g\equiv0,\quad  \text{ if } j\leq j_\mathbf{a}.
\eeq

 For $1\leq p,r\leq \infty $ and $s\in\mathbb{R}$, we introduce
\begin{align*}
\intTd g &:= \int_{[0,2\pi \mathbf{a}_1)\times [0,2\pi \mathbf{a}_2)\times\cdots\times [0,2\pi \mathbf{a}_d)}g  \, \dx,
\qquad 
\|g\|_{L^p(\Td)}:=\left(\intTd {|g|^p}\right)^{\f1 p},\\
\|g\|_{B_{p,r}^s(\Td)}  &
:=
\left(\sum\limits_{j\in\mathbb{Z}}2^{jsr}\|\Delta_{j} g\|^{r}_{L^p(\Td)}+\left\|\f{\widehat{g}_0}{\sqrt{\T}}\right\|^{r}_{L^p(\Td)}\right)^{\f1 r},\\
 B_{p,r}^s(\Td)& :=
 \left\{g\in\mathcal{S}'(\Td):\|g\|_{B_{p,r}^s(\Td)}<\infty \right\}, \qquad   \underline{B}_{p,r}^s(\Td):=B_{p,r}^s(\Td)\cap \underline{\mathcal{S}}'(\Td),
\\
 H^s(\Td) & :=
 \left\{g\in\mathcal{S}'(\Td):\|g\|_{H^s(\Td)}:\left(|{\widehat{g}_0}|^2+\sum\limits_{k\neq0}|k|^{2s}|{\widehat{g}_k}|^2\right)^{\f1 2}<\infty \right\}.
\end{align*}
For any $s\in\mathbb{R}$, it is well-known that $B_{2,2}^s(\Td)=H^s(\Td)$ in the sense of norm equivalence. For $1\leq q,p,r\leq\infty$, $s\in\mathbb{R}$, $0<T\leq\infty$ and a time-dependent function $\Psi$ taking values in Besov spaces, we define
\begin{align*}
&L^q_T(B_{p,r}^s):=
\Mp{
\Psi:
\|\Psi\|_{L^q_T(B_{p,r}^s)} :=
\left\|\|\Psi(t,\cdot)\|_{B_{p,r}^s}\right\|_{L^q(0,T)}
<\infty
},
\\ 
&
\widetilde{L}^{q}_T(B_{p,r}^s):=
\Mp{\Psi:
\|\Psi\|_{\widetilde{L}^{q}_T(B_{p,r}^s)}:=\left(\sum\limits_{j\in\mathbb{Z}}2^{jsr}\|\Delta_{j} \Psi\|^{r}_{L^q(0,T;L^p)}+\left\|\f{\widehat{\Psi}_0}{\sqrt{\T}}\right\|^{r}_{L^q(0,T;L^p)}\right)^{\f1 r}
<\infty
}.
\end{align*}
The second norm was first introduced by Chemin and Lerner \cite{CL95}. The corresponding zero-mean versions are defined as:
\begin{align*}
&L^q_T(\underline{B}_{p,r}^s):=
\Mp{
\Psi:
\|\Psi\|_{L^q_T(\underline{B}_{p,r}^s)} :=
\|\underline{\Psi}\|_{L^q_T(B_{p,r}^s)}
<\infty
},
\\ 
&
\widetilde{L}^{q}_T(\underline{B}_{p,r}^s):=
\Mp{\Psi:
\|\Psi\|_{\widetilde{L}^{q}_T(B_{p,r}^s)}:=
\|\underline{\Psi}\|_{\widetilde{L}^{q}_T(B_{p,r}^s)}
<\infty
}.
\end{align*}
Due to Minkowski inequality, 
\begin{align}\label{minski}
\|\Psi\|_{L^q_T(B_{p,r}^s)}\leq\|\Psi\|_{\widetilde{L}^{q}_T(B_{p,r}^s)}, \quad \text{if}\ \ r\leq q;
\qquad 
\|\Psi\|_{L^q_T(B_{p,r}^s)}\geq\|\Psi\|_{\widetilde{L}^{q}_T(B_{p,r}^s)}, \quad \text{if}\ \ r\geq q.
\end{align}
Thanks to \eqref{zero}, we have for $1\leq p,q,r_1,r_2\leq\infty$ and $-\infty<s_1< s_2<+\infty$ that 
\begin{align}
 &\max\{\|\Psi\|_{L^q_T(B_{p,r_1}^{s_1})},\|\Psi\|_{\widetilde{L}^{q}_T(B_{p,r_1}^{s_1})}\}
 \lesssim\min\{\|\Psi\|_{L^{q}_T(B_{p,r_2}^{s_2})},\|\Psi\|_{\widetilde{L}^{q}_T(B_{p,r_2}^{s_2})}\} \label{minski2}
\end{align}
For $0\leq\zeta < \eta<\infty$, we define the high, middle and low frequency norms as  
\begin{align*}
& 
\|g\|^{h;\eta}_{B_{p,r}^s}:=\left(\sum\limits_{2^j\geq\eta}2^{jsr}\|\Delta_{j} g\|^{r}_{L^p(\Td)}\right)^{\f1 r},\qquad \|g\|^{m;\zeta,\eta}_{B_{p,r}^s}:=\left(\sum\limits_{\zeta\leq2^j<\eta}2^{jsr}\|\Delta_{j} g\|^{r}_{L^p(\Td)}\right)^{\f1 r},
\\
&
\|g\|^{l;\zeta}_{B_{p,r}^s}:=\left(\sum\limits_{2^j<\zeta}2^{jsr}\|\Delta_{j} g\|^{r}_{L^p(\Td)}+\left\|\f{\widehat{g}_0}{\sqrt{\T}}\right\|^{r}_{L^p(\Td)}\right)^{\f1 r},
\\
&
\|\Psi\|^{h;\eta}_{L^q_T(\dot{B}_{p,r}^s)}:=\left\|\|\Psi\|^{h;\eta}_{B_{p,r}^s}\right\|_{L^q(0,T)},
\\
&
\|\Psi\|^{m;\zeta,\eta}_{L^q_T(B_{p,r}^s)}:=\left\|\|\Psi\|^{m;\zeta,\eta}_{B_{p,r}^s}\right\|_{L^q(0,T)}, \qquad  \|\Psi\|^{l;\zeta}_{L^q_T(B_{p,r}^s)}:=\left\|\|\Psi\|^{l;\zeta}_{B_{p,r}^s}\right\|_{L^q(0,T)},
\\
&
\|\Psi\|^{h;\eta}_{\widetilde{L}^q_T(\dot{B}_{p,r}^s)}:=\left(\sum\limits_{2^j\geq\eta}2^{jsr}\|\Delta_{j} \Psi\|^{r}_{L^q(0,T;L^p)}\right)^{\f1 r},
\\
&
\|\Psi\|^{m;\zeta,\eta}_{\widetilde{L}^q_T(\dot{B}_{p,r}^s)}:=\left(\sum\limits_{\zeta\leq2^j<\eta}2^{jsr}\|\Delta_{j} \Psi\|^{r}_{L^q(0,T;L^p)}\right)^{\f1 r},
\\
& 
\|\Psi\|^{l;\zeta}_{\widetilde{L}^q_T(B_{p,r}^s)}
:=
\left(\sum\limits_{2^j<\zeta}2^{jsr}\|\Delta_{j} \Psi\|^{r}_{L^q(0,T;L^p)}+\left\|\f{\widehat{\Psi}_0}{\sqrt{\T}}\right\|^{r}_{L^q(0,T;L^p)}\right)^{\f1 r}.
\end{align*}
The corresponding zero-mean versions are defined as:
\[
\|g\|^{l;\zeta}_{\underline{B}_{p,r}^s}:=\|\underline{g}\|^{l;\zeta}_{B_{p,r}^s},\quad  
\|\Psi\|^{l;\zeta}_{L^q_T(\underline{B}_{p,r}^s)}:=\|\underline{\Psi}\|^{l;\zeta}_{L^q_T(B_{p,r}^s)},
\quad
\|\Psi\|^{l;\zeta}_{\widetilde{L}^q_T(\underline{B}_{p,r}^s)}:=\|\underline{\Psi}\|^{l;\zeta}_{\widetilde{L}^q_T(B_{p,r}^s)}.
\]
%It is clear that
%$\|g\|^{h;\eta}_{B_{p,r}^s}\approx \|g^{h;\eta}\|_{B_{p,r}^s}$,  $\|g\|^{m;\zeta;\eta}_{B_{p,r}^s}\approx \|g^{m;\zeta,\eta}\|_{B_{p,r}^s}$, $\|g\|^{l;\zeta}_{B_{p,r}^s}\approx \|g^{l;\zeta}\|_{B_{p,r}^s}$, $\|\Psi\|^{h;\eta}_{\widetilde{L}^q_T(\dot{B}_{p,r}^s)}\approx\|\Psi^{h;\eta}\|_{\widetilde{L}^q_T(\dot{B}_{p,r}^s)}$, $\|\Psi\|^{m;\zeta,\eta}_{\widetilde{L}^q_T(\dot{B}_{p,r}^s)}\approx\|\Psi^{m;\zeta,\eta}\|_{\widetilde{L}^q_T(\dot{B}_{p,r}^s)}$,
%$\|\Psi\|^{l;\zeta}_{\widetilde{L}^q_T(\dot{B}_{p,r}^s)}\approx\|\Psi^{l;\zeta}\|_{\widetilde{L}^q_T(\dot{B}_{p,r}^s)}$.

Finally, we introduce Bony's para-product decomposition. For $g,h\in\mathcal{S}'(\Td)$, we have
\begin{align}
gh=T_{g}h+T_{h}g+R(g,h)+\f{\widehat{g}_0}{\sqrt{|\Td|}}\cdot\f{\widehat{h}_0}{\sqrt{|\Td|}}
\nonumber
\end{align}
where
\begin{align}
\quad T_{g}h:=\sum\limits_{j\in\mathbb{Z}}S_{j-2} g\Delta_{j} h,\qquad
R(g,h):=\sum\limits_{|j-j'|\leq2}\Delta_{j} g\Delta_{j'} h.
\nonumber
\end{align}
We additionally define $T'_{g}h:=T_{h}g+R(g,h)$. The following almost orthogonality properties are well-known:
\begin{align}
& 
\Delta_{j'}\Delta_{j} g\equiv0, \quad \text{ if }\, |j'-j|\geq2, \qquad   \Delta_{j'}( S_{j-2}h\Delta_{j} g)\equiv0, \quad \text{ if }\, |j'-j|\geq3, \label{ao1}  \\
&
\Delta_{j''}(\Delta_{j} g\Delta_{j'} h)\equiv0, \quad \text{ if } j''-j\geq5,\,|j'-j|\leq2. \label{ao2}
\end{align}

\begin{Remark} We introduce the norms $\|\cdot\|_{L^q_T(\underline{B}_{p,r}^s)}$, $\|\cdot\|_{\widetilde{L}^{q}_T(\underline{B}_{p,r}^s)}$ and $\|\cdot\|^{l;\zeta}_{\widetilde{L}^q_T(\underline{B}_{p,r}^s)}$ with the primary aim of eliminating the influence of the zero mode. As evident from their definitions, these norms satisfy the following relations:
\begin{align*}
& 
\|\Psi\|_{L^q_T(\underline{B}_{p,r}^s)}=\left\|\left(\sum\limits_{j\in\mathbb{Z}}2^{jsr}\|\Delta_{j} \Psi(\cdot)\|^{r}_{L^p(\Td)}\right)^{\f1 r}\right\|_{L^q(0,T)},
\\
& 
\|\Psi\|_{\widetilde{L}^{q}_T(\underline{B}_{p,r}^s)}=\left(\sum\limits_{j\in\mathbb{Z}}2^{jsr}\|\Delta_{j} \Psi\|^{r}_{L^q(0,T;L^p)}\right)^{\f1 r},  \qquad 
\|\Psi\|^{l;\zeta}_{\widetilde{L}^q_T(\underline{B}_{p,r}^s)}=\left(\sum\limits_{2^j<\zeta}2^{jsr}\|\Delta_{j} \Psi\|^{r}_{L^q(0,T;L^p)}\right)^{\f1 r}.
\end{align*} 
\end{Remark}

\subsection{Basic product estimates on \texorpdfstring{$\Td$}{}}

In this subsection, we introduce the product estimates in the torus. Compared with $\mathbb{R}^d$, the distinction lies mainly in the handling of the zero and non-zero modes.

\begin{Lemma}\label{le4.1}
Let $f, g \in\mathcal{S}'(\Td) $, $1\leq p, p_1, p_2, r, r_1, r_2\leq \infty$, $-\infty<s, s_1, s_2<\infty$. Then we have
\begin{align}
&
\widehat{(T_g f)}_0=\widehat{(T_f g)}_0=0\label{4.6},
\\
&
|\widehat{(R(f, g))}_0|
\les
\n{f}_{\underline{B}^{s_1}_{p_1, r_1}}
\n{g}_{\underline{B}^{s_2}_{p_2, r_2}},  \qquad
\text{ if } 1 \leq \f1 {r_1}+ \f1 {r_2}, \ \  1 = \f1 {p_1}+ \f1 {p_2}, \ \ s_1+s_2\geq0,\label{4.1}
\\
&
|\widehat{(fg)}_0|
\les
|\widehat{f}_0|
|\widehat{g}_0|
+
\n{f}_{\underline{B}^{s_1}_{p_1, r_1}}
\n{g}_{\underline{B}^{s_2}_{p_2, r_2}} ,  \qquad
\text{ if } 1 \leq \f1 {r_1}+ \f1 {r_2}, \ \  1 = \f1 {p_1}+ \f1 {p_2}, \ \ s_1+s_2\geq0,\label{4.2}
\\
&
\n{T_f g}_{B^{s}_{p, r}}
\les 
\n{f}_{L^\infty}
\n{g}_{\underline{B}^{s}_{p, r}},\label{4.3}
\\
&
\n{T_f g}_{B^{s}_{p, r}}
\les 
\n{f}_{B^{s_1}_{p_1, r_1}}
\n{g}_{\underline{B}^{s_2}_{p_2, r_2}},
\qquad
\text{ if } \f 1 r\leq \f1 {r_1}+ \f1 {r_2}, \ \ \f 1 p= \f1 {p_1}+ \f1 {p_2}, \ \ s=s_1+s_2, \ \ s_1<0,\label{4.4}
\\
&
\n{R(f, g)}_{\underline{B}^{s}_{p, r}}
\les
\n{f}_{\underline{B}^{s_1}_{p_1, r_1}}
\n{g}_{\underline{B}^{s_2}_{p_2, r_2}}, \qquad
\text{ if } \f 1 r\leq \f1 {r_1}+ \f1 {r_2}, \ \ \f 1 p= \f1 {p_1}+ \f1 {p_2}, \ \ 0<s=s_1+s_2, \label{4.5}
\\
&
\n{fg}_{B^{s}_{2, r}} 
\les
\n{f}_{B^{s_1}_{2, r}}
\n{g}_{B^{s_2+\f d 2}_{2, 1}},
\qquad
\text{ if }  -\f d 2<s=s_1+s_2, \ \ s_1< \f d 2,\ \ s_2\leq 0,
\label{4.9}
\\
&
\n{fg}_{B^{s}_{2, r}} 
\les
\n{f}_{B^{\f d 2}_{2, 1}}
\n{g}_{B^{s_2+\f d 2}_{2, 1}},
\qquad
\text{ if }  -\f d 2<s=\f d 2+s_2, \ \ s_2\leq 0,
\label{4.8}
\end{align}
provided that the right-hand sides are finite. 
\end{Lemma}
\bProof
We first prove \eqref{4.6}. Notice that
\begin{align*}
\widehat{(T_g f)}_0
&=
\df{1}{\sqrt{|\Td|}}
\sum\limits_{j\in\mathbb{Z}}
\int_{\Td}S_{j-2} g\Delta_{j} f\dx
\\
&=
\df{1}{\sqrt{|\Td|}}
\sp{
\sum\limits_{j\in\mathbb{Z}}
\int_{\Td} \df{\widehat{g}_0}{\sqrt{\Td}}\Delta_{j} f\dx
+
\sum\limits_{j\in\mathbb{Z}}
\sum\limits_{j'\leq j-3}
\int_{\Td}\Delta_{j'} g \Delta_{j} f\dx
}
\\
&=
\df{1}{\sqrt{|\Td|}}
\sum\limits_{j\in\mathbb{Z}}
 \sum\limits_{j'\leq j-3}
 \int_{\Td}g \Delta_{j'}\Delta_{j} f\dx
=0,
\end{align*}
where we used \eqref{ao1}. Next we prove \eqref{4.1}. Without loss of generality, we assume 
$s_1\ge 0$. Since $s_1+s_2\ge 0$, it follows that $s_2\ge -s_1$. Set $\f 1 {r'_1}:=1-\f1 r_1$.  From $1\leq \f1 r_1+\f1 r_2$ we see that $r_2\leq r'_1$. Hence,
\begin{align*}
|\widehat{(R(f, g))}_0|
&
\les
\sum\limits_{|j-j'|\leq2}
\n{\Delta_{j} f}_{L^{p_1}}
\n{\Delta_{j'} g}_{L^{p_2}}
\\
&
\les
\n{f}_{\underline{B}^{s_1}_{p_1, r_1}}
\n{g}_{\underline{B}^{-s_1}_{p_2, r'_1}}
\les
\n{f}_{\underline{B}^{s_1}_{p_1, r_1}}
\n{g}_{\underline{B}^{-s_1}_{p_2, r_2}}
\les
\n{f}_{\underline{B}^{s_1}_{p_1, r_1}}
\n{g}_{\underline{B}^{s_2}_{p_2, r_2}} . 
\end{align*}
Next we show \eqref{4.2}. By Bony's para-product decomposition,
\[
gf=T_{g}f
+T_{f}g
+R(g,f)
+\f{\widehat{g}_0}{\sqrt{|\Td|}}\cdot\f{\widehat{f}_0}{\sqrt{|\Td|}},
\]
which gives \eqref{4.2} by \eqref{4.6} and \eqref{4.1}. By \eqref{4.6}, we have 
$
\n{T_f g}_{B^{s}_{p, r}}=\n{T_f g}_{\underline{B}^{s}_{p, r}};
$
whence \eqref{4.3} and \eqref{4.4} follow immediately from Theorem 2.47 in
\cite{BCD11}, while \eqref{4.5} is a direct consequence of Theorem 2.52 in \cite{BCD11}. Finally, we show \eqref{4.9}. From $s>-\f d 2$ , $s_1< \f d 2$, $s_2 \leq 0$, \eqref{4.2}, \eqref{4.3}, \eqref{4.4} and \eqref{4.5}, we get
\begin{align*}
\n{fg}_{B^{s}_{2, r}}
&
\leq
|\widehat{(fg)}_0|
+
\n{fg}_{\underline{B}^{s}_{2, r}}
\\
&
\leq
\widehat{(fg)}_0
+
\n{T_f g}_{\underline{B}^{s}_{2, r}}
+
\n{T_g f}_{\underline{B}^{s}_{2, r}}
+
\n{R(f, g)}_{\underline{B}^{s}_{2, r}}
\\
&
\les
\widehat{(fg)}_0
+
\n{f}_{B^{s_1-\f d 2}_{\infty, r}}
\n{g}_{B^{s_2+\f d 2}_{2, 1}}
+
\n{g}_{B^{s_2}_{\infty, 1}}
\n{f}_{B^{s_1}_{2, r}}
+
\n{R(f, g)}_{\underline{B}^{s+\f d 2}_{1, r}}
\\
&
\les
\n{f}_{B^{s_1}_{2, r}}
\n{g}_{B^{s_2+\f d 2}_{2, 1}}.
\end{align*}
%where we used $\n{g}_{L^\infty}\leq \n{g}_{B^{0}_{\infty, 1}}$, when $s_2=0$. 
\eqref{4.8} is verified similarly. This completes the proof.   \ \  $\Box$

\begin{Remark}
In Lemma \ref{le4.1}, we carefully distinguished between $B^{s}_{p, r}$ and $\underline{B}^{s}_{p, r}$. We briefly explain the motivation. Consider the heat equation on $\Td$,
\begin{equation*}
\left\{\begin{aligned}
&\p_t \vv-\De \vv=0, 
\\
&\vv(0)=\vv_0.
\end{aligned}
\right.
\end{equation*}
For $1\leq p,r\leq \infty$, $1\leq q<\infty$, $-\infty<s<\infty$, it is standard to obtain the maximal regularity estimate: 
\begin{align}\label{4.7}
\n{\vv}_{\widetilde{L}^\infty_T(B^{s}_{p, r})}
+
\n{\vv}_{\widetilde{L}^q_T(\underline{B}^{s+\f 2 q}_{p, r})}
\les
\n{\vv_0}_{B^{s}_{p, r}}.
\end{align}
On the other hand, since $\widehat{\vv}_0\equiv\widehat{(\vv_0)}_0$, we have
\begin{align}\label{r4.8}
\n{\vv}_{\widetilde{L}^q_T(B^{s+\f 2 q}_{p, r})}
\ge
\n{\df{\widehat{\vv}_0}
{\sqrt{\T}}
}_{L^q_T(L^p)}
=
\n{\df{\widehat{(\vv_0)}_0}
{\sqrt{\T}}
}_{L^q_T(L^p)}
=
T^{\f1 q}
\n{\df{\widehat{(\vv_0)}_0}
{\sqrt{\T}}
}_{L^p} . 
\end{align}
If $\widehat{(\vv_0)}_0\neq 0$, the estimates \eqref{4.7} and \eqref{r4.8} differ significantly, which forces us to distinguish between
$B^{s}_{p, r}$ and $\underline{B}^{s}_{p, r}$.
\end{Remark}

\section{Construction of \texorpdfstring{$E^\ep_T$ and $E^{\ep, \zeta}_T$}{}}\label{construct}
This section is devoted to the construction of the quantities 
$E^\ep_T$ and  $E^{\ep, \zeta}_T$, where $E^\ep_T$
denotes the energy functional associated with \eqref{1.1.3}, and 
$E^{\ep, \zeta}_T$ is obtained by replacing the low-frequency part with 
$\vVe-\vV$ and  $\vUe-\vU$, with 
$\vU$ and $\vV$ as in Theorem \ref{Th6.1} and Theorem \ref{Th7.2}, respectively. Then, we set $\vZe:=\vVe-\vV, \,\vWe:= \vUe-\vU$ and recall that 
\begin{align*}
&
\vU
=
(\vU^1, \vU^2, \vU^3)
=
\sp{\rho, -\df{c_1}{c_2}\rho, \vw}^T
+
\sp{
\df{\widehat{(a_0)}_0}{\sqrt{|\Td|}},
\df{\widehat{(b_0)}_0}{\sqrt{|\Td|}},
\df{\widehat{(\vu_0)}_0}{\sqrt{|\Td|}}
}^T,
\\
&
\mathbb{P}(a_\ep, b_\ep, \vue)^T
= 
\sp{\rho_\ep, -\df{c_1}{c_2}\rho_\ep, \mathcal{P}\underline{\vue}}^T
+
\sp{
\df{\widehat{(a_0)}_0}{\sqrt{|\Td|}}, \df{\widehat{(b_0)}_0}{\sqrt{|\Td|}}, \df{\widehat{(\vue)}_0}{\sqrt{|\Td|}}  }^T, 
\qquad 
\mathbb{P}^\perp(a_\ep, b_\ep, \vue)^T
=
\sp{ \vthe, \df{Q^0}{R^0}\vthe, \mathcal{P}^\perp \vue }^T.
\end{align*}
Then, for $0<\theta< 1$, $1\leq\zeta<\beta_0 /\ep$, $0<T\leq T^*_0$, where 
$T^*_0$ is given in Theorem \ref{Th6.1}, we define
\begin{align*}
&
\vZ^{\ep}_{T, \theta}
:=
\n{\vVe-\vV}_{\widetilde{L}^\infty_T(B^{\f d 2-1-\theta}_{2, 2})\cap L^2_T(B^{\f d 2-\theta}_{2, 2})} , 
\\[4pt]
&
\vW^{\ep}_{T, \theta}
:=
\n{\rho_\ep-\rho}_{\widetilde{L}^\infty_T(B^{\f d 2-1-\theta}_{2, 1})}
+
\n{\mathcal{P}\vue-\vw}_{\widetilde{L}^\infty_T(\underline{B}^{\f d 2-1-\theta}_{2, 1})
\cap
L^1_T(\underline{B}^{\f d 2+1-\theta}_{2, 1})}
+
\n{\widehat{(\vue)}_0-\widehat{(\vu_0)}_0}_{L^\infty(0, T)} , 
\\[4pt]
&
E^{\ep}_{T}
:=  
\ep
\n{\vthe}^{h;\f{\beta_0}{\ep}}_{\widetilde{L}^\infty_T(B^{\f d 2}_{2, 1})}
+
\df{1}{\ep}
\n{\vthe}^{h;\f{\beta_0}{\ep}}_{L^1_T(B^{\f d 2}_{2, 1})}
+
\n{\vthe}^{l; \f{\beta_0}{\ep}}_{\widetilde{L}^\infty_T(B^{\f d 2-1}_{2, 1})\cap L^1_T(B^{\f d 2+1}_{2, 1})}
+
\n{\rho_\ep}_{\widetilde{L}^\infty_T(B^{\f d 2}_{2, 1})}
+
\n{\vue}_{\widetilde{L}^\infty_T(\underline{B}^{\f d 2-1}_{2, 1})\cap L^1_T(\underline{B}^{\f d 2+1}_{2, 1})} 
\\
&
\qquad
\quad
+
|\widehat{(a_0)}_0|
+
|\widehat{(b_0)}_0|
+
\n{\widehat{(\vue)}_0}_{L^\infty(0, T)} , 
\\[4pt]
&
E^{\ep, \zeta}_{T}
:= 
\ep
\n{\vthe}^{h;\f{\beta_0}{\ep}}_{\widetilde{L}^\infty_T(B^{\f d 2}_{2, 1})}
+
\df{1}{\ep}
\n{\vthe}^{h;\f{\beta_0}{\ep}}_{L^1_T(B^{\f d 2}_{2, 1})}
+
\n{\vthe}^{m; \zeta, \f{\beta_0}{\ep}}_{\widetilde{L}^\infty_T(B^{\f d 2-1}_{2, 1})\cap L^1_T(B^{\f d 2+1}_{2, 1})}
+
\n{ \mathcal{P}^\perp\vue}^{h;\zeta}_{\widetilde{L}^\infty_T(B^{\f d 2-1}_{2, 1})\cap L^1_T(B^{\f d 2+1}_{2, 1})}
\\
&
\qquad
\quad
+
\n{\rho_\ep}^{h;\zeta}_{\widetilde{L}^\infty_T(B^{\f d 2}_{2, 1})}
+
\n{ \mathcal{P}\vue}^{h;\zeta}_{\widetilde{L}^\infty_T(B^{\f d 2-1}_{2, 1})\cap L^1_T(B^{\f d 2+1}_{2, 1})}
+
\n{\vVe-\vV}^{l; \zeta}_{\widetilde{L}^\infty_T(B^{\f d 2-1}_{2, 1})\cap L^1_T(B^{\f d 2+1}_{2, 1})}
\\[4pt]
&
\qquad
\quad
+
\n{ \mathcal{P}\vue-\vw}^{l;\zeta}_{\widetilde{L}^\infty_T(\underline{B}^{\f d 2-1}_{2, 1})\cap L^1_T(\underline{B}^{\f d 2+1}_{2, 1})}
+
\n{\rho_\ep-\rho}^{l;\zeta}_{\widetilde{L}^\infty_T(B^{\f d 2}_{2, 1})}
+
\n{\widehat{(\vue)}_0-\widehat{(\vu_0)}_0}_{L^\infty(0, T)},
\\[4pt]
&
\vV_{T^*_0}
:=
\n{\vV}_{\widetilde{L}^\infty_{T^*_0}(B^{\f d 2-1}_{2, 1})\cap L^1_{T^*_0}(B^{\f d 2+1}_{2, 1})},
\\
&
\vU_{T^*_0}
:=
\n{\rho}_{\widetilde{L}^\infty_{T^*_0}(B^{\f d 2}_{2, 1})}
+
\n{\vw}_{\widetilde{L}^\infty_{T^*_0}(B^{\f d 2-1}_{2, 1})\cap L^1_{T_0}(B^{\f d 2+1}_{2, 1})}
+
|\widehat{(a_0)}_0|
+
|\widehat{(b_0)}_0|
+
|\widehat{(\vu_0)}_0|,
\\[4pt]
&
E_0
:=
\n{a_0}_{B^{\f d 2}_{2, 1}}
+
\n{b_0}_{B^{\f d 2}_{2, 1}}
+
\n{\vu_0}_{B^{\f d 2-1}_{2, 1}}.
\end{align*}

Let $[T]:=\max\{1, T\}$. We have 
\begin{Lemma}\label{cons} Let $1\leq p,r,q\leq \infty$, $2\leq q_1 \leq \infty$, $ 0<\ep \leq 1$, $-\infty<s<\infty$, and $0<T\leq \infty$ with $T\leq T^*_0$. Then there exists a positive constant $C$ such that
\begin{align}
&
C^{-1}
\sp{
\n{a_\ep}_{\widetilde{L}^q_T(\underline{B}^{s}_{p,r})}
+
\n{b_\ep}_{\widetilde{L}^q_T(\underline{B}^{s}_{p,r})}
}
\leq
\n{\rho_\ep}_{\widetilde{L}^q_T(B^{s}_{p,r})}
+
\n{\vthe}_{\widetilde{L}^q_T(B^{s}_{p,r})}
\leq
C
\sp{
\n{a_\ep}_{\widetilde{L}^q_T(\underline{B}^{s}_{p,r})}
+
\n{b_\ep}_{\widetilde{L}^q_T(\underline{B}^{s}_{p,r})}
},\label{cons1}
\\
&
\ep
\n{\vthe}_{\widetilde{L}^\infty_T(B^{\f d 2}_{2,1})}
+
\n{\vthe}_{\widetilde{L}^\infty_T(B^{\f d 2-1}_{2,1})}
+
\n{\vthe}_{\widetilde{L}^2_T(B^{\f d 2}_{2,1})}
+
\n{ \mathbb{P}^\perp \vue}_{\widetilde{L}^\infty_T(B^{\f d 2-1}_{2, 1})\cap \widetilde{L}^2_T(B^{\f d 2}_{2, 1})}   \nonumber
\\
& \qquad 
+
\n{\vVe}_{\widetilde{L}^\infty_T(B^{\f d 2-1}_{2, 1})\cap \widetilde{L}^2_T(B^{\f d 2}_{2, 1})}
\leq
C
E^\ep_T,\label{cons2}
\\
&
\ep
\sp{
\n{a_\ep}_{\widetilde{L}^\infty_T(B^{\f d 2}_{2,1})}
+
\n{b_\ep}_{\widetilde{L}^\infty_T(B^{\f d 2}_{2,1})}
}
\leq
C
\min\left\{
E^{\ep}_{T},
\ \
\ep
\n{\vthe}^{h;\f{\beta_0}{\ep}}_{\widetilde{L}^\infty_T(B^{\f d 2}_{2, 1})}
+
\n{\vthe}^{m; \zeta, \f{\beta_0}{\ep}}_{\widetilde{L}^\infty_T(B^{\f d 2-1}_{2, 1})}
+
\zeta\ep E^{\ep}_{T}
\right\},    \label{cons3}
\\
&
\ep
\sp{
\n{a_\ep}_{\widetilde{L}^\infty_T(B^{\f d 2-\theta}_{2,1})}
+
\n{b_\ep}_{\widetilde{L}^\infty_T(B^{\f d 2-\theta}_{2,1})}
}
\leq
C
\ep^\theta
E^{\ep}_{T},        \label{cons15}
\\
&
\n{a_\ep}_{\widetilde{L}^2_T(B^{\f d 2}_{2,1})}
+
\n{b_\ep}_{\widetilde{L}^2_T(B^{\f d 2}_{2,1})}
\leq
C
[T]^\f1 2
E^{\ep}_{T}  ,  \label{cons16}
\\
&
\n{\vU}_{\widetilde{L}^q_T(\underline{B}^{\f d 2-1+\f 2 q}_{2,1})}
\leq
C
[T]^{\f 1 q}\vU_{T^*_0},     \label{cons4}
\\
&
\n{\vUe}_{\widetilde{L}^q_T(\underline{B}^{\f d 2-1+\f 2 q}_{2,1})}
\leq
C
[T]^{\f 1 q}
E^{\ep}_{T}, 
\label{cons18}
\\
&
\n{\vZe}^{l;\zeta}_{
\widetilde{L}^\infty_T(B^{\f d 2-1}_{2, 1})
\cap
L^1_T(B^{\f d 2 +1}_{2, 1})
} 
+
\n{\mathcal{P}\vue-\vw}^{l;\zeta}_{
\widetilde{L}^\infty_T(\underline{B}^{\f d 2-1}_{2, 1})
\cap
L^1_T(\underline{B}^{\f d 2 +1}_{2, 1})
} 
+
\n{\rho_\ep-\rho}^{l;\zeta}_{
\widetilde{L}^\infty_T(B^{\f d 2}_{2, 1})} 
+
\n{\widehat{(\vue)}_0-\widehat{(\vu_0)}_0}_{L^\infty(0, T)}
\nonumber
\\
&
\qquad
\leq
C
[T]^{\f1 2}
\zeta^{1+2\theta}
\sp{
\vZ^{\ep}_{T, \theta}
+
\vW^{\ep}_{T, \theta}
},\label{new8.16}
\\
&
E^{\ep, \zeta}_T
\leq
C
\left(
\ep
\n{\vthe}^{h;\f{\beta_0}{\ep}}_{\widetilde{L}^\infty_T(B^{\f d 2}_{2, 1})}
+
\df{1}{\ep}
\n{\vthe}^{h;\f{\beta_0}{\ep}}_{L^1_T(B^{\f d 2}_{2, 1})}
+
\n{\vthe}^{m; \zeta, \f{\beta_0}{\ep}}_{\widetilde{L}^\infty_T(B^{\f d 2-1}_{2, 1})\cap L^1_T(B^{\f d 2+1}_{2, 1})}
+
\n{ \mathcal{P}^\perp\vue}^{h;\zeta}_{\widetilde{L}^\infty_T(B^{\f d 2-1}_{2, 1})\cap L^1_T(B^{\f d 2+1}_{2, 1})}
\right.
\nonumber
\\
&
\qquad
\qquad
\left.
+
\n{\rho_\ep}^{h;\zeta}_{\widetilde{L}^\infty_T(B^{\f d 2}_{2, 1})}
+
\n{ \mathcal{P}\vue}^{h;\zeta}_{\widetilde{L}^\infty_T(B^{\f d 2-1}_{2, 1})\cap L^1_T(B^{\f d 2+1}_{2, 1})}
+
[T]^{\f1 2}
\zeta^{1+2\theta}
\sp{
\vZ^{\ep}_{T, \theta}
+
\vW^{\ep}_{T, \theta}
}
\right)\label{cons20},
\\
&
E^{\ep}_{T}
\leq
C
\sp{
E^{\ep, \zeta}_T
+
[T]^{\f1 2}
\zeta^{1+2\theta}
\sp{
\vZ^{\ep}_{T, \theta}
+
\vW^{\ep}_{T, \theta}
}
+
\vV_{T^*_0}
+
\vU_{T^*_0}
},\label{cons5}
\\
&
\n{\vVe-\vV}_{
\widetilde{L}^\infty_{T}(B^{\f d 2-1}_{2, 1})
\cap
\widetilde{L}^2_{T}(B^{\f d 2}_{2, 1})
}
+
\n{\rho_\ep-\rho}_{
\widetilde{L}^\infty_{T}(B^{\f d 2}_{2, 1})}
+
\n{\mathcal{P}\vue-\vw}_{
\widetilde{L}^\infty_{T}(\underline{B}^{\f d 2-1}_{2, 1})
\cap
L^1_{T_0}(\underline{B}^{\f d 2+1}_{2, 1})
}
+
\n{\widehat{(\vue)}_0-\widehat{(\vu_0)}_0}_{L^\infty(0, T)}
\nonumber
\\
&
\qquad
\leq 
C
\sp{
E^{\ep, \zeta}_{T}
+
\n{\vV}^{h;\zeta}_{
\widetilde{L}^\infty_{T^*_0}(B^{\f d 2-1}_{2, 1})
\cap
L^1_{T^*_0}(B^{\f d 2+1}_{2, 1})
}
+
\n{\rho}^{h;\zeta}_{
\widetilde{L}^\infty_{T^*_0}(B^{\f d 2}_{2, 1})}
+
\n{\vw}^{h;\zeta}_{
\widetilde{L}^\infty_{T^*_0}(B^{\f d 2-1}_{2, 1})
\cap
L^1_{T^*_0}(B^{\f d 2+1}_{2, 1})
}
}
.\label{cons21}
\end{align}
\end{Lemma}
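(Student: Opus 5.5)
The plan is to reduce every inequality to linear algebra on the decompositions \eqref{decom1}--\eqref{decom2}, plus frequency-localized Bernstein-type comparisons, using \eqref{zero} (nonzero modes have $2^j\gtrsim 1$). All estimates are Fourier-multiplier-diagonal, so they hold block by block.

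For \eqref{cons1}, the map $(\underline{a_\ep},\underline{b_\ep})\mapsto(\rho_\ep,\vthe)$ is an invertible $2\times 2$ constant matrix with determinant proportional to $c_1Q^0+c_2R^0>0$. It commutes with $\Delta_j$, and both $\rho_\ep,\vthe$ have zero mean, so both inequalities follow block by block.

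For \eqref{cons2}, I split $\vthe$ into low and high parts at $\beta_0/\ep$.
\begin{itemize}
\item Low part: $\widetilde{L}^\infty(B^{\f d2-1})$ is in $E^\ep_T$; $\widetilde L^2(B^{\f d2})$ follows by interpolation between $\widetilde L^\infty(B^{\f d2-1})$ and $L^1(B^{\f d2+1})$; and $\ep\,\widetilde L^\infty(B^{\f d2})\lesssim \beta_0\,\widetilde L^\infty(B^{\f d2-1})$ since $2^j<\beta_0/\ep$.
\item High part: $\widetilde L^\infty(B^{\f d2-1})\lesssim \ep\,\widetilde L^\infty(B^{\f d2})$, because $2^{-j}\le \ep/\beta_0$. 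For the $\widetilde L^2(B^{\f d2})$ bound, interpolate $\|\Delta_j\vthe\|_{L^2_T}\le \|\Delta_j\vthe\|_{L^\infty_T}^{1/2}\|\Delta_j\vthe\|_{L^1_T}^{1/2}$ and use Cauchy--Schwarz with weights $\ep$ and $1/\ep$.
\item $\mathbb P^\perp\vue$: this is $\mathcal P^\perp\vue$, controlled by the $\vue$ norm with the mean excluded, since $\mathcal P^\perp$ kills the mean.
\item $\vVe$: $\mathcal L(-t/\ep)$ is unitary for $\langle\cdot,\cdot\rangle_{\mathbb H}$ and commutes with $\Delta_j$, so its norms equal those of $\mathbb P^\perp(a_\ep,b_\ep,\vue)$.
\end{itemize}

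For \eqref{cons3}, first $\ep(a_\ep,b_\ep)$ reduces to $\ep(\rho_\ep,\vthe)$ plus $\ep$ times the constant means. Bound $\ep\|\rho_\ep\|\le\ep E^\ep_T$, and bound $\ep\|\vthe\|_{B^{\f d2}}$ by splitting into three ranges:
\begin{itemize}
\item high part, kept as is;
\item middle frequencies $[\zeta,\beta_0/\ep)$, using $\ep 2^j\le\beta_0$;
\item frequencies below $\zeta$, using $\ep2^j\le\ep\zeta$, which gives $\zeta\ep E^\ep_T$.
\end{itemize}
For \eqref{cons15}, split at $1/\ep$. Below it, $\ep 2^{j(\f d2-\theta)}\le \ep^{\theta}2^{j(\f d2-1)}$ because $\ep 2^{j(1-\theta)}\le\ep^\theta$. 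Above it, $2^{-j\theta}\le\ep^\theta$ combined with the $\ep\widetilde L^\infty B^{\f d2}$ bound. The zero modes contribute $\ep\le\ep^\theta$.

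For \eqref{cons16}, use $\widetilde L^2$ bounds from \eqref{cons2}. The $\rho_\ep$ part contributes $T^{1/2}\widetilde L^\infty$, and the means contribute $T^{1/2}$. This explains $[T]^{1/2}$.

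For \eqref{cons4} and \eqref{cons18}: the $\mathcal P\vu$ components are handled by interpolation between $\widetilde L^\infty(B^{\f d2-1})$ and $L^1(B^{\f d2+1})$. The $\rho$ components use $2^{j(\f d2-1+\f2q)}\lesssim 2^{j\f d2}$ (since $q\ge2$ is effectively needed; for $q<2$ one embeds anyway because $j$ is bounded below) together with $T^{1/q}$ from $L^\infty\to L^q$.

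For \eqref{new8.16}, low frequencies $2^j<\zeta$ gain regularity at the cost of $\zeta$: going from $B^{\f d2-1-\theta}_{2,2}$ to $B^{\f d2+1}_{2,1}$ costs $\zeta^{2+\theta}$ naively. To get the sharper exponent $1+2\theta$, use the $L^2_T(B^{\f d2-\theta}_{2,2})$ piece of $\vZ^\ep_{T,\theta}$. Hölder in time gives $T^{1/2}$, and passing from $\ell^2$ to $\ell^1$ over $O(\log\zeta)$ blocks is absorbed by $\zeta^\theta$. The $\vW$ terms use the $L^1_T B^{\f d2+1-\theta}$ component directly, costing $\zeta^\theta$. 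The mean term is trivial.

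Finally, \eqref{cons20}, \eqref{cons5} and \eqref{cons21} are bookkeeping: split every norm at $\zeta$ and write $\vVe=\vZe+\vV$ and $\vUe=\vWe+\vU$. Low parts are bounded by \eqref{new8.16} and the $\vV_{T^*_0},\vU_{T^*_0}$ norms. For the $\vthe$, $\mathcal P^\perp\vue$ low parts, use the unitarity of $\mathcal L$ to pass to $\vVe$.

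The main obstacle I expect is \eqref{new8.16}: matching exactly the power $\zeta^{1+2\theta}$ and the time factor $[T]^{1/2}$ requires a careful choice of which component of $\vZ^\ep_{T,\theta}$ to interpolate. If the sharp exponent fails, I would settle for a bound of the form $C[T]\zeta^{2+\theta}$, which suffices for the later argument since $\zeta$ is fixed before $\ep\to0$.
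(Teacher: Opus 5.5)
Your proposal follows the paper's proof essentially step for step:
\begin{itemize}
\item \eqref{cons1} from the invertible constant $2\times 2$ change of variables;
\item \eqref{cons2}--\eqref{cons16} from splitting $\vthe$ at $\beta_0/\ep$ (and at $\zeta$), time interpolation, and the blockwise isometry \eqref{3.1.3} of $\mathcal{L}(\pm t/\ep)$;
\item \eqref{new8.16} from low-frequency Bernstein-type bounds on the components of $\vZ^\ep_{T,\theta}$ and $\vW^\ep_{T,\theta}$, with H\"older in time giving $[T]^{\f12}$;
\item \eqref{cons20}, \eqref{cons5} and \eqref{cons21} by splitting at $\zeta$ and writing $\vVe=\vZe+\vV$, $\vUe=\vWe+\vU$.
\end{itemize}
The fallback in your last paragraph is unnecessary. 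Cauchy--Schwarz in $j$ over $2^j<\zeta$ already gives $\zeta^{1+\theta}$ for the $L^1_T(B^{\f d2+1}_{2,1})$ piece of $\vZe$ and $\zeta^{\theta}$ for its $\widetilde{L}^\infty_T(B^{\f d2-1}_{2,1})$ piece. The paper instead embeds $B^{\f d2-\theta}_{2,2}\hookrightarrow B^{\f d2-2\theta}_{2,1}$ and pays $\zeta^{1+2\theta}$. Also, the $\rho_\ep-\rho$ term costs $\zeta^{1+\theta}$, not $\zeta^\theta$ as you wrote, though this is still within the stated bound.

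There is one incorrect claim, in your treatment of \eqref{cons4} and \eqref{cons18}. For $q<2$ it is not true that ``one embeds anyway because $j$ is bounded below.''
\begin{itemize}
\item The lower bound on $j$ from \eqref{zero} only gives $\n{f}_{B^{s_1}_{2,1}}\lesssim\n{f}_{B^{s_2}_{2,1}}$ when $s_1<s_2$.
\item For $q<2$, the $\rho$-component must be measured in $B^{\f d2-1+\f2q}_{2,1}$, and $\f d2-1+\f2q>\f d2$.
\item Nothing in $\vU_{T^*_0}$ or $E^\ep_T$ controls $\rho$ or $\rho_\ep$ beyond $B^{\f d2}_{2,1}$.
\end{itemize}
The inequality in fact fails there. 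If $\mathcal{P}\underline{\vu_0}=0$, then $\vw\equiv0$ and $\rho$ is only translated by the constant mean velocity. For $q=1$ the left side of \eqref{cons4} is then at least $T\n{\rho(0)}_{\underline{B}^{\f d2+1}_{2,1}}$, which is infinite for generic $\rho(0)\in B^{\f d2}_{2,1}$, while the right side is finite.

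So these two estimates hold only for time exponents in $[2,\infty]$. That is what the hypothesis $2\le q_1\le\infty$ is for, and the paper's proof uses $q_1$ at this point. You should restrict to that range; for $q\ge2$ your argument is correct as written.
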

\bProof \eqref{cons1} follows from the definitions of $\rho_\ep$ and $\vthe$. \eqref{cons2} is obtained by combining the interpolation inequality, \eqref{3.1.3} and 
\begin{align}
\ep
\n{\vthe}_{\widetilde{L}^\infty_T(B^{\f d 2}_{2,1})}
&
\leq
\ep
\sp{
\n{\vthe}^{h; \f{\beta_0}{\ep}}_{\widetilde{L}^\infty_T(B^{\f d 2}_{2,1})}
+
\df{\beta_0}{\ep}
\n{\vthe}^{m; \zeta, \f{\beta_0}{\ep}}_{\widetilde{L}^\infty_T(B^{\f d 2-1}_{2,1})}
+
\zeta
\n{\vthe}^{l; \zeta}_{\widetilde{L}^\infty_T(B^{\f d 2-1}_{2,1})}
}
\les
E^{\ep}_{T},\label{cons6}
\\
\n{\vthe}_{\widetilde{L}^\infty_T(B^{\f d 2-1}_{2,1})}
&
\leq
\sp{\df{\beta_0}{\ep}}^{-1}\n{\vthe}^{h; \f{\beta_0}{\ep}}_{\widetilde{L}^\infty_T(B^{\f d 2}_{2,1})}
+
\n{\vthe}^{m; \zeta, \f{\beta_0}{\ep}}_{\widetilde{L}^\infty_T(B^{\f d 2-1}_{2,1})}
+
\n{\vthe}^{l; \zeta}_{\widetilde{L}^\infty_T(B^{\f d 2-1}_{2,1})}
\les
E^{\ep}_{T},\nonumber
\end{align}
where we used $\ep\zeta<\beta_0$ in \eqref{cons6}. Notice that $\widehat{(a_\ep)}_0 \equiv \widehat{(a_0)}_0$ and $\widehat{(b_\ep)}_0 \equiv \widehat{(b_0)}_0$. Hence  
\begin{align}
\ep
\sp{
\n{a_\ep}_{\widetilde{L}^\infty_T(B^{\f d 2}_{2,1})}
+
\n{b_\ep}_{\widetilde{L}^\infty_T(B^{\f d 2}_{2,1})}
}
=
\ep
\sp{
\n{a_\ep}_{\widetilde{L}^\infty_T(\underline{B}^{\f d 2}_{2,1})}
+
\n{b_\ep}_{\widetilde{L}^\infty_T(\underline{B}^{\f d 2}_{2,1})}
+
|\widehat{(a_0)}_0|
+
|\widehat{(b_0)}_0|
}.\label{cons7}
\end{align}
From \eqref{cons6} and the definition of $E^{\ep}_{T}$, we get
\begin{align}
\ep
\n{\vthe}_{\widetilde{L}^\infty_T(B^{\f d 2}_{2,1})}
\les
\ep
\n{\vthe}^{h;\f{\beta_0}{\ep}}_{\widetilde{L}^\infty_T(B^{\f d 2}_{2, 1})}
+
\n{\vthe}^{m; \zeta,  \f{\beta_0}{\ep}}_{\widetilde{L}^\infty_T(B^{\f d 2-1}_{2, 1})}
+
\zeta\ep E^{\ep}_{T}, \quad 
\ep
\sp{
\n{\rho_\ep}_{\widetilde{L}^\infty_T(B^{\f d 2}_{2,1})}
+
|\widehat{(a_0)}_0|
+
|\widehat{(b_0)}_0|
}
\leq
\ep\zeta E^{\ep}_{T}\label{cons8}.
\end{align}
\eqref{cons3} follows from \eqref{cons1} and \eqref{cons7}-\eqref{cons8}. By \eqref{minski2} and \eqref{cons1} , we see 
\begin{align*}
\ep
\sp{
\n{a_\ep}_{\widetilde{L}^\infty_T(B^{\f d 2-\theta}_{2,1})}
+
\n{b_\ep}_{\widetilde{L}^\infty_T(B^{\f d 2-\theta}_{2,1})}
}
&
\les
\ep
\sp{
\n{\vthe}_{\widetilde{L}^\infty_T(B^{\f d 2-\theta}_{2,1})}
+
\n{\rho_\ep}_{\widetilde{L}^\infty_T(B^{\f d 2-\theta}_{2,1})}
+
|\widehat{(a_0)}_0|
+
|\widehat{(b_0)}_0|
}
\\
&
\les
\ep
\sp{\df{\beta_0}{\ep}}^{-\theta}\n{\vthe}^{h; \f{\beta_0}{\ep}}_{\widetilde{L}^\infty_T(B^{\f d 2}_{2,1})}
+
\ep
\sp{\df{\beta_0}{\ep}}^{1-\theta}
\n{\vthe}^{l; \f{\beta_0}{\ep}}_{\widetilde{L}^\infty_T(B^{\f d 2-1}_{2,1})}
+
\ep
E^{\ep}_{T}
\\
&
\les
\ep^{\theta} 
E^{\ep}_{T},
\end{align*}
which yields \eqref{cons15}. We infer from \eqref{cons1}-\eqref{cons2} that
\begin{align*}
\n{a_\ep}_{\widetilde{L}^2_T(B^{\f d 2}_{2,1})}
+
\n{b_\ep}_{\widetilde{L}^2_T(B^{\f d 2}_{2,1})}
&
\les
\n{\vthe}_{\widetilde{L}^2_T(B^{\f d 2}_{2,1})}
+
\n{\rho_\ep}_{\widetilde{L}^2_T(B^{\f d 2}_{2,1})}
+
[T]^{\f1 2}
\sp{
|\widehat{(a_0)}_0|
+
|\widehat{(b_0)}_0|
}
\\
&
\les
[T]^{\f1 2} 
E^{\ep}_{T}
\end{align*}
which gives \eqref{cons16}. By the interpolation inequality, $2\leq q_1\leq \infty$ and \eqref{minski2}, we obtain
\begin{align*}
&
\n{\vw}_{\widetilde{L}^{q_1}_{T}(B^{\f d 2-1+\f 2 q}_{2, 1})}
\leq
\n{\vw}_{\widetilde{L}^\infty_{T}(B^{\f d 2-1}_{2, 1})\cap L^1_{T}(B^{\f d 2+1}_{2, 1})}
\leq
\vU_{T^*_0} , 
\\
&
\n{\rho}_{\widetilde{L}^{q_1}_{T}(B^{\f d 2-1+\f 2 q}_{2, 1})}
\les
\n{\rho}_{\widetilde{L}^{q_1}_{T}(B^{\f d 2}_{2, 1})}
\leq
T^{\f1 {q_1}}
\n{\rho}_{\widetilde{L}^\infty_{T}(B^{\f d 2}_{2, 1})}
\leq
(T)^{\f1 {q_1}} \vU_{T^*_0},
\end{align*}
which yields \eqref{cons4}. The same argument as in \eqref{cons4} yields \eqref{cons18}.

Due to \eqref{minski2}, 
\begin{align}
&
\n{\vZe}^{l;\zeta}_{\widetilde{L}^\infty_T(B^{\f d 2-1}_{2, 1})} 
\leq
\zeta^{2\theta}
\n{\vZe}^{l;\zeta}_{\widetilde{L}^\infty_T(B^{\f d 2-1-2\theta}_{2, 1})}
\les
\zeta^{2\theta}
\vZ^{\ep}_{T, \theta}\label{cons10},
\\
&
\n{\vZe}^{l;\zeta}_{L^1_T(B^{\f d 2+1}_{2, 1})} 
\leq
[T]^{\f1 2}
\n{\vZe}^{l;\zeta}_{L^2_T(B^{\f d 2+1}_{2, 1})} 
\leq
[T]^{\f1 2}
\zeta^{1+2\theta}
\n{\vZe}^{l;\zeta}_{L^2_T(B^{\f d 2-2\theta}_{2, 1})} 
\les
[T]^{\f1 2}
\zeta^{1+2\theta}
\vZ^{\ep}_{T, \theta}\label{cons19},
\end{align}
Seeing that 
\begin{align}
&
\n{\mathcal{P}\vue-\vw}^{l,\zeta}_{\widetilde{L}^\infty_T(\underline{B}^{\f d 2-1}_{2, 1})
\cap
L^1_T(\underline{B}^{\f d 2+1}_{2, 1})}
\leq
\zeta^{\theta}
\n{\mathcal{P}\vue-\vw}^{l,\zeta}_{\widetilde{L}^\infty_T(\underline{B}^{\f d 2-1-\theta}_{2, 1})
\cap
L^1_T(\underline{B}^{\f d 2+1-\theta}_{2, 1})}
\leq
\zeta^{\theta}
\vW^{\ep}_{T, \theta},\label{cons11}
\\
&
\n{\rho_\ep-\rho}^{l,\zeta}_{\widetilde{L}^\infty_T(B^{\f d 2}_{2, 1})}
\leq
\zeta^{1+\theta}
\n{\rho_\ep-\rho}^{l,\zeta}_{\widetilde{L}^\infty_T(B^{\f d 2-1-\theta}_{2, 1})}
\leq
\zeta^{1+\theta}
\vW^{\ep}_{T, \theta},\label{cons12}
\\
&
\n{\widehat{(\vue)}_0-\widehat{(\vu_0)}_0}_{L^\infty(0, T)}
\leq
\zeta^{1+2\theta}
\n{\widehat{(\vue)}_0-\widehat{(\vu_0)}_0}_{L^\infty(0, T)}
\leq
\zeta^{1+2\theta}
\vW^{\ep}_{T, \theta},\label{cons14}
\end{align}
where we used $\zeta\geq 1$ in \eqref{cons14}. \eqref{new8.16} follows from  \eqref{cons10}-\eqref{cons14}. Combining \eqref{cons10}-\eqref{cons14} with the definition of 
$E^{\ep, \zeta}_T$, we arrive at \eqref{cons20}.

By \eqref{3.1.3}, we obtain
\begin{align}
E^{\ep}_{T}
&
\les
E^{\ep, \zeta}_{T}
+
\vV_{T^*_0}
+
\vU_{T^*_0}
+
\n{\vZe}^{l;\zeta}_{\widetilde{L}^\infty_T(B^{\f d 2-1}_{2, 1})
\cap 
\widetilde{L}^1_T(B^{\f d 2+1}_{2, 1})}
+
\n{\mathcal{P}\vue-\vw}^{l,\zeta}_{\widetilde{L}^\infty_T(\underline{B}^{\f d 2-1}_{2, 1})
\cap
L^1_T(\underline{B}^{\f d 2+1}_{2, 1})}\nonumber
\\
&\quad
+
\n{\rho_\ep-\rho}^{l,\zeta}_{\widetilde{L}^\infty_T(B^{\f d 2}_{2, 1})}
+
\n{\widehat{(\vue)}_0-\widehat{(\vu_0)}_0}_{L^\infty(0, T)}.\label{cons9}
\end{align}
Inserting \eqref{cons10}-\eqref{cons14} into \eqref{cons9},  we get \eqref{cons5}.

\eqref{cons21} follows from \eqref{3.1.3} and the definition of $E_{T}^{\ep,\zeta}$.
\ \ $\Box$

\section{Computation of eigenvalues, eigenvectors and operators}

\subsection{The nonzero eigenvalues of \texorpdfstring{$L$}{} and the associated eigenvectors}

We now compute the nonzero eigenvalues and the associated eigenvectors of the operator $L$. By the skew-symmetry of $L$, its eigenvalues are purely imaginary.
Suppose that $(g, \df{Q^0}{R^0}g, i\Grad f)\in \text{ Im } L$, where $\widehat{f}_0=0$,  is an eigenvector of the operator $L$ corresponding to the eigenvalue $\lambda i$ with
$\lambda\in\mathbb{R}\setminus\{0\}$. Then
\[
L\left(
\begin{array}{l}
g
\\[10pt]
\df{Q^0}{R^0}g
\\[10pt]
i\Grad f
\end{array}
\right)
=\left(
\begin{array}{l}
iR^0\De f
\\[10pt]
iQ^0\De f
\\[10pt]
\left(c_1+c_2\df{Q^0}{R^0}\right)\Grad g
\\
\end{array}
\right)
=\lambda i\left(
\begin{array}{l}
g
\\[10pt]
\df{Q^0}{R^0}g
\\[10pt]
i\Grad f
\end{array}
\right).
\]
It follows that
\[
 R^0\De f
 =
 \lambda g,
 \qquad 
 \sp{ c_1+c_2\f{Q^0}{R^0} }g
 =
 - \lambda f.
\]
Let 
$
\lambda
=
\sqrt{(c_1R^0+c_2Q^0)}\lambda^*
$. 
Then 
\[
-\De g
=
(\lambda^*)^2 g.
\]
Consequently, the Fourier coefficients satisfy
\[
|k|^2 \widehat{g}_k
=
(\lambda^*)^2\widehat{g}_k, \quad 
\text{ for any } 
k\in \widetilde{\mathbb{Z}}^d\setminus\{0\}, 
\]
the solutions of which are given by
\[
\lambda^*
=
\pm |k^*|, \qquad 
\widehat{g}_k=0 \text{ when } 
k\neq k^*, \qquad 
g
=
\pm
|k^*| \widehat{g}_{k^*} 
\df{\text{e}^{ik^*\cdot x }}
{\sqrt{\T}}.
\]
Hence, the nonzero eigenvalues of $L$ are given by 
 $
 \lambda i
 =
 \pm i\sqrt{c_1R^0+c_2Q^0}|k|, 
 |k|\neq 0
 $, with the corresponding eigenvector
\[
\left(
1, \df{Q^0}{R^0}, \pm\df{\sqrt{c_1R^0+c_2Q^0} k}{R^0 |k|}
\right)^T
\df{\text{e}^{ikx}}{\sqrt{|\Td|}}.
\]
Therefore, an orthonormal basis of $\text{ Im }L$ with respect to the inner product 
$
\left<\cdot, \cdot\right>_{\mathbb{H}}
$ is given by 
$\Phi_k^\alpha $, where $k\in\Zd\setminus\{0\}, \alpha\in\{1, -1\}$ and 
\begin{align}\nonumber
& \Phi^\alpha_k
=
\left(
1, \df{Q^0}{R^0}, \df{c_{0}}{R^0} 
\df{\alpha\, \tsgn(k) k}{|k|}
\right)^T 
\df{\text{e}^{ik\cdot x}}{c_\mathbf{a}},\\
\nonumber
& c_{0}
:=
\sqrt{c_1R^0+c_2Q^0},
\qquad 
c_{\mathbf{a}}
:=
c_{0}\sqrt{\df{2Q^0}{R^0}|\Td|},
\end{align}
and $\tsgn(k)$ stands for a generalized sign function on $\widetilde{\mathbb{Z}}^d\setminus\{0\}$: its value is $1$ if and only if the first nonzero component of $k$ is positive, $-1$ elsewhere.
Each $\Phi_k^\alpha $ corresponds to the eigenvalue $ic_0\lambda_k^\alpha$ with $\lambda_k^\alpha:=\alpha \,\tsgn(k) |k|$. For any $\vA\in \text{Im }\, L$, we have 
\[
\vA
=
\sum\limits_{k,\alpha} 
\widehat{\vA}^{\alpha}_k 
\Phi^\alpha_k, 
\qquad 
\widehat{\vA}^{\alpha}_k
=
\left<\vA, \Phi^\alpha_k\right>_{\mathbb{H}},
\]
and 
$
\mathcal{L}(t)\vA
=
\sum\limits_{k,\alpha}
\text{e}^{-itc_0\lambda_k^\alpha}
\widehat{\vA}^{\alpha}_k 
\Phi^\alpha_k
$ with $t\in\mathbb{R}$.

\subsection{Computation of \texorpdfstring{$
\mathcal{Q}_1^\ep,
\mathcal{Q}_2^\ep, 
\mathcal{D}^\ep
$}{} and their limits}

We now calculate
$
\mathcal{Q}_1^\ep,
\mathcal{Q}_2^\ep, 
\mathcal{D}^\ep
$ and their limits 
$
\mathcal{Q}_1, \mathcal{Q}_2, \overline{\mathcal{D}}
$. 
Let 
$
\vA
=
\sum\limits_{k,\alpha} 
\widehat{\vA}^{\alpha}_k
\Phi^\alpha_k
\in \text{Im} \, L
$, $\vB
=
\sum\limits_{k,\alpha} 
\widehat{\vB}^{\alpha}_k 
\Phi^\alpha_k
\in \text{Im} \,L$,
$
\vE=(\vE^1, \vE^2, \vE^3)^T
\in \text{Ker}\, L
$. 
Indeed, there exist
\[
\rho^*
=
\sum\limits_{k\neq 0}
\widehat{\rho}^*_k
\df{\text{e}^{ik\cdot x}}
{\sqrt{\Td}},
\qquad 
\vw^*
=
\sum\limits_{k\neq 0} 
\widehat{\vw}^*_k
\df{\text{e}^{ik\cdot x}}
{\sqrt{\Td}},
\qquad 
\Div \vw^*=0, 
\]
such that $\vE$ can be written as
$\vE=(\rho^*, -\df{c_1}{c_2}\rho^*, \vw^*)^T+\mathbb{P}_0\vE$. Direct calculation gives 
\begin{align*}
    \mathcal{Q}_1^\ep(\vA, \vB)
    &
    =
    2\mathcal{L}\left(-\dfrac{t}{\ep}\right) \mathbb{P}^\perp
    \mathcal{Q}\left(
    \mathcal{L}\left(\dfrac{t}{\ep}\right)\vA, 
    \mathcal{L}\left(\dfrac{t}{\ep}\right)\vB
    \right)
    \\
    &
    =
    2\sum\limits_{k,\alpha}
    \text{e}^{
    \f{itc_0\lambda_k^\alpha}{\ep}
    }
    \left<
    \mathcal{Q}\left(
    \mathcal{L}\left(\dfrac{t}{\ep}\right)\vA,
    \mathcal{L}\left(\dfrac{t}{\ep}\right)\vA
    \right),
    \Phi^\alpha_k
    \right>_{\mathbb{H}}
    \Phi^\alpha_k
    \\
    &
    =
    2\sum\limits_{k,\alpha}
    \sum\limits_{\gamma, \beta, m+l=k}
    \left<
    \mathcal{Q}\left(\Phi_l^\beta, \Phi_m^\gamma\right), 
    \Phi_k^\alpha
    \right>_\mathbb{H}
    \widehat{\vA}_l^\beta 
    \widehat{\vB}_m^\gamma 
    \text{e}^{
    \f{itc_0}{\ep}
    (\lambda_k^\alpha-\lambda_l^\beta-\lambda_m^\gamma)
    }
    \Phi_k^\alpha
    \\
    &
    =
    \sum\limits_{k,\alpha} 
    \sum\limits_{\gamma, \beta, m+l=k} 
    \left(
    \df{c_0}{R^0}
    \left(
    \df{\gamma \text{sgn}(m) (m\cdot k)}{|m|}
    +
    \df{\beta \tsgn(l) (l \cdot k)}{|l|}
    \right)
    \right.
    \\
    &\quad\left.
    +
    \lambda_k^\alpha
    \left(
    \df{c_0 \beta \gamma \tsgn(m)\tsgn(l)(l\cdot m)}{R^0|m||l|}
    +
    \df{c_3R^0}{c_0}
    +\df{(c_4+c_5)Q^0}{c_0}
    +\df{c_6(Q^0)^2}{c_0R^0}
    \right)
    \right)
    \\
    &\quad\cdot
    i\widehat{\vA}_l^\beta 
    \widehat{\vB}_m^\gamma 
    \text{e}^{
    \f{itc_0}{\ep}
    (\lambda_k^\alpha-\lambda_l^\beta-\lambda_m^\gamma)
    }
    \df{\Phi_k^\alpha}{2c_\mathbf{a}},
    \\[10pt]
    \mathcal{Q}_2^\ep(\underline{\vE}, \vA)
    &
    =
    2\mathcal{L}\left(-\dfrac{t}{\ep}\right) \mathbb{P}^\perp
    \mathcal{Q}\left(
    (\rho^*, -\df{c_1}{c_2}\rho^*, \vw^*)^T,
    \mathcal{L}\left(\dfrac{t}{\ep}\right) \vA
    \right)
    \\
    &
    =
    2\sum\limits_{k,\alpha}
    \sum\limits_{\gamma,l+m=k} 
    \left<
    \mathcal{Q}\left(
    (\widehat{\rho}^*_l, -\df{c_1}{c_2}\widehat{\rho}^*_l, \widehat{\vw}^*_l)^T 
    \df{\text{e}^{il\cdot x}}{\sqrt{|\Td|}}, 
    \Phi_m^\gamma
    \right), 
    \Phi^\alpha_k
    \right>_{\mathbb{H}}
    \widehat{\vA}_m^\gamma
    \text{e}^{
    \f{itc_0}{\ep}
    (\lambda_k^\alpha-\lambda_m^\gamma)
    }
    \Phi_k^\alpha
    \\
&
=
\sum\limits_{k,\alpha}
\sum\limits_{\gamma,l+m=k}
\Bigg(
(\widehat{\vw}^*_l\cdot k)
\left(
1
+
\df{\alpha\gamma\text{sgn}(k)\text{sgn}(m)(m\cdot (k+l))}
{|m||k|}
\right)
\\
&\quad
+\df{\widehat{\rho}^*_l\alpha\tsgn(k)}{c_0|k|}
\left(
|k|^2\left(c_3R^0-\df{c_6c_1Q^0}{c_2}\right)
+
Q^0(m\cdot k)(c_4+c_5)
-
\df{c_1R^0}{c_2}(l\cdot k)(c_4+c_5)
\right)
\Bigg)
\\
&\quad \cdot
i{\vA}_m^\gamma
\text{e}^{
\f{itc_0}{\ep}
(\lambda_k^\alpha-\lambda_m^\gamma)
}
\df{\Phi_k^\alpha}{2\sqrt{|\Td|}},
\end{align*}
\begin{align*}
\mathcal{Q}_2^\ep(\mathbb{P}_0\vE, \vA)
&
=
2\mathcal{L}\left(-\dfrac{t}{\ep}\right) \mathbb{P}^\perp
\mathcal{Q}\left(
\left(\df{\widehat{(\vE^1)}_0}{\sqrt{|\Td|}}, 
\df{\widehat{(\vE^2)}_0}{\sqrt{|\Td|}}, 
\df{\widehat{(\vE^3)}_0}{\sqrt{|\Td|}}
\right), 
\mathcal{L}\left(\dfrac{t}{\ep}\right) \vA
\right)
\\
&
=
2\sum\limits_{k,\alpha} 
\sum\limits_{\beta, l=k} 
\left<
\mathcal{Q}\left(
\left(
\df{\widehat{(\vE^1)}_0}{\sqrt{|\Td|}}, 
\df{\widehat{(\vE^2)}_0}{\sqrt{|\Td|}}, 
\df{\widehat{(\vE^3)}_0}{\sqrt{|\Td|}}
\right)^T, 
\Phi_l^\beta
\right), 
\Phi^\alpha_k
\right>_{\mathbb{H}}
\widehat{\vA}_l^\beta
\text{e}^{
\f{itc_0}{\ep}
(\lambda_k^\alpha-\lambda_l^\beta)
}
\Phi_k^\alpha
\\
&=
\sum\limits_{k,\alpha}
\left(
\widehat{(\vE^1)}_0
\left(c_1+c_3R^0+c_5Q^0\right)
+
\widehat{(\vE^2)}_0
\left(c_2+c_4R^0+c_6Q^0\right)
\right)
i\df{\lambda_k^\alpha\widehat{\vA}^\alpha_k}{c_0}
\df{\Phi_k^\alpha}{2\sqrt{|\Td|}}
\\
&\quad
+
\sum\limits_{k,\alpha} k\cdot \widehat{(\vE^3)}_0
i\df{\widehat{\vA}_k^\alpha \Phi_k^\alpha}{\sqrt{|\Td|}}
+
i\sum\limits_{k,\alpha}
\left(
\widehat{(\vE^1)}_0
\left(c_3R^0+c_5Q^0-c_1\right)
+
\widehat{(\vE^2)}_0
\left(c_4R^0+c_6Q^0-c_2\right)
\right)
\\
&
\qquad
\cdot
i\df{\lambda_k^\alpha\widehat{\vA}^{-\alpha}_k}{c_0}
\text{e}^{\f{2itc_0\lambda_k^\alpha}{\ep}}
\df{\Phi_k^\alpha}{2\sqrt{|\Td|}},
\\[10pt]
\mathcal{D}^\ep(\vA)
&
=
\mathcal{L}\left(-\dfrac{t}{\ep}\right) \mathcal{D}\left(
\mathcal{L}\left(\dfrac{t}{\ep}\right) 
\vA
\right)
\\
&
=
\sum\limits_{k,\alpha}
\sum\limits_{\beta, l=k} 
\left<
\mathcal{D}(\Phi_l^\beta), \Phi_k^\alpha
\right>_\mathbb{H}
\widehat{\vA}_l^\beta
\text{e}^{\f{itc_0}{\ep}
(\lambda_k^\alpha-\lambda_l^\beta)
}
\Phi_k^\alpha
\\
&
=
\sum\limits_{k,\alpha}
\df{-\nu|k|^2\widehat{\vA}^\alpha_k}{2(R^0+Q^0)}
\Phi^\alpha_k
+
\sum\limits_{k,\alpha}
\df{\nu|k|^2\widehat{\vA}^{-\alpha}_k}{2(R^0+Q^0)}
\text{e}^{\f{2itc_0\lambda_k^\alpha}{\ep}}
\Phi^\alpha_k
\\
&
=
\df{\nu}{2(R^0+Q^0)}
\De \vA
+
\sum\limits_{k,\alpha}
\df{\nu|k|^2\widehat{\vA}^{-\alpha}_k}{2(R^0+Q^0)}
\text{e}^{\f{2itc_0\lambda_k^\alpha}{\ep}}
\Phi^\alpha_k. 
\end{align*}
By the Riemann-Lebesgue lemma, i.e., for 
$
\lambda\in\mathbb{R}\setminus \{0\}, 
0<T\leq \infty, f\in L^1(0, T)
$, 
\[
\lim\limits_{\ep \to 0} \int_0^T 
\text{e}^{\f{i\lambda t}{\ep}} f \dt=0,
\]
we deduce in the sense of distributions that
\begin{align}
\mathcal{Q}_1(\vA,\vB)
&
:=
\lim\limits_{\ep \to 0}
\mathcal{Q}^\ep_1(\vA,\vB)
\nonumber
\\
&
=
\sum_{
\substack{
k, l, m, \alpha, \beta, \gamma
\\
k=l+m
\\ 
\lambda_k^\alpha-\lambda_l^\beta-\lambda_m^\gamma=0
}
}
\left(
\df{c_0}{R^0}
\left(\df{\gamma \text{sgn}(m) (m\cdot k)}{|m|}
+\df{\beta \tsgn(l) (l \cdot k)}{|l|}
\right)
\right.
\nonumber
\\
&\quad\left.
+
\lambda_k^\alpha 
\left(
\df{c_0 \beta \gamma \tsgn(m)\tsgn(l)(l\cdot m)}{R^0|m||l|}
+
\df{c_3R^0}{c_0}
+
\df{(c_4+c_5)Q^0}{c_0}
+
\df{c_6(Q^0)^2}{c_0R^0}
\right)
\right)
i\widehat{\vA}_l^\beta
\widehat{\vB}_m^\gamma 
\df{\Phi_k^\alpha}{2c_\mathbf{a}}
\nonumber
\\
&
=
\sum_{
\substack{
k, l, m, \alpha, \beta, \gamma
\\ 
k=m+l
\\ 
\lambda_k^\alpha-\lambda_l^\beta-\lambda_m^\gamma=0
}
}
\left(
\df{3c_0}{R^0}
+
\df{c_3R^0}{c_0}
+
\df{(c_4+c_5)Q^0}{c_0}
+
\df{c_6(Q^0)^2}{c_0R^0}
\right)
i\lambda_k^\alpha
\widehat{\vA}_l^\beta 
\widehat{\vB}_m^\gamma 
\df{\Phi_k^\alpha}{2c_\mathbf{a}},
\label{Q_1}
\end{align}
\begin{align}
\mathcal{Q}_2(\underline{\vE},\vA)
&
:=
\lim\limits_{\ep \to 0}
\mathcal{Q}^\ep_2(\underline{\vE},\vA)
\nonumber
\\
&
=
\sum_{
\substack{
k, m, l, \alpha, \gamma
\\ 
k=l+m
\\ 
\lambda_k^\alpha=\lambda_m^\gamma
}
}
\Bigg(
(\widehat{\vw}^*_l\cdot k)
\left(
1
+
\df{\alpha\gamma\text{sgn}(k)\text{sgn}(m)(m\cdot (k+l))
}
{|m||k|}
\right)
+
\df{\widehat{\rho}^*_l\alpha\tsgn(k)}{c_0|k|}
\left(
|k|^2\left(c_3R^0-\df{c_6c_1Q^0}{c_2}\right)
\right.
\nonumber
\\
&\quad\left.
+
Q^0(m\cdot k)(c_4+c_5)
-
\df{c_1R^0}{c_2}(l\cdot k)(c_4+c_5)
\right)
\Bigg)
\cdot 
i\widehat{\vA}_m^\gamma
\df{\Phi_k^\alpha}{2\sqrt{|\Td|}}
\nonumber
\\
&
=
\sum_{
\substack{k, m, l, \alpha, \gamma
\\ 
k=l+m
\\ \lambda_k^\alpha=\lambda_m^\gamma
}
}
\Bigg(
\df{2(\widehat{\vw}^*_l\cdot k)(m\cdot k)}{|m||k|}
+
\df{\widehat{\rho}^*_l\alpha\tsgn(k)}{c_0|k|}
\left(
|k|^2\left(c_3R^0-\df{c_6c_1Q^0}{c_2}\right)
\right.
\nonumber
\\
&\quad\left.
+
Q^0(m\cdot k)(c_4+c_5)
-
\df{c_1R^0}{c_2}(l\cdot k)(c_4+c_5)
\right)
\Bigg) 
\cdot 
i \widehat{\vA}_m^\gamma
\df{\Phi_k^\alpha}{2\sqrt{|\Td|}},
\label{Q_2}
\\[10pt]
\mathcal{Q}_2(\mathbb{P}_0 \vE,\vA)
&
:=
\lim\limits_{\ep \to 0}
\mathcal{Q}^\ep_2(\mathbb{P}_0 \vE,\vA)
\nonumber
\\
&
=
\sum\limits_{k,\alpha}
\left(
\widehat{(\vE^1)}_0
\left(c_1+c_3R^0+c_5Q^0\right)
+
\widehat{(\vE^2)}_0
\left(c_2+c_4R^0+c_6Q^0\right)
\right)
i\df{\lambda_k^\alpha\widehat{\vA}^\alpha_k}{c_0}
\df{\Phi_k^\alpha}{2\sqrt{|\Td|}}
\nonumber
\\
&
\quad
+
\sum\limits_{k,\alpha} 
k\cdot \widehat{(\vE^3)}_0
i\df{\widehat{\vA}_k^\alpha \Phi_k^\alpha}{\sqrt{|\Td|}},
\label{Q_21}
\\[10pt]
\overline{\mathcal{D}}(\vA)
&
:=
\lim\limits_{\ep \to 0}
\mathcal{D}^\ep(\vA)
=
\df{\nu}{2(R^0+Q^0)}
\De \vA.
\nonumber
\end{align}
Here, in the computation of 
$\mathcal{Q}_1(\vA, \vB)$, we used the fact that 
$k=l+m$ and $\lambda_k^\alpha=\lambda_l^\beta+\lambda_m^\gamma$ imply that $k, l$ and $m$ are collinear. In the computation of $\mathcal{Q}_2 (\underline{\vE}, \vB) $, we used the fact that $k=l+m$ and $\lambda_k^\alpha=\lambda_m^\gamma $ imply 
\[
1+\df{\alpha\gamma\text{sgn}(k)\text{sgn}(m)(m\cdot (k+l))}{|m||k|}=\df{2m\cdot k}{|k| |m|}.
\]
Next, for  $\vE, \vF=(\vF^1, \vF^2, \vF^3)\in \text{Ker}\, L$, we calculate $\mathbb{P}^\perp\mathcal{Q}(\mathbb{P}_0 \vF, \underline{\vE})$. Indeed, we have
\begin{align}\label{new1}
\mathbb{P}^\perp\mathcal{Q}(\mathbb{P}_0 \vF, \underline{\vE})
&
=
\sum\limits_{k,\alpha}
\sum\limits_{k=l} 
\left<
\mathcal{Q}\left(
\left(
\df{\widehat{(\vF^1)}_0}{\sqrt{|\Td|}}, 
\df{\widehat{(\vF^2)}_0}{\sqrt{|\Td|}}, 
\df{\widehat{(\vF^3)}_0}{\sqrt{|\Td|}}
\right)^T, 
(\widehat{\rho}^*_l, -\df{c_1}{c_2}\widehat{\rho}^*_l, \widehat{\vw}^*_l)^T
\df{\text{e}^{il\cdot x}}{\sqrt{\T}}
\right), 
\Phi^\alpha_k
\right>_{\mathbb{H}}
\Phi^\alpha_k
\nonumber
\\
&
=
\sum_{k, \alpha}
\left(
\widehat{(\vF^1)}_0\left(c_3-\df{c_1}{c_2}c_5\right)
+
\widehat{(\vF^2)}_0
\left(c_4-\df{c_1}{c_2}c_6\right)
\right)
\df{i\lambda_k^\alpha c_\mathbf{a}R^0\widehat{\rho}^*_k}
{4c_0|\Td|}
\Phi^\alpha_k.
\end{align}

\section{Decay properties of \texorpdfstring{$\vZe$ and $\vWe$}{}}\label{decay}

\subsection{Auxiliary lemmas}
In this subsection, we prepare some auxiliary lemmas that will be used in the sequel. First, we introduce some basic facts about $\mathbb{P}$, $\mathbb{P}^\perp$ and $\mathcal{L}(t)$. $\mathbb{P}$ and $\mathbb{P}^\perp$ are bounded on $L^2$-type spaces, and $\mathcal{L}(t)$ is norm-equivalent on $L^2$-type spaces. More precisely,   for $0<T\leq \infty, -\infty<s, \tau<\infty $, $1\leq q,r\leq \infty$, we have
\begin{align}\label{3.1.3}
&
\n{\mathbb{P}\vA}_{B_{2,r}^s }
\les
\n{\vA}_{B_{2,r}^s} ,
\quad
\n{\mathbb{P}^\perp \vA}_{B_{2,r}^s }
\les
\n{\vA}_{\underline{B}_{2,r}^s},
\quad
\|\mathcal{L}(\tau)
\vA
\|_{B_{2,r}^s}
\approx 
\| \vA\|_{B_{2,r}^s},\nonumber
\quad 
\text{ if } \vA\in B_{2,r}^s,
\\
&
\|\mathcal{L}\left(\pm\df{t}{\ep}\right)
\vB(t,x)
\|_{L_T^q(B_{2,r}^s)}
\approx 
\|\vB(t,x)\|_{L_T^q(B_{2,r}^s)},
\quad 
\text{ if }
\vB\in L_T^q(B_{2,r}^s),
\\
 &
\|\mathcal{L}\left(\pm\df{t}{\ep}\right)
\vB(t,x)
\|_{\widetilde{L}_T^q(B_{2,r}^s)}
\approx 
\|\vB(t,x)\|_{\widetilde{L}_T^q(B_{2,r}^s)}
\quad 
\text{ if }
\vB\in \widetilde{L}_T^q(B_{2,r}^s).
\nonumber
\end{align}

Next, we prepare some lemmas.
\begin{Lemma}\label{le8.2}
Let $0<\theta<1$, $s\in \mathbb{R}$, $1\leq r \leq \infty$, and $A, B \in \sp{\mathcal{S}'(\Td)}^{d+2}$, then
\begin{align*}
&
\n{\mathcal{Q}(A, B)}_{B^{\f d 2-1}_{2, 1}} 
\les 
\n{A}_{B^{\f d 2}_{2, 1}}
\n{B}_{B^{\f d 2}_{2, 1}},
\\
&
\n{\mathcal{Q}(A, B)}_{B^{\f d 2-1-\theta}_{2, r}} 
\les
\min\left\{
\n{B}_{B^{\f d 2}_{2, 1}}
\n{A}_{B^{\f d 2-\theta}_{2, r}},\ \
\n{A}_{B^{\f d 2}_{2, 1}}
\n{B}_{B^{\f d 2-\theta}_{2, r}}
\right\},
\\
&
\n{\mathcal{Q}(\mathbb{P}_0 A, B)}_{B^{s}_{2, r}} 
\les 
|\mathbb{P}_0 A|
\n{B}_{B^{s+1}_{2, r}},
\end{align*}
provided that the norms on the right-hand side are finite.
\end{Lemma}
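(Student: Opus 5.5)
The plan is to reduce everything to the product estimates of Lemma \ref{le4.1}, applied componentwise. Write $A=(a,b,\vu)^T$ and $B=(\tilde a,\tilde b,\tilde\vu)^T$. By polarization, $\mathcal{Q}(A,B)$ is a finite linear combination of terms of two types. The first type is a divergence of a product, $\Div(a\tilde\vu)$ or $\Div(\tilde a\vu)$. The second type is a product of a function with a gradient, $\vu\cdot\nabla\tilde\vu$ or $a\nabla\tilde b$, and the same with the roles of $A$ and $B$ exchanged. We treat each type separately.

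For the first inequality, $\n{\Div(fg)}_{B^{\f d2-1}_{2,1}}\les\n{fg}_{B^{\f d2}_{2,1}}$, and $B^{\f d2}_{2,1}$ is an algebra (this is \eqref{4.8} with $s_2=0$). For $f\nabla g$ we use \eqref{4.8} with $s_2=-1$, which is allowed since $-\f d2<\f d2-1$ for $d\ge2$. This gives $\n{f\nabla g}_{B^{\f d2-1}_{2,1}}\les\n{f}_{B^{\f d2}_{2,1}}\n{\nabla g}_{B^{\f d2-1}_{2,1}}$. Finally, $\n{\nabla g}_{B^{\f d2-1}_{2,1}}\les\n{g}_{B^{\f d2}_{2,1}}$, because the zero mode is annihilated by $\nabla$.

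For the second inequality, with the bound by $\n{B}_{B^{\f d2}_{2,1}}\n{A}_{B^{\f d2-\theta}_{2,r}}$, we apply \eqref{4.9} with $s_1=\f d2-\theta<\f d2$ and $s_2=0$, so that $s=\f d2-\theta>-\f d2$. This gives $\n{fg}_{B^{\f d2-\theta}_{2,r}}\les\n{f}_{B^{\f d2-\theta}_{2,r}}\n{g}_{B^{\f d2}_{2,1}}$, which handles the divergence terms after losing one derivative. For the terms $f\nabla g$, the factor carrying the gradient determines which estimate we use:
\begin{itemize}
\item If the gradient falls on the $B$-factor and $f$ comes from $A$, we use \eqref{4.9} with $s_1=\f d2-\theta$ and $s_2=-1$. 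Then $s=\f d2-1-\theta>-\f d2$ requires $\theta<d-1$, which holds since $\theta<1\le d-1$.
\item If the gradient falls on the $A$-factor, so that $\nabla a\in B^{\f d2-1-\theta}_{2,r}$ and $g$ comes from $B$, we use \eqref{4.9} with $s_1=\f d2-1-\theta$ and $s_2=0$. Then $s=\f d2-1-\theta>-\f d2$ again.
\end{itemize}
The other bound in the minimum follows by exchanging the roles of $A$ and $B$, which is possible because $\mathcal{Q}$ is symmetric.

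For the third inequality, $\mathbb{P}_0A$ is a constant vector $(\bar a,\bar b,\bar\vu)$, so $\mathcal{Q}(\mathbb{P}_0A,B)$ is linear in $B$ with constant coefficients. Its components are
\[
\tfrac12\bar a\,\Div\tilde\vu,\qquad \tfrac12\bar\vu\cdot\nabla\tilde a,\qquad \tfrac12\bar\vu\cdot\nabla\tilde\vu,\qquad \tfrac12\bar a\,\nabla\tilde a,
\]
and so on. Each is a constant times a first derivative of $B$, so it is bounded by $|\mathbb{P}_0A|\,\n{B}_{B^{s+1}_{2,r}}$ in $B^s_{2,r}$ for every $s$. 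Again the zero mode is killed by the derivative, so no low-frequency issue arises.

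The only delicate point is checking that the index conditions of \eqref{4.9} hold, namely $s>-\f d2$, $s_1<\f d2$ and $s_2\le0$, in the low-regularity case $\f d2-1-\theta$. This is exactly where $d\ge2$ and $\theta<1$ are used. We will also confirm that the zero-mode contribution $\widehat{(fg)}_0$ is controlled, which is already built into \eqref{4.9}.
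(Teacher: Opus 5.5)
Your proposal is correct and follows the paper's own proof. The paper expands $\mathcal{Q}(A,B)$ componentwise, applies the product estimates \eqref{4.8} and \eqref{4.9}, and calls the $\mathbb{P}_0 A$ bound straightforward because that term is a constant times a first derivative of $B$; your index checks ($s>-\frac d2$, which uses $d\ge 2$ and $\theta<1$) fill in the details the paper leaves implicit.
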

\bProof 
Using the definition of $\mathcal{Q}(A, B)$, together with \eqref{4.9} and \eqref{4.8},  we obtain the estimates for $\n{\mathcal{Q}(A, B)}_{B^{\f d 2-1}_{2, 1}}$ and $\n{\mathcal{Q}(A, B)}_{B^{\f d 2-1-\theta}_{2, r}}$. The estimate for $\n{\mathcal{Q}(\mathbb{P}_0 A, B)}_{B^{s}_{2, r}} $ is straightforward.              \ \ $\Box$

\begin{Lemma}\label{le4.2}
Let $\vA, \vB\in \text{Im} \, L$, $\vE, \vF=(\vF^1, \vF^2, \vF^3)\in \text{Ker} \,L$, $0<\theta<1$, $1\leq r \leq \infty$, and $-\infty< s, \tau<\infty$. Then 
\begin{align*}
\n{\mathcal{Q}^\ep_1(\vB, \vA)}_{B^{\f d 2-1}_{2, 1}}
&
\les 
\n{\vA}_{B^{\f d 2}_{2, 1}}
\n{\vB}_{B^{\f d 2}_{2, 1}},
\\
\n{\mathcal{Q}^\ep_2(\underline{\vE}, \vA)}_{B^{\f d 2-1}_{2, 1}}
&
\les 
\n{\vE}_{\underline{B}^{\f d 2}_{2, 1}}
\n{\vA}_{B^{\f d 2}_{2, 1}},
\\
\n{
\mathcal{L}(\tau)
\sp{
\mathbb{P}^\perp
\mathcal{Q}(\underline{\vE}, \underline{\vF})
}
}_{B^{\f d 2 -1}_{2, 1}}
&
\les
\n{\vE}_{\underline{B}^{\f d 2}_{2, 1}}
\n{\vF}_{\underline{B}^{\f d 2}_{2, 1}},
\\
\n{\mathcal{Q}^\ep_1(\vB, \vA)}_{B^{\f d 2-1-\theta}_{2, r}}
&
\les 
\min\left\{
\n{\vB}_{B^{\f d 2}_{2, 1}}
\n{\vA}_{B^{\f d 2-\theta}_{2, r}},\ \
\n{\vA}_{B^{\f d 2}_{2, 1}}
\n{\vB}_{B^{\f d 2-\theta}_{2, r}}
\right\},
\\
\n{\mathcal{Q}^\ep_2(\underline{\vE}, \vA)}_{B^{\f d 2-2-\theta}_{2, r}}
&\les 
\min\left\{
\n{\vE}_{\underline{B}^{\f d 2}_{2, 1}}
\n{\vA}_{B^{\f d 2-1-\theta}_{2, r}},\ \
\n{\vA}_{B^{\f d 2}_{2, 1}}
\n{\vE}_{\underline{B}^{\f d 2-1-\theta}_{2, r}}
\right\},
\\
\n{
\mathcal{L}(\tau)
\sp{
\mathbb{P}^\perp
\mathcal{Q}(\underline{\vE}, \underline{\vF})
}
}_{B^{\f d 2 -1-\theta}_{2, r}}
&\les
\min\left\{
\n{\vE}_{\underline{B}^{\f d 2}_{2, 1}}
\n{\vF}_{\underline{B}^{\f d 2-1-\theta}_{2, r}}, \ \
\n{\vF}_{\underline{B}^{\f d 2}_{2, 1}}
\n{\vE}_{\underline{B}^{\f d 2-1-\theta}_{2, r}}
\right\},
\\
\n{\mathcal{Q}^\ep_2(\mathbb{P}_0\vE, \vA)}_{B^{s}_{2, r}}
&\les 
|\mathbb{P}_0\vE|
\n{\vA}_{B^{s+1}_{2, r}},
\\
\n{
\mathcal{L}(\tau)
\sp{
\mathbb{P}^\perp
\mathcal{Q}(\mathbb{P}_0\vF, \underline{\vE})
}
}_{B^{s}_{2, r}}
&\les
\sp{
|\widehat{(\vF^1)}_0|
+
|\widehat{(\vF^2)}_0|
}
\n{\vE}_{\underline{B}^{s+1}_{2, r}},
\end{align*}
provided that the norms on the right-hand side are finite.
\end{Lemma}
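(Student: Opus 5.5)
The plan is to reduce every estimate in Lemma \ref{le4.2} to the corresponding estimate of Lemma \ref{le8.2}, using the norm equivalences \eqref{3.1.3} for $\mathcal{L}(\tau)$, $\mathbb{P}$ and $\mathbb{P}^\perp$. Since $\mathcal{L}(\tau)$ acts diagonally on the basis $\Phi^\alpha_k$ and only multiplies each coefficient by a unimodular factor, it preserves $\|\Delta_j\cdot\|_{L^2}$ up to constants (via the equivalence of $\langle\cdot,\cdot\rangle_{\mathbb{H}}$ with the $L^2$ inner product). This gives $\|\mathcal{L}(\tau)\vA\|_{B^s_{2,r}}\approx\|\vA\|_{B^s_{2,r}}$ uniformly in $\tau$.

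\textbf{Step 1: $\mathcal{Q}^\ep_1$.} By definition
\[
\mathcal{Q}^\ep_1(\vB,\vA)=2\mathcal{L}(-t/\ep)\mathbb{P}^\perp\mathcal{Q}\big(\mathcal{L}(t/\ep)\vB,\mathcal{L}(t/\ep)\vA\big).
\]
Applying \eqref{3.1.3} twice removes $\mathcal{L}(-t/\ep)$ and $\mathbb{P}^\perp$, with $\|\mathbb{P}^\perp\cdot\|\les\|\cdot\|$. Lemma \ref{le8.2} then gives the $B^{\f d2-1}_{2,1}$ bound, as well as both branches of the $B^{\f d2-1-\theta}_{2,r}$ minimum. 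A final application of \eqref{3.1.3} replaces $\mathcal{L}(t/\ep)\vA$ by $\vA$, and likewise for $\vB$.

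\textbf{Step 2: $\mathcal{Q}^\ep_2$ and the $\mathcal{L}(\tau)\mathbb{P}^\perp\mathcal{Q}$ terms.} The argument is the same; since $\vE$ and $\vF$ are underlined, they are zero-mean and their $B$ and $\underline{B}$ norms coincide.

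The one case that needs care is the $B^{\f d2-2-\theta}_{2,r}$ bound on $\mathcal{Q}^\ep_2(\underline\vE,\vA)$, which puts the low regularity $\f d2-1-\theta$ on either factor and is not directly stated in Lemma \ref{le8.2}. Here I will write $\mathcal{Q}$ as a sum of terms of the form $\p(fg)$ and $f\p g$. The derivative accounts for one unit of regularity, so it remains to bound the products $fg$ in $B^{\f d2-1-\theta}_{2,r}$ using \eqref{4.9} with $s_1=\f d2-1-\theta<\f d2$, $s_2=0$ and $s=\f d2-1-\theta>-\f d2$ (valid since $d\ge2$ and $\theta<1$). For terms of the form $f\p g$ with the derivative landing on the smooth factor, I will instead apply \eqref{4.9} with $s_2=-1$, keeping $s=\f d2-2-\theta>-\f d2$. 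The two branches of the minimum come from swapping the roles of the two factors.

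\textbf{Step 3: terms containing $\mathbb{P}_0$.} $\mathcal{Q}(\mathbb{P}_0\vE,\cdot)$ is a constant-coefficient first-order differential operator, so the third estimate of Lemma \ref{le8.2} together with \eqref{3.1.3} gives the bound on $\mathcal{Q}^\ep_2(\mathbb{P}_0\vE,\vA)$.

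For the last estimate, I will use the explicit formula \eqref{new1}. It shows that only $\widehat{(\vF^1)}_0$ and $\widehat{(\vF^2)}_0$ survive: the $\widehat{(\vF^3)}_0\cdot\nabla$ contribution to the velocity component is divergence-free and is annihilated by $\mathbb{P}^\perp$. The remaining coefficient is $\lambda_k^\alpha\widehat\rho^*_k$, which is bounded by $|k||\widehat\rho^*_k|$. This yields the bound by $\|\vE\|_{\underline{B}^{s+1}_{2,r}}$, and one more application of \eqref{3.1.3} removes $\mathcal{L}(\tau)$.

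I expect the main obstacle to be the regularity bookkeeping for the negative-index product in Step 2, making sure the conditions of \eqref{4.9} hold for every term in $\mathcal{Q}$.
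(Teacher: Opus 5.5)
The strategy is the paper's: remove $\mathcal{L}(\pm t/\ep)$, $\mathcal{L}(\tau)$ and $\mathbb{P}^\perp$ with \eqref{3.1.3}, apply Lemma~\ref{le8.2}, and read the last bound off \eqref{new1}. It works for every bound that Lemma~\ref{le8.2} actually contains, so your Steps 1 and 3 are fine. The genuine gap is the $B^{\f d2-2-\theta}_{2,r}$ bound on $\mathcal{Q}^\ep_2(\underline{\vE},\vA)$. For the terms $f\p g$ with the derivative on the smooth factor you invoke \eqref{4.9} with $s_2=-1$. That requires $s=\f d2-2-\theta>-\f d2$, i.e.\ $d>2+\theta$, which is false for $d=2$; there the regularity sum $(\f d2-1-\theta)+(\f d2-1)=-\theta$ is negative. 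This is not bookkeeping that a better product law repairs, because $\mathbb{P}^\perp$ does not remove the high--high$\to$low interaction. On $\mathbb{T}^2$ take $\vE=(0,0,\vw)$ with $\vw=(0,\cos Nx_1)$, and $\vA$ with $\mathcal{L}(t/\ep)\vA=(0,0,\Grad\phi)$, $\phi=a\sin(Nx_1+x_2)$. Then $2\mathcal{Q}(\vE,\mathcal{L}(t/\ep)\vA)=(0,0,\vw\cdot\Grad\Grad\phi+\Grad\phi\cdot\Grad\vw)$. Its frequency-$e_2$ part is $\big(-\f{aN}{2}\sin x_2,\ \f{a(N^2-1)}{2}\sin x_2\big)$, and $\mathcal{P}^\perp$ keeps the second component. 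Hence the left side is $\gtrsim aN^2$, while both entries of the minimum are $\approx aN^{2-\theta}$. Your argument therefore proves this estimate only for $d\ge 3$, and for $d=2$ the estimate is false as stated. The paper's citation of Lemma~\ref{le8.2} does not supply it either, since that lemma is restricted to $0<\theta<1$ and target space $B^{\f d2-1-\theta}_{2,r}$.

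Your claim that ``the argument is the same'' for the $B^{\f d2-1-\theta}_{2,r}$ bound on $\mathcal{L}(\tau)\mathbb{P}^\perp\mathcal{Q}(\underline\vE,\underline\vF)$ is also not accurate. Lemma~\ref{le8.2} gives $\n{\vE}_{\underline{B}^{\f d2}_{2,1}}\n{\vF}_{\underline{B}^{\f d2-\theta}_{2,r}}$ and its symmetric counterpart, not the printed $\n{\vF}_{\underline{B}^{\f d2-1-\theta}_{2,r}}$. The printed version gains a full derivative and cannot hold. For $\vE=(0,0,\vw_1)$ and $\vF=(0,0,\vw_2)$ divergence-free, the only surviving component is $\Grad\De^{-1}(\p_i w_{1,j}\p_j w_{2,i})$. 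This is a genuinely first-order bilinear expression, and two generic divergence-free bumps concentrated at frequency $\lambda$ give a ratio $\approx\lambda$ between the two sides. The $\f d2-\theta$ form is what the reduction actually yields and what is used later, e.g.\ in the estimate of $R^M_{5,\ep}$. You should state and prove that form rather than the printed one.
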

\bProof 
Using \eqref{3.1.3} and Lemma \ref{le8.2}, we obtain the estimates for 
$\n{\mathcal{Q}^\ep_1(\vB, \vA)}_{B^{\f d 2-1}_{2, 1}}$,
$\n{\mathcal{Q}^\ep_2(\underline{\vE}, \vA)}_{B^{\f d 2-1}_{2, 1}}$,
$
\n{
\mathcal{L}(\tau)
\sp{
\mathbb{P}^\perp
\mathcal{Q}(\underline{\vE}, \underline{\vF})
}
}_{B^{\f d 2-1}_{2, 1}},
$
$\n{\mathcal{Q}^\ep_1(\vB, \vA)}_{B^{\f d 2-1-\theta}_{2, r}}$, 
$\n{\mathcal{Q}^\ep_2(\underline{\vE}, \vA)}_{B^{\f d 2-2-\theta}_{2, r}}$,
$
\n{
\mathcal{L}(\tau)
\sp{
\mathbb{P}^\perp
\mathcal{Q}(\underline{\vE}, \underline{\vF})
}
}_{B^{\f d 2-1-\theta}_{2, r}}
$
and
$\n{\mathcal{Q}^\ep_2(\mathbb{P}_0\vE, \vA)}_{B^{s}_{2, r}}$. The last estimate follows from \eqref{new1} and \eqref{3.1.3}.
 \ \ $\Box$

\begin{Lemma}\label{le4.4} 
Let $-\infty<s, s_1<\infty$, $s_2, s_3, s_4, s_5\geq 0$ and 
\begin{align*}
    f=
\sum\limits_{k\neq0}
\widehat{f}_k
\df{\text{e}^{ik\cdot x}}{\sqrt{\T}},
\qquad 
g=
\sum\limits_{k\neq0}
\widehat{g}_k
\df{\text{e}^{ik\cdot x}}{\sqrt{\T}},
\qquad
h=
\sum\limits_{k\neq0}
\widehat{h}_k
\df{\text{e}^{ik\cdot x}}{\sqrt{\T}}
\end{align*}
with
$
\widehat{h}_k=\sum\limits_{k=l+m}C(k,l,m)\widehat{f}_l\widehat{g}_m.
$
If $s+s_1\geq 0$ and there exists a positive constant $K$ such that for any $(k,l,m)\in \sp{
\widetilde{\mathbb{Z}}^d\setminus\{0\}}^3$,
\begin{align*}
|C(k,l,m)|\leq K |k|^{s_1}|l|^{s_2}|m|^{s_3}.
\end{align*}
Then, 
\begin{align*}
\n{h}_{H^s}
\les
K
\sp{
\n{g}_{H^{s+s_1+s_2+s_3+s_4}}
\n{|k|^{-s_4}\widehat{f}_k}_{\ell^1
\sp{
\widetilde{\mathbb{Z}}^d\setminus\{0\}}}
+
\n{f}_{H^{s+s_1+s_2+s_3+s_5}}
\n{|k|^{-s_5}\widehat{g}_k}_{\ell^1
\sp{
\widetilde{\mathbb{Z}}^d\setminus\{0\}}
}
},
\end{align*}
provided that the right-hand side is finite.
\end{Lemma}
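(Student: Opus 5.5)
Since $\|h\|_{H^s}^2=\sum_{k\neq0}|k|^{2s}|\widehat h_k|^2$, it suffices to bound the weighted sequence $\big(|k|^{s}|\widehat h_k|\big)_{k\neq0}$ in $\ell^2(\widetilde{\mathbb{Z}}^d\setminus\{0\})$. The plan is a pointwise bound on the Fourier coefficients followed by Young's convolution inequality $\ell^2*\ell^1\subset\ell^2$. First, the hypothesis on $C$ gives
\begin{align*}
|k|^{s}|\widehat h_k|
\le K\sum_{l+m=k}|k|^{s+s_1}|l|^{s_2}|m|^{s_3}|\widehat f_l||\widehat g_m|.
\end{align*}
Only nonzero $l,m$ appear, because $\widehat f_0=\widehat g_0=0$.

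Next I split the sum according to which of $l$, $m$ carries the larger frequency. On the region $|l|\le|m|$ we have $|k|\le|l|+|m|\le 2|m|$. Since $s+s_1\ge0$, this gives $|k|^{s+s_1}\le 2^{s+s_1}|m|^{s+s_1}$, and since $s_2\ge0$, it gives $|l|^{s_2}\le|m|^{s_2}$. To produce the factor $|l|^{-s_4}$ I insert $1=|l|^{s_4}|l|^{-s_4}$ and use $|l|^{s_4}\le|m|^{s_4}$, which holds because $s_4\ge0$. This region is therefore bounded by
\begin{align*}
2^{s+s_1}K\sum_{l+m=k}\big(|l|^{-s_4}|\widehat f_l|\big)\big(|m|^{s+s_1+s_2+s_3+s_4}|\widehat g_m|\big).
\end{align*}
The region $|m|<|l|$ is handled symmetrically with $s_5$: the weights are moved onto $l$, which gives the convolution of $|m|^{-s_5}|\widehat g_m|$ with $|l|^{s+s_1+s_2+s_3+s_5}|\widehat f_l|$.

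Finally, for each region I apply Young's inequality $\|a*b\|_{\ell^2}\le\|a\|_{\ell^1}\|b\|_{\ell^2}$ on the lattice $\widetilde{\mathbb{Z}}^d$, extending sequences by zero at the origin. The $\ell^2$ factor is exactly $\|g\|_{H^{s+s_1+s_2+s_3+s_4}}$, respectively $\|f\|_{H^{s+s_1+s_2+s_3+s_5}}$, by the definition of the $H^\sigma$ norm, using the zero-mean assumption. The $\ell^1$ factor is the stated weighted $\ell^1$ norm. Adding the two regions gives the claim.

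I do not expect a genuine obstacle. The one point needing care is the transfer of the $|k|^{s+s_1}$ weight onto the dominant frequency. This transfer is the only place where $s+s_1\ge 0$ is used, and for a negative exponent the inequality $|k|\le 2\max(|l|,|m|)$ would point the wrong way. All remaining exponents are nonnegative, so they move monotonically onto the larger frequency. If $s_1<0$ with $s+s_1\ge0$, nothing changes, because only the combined exponent matters.
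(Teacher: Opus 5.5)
Your proof is correct and follows essentially the same route as the paper. In both, the convolution sum is split according to which of $l,m$ carries the dominant frequency, the weight $|k|^{s+s_1}$ (using $s+s_1\ge0$) and the remaining nonnegative powers are moved onto that frequency, and Young's inequality $\ell^1*\ell^2\subset\ell^2$ on $\widetilde{\mathbb{Z}}^d$ finishes the argument. The only difference is cosmetic: the paper splits into $|k|>2|l|$ and $|k|\le 2|l|$ (where $|m|\le 3|l|$) rather than your $|l|\le|m|$ versus $|m|<|l|$, which affects only the constants.
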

\bProof
The proof is similar to that of Lemma 2.4 in \cite{D02}, so we only give a sketch of proof. Notice that 
\begin{align*}
\n{h}^2_{H^s}
&
\leq
\sum_{k\neq 0}
|k|^{2s}
\sp{
\sum_{k=m+l}
|C(k,m,l)|
|\widehat{f}_l|
|\widehat{g}_m|
}^2
\\
&
\les
\sum_{k\neq 0}
|k|^{2s}
\sp{
\sum_{k=m+l, |k|>2|l|}
|C(k,m,l)|
|\widehat{f}_l|
|\widehat{g}_m|
}^2
+
\sum_{k\neq 0}
|k|^{2s}
\sp{
\sum_{k=m+l, |k|\leq 2|l|}
|C(k,m,l)|
|\widehat{f}_l|
|\widehat{g}_m|
}^2
\\
&
\les
K^2\sum_{k\neq 0}
\sp{
\sum_{k=m+l, |k|>2|l|}
|m|^{s+s_1+s_2+s_3+s_4}
|\widehat{g}_m|
|l|^{-s_4}|\widehat{f}_l|
}^2
\\
&
\quad
+
K^2
\sum_{k\neq 0}
\sp{
\sum_{k=m+l, |k|\leq 2|l|}
|l|^{s+s_1+s_2+s_3+s_5}
|\widehat{f}_l|
|m|^{-s_5}|\widehat{g}_m|
}^2
\\
&
\les
K^2
\sp{
\n{g}^2_{H^{s+s_1+s_2+s_3+s_4}}
\n{|k|^{-s_4}\widehat{f}_k}^2_{\ell^1
\sp{
\widetilde{\mathbb{Z}}^d\setminus\{0\}}
}
+
\n{f}^2_{H^{s+s_1+s_2+s_3+s_5}}
\n{|k|^{-s_5}\widehat{g}_k}^2_{\ell^1
\sp{
\widetilde{\mathbb{Z}}^d\setminus\{0\}}
}
}.
\end{align*}
This completes the proof. \ \ $\Box$

\begin{Lemma}\label{Le8.6} 
Let $R, R_1, R_2, R_3>0$ such that $R_j<R$ for $j=1,2,3$. Suppose that $f$ is smooth on $(-R, R)$ with $f(0)=0$ and $g$ is smooth on $(-R, R)\times (-R, R))$ with $g(0, 0)=0$. For $0<s\leq \f d 2$ and $\n{u}_{L^\infty}\leq R_1$, $\n{v}_{L^\infty}\leq R_2$, $\n{w}_{L^\infty}\leq R_3$, there exist positive constants 
$C(R_1, f)$ and $C(R_1, R_2, g)$ such that
\begin{align*}
\n{f(u)}_{B^{s}_{2,1}}
&
\leq
C(R_1, f)
\n{u}_{B^{s}_{2,1}},
\\
\n{g(v, w)}_{B^{s}_{2,1}}
&
\leq
C(R_2, R_3, g)
\sp{
\n{v}_{B^{s}_{2,1}}
+
\n{w}_{B^{s}_{2,1}}
}.
\end{align*}
\end{Lemma}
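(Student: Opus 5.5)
This is the classical composition estimate in Besov spaces, transplanted to the torus. I would reduce to the $\mathbb{R}^d$ statement (Theorem 2.87/2.89 type results in \cite{BCD11}) by working directly with the Littlewood--Paley decomposition on $\Td$. The two points that need care are the zero mode and the lack of scaling on the torus. The two-variable estimate follows from the one-variable argument, so I describe the first in detail.

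\textbf{Step 1: Meyer's first linearization.} With $S_j u$ as in \eqref{fourier} (so $S_j u\to u$ and $S_{j_\mathbf{a}+1}u=\widehat u_0/\sqrt{\T}$ by \eqref{zero}), write the telescoping series
\begin{align*}
f(u)=f(S_{j_\mathbf{a}+1}u)+\sum_{j> j_\mathbf{a}}\big(f(S_{j+1}u)-f(S_ju)\big)=f(S_{j_\mathbf{a}+1}u)+\sum_{j>j_\mathbf{a}} m_j\,\Delta_j u,
\end{align*}
where $m_j:=\int_0^1 f'(S_ju+\tau\Delta_ju)\,d\tau$. Since $\|S_ju\|_{L^\infty}\lesssim\|u\|_{L^\infty}$, the functions $m_j$ are smooth and bounded by $C(R_1,f)$. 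Their derivatives satisfy $\|\partial^\alpha m_j\|_{L^\infty}\le C_\alpha(R_1,f)2^{j|\alpha|}$, by the chain rule and Bernstein's inequality applied to $S_ju$ and $\Delta_j u$. Two remarks on constants:
\begin{itemize}
\item The constant depends on $\|u\|_{L^\infty}$ only through $\sup|f^{(k)}|$ on a neighbourhood of $[-CR_1,CR_1]$. One may need $CR_1<R$, which is implicitly assumed, or one replaces $S_j$ by a kernel with nonnegative average so that $\|S_ju\|_{L^\infty}\le\|u\|_{L^\infty}$.
\item The zero-mode term satisfies $|f(\widehat u_0/\sqrt{\T})|\le C|\widehat u_0|$ because $f(0)=0$, and it is a constant, so it only contributes to the $\widehat{\cdot}_0$ part of the norm.
\end{itemize}

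\textbf{Step 2: dyadic bound.} Estimate $\|\Delta_{j'}(m_j\Delta_ju)\|_{L^2}$ in two ways.
\begin{itemize}
\item For $j'\le j+N_0$, use the trivial bound $\lesssim\|\Delta_ju\|_{L^2}$.
\item For $j'>j+N_0$, integrate by parts $M$ times on the Fourier side, i.e.\ use $\|\Delta_{j'}g\|_{L^2}\lesssim2^{-j'M}\|\nabla^M g\|_{L^2}$. Leibniz together with the derivative bounds on $m_j$ gives $\lesssim2^{(j-j')M}\|\Delta_ju\|_{L^2}$.
\end{itemize}
Then $2^{j's}\|\Delta_{j'}f(u)\|_{L^2}\lesssim\sum_j 2^{(j'-j)s}\min(1,2^{(j-j')M})\,2^{js}\|\Delta_ju\|_{L^2}$. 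Choosing $M>s$ and using $s>0$, Young's inequality for series gives the $\ell^1$ bound by $\|u\|_{\underline B^s_{2,1}}$. The zero mode of $f(u)$ is controlled by $\|f(u)\|_{L^2}\le C\|u\|_{L^2}\lesssim\|u\|_{B^s_{2,1}}$, again using $f(0)=0$ and the mean value theorem. This proves the first inequality.

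\textbf{Step 3: two variables.} Write
\begin{align*}
g(v,w)=g(v,w)-g(0,w)+g(0,w)=v\,\tilde g_1(v,w)+g(0,w),
\end{align*}
with $g(0,\cdot)$ handled by Step 1. Alternatively, and more cleanly, redo the telescoping with $(S_jv,S_jw)$ and $m_j^{1},m_j^{2}$ the averaged partial derivatives. This gives $\sum_j (m^1_j\Delta_jv+m^2_j\Delta_jw)$, and Step 2 applies verbatim to each term.

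\textbf{Main obstacle.} The main obstacle is Step 2's commutation of high-frequency localization with a rough multiplier, specifically the bound $\|\partial^\alpha m_j\|_{L^\infty}\lesssim 2^{j|\alpha|}$ on the torus with a non-cubic period $\mathbf{a}$. Here I would invoke Bernstein on $\Td$: all frequencies of $S_ju$ lie in a ball of radius $\sim2^j$, and Fourier multipliers on $\Td$ with symbols $\varphi(2^{-j}\cdot)$ have kernels uniformly bounded in $L^1$ (by periodization of the $\mathbb{R}^d$ kernels). I would also keep track of the restriction $s\le d/2$, which is not needed in the argument above except to connect the statement with the $L^\infty$ hypothesis.
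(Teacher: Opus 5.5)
Your proposal is correct and follows the same route as the paper, which simply defers to the proof of Theorem 2.61 in \cite{BCD11}: Meyer's first linearization $f(u)=\sum_j m_j\Delta_j u$ with $\|\partial^\alpha m_j\|_{L^\infty}\lesssim 2^{j|\alpha|}$, followed by a dyadic summation, carried over to $\Td$ with the zero mode handled separately as you do. For $g$, use your telescoping version of Step~3; the factorization $v\,\tilde g_1(v,w)$ is circular, because it already needs the two-variable estimate for $\tilde g_1$.

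The range issue you flag should not be settled by assuming $CR_1<R$, which is not a hypothesis. Instead, replace $f$ by $\psi f$ and $g$ by $\psi(x)\psi(y)g(x,y)$, where $\psi\in C_c^\infty((-R,R))$ equals $1$ on $[-\max_j R_j,\max_j R_j]$. These functions are smooth on $\mathbb{R}$ (resp.\ $\mathbb{R}^2$) and coincide with $f$, $g$ on the ranges of $u$, $(v,w)$. The resulting constants depend only on $f$, $g$ and the $R_j$.
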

Lemma \ref{Le8.6} is proved by repeating the proof of Theorem 2.61 in \cite{BCD11}. We omit the details.

\subsection{Decay properties of \texorpdfstring{$\vZe$}{}}
This subsection is devoted to the decay properties of $\vZe$ with respect to the Mach number $\ep$.
\begin{Proposition}\label{Pro4.1}
Let $T^*_0$ be as in Theorem \ref{Th6.1},
$0<T< \infty$ with $T\leq T^*_0$, $0<\theta<1$ and $(a_0, b_0, \vu_0)\in B^{\f d 2}_{2, 1} \times B^{\f d 2}_{2, 1} \times B^{\f d 2-1}_{2, 1}$. Assume that $(a_\ep, b_\ep, \vue)$ is a regular solution to \eqref{1.1.3} on $[0, T]$,  satisfying
\begin{align*}
\ep 
\sp{
\n{\vae}_{L^\infty_T(L^\infty)}
+
\n{\vbe}_{L^\infty_T(L^\infty)}
}
\leq
\f{\min\{R^0, Q^0\}}{2}.
\end{align*}
Then 
\begin{align*}
\vZ_{T, \theta}^\ep 
&
\leq
C
\exp{ 
\left[
C
\sp{
\vV_{T^*_0}^2
+ 
[T]
\sp{E^{\ep}_T}^2
}
\right]
}
\Bigg[
\sp{
\vW^\ep_{T, \theta}
}^{\f{\theta}{1+\theta}}
\sp{
\vU_{T^*_0}+E^{\ep}_T
}^{\f{1}{1+\theta}}
\vV_{T^*_0}
+
\vW^\ep_{T, \theta}
\vV_{T^*_0}
\\
&
\qquad 
+
\tau(\ep)
[T]^{\f 3 2}
\sp{
E_0
+
E^2_0
+
\vV_{T^*_0}
+
\vU_{T^*_0}
+
E^{\ep}_T
+1
}
\sp{
\vV^2_{T^*_0}
+
\vU^2_{T^*_0}
+
\sp{
\sp{
E^{\ep}_T
}^2
+
E^{\ep}_T
+
1}^2
}
\Bigg],
\end{align*}
where $\tau(\ep)$ is monotonically increasing in $\ep$ and satisfies $\lim\limits_{\ep \to 0^{+}} \tau(\ep)=0$.
\end{Proposition}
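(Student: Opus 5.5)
I will derive the equation for $\vZe=\vVe-\vV$ by subtracting \eqref{1.4} from \eqref{1.6}:
\begin{align*}
\p_t \vZe-\overline{\mathcal{D}}(\vZe)
&=
-\bigl(\mathcal{Q}_1^\ep(\vVe,\vVe)-\mathcal{Q}_1(\vV,\vV)\bigr)
-\bigl(\mathcal{Q}_2^\ep(\underline{\vUe},\vVe)-\mathcal{Q}_2(\underline{\vU},\vV)\bigr)
-\bigl(\mathcal{Q}_2^\ep(\mathbb{P}_0\vUe,\vVe)-\mathcal{Q}_2(\mathbb{P}_0\vU,\vV)\bigr)
\\
&\quad
+\bigl(\mathcal{D}^\ep-\overline{\mathcal{D}}\bigr)(\vVe)
+\mathcal{L}\Bigl(-\tfrac{t}{\ep}\Bigr)\mathbb{P}^\perp r_\ep
-\mathcal{L}\Bigl(-\tfrac{t}{\ep}\Bigr)\mathbb{P}^\perp\bigl(\mathcal{Q}(\underline{\vUe},\underline{\vUe})+2\mathcal{Q}(\mathbb{P}_0\vUe,\underline{\vUe})\bigr).
\end{align*}
The plan is then a standard $H^{\f d2-1-\theta}$ energy estimate in the inner product $\langle\cdot,\cdot\rangle_{\mathbb H}$, which gives a heat-type dissipation controlling the $L^2_T(B^{\f d2-\theta}_{2,2})$ norm, followed by Gronwall. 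Each difference on the right is split into a \emph{linear-in-$\vZe$} part, an \emph{oscillatory} part, and a \emph{$\vWe$} part.

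\textbf{Linear-in-$\vZe$ and $\vWe$ parts.} Writing $\mathcal{Q}^\ep_1(\vVe,\vVe)-\mathcal{Q}^\ep_1(\vV,\vV)=\mathcal{Q}^\ep_1(\vZe,\vVe+\vV)$, the fourth estimate of Lemma \ref{le4.2} bounds it in $B^{\f d2-1-\theta}_{2,2}$ by $\n{\vZe}_{B^{\f d2-\theta}_{2,2}}(\n{\vVe}_{B^{\f d2}_{2,1}}+\n{\vV}_{B^{\f d2}_{2,1}})$. Pairing with $\vZe$ and absorbing half of the dissipation, this produces the Gronwall weight $\int_0^T(\n{\vVe}^2_{B^{\f d2}_{2,1}}+\n{\vV}^2_{B^{\f d2}_{2,1}})$. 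By \eqref{cons2} this weight is bounded by $\vV^2_{T^*_0}+(E^\ep_T)^2$.

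The $\mathcal{Q}_2^\ep(\underline{\vUe},\cdot)$ and $\mathcal{Q}_2^\ep(\mathbb{P}_0\vUe,\cdot)$ terms are treated the same way. The $\underline{\vUe}$ contribution is bounded by \eqref{cons18}, and this is where the factor $[T](E^\ep_T)^2$ in the exponent comes from. The $\mathbb{P}_0$ contribution is skew (it is a transport by a constant), so it causes no loss. The remaining pieces, with $\vWe$ in place of $\vUe$, are estimated using the fifth estimate of Lemma \ref{le4.2} against $\vV$. This gives $\vW^\ep_{T,\theta}\vV_{T^*_0}$.

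One of these pieces needs $\n{\vWe}$ at regularity $\f d2-\theta$, which is higher than what $\vW^\ep_{T,\theta}$ controls. For it I interpolate between $\vW^\ep_{T,\theta}$ (regularity $\f d2-1-\theta$) and $\vU_{T^*_0}+E^\ep_T$ (regularity $\f d2$). This produces exactly the factor $(\vW^\ep_{T,\theta})^{\f\theta{1+\theta}}(\vU_{T^*_0}+E^\ep_T)^{\f1{1+\theta}}\vV_{T^*_0}$.

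\textbf{Residual terms.} The term $\mathbb{P}^\perp r_\ep$ is of size $\ep(\n{a_\ep}+\n{b_\ep})\times(\cdots)$. Using Lemma \ref{Le8.6} together with \eqref{cons15}, it contributes $\ep^\theta$ times a polynomial in $E^\ep_T$, and this is absorbed into $\tau(\ep)$. The $\mathbb{P}^\perp\mathcal{Q}(\underline{\vUe},\underline{\vUe})$ and $\mathbb{P}^\perp\mathcal{Q}(\mathbb{P}_0\vUe,\underline{\vUe})$ terms (see \eqref{new1}) carry the factor $\mathcal{L}(-t/\ep)$. By the eigenvalue computation, they are purely oscillatory of the form $e^{ic_0\lambda_k^\alpha t/\ep}$ with $\lambda_k^\alpha\neq 0$, so they are handled like the nonresonant terms below.

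\textbf{Main obstacle: the nonresonant parts.} These are $\mathcal{Q}_1^\ep(\vV,\vV)-\mathcal{Q}_1(\vV,\vV)$, $\mathcal{Q}_2^\ep(\underline{\vU},\vV)-\mathcal{Q}_2(\underline{\vU},\vV)$, the analogous $\mathbb{P}_0$ difference, and $(\mathcal{D}^\ep-\overline{\mathcal{D}})$. They are sums of $e^{i\omega t/\ep}$ with phases $\omega=c_0(\lambda_k^\alpha-\lambda_l^\beta-\lambda_m^\gamma)\neq0$. Since no time decay is available, I plan to follow the filtering approach.

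First, truncate all profiles to frequencies $|k|\le M$. The tails are bounded by high-frequency norms of $\vV,\vU$, which tend to zero as $M\to\infty$ uniformly in $\ep$ thanks to the $\widetilde L$ Chemin--Lerner structure. On the finite set $|k|,|l|,|m|\le M$, the nonzero phases satisfy $|\omega|\ge \widetilde C_M>0$, where $\widetilde C_M$ and $\widecheck C_M$ are the constants \eqref{xishu1}, \eqref{xishu2}.

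Then integrate by parts in time, writing $e^{i\omega t/\ep}=\tfrac{\ep}{i\omega}\p_t e^{i\omega t/\ep}$. The time derivative falls on $\vV,\vU$ or on $\vZe$. It is replaced using their equations \eqref{1.3}, \eqref{1.4} and the $\vZe$ equation, and estimated on the truncated frequencies via Lemma \ref{le4.4}. This produces a factor $\ep M^{C}/\widetilde C_M$ times $[T]^{\f32}$ times the polynomials in $E_0,\vV_{T^*_0},\vU_{T^*_0},E^\ep_T$ appearing in the statement. The quadratic-in-$E^\ep_T$ pieces come from $\p_t\vZe$.

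Finally, set $\tau(\ep)=\inf_M(\ep M^C/\widetilde C_M+\text{tail}(M))$, made monotone in $\ep$. It tends to $0$ as $\ep\to0$ but is not explicit. Gronwall then yields the claimed bound.
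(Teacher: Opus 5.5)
Your overall architecture matches the paper's. You subtract \eqref{1.4} from \eqref{1.6} and run the $B^{\f d2-1-\theta}_{2,2}$ energy estimate with $\overline{\mathcal{D}}$ on the left, as in Lemma \ref{le4.3}. You put $\mathcal{Q}_1^\ep(\vV+\vVe,\vZe)$, $\mathcal{Q}_2^\ep(\underline{\vUe},\vZe)$ and $\mathcal{Q}_2^\ep(\mathbb{P}_0\vUe,\vZe)$ into the Gronwall weight, interpolate for $\mathcal{Q}_2^\ep(\underline{\vWe},\vV)$, and remove the non-resonant sources by truncating at $|k|\le M$ and integrating by parts in time.

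\textbf{The gap: control of the truncation tails.} You bound the tails by high-frequency norms of $\vV,\vU$, which do vanish as $M\to\infty$ uniformly in $\ep$. That handles the tails of $\mathcal{Q}_1^\ep(\vV,\vV)-\mathcal{Q}_1(\vV,\vV)$ and of the $\mathcal{Q}_2(\vU,\vV)$ differences. It does not handle the three oscillatory sources built from the $\ep$-dependent unknowns:
$\mathcal{L}(-t/\ep)\mathbb{P}^\perp\mathcal{Q}(\underline{\vUe},\underline{\vUe})$, $\mathcal{L}(-t/\ep)\mathbb{P}^\perp\mathcal{Q}(\mathbb{P}_0\vUe,\underline{\vUe})$ and $(\mathcal{D}^\ep-\overline{\mathcal{D}})(\vVe)$.

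Their tails involve the parts of $\vUe$ and $\vVe$ (or $\vZe$) at frequencies $|k|>M$. At this stage these are only bounded by $E^\ep_T$, not small uniformly in $\ep$. Uniform high-frequency smallness of the solution is obtained only in the proof of Theorem \ref{main}, with this proposition as an input, so invoking it here would be circular.

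Splitting $\vVe=\vV+\vZe$ does not rescue this. The oscillating part of $(\mathcal{D}^\ep-\overline{\mathcal{D}})(\vZe)$ has symbol $\f{\nu|k|^2}{2(R^0+Q^0)}$, the same size as the dissipation, so its untruncated part cannot be absorbed.

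\textbf{The missing idea is what the loss of $\theta$ derivatives is for.} The target norm sits at level $\f d2-1-\theta$, while $E^\ep_T$ controls level $\f d2-1$. So every tail gains a power of $M$ through $\n{\vA^M}_{B^{s-\theta}_{2,r}}\les M^{-\theta}\n{\vA}_{\underline{B}^{s}_{2,r}}$. For example, the tail of the $\mathcal{Q}(\underline{\vUe},\underline{\vUe})$ source is $\les M^{-\theta}[T](E^\ep_T)^2$ in $L^1_T(B^{\f d2-1-\theta}_{2,2})$. The tail of $(\mathcal{D}^\ep-\overline{\mathcal{D}})(\vVe)$ is $\les \n{\vVe^M}_{L^2_T(B^{\f d2-\theta}_{2,2})}\les M^{-\theta}E^\ep_T$ in $L^2_T(B^{\f d2-2-\theta}_{2,2})$, as in \eqref{3.1.15}. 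Balancing $M^{-\theta/2}$ against $\ep C_M$ then gives a $\tau(\ep)$ independent of the data, multiplying explicit polynomials in $E^\ep_T$, as in the statement.

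\textbf{Further points.}
\begin{itemize}
\item In the filtered versions of these three sources, the time derivative falls on $\vUe$ and $\vVe$, not only on $\vV,\vU$ or $\vZe$. You therefore need the bounds \eqref{tV}--\eqref{tU}, derived from \eqref{1.2} and \eqref{1.6}. They are the source of the factor $((E^\ep_T)^2+E^\ep_T+1)^2$.
\item $\mathcal{Q}_2^\ep(\mathbb{P}_0\vUe,\cdot)$ is not skew; only its resonant limit is, by \eqref{5.13}. Besides transport by $\widehat{(\vue)}_0$, the density averages generate a first-order oscillating part with coefficient $\widehat{(a_0)}_0(c_3R^0+c_5Q^0-c_1)+\widehat{(b_0)}_0(c_4R^0+c_6Q^0-c_2)$. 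Its $\mathbb{H}$-pairing with $\vZe$ does not vanish in general. This is harmless: it is bounded by $|\mathbb{P}_0\vUe|\,\n{\vZe}_{B^{\f d2-1-\theta}_{2,2}}\n{\vZe}_{B^{\f d2-\theta}_{2,2}}$ and absorbed through the weight $\int_0^T|\mathbb{P}_0\vUe|^2\le[T](E^\ep_T)^2$, as in Lemma \ref{le4.3}.
\item \eqref{xishu1}--\eqref{xishu2} define $\widetilde C_M,\widecheck C_M$ as suprema of $|\omega|^{-1}$. So the gain is $\ep M^C\widetilde C_M$, not $\ep M^C/\widetilde C_M$.
\end{itemize}
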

\bProof
Subtracting \eqref{1.4} from \eqref{1.6}, we obtain
\begin{align}\label{3.1.1}
\p_t \vZe
&
+
\mathcal{Q}_1^\ep(\vV+\vVe, \vZe)
+
\mathcal{Q}_2^\ep(\underline{\vUe}, \vZe)
+
\mathcal{Q}_2^\ep(\mathbb{P}_0 \vUe, \vZe)
-
\overline{\mathcal{D}}(\vZe)
\\
&
=
\mathcal{L}\left(-\df{t}{\ep}\right)\mathbb{P}^\perp r_\ep
-
\mathcal{Q}_2^\ep(\underline{\vWe}, \vV)
-
\mathcal{Q}_2^\ep(\mathbb{P}_0\vWe, \vV)
+
\sum_{j=1}^{6}R_{j,\ep},
\nonumber
\end{align}
where
\begin{align*}
R_{1,\ep}
&
=
\mathcal{Q}_1(\vV, \vV)
-
\mathcal{Q}^\ep_1(\vV, \vV),
\qquad 
R_{2,\ep}
=
\mathcal{Q}_2(\underline{\vU}, \vV)
-
\mathcal{Q}^\ep_2(\underline{\vU}, \vV),
\\
R_{3,\ep}
&
=\mathcal{Q}_2(\mathbb{P}_0 \vU, \vV)
-
\mathcal{Q}^\ep_2(\mathbb{P}_0 \vU, \vV),
\qquad 
R_{4,\ep}
=
-2\mathcal{L}\left(-\df{t}{\ep}\right)\mathbb{P}^\perp
\mathcal{Q}(\mathbb{P}_0 \vUe, \underline{\vUe}) , 
\\
R_{5,\ep}
&
=-\mathcal{L}\left(-\df{t}{\ep}\right)\mathbb{P}^\perp
\mathcal{Q}(\underline{\vUe}, \underline{\vUe}),
\qquad 
R_{6, \ep}
=
-(\overline{\mathcal{D}}(\vVe)-\mathcal{D}^\ep(\vVe)). 
\end{align*}
For our purposes, a delicate treatment of $R_{j, \ep} \, (1\leq j\leq 6)$
is required. Following the approach in \cite{D02,S94}, we employ the negative time regularity method. For any $\vA=(a,b, \vu)^T$ and $M>0$, we define the truncation of $\vA$ as 
$
\vA_M:=\sum\limits_{|k|\leq M}
(\widehat{a}_k, \widehat{b}_k, \widehat{\vu}_k)^T
\df{\text{e}^{ik\cdot x}}{\sqrt{\Td}}
$. Moreover, we set
$
\vA^M:=\vA-\vA_M
$. 
We recall the expression
$
\vU
=
(\rho, -\df{c_1}{c_2}\rho, \vw)^T
+
(
\df{\widehat{(a_0)}_0}{\sqrt{|\Td|}}
, 
\df{\widehat{(b_0)}_0}{\sqrt{|\Td|}}
, 
\df{\widehat{(\vu_0)}_0}{\sqrt{|\Td|}}
)^T
$ and set
\begin{align*}
R_{1, \ep, M}
:=
&
\mathcal{Q}_1(\vV_M, \vV_M)
-
\mathcal{Q}^\ep_1(\vV_M, \vV_M), 
\quad 
R^M_{1, \ep}
:=
R_{1, \ep}
-
R_{1, \ep, M},
\\[10pt]
\widetilde{R}_{1,\ep, M}
:=
&
-\sum_{
\substack{k, l, m, \alpha, \beta, \gamma
\\ 
k=m+l, |m|\leq M, |l|\leq M
\\ 
\lambda_k^\alpha-\lambda_l^\beta-\lambda_m^\gamma\neq0
}
}
\left(
\df{c_0}{R^0}
\left(
\df{\gamma \text{sgn}(m) (m\cdot k)}{|m|}
+
\df{\beta \tsgn(l) (l \cdot k)}{|l|}
\right)
\right.
\\
&
\quad\left.
+
\lambda_k^\alpha 
\left(
\df{c_0 \beta \gamma \tsgn(m)\tsgn(l)(l\cdot m)}
{ R^0|m||l|}
+
\df{c_3R^0}{c_0}
+
\df{(c_4+c_5)Q^0}{c_0}
+
\df{c_6(Q^0)^2}{c_0R^0}
\right)
\right)
\\
&
\qquad \qquad  \cdot
\df{
\text{e}^{ \f{itc_0}{\ep}
(\lambda_k^\alpha - \lambda_l^\beta-\lambda_m^\gamma)
}
} 
{c_0(\lambda_k^\alpha-\lambda_l^\beta-\lambda_m^\gamma) }
\widehat{\vV}_l^\beta 
\widehat{\vV}_m^\gamma 
\df{\Phi_k^\alpha}{2c_\mathbf{a}},
\\[10pt]
\widetilde{R}^t_{1,\ep, M}
:=
&
-\sum_{
\substack
{
k, l, m, \alpha, \beta, \gamma
\\ 
k=m+l, |m|\leq M, |l|\leq M
\\ 
\lambda_k^\alpha-\lambda_l^\beta-\lambda_m^\gamma\neq0
}
}
\left(
\df{c_0}{R^0}
\left(
\df{\gamma \text{sgn}(m) (m\cdot k)}{|m|}
+
\df{\beta \tsgn(l) (l \cdot k)}{|l|}
\right)
\right.
\\
&
\quad\left.
+
\lambda_k^\alpha 
\left(
\df{c_0 \beta \gamma \tsgn(m)\tsgn(l)(l\cdot m)}{R^0|m||l|}
+
\df{c_3R^0}{c_0}
+
\df{(c_4+c_5)Q^0}{c_0}
+
\df{c_6(Q^0)^2}{c_0R^0}
\right)
\right)
\\
&
\qquad \qquad \cdot 
\df{
\text{e}^{
\f{itc_0}{\ep}
(\lambda_k^\alpha-\lambda_l^\beta-\lambda_m^\gamma)
}
}
{
c_0(\lambda_k^\alpha-\lambda_l^\beta-\lambda_m^\gamma)
}
\left(
\p_t\widehat{\vV}_l^\beta \widehat{\vV}_m^\gamma
+
\widehat{\vV}_l^\beta \p_t\widehat{\vV}_m^\gamma
\right) 
\df{\Phi_k^\alpha}{2c_\mathbf{a}},   
\end{align*}
\begin{align*}
R_{2,\ep, M}:=
&
\mathcal{Q}_2(\underline{\vU_M}, \vV_M)
-
\mathcal{Q}^\ep_2(\underline{\vU_M}, \vV_M),
\quad 
R^M_{2,\ep}:=R_{ 2,\ep}-R_{2,\ep, M},
\\
\widetilde{R}_{2, \ep, M}:=
&
-\sum_{
\substack{
k, m, l, \alpha, \gamma
\\ 
k=l+m, |l|\leq M, |m|\leq M
\\ 
\lambda_k^\alpha\neq\lambda_m^\gamma
}
}
\Bigg(
(\widehat{\vw}_l\cdot k)
\left(
1
+
\df{\alpha\gamma\text{sgn}(k)\text{sgn}(m)(m\cdot (k+l))}
{|m||k|}
\right)
\\
&
+
\df{\widehat{\rho}_l\alpha\tsgn(k)}{c_0|k|}
\left(
|k|^2\left(c_3R^0 -\df{c_6c_1Q^0}{c_2} \right)
+
Q^0(m\cdot k)(c_4+c_5)
-
\df{c_1R^0}{c_2}
(l\cdot k)(c_4+c_5)
\right)
\Bigg)
\\
& \qquad \qquad 
\cdot 
\df{
\text{e}^{\f{itc_0}{\ep}(\lambda_k^\alpha-\lambda_m^\gamma)}
}
{c_0(\lambda_k^\alpha-\lambda_m^\gamma)}
\widehat{\vV}_m^\gamma
\df{\Phi_k^\alpha}{2\sqrt{|\Td|}},
\\[10pt]
\widetilde{R}^t_{2, \ep, M}:=
&
-\sum_{
\substack{
k, m, l, \alpha, \gamma
\\ 
k=l+m, |l|\leq M, |m|\leq M
\\
\lambda_k^\alpha\neq\lambda_m^\gamma
}
}
\Bigg(
(\p_t\widehat{\vw}_l\cdot k)
\left(
1+
\df{\alpha\gamma\text{sgn}(k)\text{sgn}(m)(m\cdot (k+l))}
{|m||k|}
\right)
\\
&
+
\df{\p_t\widehat{\rho}_l\alpha\tsgn(k)}
{c_0|k|}
\left(
|k|^2\left(c_3R^0-\df{c_6c_1Q^0}{c_2}\right)
+
Q^0(m\cdot k)(c_4+c_5)
-
\df{c_1R^0}{c_2}(l\cdot k)(c_4+c_5)
\right)
\Bigg)
\\
& \qquad  
\cdot
\df{
\text{e}^{\f{itc_0}{\ep}(\lambda_k^\alpha-\lambda_m^\gamma)}
}
{c_0(\lambda_k^\alpha-\lambda_m^\gamma)}
\widehat{\vV}_m^\gamma
\df{\Phi_k^\alpha}{2\sqrt{|\Td|}}
-
\sum_{
\substack{
k, m, l, \alpha, \gamma
\\
k=l+m, |l|\leq M, |m|\leq M
\\
\lambda_k^\alpha\neq\lambda_m^\gamma
}
}
\Bigg(
(\widehat{\vw}_l\cdot k)
\left(
1+
\df{\alpha\gamma\text{sgn}(k)\text{sgn}(m)(m\cdot (k+l))}{|m||k|}
\right)
\\
&
+
\df{\widehat{\rho}_l\alpha\tsgn(k)}{c_0|k|}
\left(
|k|^2\left(c_3R^0-\df{c_6c_1Q^0}{c_2}\right)
+
Q^0(m\cdot k)(c_4+c_5)
-
\df{c_1R^0}{c_2}(l\cdot k)(c_4+c_5)
\right)
\Bigg)
\\
& \qquad 
\cdot 
\df{
\text{e}^{\f{itc_0}{\ep}(\lambda_k^\alpha-\lambda_m^\gamma)}
}
{c_0(\lambda_k^\alpha-\lambda_m^\gamma)}
\p_t\widehat{\vV}_m^\gamma
\df{\Phi_k^\alpha}{2\sqrt{|\Td|}},
\end{align*}        
\begin{align*}
R_{3,\ep, M}:=
&
\mathcal{Q}_2(\mathbb{P}_0 \vU, \vV_M)
-
\mathcal{Q}^\ep_2(\mathbb{P}_0 \vU, \vV_M), 
\quad   
R^M_{3,\ep}
:=  R_{3,\ep}
-  
R_{3,\ep, M},
\\[10pt]
\widetilde{R}_{3,\ep, M}:=
&
-\sum\limits_{|k|\leq M,\alpha}
\left(
\widehat{(a_0)}_0
\left(c_3R^0+c_5Q^0-c_1\right)
+
\widehat{(b_0)}_0
\left(c_4R^0+c_6Q^0-c_2\right)
\right)
\df{\lambda_k^\alpha\widehat{\vV}^{-\alpha}_k}
{c_0}
\df{\text{e}^{\f{2itc_0\lambda_k^\alpha}{\ep}}}
{2c_0\lambda_k^\alpha}
\df{\Phi_k^\alpha}
{2\sqrt{|\Td|}},
\\[10pt]
\widetilde{R}^t_{3,\ep, M}:=
&
-\sum\limits_{|k|\leq M,\alpha}
\left(
\widehat{(a_0)}_0
\left(c_3R^0+c_5Q^0-c_1\right)
+
\widehat{(b_0)}_0
\left(c_4R^0+c_6Q^0-c_2\right)
\right)
\df{\lambda_k^\alpha\p_t\widehat{\vV}^{-\alpha}_k}
{c_0}
\df{\text{e}^{\f{2itc_0\lambda_k^\alpha}{\ep}}}
{2c_0\lambda_k^\alpha}
\df{\Phi_k^\alpha}
{2\sqrt{|\Td|}},
\end{align*}
\begin{align*}
 R_{4,\ep, M}:=
 &
 -2\mathcal{L}\left(-\df{t}{\ep}\right)\mathbb{P}^\perp
 \mathcal{Q}(\mathbb{P}_0 \vUe, \underline{\vU_{\ep, M}}),
 \quad 
 R^M_{4,\ep}:=R_{4,\ep}-R_{4,\ep, M},
 \\[10pt]
\widetilde{R}_{4,\ep, M}:=
&
-\sum_{|k|\leq M, \alpha}
\left(
\widehat{(a_0)}_0
\left(c_3-\df{c_1}{c_2}c_5\right)
+
\widehat{(b_0)}_0
\left(c_4-\df{c_1}{c_2}c_6\right)
\right)
\df{\lambda_k^\alpha c_\mathbf{a}R^0\widehat{\rho}_{\ep, k}}
{2c_0|\Td|}
\df{
\text{e}^{\f{it c_o\lambda_k^\alpha}{\ep}}
}
{c_0\lambda_k^\alpha}
\Phi_k^\alpha,
\\[10pt]
\widetilde{R}^t_{4,\ep, M}:=
&
-\sum_{|k|\leq M, \alpha}
\left(
\widehat{(a_0)}_0\left(c_3-\df{c_1}{c_2}c_5\right)
+
\widehat{(b_0)}_0
\left(c_4-\df{c_1}{c_2}c_6\right)
\right)
\df{\lambda_k^\alpha c_\mathbf{a}R^0\p_t\widehat{\rho}_{\ep, k}}
{2c_0|\Td|}
\df{
\text{e}^{\f{itc_o\lambda_k^\alpha}{\ep}}
}
{c_0\lambda_k^\alpha}
\Phi_k^\alpha , 
\\[10pt]
R_{5,\ep, M}:=
&
-\mathcal{L}\left(-\df{t}{\ep}\right)\mathbb{P}^\perp
\mathcal{Q}(\underline{\vU_{\ep, M}}, \underline{\vU_{\ep, M}}),
\quad 
R^M_{5,\ep}:=R_{5,\ep}-R_{5,\ep, M},
\\[10pt]
\widetilde{R}_{5,\ep, M}:=
&
-\sum_{
\substack{
k, m, l, \alpha
\\
k=l+m, |l|\leq M, |m|\leq M
}
}
\Bigg[
\df{c_\mathbf{a}R^0\alpha\tsgn(k)}{2c_0|\Td||k|}
(\widehat{(\mathcal{P}^\perp \underline{\vue})}_l\cdot k)
(\widehat{(\mathcal{P}^\perp \underline{\vue})}_m\cdot k)
\\
&
\quad
+
\df{\lambda_k^\alpha c_\mathbf{a}R^0\widehat{\rho}_{\ep, l}\widehat{\rho}_{\ep, m}}{4c_0|\Td|}
\left(
c_3-\f{c_1}{c_2}(c_4+c_5)
+c_6\left(\df{c_1}{c_2}\right)^2
\right)
\Bigg]
\df{
\text{e}^{\f{itc_o\lambda_k^\alpha}{\ep}}
}
{c_0\lambda_k^\alpha}
\Phi_k^\alpha,
\\[10pt]
\widetilde{R}^t_{5,\ep, M}:=
&
-\sum_{
\substack{
k, m, l, \alpha
\\ 
k=l+m, |l|\leq M, |m|\leq M
}
}
\Bigg[
\df{c_\mathbf{a}R^0\alpha\tsgn(k)}{2c_0|\Td||k|}
\left(
(\p_t\widehat{(\mathcal{P}^\perp \underline{\vue})}_l\cdot k)
(\widehat{(\mathcal{P}^\perp \underline{\vue})}_m\cdot k)
\right.
\\
&
\left.
\quad
+
(\widehat{(\mathcal{P}^\perp \underline{\vue})}_l\cdot k)
(\p_t\widehat{(\mathcal{P}^\perp \underline{\vue})}_m\cdot k)
\right)
+
\df{
\lambda_k^\alpha c_\mathbf{a}R^0
\left(
\p_t\widehat{\rho}_{\ep, l}\widehat{\rho}_{\ep, m}
+
\widehat{\rho}_{\ep, l}\p_t\widehat{\rho}_{\ep, m}
\right)
}
{4c_0|\Td|}
\\
&
\qquad 
\left(
c_3-\f{c_1}{c_2}(c_4+c_5)
+c_6\left(\df{c_1}{c_2}\right)^2
\right)
\Bigg]
\df{
\text{e}^{\f{itc_o\lambda_k^\alpha}{\ep}}
}
{c_0\lambda_k^\alpha}
\Phi_k^\alpha,
\\[10pt]
R_{6, \ep, M}:=
 &
 -(\overline{\mathcal{D}}(\vV_{\ep, M})-\mathcal{D}^\ep(\vV_{\ep, M})) ,
 \quad 
 R^M_{6, \ep}:=R_{6, \ep}-R_{6, \ep, M} ,
 \\
\widetilde{R}_{6,\ep, M}:=
&
\sum\limits_{|k|\leq M,\alpha}
\df{\nu|k|^2\widehat{\vV}^{-\alpha}_{\ep, k}}
{2(R^0+Q^0)}
\df{\text{e}^{\f{2itc_0\lambda_k^\alpha}{\ep}}}
{2ic_0\lambda_k^\alpha}
\Phi^\alpha_k,
\\
\widetilde{R}^t_{6,\ep, M}:=
&
\sum\limits_{|k|\leq M,\alpha}
\df{\nu|k|^2\p_t\widehat{\vV}^{-\alpha}_{\ep, k}}
{2(R^0+Q^0)}
\df{
\text{e}^{\f{2itc_0\lambda_k^\alpha}{\ep}}
}
{2ic_0\lambda_k^\alpha}
\Phi^\alpha_k,
\\[10pt]
\end{align*}

It can be readily verified that $R_{j,\ep, M}$, $\widetilde{R}_{j,\ep, M}$ and $\widetilde{R}^t_{j,\ep, M}$ satisfy 
\begin{align}\label{3.1.2}
\p_t\ep \widetilde{R}_{j,\ep, M}
-
\ep \widetilde{R}^t_{j,\ep, M}
=
R_{j,\ep,M}
, \ \ 1\leq j \leq 6.
\end{align}
Next, we set $\widetilde{\vZ}_{\ep, M}:=\vZe-\sum\limits_{j=1}^{6}\ep \widetilde{R}_{j,\ep, M}$. Subtracting \eqref{3.1.2} from \eqref{3.1.1}, we find that $\widetilde{\vZ}_{\ep, M}$ satisfies 
\begin{align}\label{3.1.4}
\p_t
&
\widetilde{\vZ}_{\ep, M}
+
\mathcal{Q}_1^\ep(\vV+\vVe, \widetilde{\vZ}_{\ep, m})
+
\mathcal{Q}_2^\ep(\underline{\vUe}, \widetilde{\vZ}_{\ep, M})
+
\mathcal{Q}_2^\ep(\mathbb{P}_0 \vUe, \widetilde{\vZ}_{\ep, M})
-
\overline{\mathcal{D}}(\widetilde{\vZ}_{\ep, M})
\nonumber
\\
&
=
\mathcal{L}\left(-\df{t}{\ep}\right)
\mathbb{P}^\perp r_\ep
-
\mathcal{Q}_2^\ep(\underline{\vWe}, \vV)
-
\mathcal{Q}_2^\ep(\mathbb{P}_0{\vWe}, \vV)
+
\sum_{j=1}^{6}R^M_{j,\ep}
\\
&
\quad
-
\ep\sum_{j=1}^{6}
\sp{
\widetilde{R}^t_{j, \ep, M}
+
\mathcal{Q}_1^\ep(\vV+\vVe, \widetilde{R}_{M, j, \ep})
+
\mathcal{Q}_2^\ep(\underline{\vUe}, \widetilde{R}_{j, \ep, M})
+
\mathcal{Q}_2^\ep(\mathbb{P}_0 \vUe, \widetilde{R}_{j, \ep, M})
-
\overline{\mathcal{D}}(\widetilde{R}_{j, \ep, M})
}\nonumber.
\end{align}
Applying Lemma \ref{le4.3} to \eqref{3.1.4} and using Lemma \ref{cons}, we obtain
\begin{align}\label{3.1.5}
&
\vZ^{\ep}_{T, \theta}
\leq
C
e^{C
\sp{
\vV_{T^*_0}^2
+ 
[T]
\sp{E^{\ep}_T}^2
}
}
\Bigg(
\n{\widetilde{\vZ}_{\ep, M}(0)}_{\underline{B}^{\f d 2 -1-\theta}_{2, 2}}
+
\n{r_\ep}_{L_T^1(\underline{B}^{\f d 2 -1-\theta}_{2, 2})}
+
\n{\mathcal{Q}^\ep_2(\vWe, \vV)}_{L_T^1(\underline{B}^{\f d 2 -1-\theta}_{2, 2})}
\nonumber
\\
& \quad 
+
\n{\mathcal{Q}^\ep_2(\mathbb{P}_0 \vWe, \vV)}_{L_T^1(\underline{B}^{\f d 2 -1-\theta}_{2, 2})}
+
\sum_{j=1}^{5}
\n{R^M_{j, \ep}}_{L_T^1(\underline{B}^{\f d 2 -1-\theta}_{2, 2})}\nonumber
+
\n{R^M_{6, \ep}}_{L_T^2(\underline{B}^{\f d 2 -2-\theta}_{2, 2})}
\\
&\quad 
+
\ep
\sum_{1}^{6}
\left(
\n{\widetilde{R}^t_{j, \ep, M}}_{L_T^1(\underline{B}^{\f d 2 -1-\theta}_{2, 2})}
+
\n{\mathcal{Q}_1^\ep(\vV+\vVe, \widetilde{R}_{j, \ep, M} )}_{L_T^1(\underline{B}^{\f d 2 -1-\theta}_{2, 2})}
+
\n{\mathcal{Q}_2^\ep(\underline{\vUe}, \widetilde{R}_{j, \ep, M} )}_{L_T^1(\underline{B}^{\f d 2 -1-\theta}_{2, 2})}
\right.
\nonumber
\\
&
\left. \quad 
+
\n{\mathcal{Q}_2^\ep(\mathbb{P}_0\vUe, \widetilde{R}_{j, \ep, M} )}_{L_T^2(\underline{B}^{\f d 2 -2-\theta}_{2, 2})}
+
\n{\overline{\mathcal{D}}(\widetilde{R}_{j, \ep, M} )}_{L_T^2(\underline{B}^{\f d 2 -2-\theta}_{2, 2})}
+
\n{\widetilde{R}_{j, \ep, M} }_{
\widetilde{L}_T^\infty (B^{\f d 2-1-\theta}_{2, 2})
\cap 
L_T^2 (B^{\f d 2-\theta}_{2, 2})}
\right).
\end{align}

Next, we choose $M$ sufficiently large, which will be determined later, to estimate the terms on the right-hand side of \eqref{3.1.5}. To this end, we introduce some basic estimates for the truncations $\vA_M$
and $\vA^M$ of $\vA$. For $1\leq  r \leq \infty$, $1\leq M$, $-\infty<s<\infty$, $s_1\geq0$ and $s_2>0$, we have
\begin{align}
&
\n{\vA_M}_{B^{s+s_1}_{2, r}}
\les
M^{s_1}
\n{\vA}_{B^{s}_{2, r}},
\qquad 
\n{\vA^M}_{B^{s-s_1}_{2, r}}
\les
M^{-s_1}
\n{\vA}_{\underline{B}^{s}_{2, r}},
\label{3.1.6}
\\
&
\n{|k|^{-s_2}\widehat{(\vA_M)}_k}_{\ell^1\sp{
\widetilde{\mathbb{Z}}^d\setminus\{0\}
}
}
\les
M^{s_1}
\n{\vA}_{\underline{B}^{\f d 2-s_1}_{2, 2}},
\qquad 
\n{|k|^{-s_2}\widehat{(\vA^M)}_k}_{\ell^1\sp{
\widetilde{\mathbb{Z}}^d\setminus\{0\}
}
}
\les
M^{-s_2}
\n{\vA}_{\underline{B}^{\f d 2}_{2, 2}}.\label{3.1.7}
\end{align}
We set
\begin{align}
\widetilde{B}(M):=
&
\Mp{(k, l, m, \alpha, \beta, \gamma): k=l+m, |m|\leq M, |l|\leq M, \lambda_k^\alpha-\lambda_l^\beta-\lambda_m^\gamma \neq 0},
\nonumber
\\
\widecheck{B}(M):=
&
\Mp{(k, l, m, \alpha, \gamma): k=l+m, |m|\leq M, |l|\leq M, \lambda_k^\alpha-\lambda_m^\gamma \neq 0},
\nonumber
\\
\widetilde{C}_M:=
&
\sup
\Mp{|\lambda_k^\alpha-\lambda_l^\beta-\lambda_m^\gamma|^{-1}: 
(k, l, m, \alpha, \beta, \gamma) \in \widetilde{B}(M)},
\label{xishu1}
\\
\widecheck{C}_M:=
&
\sup
\Mp{|\lambda_k^\alpha-\lambda_m^\gamma|^{-1}: 
(k, l, m, \alpha, \gamma) \in \widecheck{B}(M)}.
\label{xishu2}
\end{align}

\noindent $\bullet$ \textbf{Estimates for}  $\widetilde{R}_{j, \ep, M}$ and $\widetilde{\vZ}_{\ep, M}(0)$. 

From \eqref{minski}, \eqref{minski2}, \eqref{3.1.6}, \eqref{3.1.7}, Lemma \ref{cons}  and Lemma \ref{le4.4}, we obtain\footnote{In this paper, we shall frequently use the fact that $\ell^{r_1}(\mathbb{Z}) \hookrightarrow \ell^{r_2}(\mathbb{Z}), 1\leq r_1\leq r_2\leq\infty$, without further mention.}
\begin{align*}
&
\n{\widetilde{R}_{1, \ep, M} }_{
\widetilde{L}_T^\infty (B^{\f d 2-1-\theta}_{2, 2})
}
\les
\n{\widetilde{R}_{1, \ep, M} }_{
L_T^\infty (B^{\f d 2-1-\f{\theta} {2}}_{2, 2})
}
\les
\widetilde{C}_M
\n{|k|^{-\f{\theta}{2}}\widehat{(\vV_M)}_k}_{L_T^\infty
\sp{
\ell^1
\sp{
\widetilde{\mathbb{Z}}^d\setminus\{0\}
}
}
}
\n{\vV_M}_{L^\infty_T(B^{\f d 2}_{2, 2})}
\\
&
\qquad
\qquad
\qquad
\qquad
\quad
\les
\widetilde{C}_M
M^2
 \n{\vV}^2_{
L_T^\infty (B^{\f d 2-1}_{2, 2})
}
\les 
\widetilde{C}_M
M^2
\vV_{T^*_0}^2,
\\[4pt]
&
\n{\widetilde{R}_{1, \ep, M} }_{
L_T^2 (B^{\f d 2-\theta}_{2, 2})
}
\les
\widetilde{C}_M
M^2
 \n{\vV}_{
L_T^\infty (B^{\f d 2-1}_{2, 2})
}
 \n{\vV}_{
L_T^2 (B^{\f d 2}_{2, 2})
}
\les
\widetilde{C}_M
M^2
\vV_{T^*_0}^2 , 
\\[4pt]
&
 \n{\widetilde{R}_{2, \ep, M} }_{
\widetilde{L}_T^\infty (B^{\f d 2-1-\theta}_{2, 2})
}
\les
\widecheck{C}_M
M^2
 \n{\vV}_{
L_T^\infty (B^{\f d 2-1}_{2, 2})
}
 \n{\vU}_{
L_T^\infty (\underline{B}^{\f d 2-1}_{2, 2})
}
\leq
\widecheck{C}_M
M^2
\vV_{T^*_0}
\vU_{T^*_0},
\\[4pt]
&
\n{\widetilde{R}_{2, \ep, M} }_{
L_T^2 (B^{\f d 2-\theta}_{2, 2})
}
\les
\widecheck{C}_M
M^2
 \n{\vU}_{
L_T^\infty (\underline{B}^{\f d 2-1}_{2, 2})
}
 \n{\vV}_{
L_T^2 (B^{\f d 2}_{2, 2})
}
\les
\widecheck{C}_M
M^2
\vV_{T^*_0}
\vU_{T^*_0},
\\[4pt]
&
\n{\widetilde{R}_{3, \ep, M} }_{
\widetilde{L}_T^\infty (B^{\f d 2-1-\theta}_{2, 2})
}
\les
\sp{|\widehat{(a_0)}_0|+ |\widehat{(b_0)}_0 |} 
\n{\vV}_{
L_T^\infty (B^{\f d 2-1}_{2, 2})
}
\les
\vU_{T^*_0}
\vV_{T^*_0},
\\[4pt]
&
\n{\widetilde{R}_{3, \ep, M} }_{
L_T^2 (B^{\f d 2-\theta}_{2, 2})
}
\les
\sp{|\widehat{(a_0)}_0|+ |\widehat{(b_0)}_0 |} 
\n{\vV}_{
L_T^2 (B^{\f d 2}_{2, 2})
}
\les
\vU_{T^*_0}
\vV_{T^*_0},
\\[4pt]
&
\n{\widetilde{R}_{4, \ep, M} }_{
\widetilde{L}_T^\infty (B^{\f d 2-1-\theta}_{2, 2})
}
\les
\sp{|\widehat{(a_0)}_0|+ |\widehat{(b_0)}_0 |} 
\n{\vUe}_{
L_T^\infty (\underline{B}^{\f d 2-1}_{2, 2})
}
\les
\sp{E^{\ep}_T}^2,
\\[4pt]
&
\n{\widetilde{R}_{4, \ep, M} }_{
L_T^2 (B^{\f d 2-\theta}_{2, 2})
}
\les
\sp{|\widehat{(a_0)}_0|+ |\widehat{(b_0)}_0 |} 
\n{\vU}_{
L_T^2 (\underline{B}^{\f d 2}_{2, 2})
}
\les
[T]^{\f1 2}
\sp{E^{\ep}_T}^2,
\\[4pt]
&
\n{\widetilde{R}_{5, \ep, M} }_{
\widetilde{L}_T^\infty (B^{\f d 2-1-\theta}_{2, 2})
}
\les
\n{\widetilde{R}_{5, \ep, M} }_{
L_T^\infty (B^{\f d 2-1}_{2, 2})
}
\les
\n{|k|^{-1}\widehat{(\vU_{\ep, M})}_k}_{L_T^\infty
\sp{
\ell^1
\sp{
\widetilde{\mathbb{Z}}^d\setminus\{0\}
}
}
}
 \n{\vU_{\ep, M}}_{
L_T^\infty (\underline{B}^{\f d 2}_{2, 2})
}
\\
&
\qquad
\qquad
\qquad
\qquad
\quad
\les
M^2
 \n{\vUe}^2_{
L_T^\infty (\underline{B}^{\f d 2-1}_{2, 2})
}
\les
M^2
\sp{E^{\ep}_T}^2,
\\[4pt]
&
\n{\widetilde{R}_{5, \ep, M} }_{
L_T^2 (B^{\f d 2-\theta}_{2, 2})
}
\les
M
\n{\vUe}_{
L_T^\infty (\underline{B}^{\f d 2-1}_{2, 2})
}
\n{\vUe }_{
L_T^2 (\underline{B}^{\f d 2}_{2, 2})
}
\les
M
[T]^{\f1 2}
\sp{E^{\ep}_T}^2,
\\[4pt]
&
\n{\widetilde{R}_{6, \ep, M} }_{
\widetilde{L}_T^\infty (B^{\f d 2-1-\theta}_{2, 2})
}
\les
M
\n{\vVe}_{
L_T^\infty (B^{\f d 2-1}_{2, 2})
}
\les
M
E^{\ep}_{T},
\\[4pt]
&
\n{\widetilde{R}_{6, \ep, M} }_{
L_T^2 (B^{\f d 2-\theta}_{2, 2})
}
\les
M
\n{\vVe}_{
L_T^2 (B^{\f d 2}_{2, 2})
}
\les
M
E^{\ep}_{T}.
\end{align*}
Collecting the above estimates and applying Young's inequality, we obtain
\begin{align}
&
\ep
\sum\limits_{j=1}^6
    \n{\widetilde{R}_{j, \ep, M} }_{
\widetilde{L}_T^\infty (B^{\f d 2-1-\theta}_{2, 2})
}
\les
\ep
C_M
\sp{
\vV^2_{T_0}
+
\vU^2_{T_0}
+
\sp{
E^{\ep}_{T}
}^2
+
E^{\ep}_{T}
},
\label{3.1.8}
\\
&
\ep
\sum\limits_{j=1}^6
\n{\widetilde{R}_{j, \ep, M} }_{
L_T^2 (B^{\f d 2-\theta}_{2, 2})}
\les
\ep
C_M
\sp{
\vV^2_{T_0}
+
\vU^2_{T_0}
+
[T]^{\f1 2}
\sp{E^{\ep}_{T}}^2
+
E^{\ep}_{T}
},
\label{3.1.9}
\end{align}
where we set $C_M:=\max\{\widetilde{C}_M M^{2+\theta}, \widecheck{C}_M M^{2+\theta}, M^2\}$. Here, $\theta$ is introduced to unify the notation with the coefficients appearing later. The same argument gives
\begin{align}
&
\n{\widetilde{\vZ}_{\ep, M}(0)}_{B^{\f d 2 -1-\theta}_{2, 2}}
\les
\ep
\sum\limits_{j=1}^6
\n{\widetilde{R}_{j, \ep, M} (0)}_{
B^{\f d 2-1-\theta}_{2, 2}
}
\les
\ep
C_M
\sp{
E_0
+
E_0^2
}
\label{3.1.10}.
\end{align}

\noindent $\bullet$ \textbf{Estimates for}  $\mathcal{Q}_1^\ep(\vV+\vVe, \widetilde{R}_{j, \ep, M} )$, $\mathcal{Q}_2^\ep(\underline{\vUe}, \widetilde{R}_{j, \ep, M} )$, $\mathcal{Q}_2^\ep(\mathbb{P}_0\vUe, \widetilde{R}_{j, \ep, M} )$ and $\overline{\mathcal{D}}(\widetilde{R}_{j, \ep, M} ))$.

From \eqref{minski2} and Lemma \ref{le4.2}  we get
\begin{align*}
&
\ep
\sum\limits_{j=1}^6
\n{\mathcal{Q}_1^\ep(\vV+\vVe, \widetilde{R}_{j, \ep, M} )}_{L_T^1(\underline{B}^{\f d 2 -1-\theta}_{2, 2})}
\les
\ep
\sum\limits_{j=1}^6
\n{\vV+\vVe}_{L_T^2(B^{\f d 2 }_{2, 1})}
\n{\widetilde{R}_{j, \ep, M} }_{
L_T^2 (B^{\f d 2-\theta}_{2, 2})} , 
\\
&
\ep
\sum\limits_{j=1}^6
\n{\mathcal{Q}_2^\ep(\underline{\vUe}, \widetilde{R}_{j, \ep, M} )}_{L_T^1(\underline{B}^{\f d 2 -1-\theta}_{2, 2})}
\les
\ep
\sum\limits_{j=1}^6
\n{\vUe}_{L_T^2(\underline{B}^{\f d 2 }_{2, 1})}
\n{\widetilde{R}_{j, \ep, M} }_{
L_T^2 (B^{\f d 2-\theta}_{2, 2})} , 
\\
&
\ep
\sum\limits_{j=1}^6
\n{\mathcal{Q}_2^\ep(\mathbb{P}_0 \vUe, \widetilde{R}_{j, \ep, M} )}_{L_T^2(\underline{B}^{\f d 2 -2-\theta}_{2, 2})}
\les
\ep
\sum\limits_{j=1}^6
\sp{|\widehat{(a_0)}_0|+|\widehat{(b_0)}_0|+\n{\widehat{(\vue)}_0}_{L^\infty(0, T)}}
\n{\widetilde{R}_{j, \ep, M} }_{
L_T^2 (B^{\f d 2-\theta}_{2, 2})} , 
\\
&
\ep
\sum\limits_{j=1}^6
\n{
\overline{\mathcal{D}}(\widetilde{R}_{j, \ep, M}) }_{L_T^2(\underline{B}^{\f d 2 -2-\theta}_{2, 2})}
\les
\ep
\sum\limits_{j=1}^6
\n{
\widetilde{R}_{j, \ep, M} 
}_{L_T^2(B^{\f d 2 -\theta}_{2, 2})}.
\end{align*}
Then, using Lemma \ref{cons} and \eqref{3.1.9}, we obtain
\begin{align}
&
\ep
\sum\limits_{j=1}^6
\left(
\n{\mathcal{Q}_1^\ep(\vV+\vVe, \widetilde{R}_{j, \ep, M} )}_{L_T^1(\underline{B}^{\f d 2 -1-\theta}_{2, 2})}  
+
\n{\mathcal{Q}_2^\ep(\underline{\vUe}, \widetilde{R}_{j, \ep, M} )}_{L_T^1(\underline{B}^{\f d 2 -1-\theta}_{2, 2})}
\right.
\nonumber
\\
&
\left.
\quad
+
\n{\mathcal{Q}_2^\ep(\mathbb{P}_0 \vUe, \widetilde{R}_{j, \ep, M} )}_{L_T^2(\underline{B}^{\f d 2 -2-\theta}_{2, 2})}
+
\n{
\overline{\mathcal{D}}(\widetilde{R}_{j, \ep, M}) }_{L_T^2(\underline{B}^{\f d 2 -2-\theta}_{2, 2})}
\right)
\nonumber
\\
& \qquad \qquad 
\les
\ep
C_M
\sp{
\vV_{T^*_0}
+
[T]^\f1 2
E^{\ep}_{T}
+1
}
\sp{
\vV^2_{T_0}
+
\vU^2_{T_0}
+
[T]^{\f1 2}
\sp{E^{\ep}_{T}}^2
+
E^{\ep}_{T}
} . 
\label{3.1.11}
\end{align}

\noindent $\bullet$ \textbf{Estimates for}  $\widetilde{R}^t_{j, \ep, M}$.

From \eqref{minski}, \eqref{minski2}, \eqref{3.1.6}, \eqref{3.1.7} and Lemma \ref{le4.4}, we get
\begin{align*}
&
\n{\widetilde{R}^t_{1, \ep, M}}_{L_T^1(\underline{B}^{\f d 2 -1-\theta}_{2, 2})}
\les
\widetilde{C}_M
\n{|k|^{-\theta}\widehat{(\p_t\vV_M)}_k}_{L_T^1
\sp{
\ell^1
\sp{
\widetilde{\mathbb{Z}}^d\setminus\{0\}
}
}
}
\n{\vV_M}_{L^\infty_T(B^{\f d 2}_{2, 2})}
\\
&
\qquad
\qquad
\qquad
\qquad
\qquad
+
\widetilde{C}_M
\n{|k|^{-\theta}\widehat{(\vV_M)}_k}_{L_T^\infty
\sp{
\ell^1
\sp{
\widetilde{\mathbb{Z}}^d\setminus\{0\}
}
}
}
\n{\p_t\vV_M}_{L^1_T(B^{\f d 2}_{2, 2})}
\\
&
\qquad
\qquad
\qquad
\qquad
\quad
\les
\widetilde{C}_M M^{2+\theta}
\n{\p_t\vV}_{L^1_T(B^{\f d 2-1-\theta}_{2, 2})}
\n{\vV}_{L^\infty_T(B^{\f d 2-1}_{2, 2})},
\\[4pt]
&
\n{\widetilde{R}^t_{2, \ep, M}}_{L_T^1(\underline{B}^{\f d 2 -1-\theta}_{2, 2})}
\les
\n{\widetilde{R}^t_{2, \ep, M}}_{L_T^1(\underline{B}^{\f d 2 -1}_{2, 2})}
\\
&
\qquad
\qquad
\qquad
\qquad
\quad
\les
\widetilde{C}_M
\n{|k|^{-\theta}\widehat{(\p_t\vV_M)}_k}_{L_T^1
\sp{
\ell^1
\sp{
\widetilde{\mathbb{Z}}^d\setminus\{0\}
}
}
}
\n{\vV_M}_{L^\infty_T(\underline{B}^{\f d 2+\theta}_{2, 2})}
\\
&
\qquad
\qquad
\qquad
\qquad
\qquad
+
\widetilde{C}_M
\n{|k|^{-\theta}\widehat{(\p_t\vU_M)}_k}_{L_T^1
\sp{
\ell^1
\sp{
\widetilde{\mathbb{Z}}^d\setminus\{0\}
}
}
}
\n{\vU_M}_{L^\infty_T(\underline{B}^{\f d 2+\theta}_{2, 2})}
\\
&
\qquad
\qquad
\qquad
\qquad
\quad
\les
\widecheck{C}_M
M^{2+2\theta}
\sp{
\n{\p_t\vV}_{L_T^1(B^{\f d 2 -1-\theta}_{2, 2})}
\n{\vU}_{L_T^\infty(\underline{B}^{\f d 2 -1}_{2, 2})}
+
\n{\p_t\vU}_{L_T^1(\underline{B}^{\f d 2 -1}_{2, 2})}
\n{\vV}_{L_T^\infty(B^{\f d 2 -1}_{2, 2})}
},
\\[4pt]
&
\n{\widetilde{R}^t_{3, \ep, M}}_{L_T^1(\underline{B}^{\f d 2 -1-\theta}_{2, 2})}
\les
\sp{|\widehat{(a_0)}_0|+ |\widehat{(b_0)}_0 |} 
\n{\p_t\vV}_{L_T^1(B^{\f d 2 -1-\theta}_{2, 2})},
\\[4pt]
&
\n{\widetilde{R}^t_{4, \ep, M}}_{L_T^1(\underline{B}^{\f d 2 -1-\theta}_{2, 2})}
\les
\sp{|\widehat{(a_0)}_0|+ |\widehat{(b_0)}_0 |} 
\n{\p_t\vUe}_{L_T^1(\underline{B}^{\f d 2-1}_{2, 1})},
\end{align*}
\begin{align*}
&
\n{\widetilde{R}^t_{5, \ep, M}}_{L_T^1(\underline{B}^{\f d 2 -1-\theta}_{2, 2})}
\les
M^2
\n{\p_t\vUe}_{L_T^1(\underline{B}^{\f d 2 -1}_{2, 1})}
\n{\vUe}_{\widetilde{L}_T^\infty(\underline{B}^{\f d 2 -1}_{2, 1})},
\\
&
\n{\widetilde{R}^t_{6, \ep, M}}_{L_T^1(\underline{B}^{\f d 2 -1-\theta}_{2, 2})}
\les
M
\n{\p_t\vVe}_{L_T^1(B^{\f d 2 -1}_{2, 1})}.
\end{align*}
We claim that
\begin{align}
&
\n{\p_t\vV}_{L_T^1(B^{\f d 2 -1-\theta}_{2, 2})}
\les
\vV_{T^*_0}
\sp{
\vV_{T^*_0}+[T]^{\f1 2}\vU_{T^*_0}+1
},
\nonumber
\\
&
\n{\p_t\vU}_{L_T^1(\underline{B}^{\f d 2 -1}_{2, 1})}
\les
\vU_{T^*_0}
\sp{
[T]^{\f1 2}
\vU_{T^*_0}
+
1
},
\nonumber
\\
&
\n{\p_t\vVe}_{L_T^1(B^{\f d 2 -1}_{2, 1})}
\les
[T]
E_{T}^{\ep}
\sp{
\sp{
E_{T}^{\ep}
}^2
+
E_{T}^{\ep}
+
1},
\label{tV}
\\
&
\n{\p_t\vUe}_{L_T^1(\underline{B}^{\f d 2 -1}_{2, 1})}
\les
[T]
E_{T}^{\ep}
\sp{
\sp{
E_{T}^{\ep}
}^2
+
E_{T}^{\ep}
+
1}.
\label{tU}
\end{align}
The proof of this claim will be given later. Combining the estimates at hand, we obtain
\begin{align}
\ep
\sum_{j=1}^6
\n{\widetilde{R}^t_{j, \ep, M}}_{L_T^1(\underline{B}^{\f d 2 -1-\theta}_{2, 2})}
\les
\ep
C_M
\left[
\sp{
\vV^2_{T_0}
+
\vU^2_{T_0}
}
\sp{
\vV_{T^*_0}+[T]^{\f1 2}\vU_{T^*_0}+1
}
+
[T]
\sp{
\sp{
E_{T}^{\ep}
}^2
+
E_{T}^{\ep}
+
1}^2
\right].
\label{3.1.17}
\end{align}

\noindent $\bullet$ \textbf{Estimates for}  $\p_t \vV, \p_t \vU$, $\p_t \vVe$ and $\p_t \vUe$.

Similar to the argument leading to \eqref{3.1.16}, we obtain
\begin{align*}
\n{
r_\ep
}_{L_T^1(B_{2, 1}^{\f d 2-1})}
\les
[T]
\sp{
E_{T}^{\ep}
}^2
\sp{
E_{T}^{\ep}+1
}.
\end{align*}
Using \eqref{3.1.3}, we get
\begin{align*}
\n{\mathcal{D}^\ep(\vVe)}_{L_T^1(B_{2, 1}^{\f d 2-1})}
=
\n{
\mathcal{L}\left(-\dfrac{t}{\ep}\right) 
\sp{
0, 0, \df{\nu}{R^0+ Q^0}\De \mathcal{P}^\perp \vue
}
}_{L_T^1(B_{2, 1}^{\f d 2-1})}
\les
\n{\mathcal{P}^\perp \vue}_{L_T^1(B_{2, 1}^{\f d 2+1})}.
\end{align*}
From Lemma \ref{cons}, Lemma \ref{le8.2}, Lemma \ref{4.2}, Lemma \ref{le6.1} and Lemma \ref{le6.2}, we get
\begin{align*}
&
\n{\p_t\vV}_{L_T^1(B^{\f d 2 -1-\theta}_{2, 2})}
\\
&
\leq
\n{\mathcal{Q}_1(\vV, \vV)}_{L_T^1(B^{\f d 2 -1-\theta}_{2, 2})}
+
\n{\mathcal{Q}_2(\underline{\vU}, \vV)}_{L_T^1(B^{\f d 2 -1-\theta}_{2, 2})}
+
\n{\mathcal{Q}_2(\mathbb{P}_0 \vU, \vV)}_{L_T^1(B^{\f d 2 -1-\theta}_{2, 2})}
+
\n{\overline{\mathcal{D}}(\vV)}_{L_T^1(B^{\f d 2 -1-\theta}_{2, 2})}
\\
&
\les
\n{\mathcal{Q}_1(\vV, \vV)}_{L_T^1(B^{\f d 2 -1}_{2, 1})}
+
\n{\mathcal{Q}_2(\underline{\vU}, \vV)}_{L_T^1(B^{\f d 2 -1-\theta}_{2, 2})}
+
\n{\mathcal{Q}_2(\mathbb{P}_0 \vU, \vV)}_{L_T^1(B^{\f d 2 -1}_{2, 1})}
+
\n{\overline{\mathcal{D}}(\vV)}_{L_T^1(B^{\f d 2 -1}_{2, 1})}
\\
&
\les
\n{\vV}^2_{L_T^2(B^{\f d 2 }_{2, 1})}
+
\n{\vV}_{L_T^2(B^{\f d 2 }_{2, 1})}
\n{\vU}_{L_T^2(\underline{B}^{\f d 2 }_{2, 1})}
+
\sp{|\widehat{(a_0)}_0|+|\widehat{(b_0)}_0|+|\widehat{(\vu_0)}_0|}
\n{\vV}_{L_T^1(B^{\f d 2 +1}_{2, 1})}
+
\n{\vV}_{L_T^1(B^{\f d 2 +1}_{2, 1})}
\\
&
\les
\vV_{T^*_0}
\sp{
\vV_{T^*_0}+[T]^{\f1 2}\vU_{T^*_0}+1
},
\end{align*}
\begin{align*}
&
\n{\p_t\vU}_{L_T^1(B^{\f d 2 -1}_{2, 1})}
\\
&
\leq
\n{\mathbb{P}\mathcal{Q}(\underline{\vU}, \underline{\vU})}_{L_T^1(B^{\f d 2 -1}_{2, 1})}
+
2
\n{\mathbb{P}\mathcal{Q}(\mathbb{P}_0\vU, \underline{\vU})}_{L_T^1(B^{\f d 2 -1}_{2, 1})}
+
\n{\mathcal{D}\vU}_{L_T^1(B^{\f d 2 -1}_{2, 1})}
\\
&
\les
\n{\vU}^2_{L_T^2(\underline{B}^{\f d 2 }_{2, 1})}
+
[T]^{\f1 2}
\sp{|\widehat{(a_0)}_0|+|\widehat{(b_0)}_0|+|\widehat{(\vu_0)}_0|}
\n{\vU}_{L_T^2(\underline{B}^{\f d 2 }_{2, 1})}
+
\n{\vw}_{L_T^1(B^{\f d 2 +1}_{2, 1})}
\\
&
\les
\vU_{T^*_0}
\sp{
[T]^{\f1 2}
\vU_{T^*_0}
+
1
},
\\[4pt]
&
\n{\p_t\vVe}_{L_T^1(B^{\f d 2 -1}_{2, 1})}
\\
&
\leq
\n{
\mathcal{Q}_1^\ep(\vVe, \vVe)
}_{L_T^1(B^{\f d 2 -1}_{2, 1})}
+
\n{
\mathcal{Q}_2^\ep(\underline{\vUe}, \vVe)
}_{L_T^1(B^{\f d 2 -1}_{2, 1})}
+
\n{
\mathcal{Q}_2^\ep(\mathbb{P}_0\vUe, \vVe)
}_{L_T^1(B^{\f d 2 -1}_{2, 1})}
\\
&
\quad
+
\n{
\mathcal{D}^\ep(\vVe)
}_{L_T^1(B^{\f d 2 -1}_{2, 1})}
+
\n{
\mathcal{L}\left(-\dfrac{t}{\ep}\right) \mathbb{P}^\perp r_\ep
}_{L_T^1(B^{\f d 2 -1}_{2, 1})}
+
\n{
\mathcal{L}\left(-\dfrac{t}{\ep}\right) \mathbb{P}^\perp\mathcal{Q}(\underline{\vUe}, \underline{\vUe})
}_{L_T^1(B^{\f d 2 -1}_{2, 1})}
\\
&
\quad
+
2
\n{\mathcal{L}\left(-\dfrac{t}{\ep}\right) \mathbb{P}^\perp\mathcal{Q}(\mathbb{P}_0 \vUe, \underline{\vUe})
}_{L_T^1(B^{\f d 2 -1}_{2, 1})}
\\
&
\les
\n{\vVe}^2_{L_T^2(B^{\f d 2}_{2, 1})}
+
\n{\vVe}_{L_T^2(B^{\f d 2}_{2, 1})}
\n{\vUe}_{L_T^2(\underline{B}^{\f d 2}_{2, 1})}
+
[T]^{\f1 2}
\sp{|\widehat{(a_0)}_0|+|\widehat{(b_0)}_0|+|\widehat{(\vue)}_0|_{L^\infty(0, T)}}
\n{\vVe}_{L_T^2(B^{\f d 2}_{2, 1})}
\\
&
\quad
+
\n{\mathcal{P}^\perp \vue}_{L_T^1(B^{\f d 2+1}_{2, 1})}
+
[T]
\sp{
E_{T}^{\ep}
}^2
\sp{
E_{T}^{\ep}+1
}
+
\n{\vUe}^2_{L_T^2(\underline{B}^{\f d 2}_{2, 1})}
\\
&
\quad
+
[T]^{\f1 2}
\sp{|\widehat{(a_0)}_0|+|\widehat{(b_0)}_0|+|\widehat{(\vue)}_0|_{L^\infty(0, T)}}
\n{\vUe}_{L_T^2(B^{\f d 2}_{2, 1})}
\\
&
\les
[T]
E_{T}^{\ep}
\sp{
\sp{
E_{T}^{\ep}
}^2
+
E_{T}^{\ep}
+
1},
\\[4pt]
&
\n{\p_t\vUe}_{L_T^1(\underline{B}^{\f d 2 -1}_{2, 1})}
\\
&
\leq
\n{\mathbb{P}\mathcal{Q}(\underline{\vUe}, \underline{\vUe})}_{L_T^1(B^{\f d 2 -1}_{2, 1})}
+
2
\n{\mathbb{P}\mathcal{Q}(\mathbb{P}_0\vUe, \underline{\vUe})}_{L_T^1(B^{\f d 2 -1}_{2, 1})}
+
\n{
\mathcal{D}(\vUe)}_{L_T^1(B^{\f d 2 -1}_{2, 1})}
+
\n{
\mathbb{P} r_\ep}_{L_T^1(B^{\f d 2 -1}_{2, 1})}
\\
&
\quad
+
2
\n{
\mathbb{P}\mathcal{Q}\left(\underline{\vUe}, \mathcal{L}\left(\df{t}{\ep}\right) \vVe\right)
}_{L_T^1(B^{\f d 2 -1}_{2, 1})}
+
2
\n{
\mathbb{P}\mathcal{Q}\left(\mathbb{P}_0\vUe, \mathcal{L}\left(\df{t}{\ep}\right) \vVe\right)
}_{L_T^1(B^{\f d 2 -1}_{2, 1})}
\\
&
\les
[T]
E_{T}^{\ep}
\sp{
\sp{
E_{T}^{\ep}
}^2
+
E_{T}^{\ep}
+
1}.
\end{align*}

\medskip

\noindent $\bullet$ \textbf{Estimates for}  $R^M_{j, \ep}$.

Observe that
\begin{align*}
&
R^M_{1, \ep}
=
\sp{
2\mathcal{Q}_1(\vV_M, \vV^M)
+
\mathcal{Q}_1(\vV^M, \vV^M)
}
-
\sp{
2\mathcal{Q}^\ep_1(\vV_M, \vV^M)
+
\mathcal{Q}^\ep_1(\vV^M, \vV^M)
},
\\
&
R^M_{2, \ep}
=
\sp{
\mathcal{Q}_2(\underline{\vU_M}, \vV^M)
+
\mathcal{Q}_2(\vU^M, \vV_M)
+
\mathcal{Q}_2(\vU^M, \vV_M)
}
\\
&
\qquad
\quad
-
\sp{
\mathcal{Q}^\ep_2(\underline{\vU_M}, \vV^M)
+
\mathcal{Q}^\ep_2(\vU^M, \vV_M)
+
\mathcal{Q}^\ep_2(\vU^M, \vV^M)
}.
\end{align*}
From \eqref{3.1.6}, \eqref{3.1.7}, Lemma \ref{cons} and Lemma \ref{le4.4}, we get
\begin{align}
&
\n{\mathcal{Q}_1(\vV_M, \vV^M)}_{L_T^1(\underline{B}^{\f d 2 -1-\theta}_{2, 2})}
\nonumber
\\
& \quad 
\les
\n{|k|^{-\f{\theta}{2}}\widehat{(\vV_M)}_k}_{L_T^2
\sp{
\ell^1
\sp{
\widetilde{\mathbb{Z}}^d\setminus\{0\}
}
}
}
\n{\vV^M}_{L_T^2(B^{\f d 2 -\f \theta 2}_{2, 2})}
+
\n{|k|^{-\theta}\widehat{(\vV^M)}_k}_{L_T^2
\sp{
\ell^1
\sp{
\widetilde{\mathbb{Z}}^d\setminus\{0\}
}
}
}
\n{\vV_M}_{L_T^2(B^{\f d 2}_{2, 2})}
\nonumber
\\
& \quad 
\les
\sp{
M^{-\f \theta 2}
+
M^{-\theta}
}
\vV^2_{T_0},
\nonumber
\\
&
\n{\mathcal{Q}_1(\vV^M, \vV^M)}_{L_T^1(\underline{B}^{\f d 2 -1-\theta}_{2, 2})}
\les
\n{|k|^{-\theta}\widehat{(\vV^M)}_k}_{L_T^2
\sp{
\ell^1
\sp{
\widetilde{\mathbb{Z}}^d\setminus\{0\}
}
}
}
\n{\vV^M}_{L_T^2(B^{\f d 2}_{2, 2})}
\les
M^{-\theta}
\vV^2_{T_0},
\nonumber
\\
&
\n{\mathcal{Q}_2(\underline{\vU_M}, \vV^M)}_{L_T^1(\underline{B}^{\f d 2 -1-\theta}_{2, 2})}
+
\n{\mathcal{Q}_2(\vU^M, \vV_M)}_{L_T^1(\underline{B}^{\f d 2 -1-\theta}_{2, 2})}
+
\n{\mathcal{Q}_2(\vU^M, \vV^M)}_{L_T^1(\underline{B}^{\f d 2 -1-\theta}_{2, 2})}
\nonumber
\\
&  \quad 
\les
\sp{
M^{-\f \theta 2}
+
M^{-\theta}
}
\n{\vV}_{L_T^2(B^{\f d 2}_{2, 2})}
\n{\vU}_{L_T^2(\underline{B}^{\f d 2 }_{2, 2})}
\les
\sp{
M^{-\f \theta 2}
+
M^{-\theta}
}
[T]^\f1 2
\vV_{T^*_0}
\vU_{T^*_0},
\label{3.1.14}
\end{align}
where we have used the relations 
$|k|=|m|$ and $|l|\leq 2|m|$
in the derivation of \eqref{3.1.14}. From \eqref{3.1.6}, Lemma \ref{cons} and Lemma \ref{le4.2}, we infer that 
\begin{align*}
&
\n{\mathcal{Q}^\ep_1(\vV_M, \vV^M)}_{L_T^1(\underline{B}^{\f d 2 -1-\theta}_{2, 2})}
+
\n{\mathcal{Q}^\ep_1(\vV^M, \vV^M)}_{L_T^1(\underline{B}^{\f d 2 -1-\theta}_{2, 2})}
\\
&  \quad 
\les
\sp{
\n{\vV^M}_{L_T^2(B^{\f d 2}_{2, 1})}
+
\n{\vV_M}_{L_T^2(B^{\f d 2}_{2, 1})}
}
\n{\vV^M}_{L_T^2(B^{\f d 2-\theta}_{2, 2})}
\\
&  \quad 
\les
\sp{
M^{-\f \theta 2}
+
M^{\theta}
}
\vV^2_{T_0},
\\[4pt]
&
\n{\mathcal{Q}^\ep_2(\underline{\vU_M}, \vV^M)}_{L_T^1(\underline{B}^{\f d 2 -1-\theta}_{2, 2})}
+
\n{\mathcal{Q}^\ep_2(\vU^M, \vV_M)}_{L_T^1(\underline{B}^{\f d 2 -1-\theta}_{2, 2})}
+
\n{\mathcal{Q}^\ep_2(\vU^M, \vV^M)}_{L_T^1(\underline{B}^{\f d 2 -1-\theta}_{2, 2})}
\\
&  \quad 
\les
\sp{
\n{\vU_M}_{L_T^2(\underline{B}^{\f d 2 }_{2, 1})}
+
\n{\vU^M}_{L_T^2(B^{\f d 2 }_{2, 1})}
+
\n{\vV_M}_{L_T^2(B^{\f d 2 }_{2, 1})}
}
\sp{
\n{\vV^M}_{L_T^2(B^{\f d 2 -\theta}_{2, 2})}
+
\n{\vU^M}_{L_T^2(B^{\f d 2 -\theta}_{2, 2})}
}
\\
&  \quad 
\les
M^{-\theta}
\sp{
[T]^{\f1 2}
\vU_{T^*_0}
+
\vV_{T^*_0}
}^2,
\\[4pt]
&
\n{R^M_{3, \ep}}_{L_T^1(\underline{B}^{\f d 2 -1-\theta}_{2, 2})}
\\
&  \quad 
\les
|\mathbb{P}_0 \vU|
\n{\vV^M}_{L_T^1(B^{\f d 2 -\theta}_{2, 2})}
\les
M^{-\theta}
|\mathbb{P}_0 \vU|
\n{\vV}_{L_T^1(B^{\f d 2}_{2, 2})}
\les
M^{-\theta}
|\mathbb{P}_0 \vU|
\n{\vV}_{L_T^1(B^{\f d 2+1}_{2, 2})}
\les
M^{-\theta}
\vU_{T^*_0}
\vV_{T^*_0},
\\[4pt]
&
\n{R^M_{4, \ep}}_{L_T^1(\underline{B}^{\f d 2 -1-\theta}_{2, 2})}
\\
&  \quad 
\les
\sp{|\widehat{(a_0)}_0|+|\widehat{(b_0)}_0|}
\n{\vUe^M}_{L_T^1(B^{\f d 2 -\theta}_{2, 2})}
\\
&  \quad 
\les
M^{-\theta}
\sp{|\widehat{(a_0)}_0|+|\widehat{(b_0)}_0|}
\n{\vUe}_{L_T^1(\underline{B}^{\f d 2}_{2, 2})}
\\
&  \quad 
\les
M^{-\theta}
[T]^{\f1 2}
\sp{|\widehat{(a_0)}_0|+|\widehat{(b_0)}_0|}
\n{\vUe}_{L_T^2(B^{\f d 2 }_{2, 2})}
\\
&  \quad 
\les
M^{-\theta}
[T]
\sp{E^{\ep}_T}^2,
\\
&
\n{R^M_{5, \ep}}_{L_T^1(\underline{B}^{\f d 2 -1-\theta}_{2, 2})}
\les
\sp{
\n{\vU_{\ep, M}}_{L_T^2(\underline{B}^{\f d 2 }_{2, 1})}
+
\n{\vUe^M}_{L_T^2(B^{\f d 2 }_{2, 1})}
}
\n{\vUe^M}_{L_T^2(B^{\f d 2-\theta }_{2, 2})}
\les
M^{-\theta}
[T]
\sp{E^{\ep}_T}^2,
\\[4pt]
&
\n{R^M_{6, \ep}}_{L_T^2(\underline{B}^{\f d 2 -2-\theta}_{2, 2})}
\les
\n{\vVe^M}_{L_T^2(B^{\f d 2-\theta }_{2, 2})}
\les
M^{-\theta}
E^{\ep}_{T}.
\end{align*}
Combining the above estimates yields
\begin{align}
\sum_{j=1}^{5}
\n{R^M_{j, \ep}}_{L_T^1(\underline{B}^{\f d 2 -1-\theta}_{2, 2})}
+
\n{R^M_{6, \ep}}_{L_T^2(\underline{B}^{\f d 2 -2-\theta}_{2, 2})}
\les
M^{-\f \theta 2}
\sp{
\sp{
[T]^{\f1 2}
\vU_{T^*_0}
+
\vV_{T^*_0}
}^2
+
[T]
\sp{E^{\ep}_T}^2
+
E^{\ep}_{T}
}.
\label{3.1.15}
\end{align}

\medskip

\noindent $\bullet$ \textbf{Estimates for}   $\mathcal{Q}^\ep_2(\mathbb{P}_0 \vWe, \vV)$ and $\mathcal{Q}^\ep_2(\underline{\vWe}, \vV)$

From Lemma \ref{le4.2} we get
\begin{align}
\n{\mathcal{Q}^\ep_2(\mathbb{P}_0\vWe, \vV)}_{L_T^2(\underline{B}^{\f d 2 -2-\theta}_{2, 2})}
&
\les
\n{\vV}_{L_T^2(B^{\f d 2 -1-\theta}_{2, 2})}
\n{\mathbb{P}_0 \vWe}_{L^\infty(0, T)}
\nonumber
\\
&
\les
\n{\vV}_{L_T^2(B^{\f d 2}_{2, 2})}
\n{\widehat{(\vue)}_0-\widehat{(\vu_0)}_0}_{L^\infty(0, T)}
\nonumber
\\
&
\les
\vV_{T^*_0}
\vW^\ep_{T, \theta},
\label{3.1.18}
\\[4pt]
\n{\mathcal{Q}^\ep_2(\underline{\vWe}, \vV)}_{L_T^1(\underline{B}^{\f d 2 -1-\theta}_{2, 2})}
&
\les
\n{\vWe}_{L_T^\infty(\underline{B}^{\f d 2-\theta}_{2, 2})}
\n{\vV}_{L_T^1(B^{\f d 2}_{2, 1})}
\nonumber
\\
&
\les
\n{\vWe}^{\f{\theta}{1+\theta}}_{L_T^\infty(\underline{B}^{\f d 2-1-\theta}_{2, 2})}
\n{\vWe}^{\f{1}{1+\theta}}_{L_T^\infty(\underline{B}^{\f d 2}_{2, 2})}
\n{\vV}_{L_T^1(B^{\f d 2+1}_{2, 1})}
\nonumber
\\
&
\les
\sp{
\vW^\ep_{T, \theta}
}^{\f{\theta}{1+\theta}}
\sp{
\vU_{T^*_0}+E^{\ep}_T
}^{\f{1}{1+\theta}}
\vV_{T^*_0}.
\label{3.1.19}
\end{align}
where we used 
$
\mathbb{P}_0\vWe
=
(0, 0, 
\df{\widehat{(\vue)}_0}{\sqrt{|\Td|}}-\df{\widehat{(\vu_0)}_0}{\sqrt{|\Td|}})^T.
$

\medskip

\noindent $\bullet$ \textbf{Estimates for}  $r_\ep$.\label{pressure}

From Lemma \ref{Le8.6} we see
\begin{align*}
\n{
\int_0^1
(\p_x \widetilde{p})(t\ep a_\ep, t\ep b_\ep)-(\p_x \widetilde{p})(0, 0)\dt
}_{B^{\f d 2-\theta}_{2,1}}
&
\leq
\int_0^1
\n{
(\p_x \widetilde{p})(t\ep a_\ep, t\ep b_\ep)-(\p_x \widetilde{p})(0, 0)
}_{B^{\f d 2-\theta}_{2,1}}\dt
\\
&
\les
\int_0^1 
t
\sp{
\ep
\n{
a_\ep
}_{B^{\f d 2-\theta}_{2,1}}
+
\ep
\n{
b_\ep
}_{B^{\f d 2-\theta}_{2,1}}
}
\dt
\\
&
\les
\ep\n{
a_\ep
}_{B^{\f d 2-\theta}_{2,1}}
+
\ep
\n{
b_\ep
}_{B^{\f d 2-\theta}_{2,1}}.
\end{align*}
Then, by Lemma \ref{cons}, we obtain
\begin{align*}
&
\n{
a_\ep \Grad a_\ep
\int_0^1
(\p_x \widetilde{p})(t\ep a_\ep, t\ep b_\ep)-(\p_x \widetilde{p})(0, 0)\dt
}_{L^1_T(B_{2,1}^{\f d 2 -1-\theta})}
\\
& \quad 
\les
\n{
\n{a_\ep \Grad a_\ep}_{B_{2,1}^{\f d 2 -1}}
\n{
\int_0^1
(\p_x \widetilde{p})(t\ep a_\ep, t\ep b_\ep)-(\p_x \widetilde{p})(0, 0)\dt
}_{B^{\f d 2-\theta}_{2,1}}
}_{L^1(0, T)}
\\
&\quad 
\les
\n{a_\ep}^2_{L_T^2(B^{\f d 2}_{2,1})}
\sp{
\ep
\n{a_\ep}_{L_T^\infty(B^{\f d 2-\theta}_{2,1})}
+
\ep
\n{b_\ep}_{L_T^\infty(B^{\f d 2-\theta}_{2,1})}
}
\\
&\quad 
\les
\ep^{\theta}
[T]
\sp{
E_{T,1}^{\ep, \zeta}
}^3.
\end{align*}
The same argument gives
\begin{align*}
&
\n{
b_\ep \Grad a_\ep
\int_0^1
(\p_y \widetilde{p})(t\ep a_\ep, t\ep b_\ep)-(\p_x \widetilde{p})(0, 0)\dt
}_{L^1_T(B_{2,1}^{\f d 2 -1-\theta})}    
\les
\ep^{\theta}
[T]
\sp{
E_{T}^{\ep}
}^3 , 
\\
&
\n{
a_\ep \Grad b_\ep
\int_0^1
(\p_x \widecheck{p})(t\ep a_\ep, t\ep b_\ep)-(\p_x \widecheck{p})(0, 0)\dt
}_{L^1_T(B_{2,1}^{\f d 2 -1-\theta})}    
\les
\ep^{\theta}
[T]
\sp{
E_{T}^{\ep}
}^3,
\\
&
\n{
b_\ep \Grad b_\ep
\int_0^1
(\p_y \widecheck{p})(t\ep a_\ep, t\ep b_\ep)-(\p_y \widecheck{p})(0, 0)\dt
}_{L^1_T(B_{2,1}^{\f d 2 -1-\theta})}    
\les
\ep^{\theta}
[T]
\sp{
E_{T}^{\ep}
}^3.
\end{align*}
Applying the Taylor expansion of pressure \eqref{Taylor}, we obtain
\begin{align*}
&
\n{
\dfrac{\nabla p(Z_\ep)}{\ep^2(R_\ep + Q_\ep)}
-
\left(
c_1 \dfrac{\Grad a_\ep}{\ep}+c_2\dfrac{\Grad b_\ep}{\ep}
+
c_3 
a_\ep\Grad a_\ep
+
c_4 
b_\ep\Grad a_\ep
+
c_5 
a_\ep\Grad b_\ep
+
c_6 
b_\ep\Grad b_\ep
\right)
}_{L^1_T(B_{2,1}^{\f d 2 -1-\theta})} 
\\
& \quad 
\les
\ep^{\theta}
[T]
\sp{
E_{T}^{\ep}
}^3.
\end{align*}
Using Lemma \ref{cons} and Lemma \ref{Le8.6} again, 
\begin{align*}
\n{
I(\ep a_\ep+ \ep b_\ep) \mathcal{A}\vue
}_{L^1_T(B_{2,1}^{\f d 2 -1-\theta})} 
&
\les
\sp{
\ep
\n{a_\ep}_{L_T^\infty(B^{\f d 2-\theta}_{2,1})}
+
\ep
\n{b_\ep}_{L_T^\infty(B^{\f d 2-\theta}_{2,1})}
}
\n{
\vue
}_{L^1_T(B_{2,1}^{\f d 2 +1})} 
\\
&
\les
\ep^{\theta}
\sp{
E_{T}^{\ep}
}^2 .
\end{align*}
Collecting the above estimates, we arrive at 
\begin{align}\label{3.1.16}
\n{
r_\ep
}_{L_T^1(\underline{B}_{2, 2}^{\f d 2-1-\theta})}
\les
\n{
r_\ep
}_{L_T^1(B_{2, 1}^{\f d 2-1-\theta})}
\les
\ep^\theta
[T]
\sp{
E_{T}^{\ep}
}^2
\sp{
E_{T}^{\ep}+1
}.
\end{align}
We define the function $\mathcal{X}(M):=C_M M^{\f{\theta}{2}}$, which is monotonically increasing in $M$, and set 
$\tau(\ep)
:=
\max\{
\sp{\mathcal{X}^{-1}(\f{1}{\ep})}^{-\f{\theta}{2}},
\ep^\theta
\}$.
By inserting \eqref{3.1.8}, \eqref{3.1.9}, \eqref{3.1.10}, \eqref{3.1.11}, \eqref{3.1.17}, \eqref{3.1.15}, \eqref{3.1.18}, \eqref{3.1.19} and \eqref{3.1.16} into \eqref{3.1.5}, one completes the proof. \ \ $\Box$

\subsection{Decay properties of \texorpdfstring{$\vWe$}{}}

This subsection is devoted to the decay properties of $\vWe$ with respect to the Mach number $\ep$. Recall that
\begin{align}\label{4.2.2}
\vWe
=
(\vWe^1, \vWe^2, \vWe^3)
=
\sp{
\rho_\ep-\rho, -\df{c_1}{c_2}(\rho_\ep-\rho), \mathcal{P}\underline{\vue}-\vw+(\df{\widehat{(\vue)}_0}{\sqrt{|\Td|}}-\df{\widehat{(\vu_0)}_0}{\sqrt{|\Td|}})
}^T.
\end{align}
\begin{Proposition}\label{Pro4.2}
Let $T^*_0$ be as in Theorem \ref{Th6.1},
$0<T< \infty$ with $T\leq T^*_0$, $0<\theta<1$ and $(a_0, b_0, \vu_0)\in B^{\f d 2}_{2, 1} \times B^{\f d 2}_{2, 1} \times B^{\f d 2-1}_{2, 1}$. Assume that $(a_\ep, b_\ep, \vue)$ is a regular solution to \eqref{1.1.3} on $[0, T]$,  satisfying
\begin{align*}
\ep 
\sp{
\n{\vae}_{L^\infty_T(L^\infty)}
+
\n{\vbe}_{L^\infty_T(L^\infty)}
}
\leq
\f{\min\{R^0, Q^0\}}{2}.
\end{align*}
Then 
\begin{align*}
\vW_{T, \theta}^\ep
\leq
C
[T]
\ep^{\f{\theta}{3+\theta}}
\text{e}^{C
\sp{ E^{\ep}_T
+
[T]
\sp{
\vU_{T^*_0}
+
\vU^2_{T^*_0}
}
}
}
\sp{
E_{T}^{\ep}
+
E^2_0
}
\sp{
\sp{
E_{T}^{\ep}
}^2
+
E_{T}^{\ep}
+
1}
\sp{
\vU_{T^*_0}
+
E_{T}^{\ep}
}.
\end{align*}
\end{Proposition}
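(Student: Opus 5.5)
The plan is to mimic the proof of Proposition \ref{Pro4.1}, now for the slow component. Subtracting \eqref{1.3} from \eqref{1.2}, $\vWe=\vUe-\vU$ solves
\begin{align*}
\p_t\vWe
&+\mathbb{P}\mathcal{Q}(\underline{\vWe},\underline{\vUe}+\underline{\vU})
+2\mathbb{P}\mathcal{Q}(\mathbb{P}_0\vU,\underline{\vWe})
+2\mathbb{P}\mathcal{Q}(\mathbb{P}_0\vWe,\underline{\vUe})
-\mathcal{D}(\vWe)
\\
&=\mathbb{P}r_\ep
-2\mathbb{P}\mathcal{Q}\Big(\underline{\vUe},\mathcal{L}\big(\tfrac{t}{\ep}\big)\vVe\Big)
-2\mathbb{P}\mathcal{Q}\Big(\mathbb{P}_0\vUe,\mathcal{L}\big(\tfrac{t}{\ep}\big)\vVe\Big).
\end{align*}
Since $\widehat{(a_\ep)}_0$ and $\widehat{(b_\ep)}_0$ are conserved, $\mathbb{P}_0\vWe=(0,0,(\widehat{(\vue)}_0-\widehat{(\vu_0)}_0)/\sqrt{\T})^T$.

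By \eqref{4.2.2}, the system splits into three pieces:
\begin{itemize}
\item a linear transport equation for $\rho_\ep-\rho$, with velocity $\vw$ (or $\mathcal{P}\vue$) and source terms;
\item a Stokes-type parabolic equation for $\mathcal{P}\underline{\vue}-\vw$;
\item an ODE for the mean velocity $\widehat{(\vue)}_0-\widehat{(\vu_0)}_0$.
\end{itemize}
I estimate $\rho_\ep-\rho$ in $\widetilde{L}^\infty_T(B^{\f d2-1-\theta}_{2,1})$ by the standard transport estimate with commutators. This uses Lemma \ref{le4.1} (in particular \eqref{4.9}, \eqref{4.8}), and costs a factor $\exp(C\n{\vw}_{L^1_T(B^{\f d2+1}_{2,1})}+C\n{\vue}_{L^1_T(\underline B^{\f d2+1}_{2,1})})$. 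I estimate $\mathcal{P}\vue-\vw$ by parabolic maximal regularity in $\widetilde{L}^\infty_T(\underline B^{\f d2-1-\theta}_{2,1})\cap L^1_T(\underline B^{\f d2+1-\theta}_{2,1})$. The mean mode is estimated directly, using \eqref{4.2} for the zero mode of the quadratic terms.

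The terms on the left that are linear in $\vWe$ are bounded through Lemma \ref{le8.2} and Lemma \ref{cons} (\eqref{cons4}, \eqref{cons18}). Their coefficients involve $\vU_{T^*_0}$, $E^\ep_T$ and $[T]$, so Gronwall produces the factor $\exp\big(C(E^\ep_T+[T](\vU_{T^*_0}+\vU_{T^*_0}^2))\big)$ appearing in the statement. The remainder $\mathbb{P}r_\ep$ is already controlled by \eqref{3.1.16}, which gives $\ep^\theta[T](E^\ep_T)^2(E^\ep_T+1)$.

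The real work, and the main obstacle, is the two acoustic forcing terms, which converge to zero only weakly. Every Fourier mode of $\mathcal{L}(t/\ep)\vVe$ oscillates like $\text{e}^{-itc_0\lambda_k^\alpha/\ep}$. Projected by $\mathbb{P}$ onto $\text{Ker}\,L$ (eigenvalue $0$), these phases are never resonant, because $|\lambda_k^\alpha|=|k|\ge c_{\mathbf a}>0$; unlike Proposition \ref{Pro4.1}, no small-divisor constants $\widetilde C_M,\widecheck C_M$ enter. I therefore split $\vVe=(\vVe)_N+(\vVe)^N$ at a frequency $N\ge1$.

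For the high part, Lemma \ref{le8.2} together with \eqref{3.1.6} and \eqref{cons18} give
$$\n{\mathbb{P}\mathcal{Q}(\underline{\vUe},\mathcal{L}(t/\ep)(\vVe)^N)}_{L^1_T(B^{\f d2-1-\theta}_{2,1})}\lesssim N^{-\theta}[T]^{\f12}(E^\ep_T)^2,$$
and similarly $\n{\mathbb{P}\mathcal{Q}(\mathbb{P}_0\vUe,\mathcal{L}(t/\ep)(\vVe)^N)}\lesssim N^{-\theta}E^\ep_T\,\vU_{T^*_0}$. For the low part, I write $\text{e}^{-itc_0\lambda/\ep}=\p_t\big(\ep\,\text{e}^{-itc_0\lambda/\ep}/(-ic_0\lambda)\big)$ and, as in \eqref{3.1.2}, introduce a corrector $\ep\widetilde{R}_N$ with $\p_t(\ep\widetilde R_N)-\ep\widetilde R^t_N$ equal to the low-frequency forcing. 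I then apply the estimates to $\vWe-\ep\widetilde R_N$. Here $\widetilde R^t_N$ contains $\p_t\vUe$ and $\p_t\vVe$, which are controlled by the claims \eqref{tV}–\eqref{tU}: $[T]E^\ep_T((E^\ep_T)^2+E^\ep_T+1)$. The boundary term at $t=0$ is bounded by $\ep N^{3}E_0^2$.

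The whole difficulty is counting powers of $N$. Truncation via \eqref{3.1.6}–\eqref{3.1.7} and Lemma \ref{le4.4} costs a derivative on the oscillating factor, and the commutator/transport structure in $B^{\f d2-1-\theta}$ for the $\rho$ component costs more. I aim for a total loss of $\ep N^{3}$, which yields
$$\vW^\ep_{T,\theta}\lesssim[T]e^{(\cdots)}\big(\ep^\theta+N^{-\theta}+\ep N^3\big)(E^\ep_T+E_0^2)\big((E^\ep_T)^2+E^\ep_T+1\big)(\vU_{T^*_0}+E^\ep_T).$$
Choosing $N=\ep^{-1/(3+\theta)}$ balances $N^{-\theta}$ against $\ep N^3$ and gives the rate $\ep^{\theta/(3+\theta)}$, which dominates $\ep^\theta$. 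If the $N$-count turns out worse for the transported density, I would first try estimating the $\rho$ component of the corrector one derivative lower and interpolating, as in \eqref{3.1.19}.
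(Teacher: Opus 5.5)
Your route is the paper's: subtract \eqref{1.3} from \eqref{1.2}, and estimate $\rho_\ep-\rho$, $\mathcal{P}\vue-\vw$ and the mean through the transport--Stokes estimate of Lemma \ref{le6.4}. The non-resonant forcing is then removed by an $O(\ep)$ corrector built from $\sum_{k,\alpha}(-ic_0\lambda_k^\alpha)^{-1}\widehat{\vV}^\alpha_{\ep,k}\Phi^\alpha_k$. You are right that no $\widetilde C_M,\widecheck C_M$ appear; the lower bound on $|\lambda_k^\alpha|$ is $\min_j(\mathbf a^j)^{-1}$, not $c_{\mathbf a}$. Finally $\p_t\vUe,\p_t\vVe$ are controlled by \eqref{tV}--\eqref{tU}, and $N=\ep^{-1/(3+\theta)}$.

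The step that fails as you set it up is truncating only $\vVe$. Your corrector is then $\ep\,\mathbb{P}\mathcal{Q}\big(\underline{\vUe},\mathcal{L}(t/\ep)\widetilde{\vV}_{\ep,N}\big)$ with the full $\vUe$, whose velocity part lies only in $\widetilde L^\infty_T(\underline B^{\f d 2-1}_{2,1})$. Since $\mathcal{Q}$ differentiates that factor (e.g.\ $u_B\cdot\Grad\mathcal{P}\vue$, whose output sits at the unbounded frequency of $\vue$), the corrector cannot be bounded in $\widetilde L^\infty_T(B^{\f d 2-1-\theta}_{2,1})$ for $\theta<1$. That norm is needed both for the datum at $t=0$ and to pass from the corrected unknown back to $\vWe$. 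For the same reason, the piece $\mathbb{P}\mathcal{Q}\big(\underline{\p_t\vUe},\mathcal{L}(t/\ep)\widetilde{\vV}_{\ep,N}\big)$ of your $\widetilde R^t_N$ would need $\p_t\vUe\in L^1_T(B^{\f d 2-\theta}_{2,1})$, whereas \eqref{tU} only gives $L^1_T(\underline B^{\f d 2-1}_{2,1})$.

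The paper avoids this by truncating both factors, building the corrector from $\mathbb{P}\mathcal{Q}\big(\underline{\vU_{\ep,N}},\mathcal{L}(t/\ep)\widetilde{\vV}_{\ep,N}\big)$. Then $\n{\vU_{\ep,N}}_{\widetilde L^\infty_T(\underline B^{\f d 2-\theta}_{2,1})}\lesssim N^{1-\theta}E^\ep_T$ and $\n{\p_t\vU_{\ep,N}}_{L^1_T(\underline B^{\f d 2-\theta}_{2,1})}\lesssim N^{1-\theta}\n{\p_t\vUe}_{L^1_T(\underline B^{\f d 2-1}_{2,1})}$, and these losses fit inside your $\ep N^3$ budget. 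The remainder acquires the extra cross term $\mathbb{P}\mathcal{Q}\big(\underline{\vU_\ep^{N}},\mathcal{L}(t/\ep)\vV_{\ep,N}\big)$. By Lemma \ref{le8.2}, \eqref{4.2.7} and \eqref{cons18} it is bounded by $\n{\vU^N_\ep}_{L^2_T(\underline B^{\f d 2-\theta}_{2,1})}\n{\vVe}_{L^2_T(B^{\f d 2}_{2,1})}\lesssim N^{-\theta}[T]^{\f12}(E^\ep_T)^2$, as in \eqref{4.2.10}. With this change your balancing is unchanged.

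Two smaller points. First, the zero mode of the oscillating forcing does not vanish (its third component has nonzero mean). Make sure that mode also goes through the corrector rather than being estimated directly via \eqref{4.2}, which would only give an $O(1)$ bound. Second, to land exactly on the stated exponential factor, group the linear terms as in \eqref{4.2.3}: transport by $\mathcal{P}\vue$ and its mean, with coupling coefficients $\rho$ and $\vw$ coming from $\vU$. Transporting by $\vw$, or keeping $\mathcal{Q}(\mathbb{P}_0\vWe,\underline{\vUe})$ as a Gronwall term, puts $[T](E^\ep_T)^2$ or $[T]E^\ep_T$ into the exponent. That is harmless in the final argument but is not the bound as stated.
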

\bProof
Subtracting \eqref{1.3} from \eqref{1.2}, we obtain
\begin{align}
\p_t\vWe
&
+
\mathbb{P}
\mathcal{Q}(\underline{\vWe}, \underline{\vWe})
+
2
\mathbb{P}
\mathcal{Q}(\underline{\vWe}, \underline{\vU})
+
2
\mathbb{P}
\mathcal{Q}(\mathbb{P}_0\vue, \underline{\vWe})
+
2
\mathbb{P}
\mathcal{Q}(\mathbb{P}_0\vWe, \underline{\vU})
-
\mathcal{D}\vWe
\nonumber
\\
&
=
\mathbb{P}r_\ep
-
2\mathbb{P}\mathcal{Q}\left(\underline{\vUe}, \mathcal{L}\left(\df{t}{\ep}\right) \vVe\right)
-
2\mathbb{P}\mathcal{Q}\left(\mathbb{P}_0\vU, \mathcal{L}\left(\df{t}{\ep}\right) \vVe\right),\label{4.2.1}
\end{align}
where we used 
$
\mathbb{P}\mathcal{Q}\left(\mathbb{P}_0\vWe, \mathcal{L}\left(\df{t}{\ep}\right) \vVe\right)=0.
$
Due to \eqref{4.2.2}, it suffices to consider the evolution of $\vWe^1$ and $\vWe^3$ only. Accordingly, we also express the terms
$\mathbb{P}\mathcal{Q}\left(\underline{\vUe}, \mathcal{L}\left(\df{t}{\ep}\right) \vVe\right)$
and
$\mathbb{P}\mathcal{Q}\left(\mathbb{P}_0\vU, \mathcal{L}\left(\df{t}{\ep}\right) \vVe\right)$
in their component forms: 
\begin{align*}
\mathbb{P}\mathcal{Q}\left(\underline{\vUe}, \mathcal{L}\left(\df{t}{\ep}\right) \vVe\right)=(R^{1}_{1, \ep}, R^{2}_{1, \ep}, R^{3}_{1, \ep}), \quad 
\mathbb{P}\mathcal{Q}\left(\mathbb{P}_0\vU, \mathcal{L}\left(\df{t}{\ep}\right) \vVe\right)=(R^{1}_{2, \ep}, R^{2}_{2, \ep}, R^{3}_{2, \ep}) . 
\end{align*}
It follows from \eqref{4.2.1} that 
\begin{equation}\label{4.2.3}
\left\{\begin{aligned}
&
\p_t
\vWe^1
+
\Div(\underline{\vWe^1}  \underline{\mathcal{P}\vue})
+
\Div(\underline{\vWe^1} \df{\widehat{(\vue)}_0}{\sqrt{|\Td|}})
=
-
\Div(\underline{\vWe^3}  \rho)
-
\Div(\rho\df{\widehat{(\vWe^3)}_0}{\sqrt{|\Td|}})
-
2R^{1}_{1,\ep}
-
2R^{1}_{2,\ep},
\\
&
\p_t
\vWe^3
+
\mathcal{P}
\sp{
\underline{\mathcal{P}\vue}\cdot \Grad\vWe^3
+
\df{\widehat{(\vue)}_0}{\sqrt{|\Td|}}\cdot \Grad\vWe^3
}
-
\df{\mu}{R^0+Q^0}
\De
\vWe^3
\\
&
\quad
=
\mathcal{P}
\left(
r^3_\ep
-
\underline{\vWe^3}\cdot \Grad\vw
-
\df{\widehat{(\vWe^3)}_0}{\sqrt{\Td}} 
\cdot
\Grad
\vw
-
2R^{3}_{1, \ep}
-
2R^{3}_{2, \ep}
\right).
\end{aligned}
\right.
\end{equation}
Similar to the analysis of $\vZe$, we apply the negative time regularity method. We set
\begin{align*}
&
\widetilde{\vV}_\ep
=
\sum\limits_{k,\alpha}
\df
{1}
{-ic_0\lambda_k^\alpha}
\widehat{\vV}^{\alpha}_{\ep, k} 
\Phi^\alpha_k,
\\
&
\mathbb{P}\mathcal{Q}\left(\underline{\vU_{\ep, M}}, \mathcal{L}\left(\df{t}{\ep}\right) \vV_{\ep, M}\right)
=
(R^{1}_{1, \ep, M}, R^{2}_{1, \ep, M}, R^{3}_{1, \ep, M}), 
\\
&
(R^{1, M}_{1, \ep}, R^{2, M}_{1, \ep}, R^{3, M}_{1, \ep})
=
\mathbb{P}\mathcal{Q}\left(\underline{\vUe}, \mathcal{L}\left(\df{t}{\ep}\right) \vVe\right)
-
\mathbb{P}\mathcal{Q}\left(\underline{\vU_{\ep, M}}, \mathcal{L}\left(\df{t}{\ep}\right) \vV_{\ep, M}\right),
\\
&
\mathbb{P}\mathcal{Q}\left(\underline{\vU_{\ep, M}}, \mathcal{L}\left(\df{t}{\ep}\right) \widetilde{\vV}_{\ep, M}\right)=(\widetilde{R}^{1}_{1, \ep, M}, \widetilde{R}^{2}_{1, \ep, M}, \widetilde{R}^{3}_{1, \ep, M}),
\\
&
\mathbb{P}\mathcal{Q}\left(\underline{\p_t\vU_{\ep, M}}, \mathcal{L}\left(\df{t}{\ep}\right) \widetilde{\vV}_{\ep, M}\right)
+
\mathbb{P}\mathcal{Q}\left(\underline{\vU_{\ep, M}}, \mathcal{L}\left(\df{t}{\ep}\right) \p_t\widetilde{\vV}_{\ep, M}\right)
=
(\widetilde{R}^{1, t}_{1, \ep, M}, \widetilde{R}^{2, t}_{1, \ep, M}, \widetilde{R}^{3, t}_{1, \ep, M}),
\\
&
\mathbb{P}\mathcal{Q}\left(\mathbb{P}_0\vU, \mathcal{L}\left(\df{t}{\ep}\right) \vV_{\ep, M}\right)
=
(R^{1}_{2, \ep, M}, R^{2}_{2, \ep, M}, R^{3}_{2, \ep, M}),
\\
&
(R^{1, M}_{2, \ep}, R^{2, M}_{2, \ep}, R^{3, M}_{2, \ep})
=
\mathbb{P}\mathcal{Q}\left(\mathbb{P}_0\vU, \mathcal{L}\left(\df{t}{\ep}\right) \vVe\right)
-
\mathbb{P}\mathcal{Q}\left(\mathbb{P}_0\vU, \mathcal{L}\left(\df{t}{\ep}\right) \vV_{\ep, M}\right),
\\
&
\mathbb{P}\mathcal{Q}\left(\mathbb{P}_0\vU, \mathcal{L}\left(\df{t}{\ep}\right) \widetilde{\vV}_{\ep, M}\right)
=
(\widetilde{R}^{1}_{2, \ep, M}, \widetilde{R}^{2}_{2, \ep, M}, \widetilde{R}^{3}_{2, \ep, M}),
\\
&
\mathbb{P}\mathcal{Q}\left(\mathbb{P}_0\vU, \mathcal{L}\left(\df{t}{\ep}\right) \p_t\widetilde{\vV}_{\ep, M}\right)
=
(\widetilde{R}^{1, t}_{2, \ep, M}, \widetilde{R}^{2, t}_{2, \ep, M}, \widetilde{R}^{3, t}_{2, \ep, M}).
\end{align*}
It can be readily verified that $R^{j}_{j',\ep, M}$, $\widetilde{R}^{j}_{j', \ep, M}$ and $\widetilde{R}^{j, t}_{j',\ep, M} $ satisfy
\begin{align}\label{4.2.4}
\p_t \ep\widetilde{R}^{j}_{j', \ep, M}
-
\ep\widetilde{R}^{j, t}_{j',\ep, M} 
=
R^{j}_{j',\ep, M}, 
\quad 
j=1, 2, 3,
\quad 
j'=1, 2.
\end{align}
Next, we set 
$
\widetilde{\vW}^1_{\ep, M}
=
\vWe^1+2\sum\limits_{j'=1}^2\ep\widetilde{R}^{1}_{j', \ep, M}
$
and 
$
\widetilde{\vW}^3_{\ep, M}
=
\vWe^3+2\sum\limits_{j'=1}^2\ep\widetilde{R}^{3}_{j', \ep, M}
$.
Adding twice of \eqref{4.2.3} to \eqref{4.2.1}, we obtain the following equations governing $\widetilde{\vW}^1_{\ep, M}$ and $\widetilde{\vW}^3_{\ep, M}$: 
\begin{equation}\label{4.2.5}
\left\{\begin{aligned}
&
\p_t
\widetilde{\vW}^1_{\ep, M}
+
\Div(\underline{\widetilde{\vW}^1_{\ep, M}}  \underline{\mathcal{P}\vue})
+
\Div(\underline{\widetilde{\vW}^1_{\ep, M}}\df{\widehat{(\vue)}_0}{\sqrt{|\Td|}})
\\
&
\quad
=
-
\Div(\underline{\widetilde{\vW}^3_{\ep, M}}  \rho)
-
\Div(\rho\df{\widehat{(\widetilde{\vW}^3_{\ep, M})}_0}{\sqrt{|\Td|}})
-
2R^{1, M}_{1,\ep}
-
2R^{1, M}_{2,\ep}
+
2
\ep
\widetilde{R}^{1, t}_{j, \ep, M}
+
F_{1, \ep},
\\
&
\p_t
\widetilde{\vW}^3_{\ep, M}
+
\mathcal{P}
\sp{
\underline{\mathcal{P}\vue}\cdot \Grad\widetilde{\vW}^3_{\ep, M}
+
\df{\widehat{(\vue)}_0}{\sqrt{|\Td|}}\cdot \Grad\widetilde{\vW}^3_{\ep, M}
}
-
\df{\mu}{R^0+Q^0}
\De
\widetilde{\vW}^3_{\ep, M}
\\
&
\quad
=
\mathcal{P}
\left(
r^3_\ep
-
\underline{\widetilde{\vW}^3_{\ep, M}}\cdot \Grad\vw
-
\df{\widehat{(\widetilde{\vW}^3_{\ep, M})}_0}{\sqrt{\Td}} 
\cdot
\Grad
\vw
-
2R^{3, M}_{1, \ep}
-
2R^{3, M}_{2, \ep}
+
2\ep
\widetilde{R}^{3, t}_{j, \ep, M}
+
F_{2, \ep}
\right),
\end{aligned}
\right.
\end{equation}
where
\begin{align*}
&
F_{1, \ep}
=
2
\ep
\sum\limits_{j=1}^2
\sp{
\Div(\underline{\widetilde{R}^{1}_{j, \ep, M}}  \underline{\mathcal{P}\vue})
+
\Div(\underline{\widetilde{R}^{1}_{j, \ep, M}} \df{\widehat{(\vue)}_0}{\sqrt{|\Td|}})
+
\Div(\underline{\widetilde{R}^{3}_{j, \ep, M}} \rho)
+
\Div(\widehat{(\widetilde{R}^{3}_{j, \ep, M})}_0 \rho)
},
\\
&
F_{2, \ep}
=
2
\ep
\sum\limits_{j=1}^2
\mathcal{P}
\sp{
\underline{\mathcal{P}\vue}
\cdot 
\Grad
\widetilde{R}^{3}_{j, \ep, M}
+
\df{\widehat{(\vue)}_0}{\sqrt{|\Td|}}
\cdot 
\Grad
\widetilde{R}^{3}_{j, \ep, M}
+
\underline{\widetilde{R}^{3}_{j, \ep, M}}\cdot \Grad\vw
+
\df{\widehat{(\widetilde{R}^{3}_{j, \ep, M})}_0}{\sqrt{\Td}} 
\cdot
\Grad
\vw
}
\\
&
\qquad
\quad
-2\ep\sum\limits_{j=1}^2\df{\mu}{R^0+Q^0}
\De\widetilde{R}^{3}_{j, \ep, M}.
\end{align*}
Applying Lemma \ref{le6.4} to \eqref{4.2.5} and using Lemma \ref{cons} yields
\begin{align}\label{4.2.6}
\vW_{T, \theta}^{\ep}
&
\leq
C
\exp{ \left[C
\sp{ E^{\ep}_T
+
[T]
\sp{
\vU_{T^*_0}
+
\vU^2_{T_0}
}
} \right]
}
\Bigg[ 
\n{\widetilde{\vW}^1_{\ep, M}(0)}_{B^{\f d 2 -1-\theta}_{2, 1}}
+
\n{\widetilde{\vW}^3_{\ep, M}(0)}_{B^{\f d 2 -1-\theta}_{2, 1}}
+
\n{r_\ep^3}_{L_T^1(B^{\f d 2 -1-\theta}_{2, 1})}
\nonumber
\\
&
\quad
+
\sum\limits_{j=1}^2
\sp{
\n{R^{1, M}_{j, \ep}}_{
L_T^1(B^{\f d 2 -1-\theta}_{2, 1})}
+
\n{R^{3, M}_{j, \ep}}_{
L_T^1(B^{\f d 2 -1-\theta}_{2, 1})}
+
\n{F_{j, \ep}}_{
L_T^1(B^{\f d 2 -1-\theta}_{2, 1})}
}
\nonumber
\\
&
\quad
+
\ep
\sum\limits_{j=1}^2
\left(
\n{\widetilde{R}^{1}_{j, \ep, M}}_{\widetilde{L}_T^\infty(B^{\f d 2 -1-\theta}_{2, 1})}
+
\n{\widetilde{R}^{3}_{j, \ep, M}}_{\widetilde{L}_T^\infty(B^{\f d 2 -1-\theta}_{2, 1})
\cap
L_T^1(\underline{B}^{\f d 2 +1-\theta}_{2, 1})}
+
\right.
\nonumber
\\
&
\left.
\qquad \qquad
+
\n{\widetilde{R}^{1, t}_{j, \ep, M}}_{
L_T^1(B^{\f d 2 -1-\theta}_{2, 1})}
+
\n{\widetilde{R}^{3, t}_{j, \ep, M}}_{
L_T^1(B^{\f d 2 -1-\theta}_{2, 1})}
\right)
\Bigg] . 
\end{align}
Next, we choose $M$ sufficiently large, which will be determined later, to estimate the terms on the right-hand side of \eqref{4.2.6}. To this end, we introduce some basic estimates for the truncations $\vA_M$
and $\vA^M$ of $\vA$. For $1\leq q, r \leq \infty$, $1\leq M$, $-\infty<s<\infty$ and $s_1\geq0$, we have
\begin{align}\label{4.2.7}
\n{\vA_M}_{\widetilde{L}_T^q(B^{s+s_1}_{2, r})}
\les
M^{s_1}
\n{\vA}_{\widetilde{L}_T^q(B^{s}_{2, r})},
\quad 
\n{\vA^M}_{\widetilde{L}_T^q(B^{s-s_1}_{2, r})}
\les
M^{-s_1}
\n{\vA}_{\widetilde{L}_T^q(\underline{B}^{s}_{2, r})}.
\end{align}

\noindent $\bullet$ \textbf{Estimates for}  $\widetilde{R}^{1}_{j, \ep, M}$, $\widetilde{R}^{3}_{j, \ep, M}$,  $\widetilde{\vW}^1_{\ep, M}(0)$ and $\widetilde{\vW}^3_{\ep, M}(0)$.

From \eqref{minski}, \eqref{minski2}, \eqref{3.1.3}, \eqref{4.2.7}, Lemma \ref{cons} and Lemma \ref{le8.2}, we obtain
\begin{align*}
&
\n{
\mathbb{P}
\mathcal{Q}
\left(\underline{\vU_{\ep, M}}, \mathcal{L}\left(\df{t}{\ep}\right) \widetilde{\vV}_{\ep, M}\right)
}_{\widetilde{L}_T^\infty(B^{\f d 2 -1-\theta}_{2, 1})}
\\
& \quad 
\les
\n{
\mathcal{Q}
\left(\underline{\vU_{\ep, M}}, \mathcal{L}\left(\df{t}{\ep}\right) \widetilde{\vV}_{\ep, M}\right)
}_{\widetilde{L}_T^\infty(B^{\f d 2 -1-\theta}_{2, 1})}
\les
\n{
\vU_{\ep, M}
}_{\widetilde{L}_T^\infty(\underline{B}^{\f d 2-\theta}_{2, 1})}
\n{
\widetilde{\vV}_{\ep, M}
}_{\widetilde{L}_T^\infty(B^{\f d 2}_{2, 1})}
\\
& \quad 
\les
M^{1-\theta}
\n{
\vUe
}_{\widetilde{L}_T^\infty(\underline{B}^{\f d 2-1}_{2, 1})}
\n{
\vVe
}_{\widetilde{L}_T^\infty(B^{\f d 2-1}_{2, 1})}
\les
M^{1-\theta}
\sp{E^{\ep}_T}^2,
\\[4pt]
&
\n{
\mathbb{P}
\mathcal{Q}
\left(\underline{\vU_{\ep, M}}, \mathcal{L}\left(\df{t}{\ep}\right) \widetilde{\vV}_{\ep, M}\right)
}_{L_T^1(B^{\f d 2 +1-\theta}_{2, 1})}
\\
& \quad 
\les
M^{2}
\n{
\mathcal{Q}
\left(\underline{\vU_{\ep, M}}, \mathcal{L}\left(\df{t}{\ep}\right) \widetilde{\vV}_{\ep, M}\right)
}_{L_T^1(B^{\f d 2 -1-\theta}_{2, 1})}
\les
M^{2}
\n{
\vU_{\ep, M}
}_{L_T^2(\underline{B}^{\f d 2-\theta}_{2, 1})}
\n{
\widetilde{\vV}_{\ep, M}
}_{L_T^2(B^{\f d 2}_{2, 1})}
\\
& \quad  
\les
M^{2}
\n{
\vUe
}_{L_T^2(\underline{B}^{\f d 2}_{2, 1})}
\n{
\vVe
}_{L_T^2(B^{\f d 2}_{2, 1})}
\les
M^{2}
[T]^{\f1 2}
\sp{E^{\ep}_T}^2,
\\[4pt]
&
\n{
\mathbb{P}
\mathcal{Q}
\left(\mathbb{P}_0 \vU, \mathcal{L}\left(\df{t}{\ep}\right) \widetilde{\vV}_{\ep, M}\right)
}_{\widetilde{L}_T^\infty(B^{\f d 2 -1-\theta}_{2, 1})}
\\
& \quad 
\les
|\mathbb{P}_0 \vU|
\n{
\widetilde{\vV}_{\ep, M}
}_{\widetilde{L}_T^\infty(B^{\f d 2 -\theta}_{2, 1})}
\les
|\mathbb{P}_0 \vU|
\n{
\vVe
}_{\widetilde{L}_T^\infty(B^{\f d 2 -1-\theta}_{2, 1})}
\les
\sp{E^{\ep}_T}^2,
\\[4pt]
&
\n{
\mathbb{P}
\mathcal{Q}
\left(\mathbb{P}_0 \vU, \mathcal{L}\left(\df{t}{\ep}\right) \widetilde{\vV}_{\ep, M}\right)
}_{L_T^1(B^{\f d 2 +1-\theta}_{2, 1})}
\\
& \quad  
\les
|\mathbb{P}_0 \vU|
\n{
\widetilde{\vV}_{\ep, M}
}_{L_T^1(B^{\f d 2+2 -\theta}_{2, 1})}
\les
M^{1-\theta}
|\mathbb{P}_0 \vU|
\n{
\vVe
}_{L_T^1(B^{\f d 2}_{2, 1})}
\les
M^{1-\theta}
[T]^{\f1 2}
\sp{E^{\ep}_T}^2.
\end{align*}
Combining the above estimates,
\begin{align}\label{4.2.8}
&
\ep
\sum\limits_{j=1}^2
\sp{
\n{\widetilde{R}^{1}_{j, \ep, M}}_{
\widetilde{L}_T^\infty(B^{\f d 2 -1-\theta}_{2, 1})
}
+
\n{\widetilde{R}^{3}_{j, \ep, M}}_{
\widetilde{L}_T^\infty(B^{\f d 2 -1-\theta}_{2, 1})
\cap
L_T^1(\underline{B}^{\f d 2 +1-\theta}_{2, 1})}
}
\nonumber
\\
& \quad 
\les
\ep
\n{
\mathbb{P}
\mathcal{Q}
\left(\underline{\vU_{\ep, M}}, \mathcal{L}\left(\df{t}{\ep}\right) \widetilde{\vV}_{\ep, M}\right)
}_{\widetilde{L}_T^\infty(B^{\f d 2 -1-\theta}_{2, 1})
\cap
L_T^1(B^{\f d 2 +1-\theta}_{2, 1})}
\nonumber
\\
&
\qquad
+
\ep
\n{
\mathbb{P}
\mathcal{Q}
\left(\mathbb{P}_0 \vU, \mathcal{L}\left(\df{t}{\ep}\right) \widetilde{\vV}_{\ep, M}\right)
}_{\widetilde{L}_T^\infty(B^{\f d 2 -1-\theta}_{2, 1})
\cap
L_T^1(B^{\f d 2 +1-\theta}_{2, 1})}
\nonumber
\\
& \quad 
\les
\ep
M^{2}
[T]^{\f1 2}
\sp{E^{\ep}_T}^2.
\end{align}
The same argument yields
\begin{align}\label{4.2.9}
\n{\widetilde{\vW}^1_{\ep, M}(0)}_{B^{\f d 2 -1-\theta}_{2, 1}}
+
\n{\widetilde{\vW}^3_{\ep, M}(0)}_{B^{\f d 2 -1-\theta}_{2, 1}}
&
\les
\ep
\sum\limits_{j=1}^2
\sp{
\n{\widetilde{R}^{1}_{j, \ep, M}(0)}_{
B^{\f d 2 -1-\theta}_{2, 1}
}
+
\n{\widetilde{R}^{3}_{j, \ep, M}(0)}_{(B^{\f d 2 -1-\theta}_{2, 1})
}
}
\nonumber
\\
&
\les
\ep
M^{1-\theta}
E_0^2.
\end{align}

\noindent $\bullet$ \textbf{Estimates for}  $R^{1, M}_{j, \ep}$, $R^{3, M}_{j, \ep}$.

From \eqref{minski}, \eqref{3.1.3}, \eqref{4.2.7}, Lemma \ref{cons} and Lemma \ref{le8.2}, we obtain
\begin{align}\label{4.2.10}
&
\sum\limits_{j=1}^2
\sp{
\n{R^{1, M}_{j, \ep}}_{
L_T^1(\underline{B}^{\f d 2 -1-\theta}_{2, 1})
}
+
\n{R^{3, M}_{j, \ep}}_{
L_T^1(\underline{B}^{\f d 2 -1-\theta}_{2, 1})}
}
\nonumber
\\
& \quad 
\les
\n{
\mathbb{P}
\mathcal{Q}
\left(\underline{\vU^M_{\ep}}, \mathcal{L}\left(\df{t}{\ep}\right) \vV^M_{\ep}\right)
}_{L_T^1(B^{\f d 2 -1-\theta}_{2, 1})}
+
\n{
\mathbb{P}
\mathcal{Q}
\left(\underline{\vU_{\ep, M}}, \mathcal{L}\left(\df{t}{\ep}\right) \vV^M_{\ep}\right)
}_{L_T^1(B^{\f d 2 -1-\theta}_{2, 1})}
\nonumber
\\
&
\qquad
+
\n{
\mathbb{P}
\mathcal{Q}
\left(\underline{\vU^M_{\ep}}, \mathcal{L}\left(\df{t}{\ep}\right) \vV_{\ep, M}\right)
}_{L_T^1(B^{\f d 2 -1-\theta}_{2, 1})}
+
\n{
\mathbb{P}
\mathcal{Q}
\left(\mathbb{P}_0 \vU, \mathcal{L}\left(\df{t}{\ep}\right) \vV^M_{\ep}\right)
}_{L_T^1(B^{\f d 2 -1-\theta}_{2, 1})}
\nonumber
\\
& \quad 
\les
\n{\vU^M_{\ep}}_{
L_T^2(\underline{B}^{\f d 2 }_{2, 1})
}
\n{\vV^M_{\ep}}_{
L_T^2(B^{\f d 2 -\theta}_{2, 1})
}
+
\n{\vU_{M, \ep}}_{
L_T^2(\underline{B}^{\f d 2 }_{2, 1})
}
\n{\vV^M_{\ep}}_{
L_T^2(B^{\f d 2 -\theta}_{2, 1})
}
\nonumber
\\
&
\qquad
+
\n{\vU^M_{\ep}}_{
L_T^2(\underline{B}^{\f d 2 -\theta}_{2, 1})
}
\n{\vV_{\ep, M}}_{
L_T^2(B^{\f d 2}_{2, 1})
}
+
[T]^{\f1 2}
|\mathbb{P}_0 \vU|
\n{\vV^M_{\ep}}_{
L_T^2(\underline{B}^{\f d 2 -\theta}_{2, 1})}
\nonumber
\\
&\quad 
\les
M^{-\theta}
[T]^{\f1 2}
\sp{E^{\ep}_T}^2.
\end{align}

\noindent $\bullet$ \textbf{Estimates for}  $F_{j, \ep}$.

From \eqref{minski}, \eqref{minski2}, \eqref{4.2.8} and Lemma \ref{le8.2}, we get
\begin{align}
&
 \n{
F_{1, \ep}
}_{L_T^1(B^{\f d 2 -1-\theta}_{2, 1})}   
\nonumber
\\
& \quad 
\les
\ep
\sum\limits_{j=1}^2
\left(
M
 \n{
\widetilde{R}^{1}_{j, \ep, M}
}_{L_T^\infty(\underline{B}^{\f d 2-1-\theta}_{2, 1})}  
\n{
\mathcal{P}\vue
}_{L_T^1(\underline{B}^{\f d 2}_{2, 1})}  
+
M
[T]
\n{
\widetilde{R}^{1}_{j, \ep, M}
}_{L_T^\infty(\underline{B}^{\f d 2-1-\theta}_{2, 1})} 
\n{\widehat{(\vue)}_0}_{L^\infty(0, T)}
\right.
\nonumber
\\
&
\left.
\qquad
+
\n{
\widetilde{R}^{3}_{j, \ep, M}
}_{L_T^1(\underline{B}^{\f d 2-\theta}_{2, 1})}  
\n{
\rho
}_{L_T^\infty(B^{\f d 2}_{2, 1})} 
+
[T]
\n{
\rho
}_{L_T^\infty(B^{\f d 2-\theta}_{2, 1})} 
\n{\widehat{(\widetilde{R}^{3}_{j, \ep, M})}_0}_{L^\infty(0, T)}
\right)
\nonumber
\\
& \quad 
\les
\ep
M^3
[T]^{\f 3 2}
\sp{E^{\ep}_T}^2
\sp{
E^{\ep}_T
+
\vU_{T^*_0}
},  
\label{4.2.11}    
\\[4pt]
&
\n{
F_{2, \ep}
}_{L_T^1(B^{\f d 2 -1-\theta}_{2, 1})}  
\nonumber
\\
&\quad 
\les  
\ep
\sum\limits_{j=1}^2
\left(
\n{
\widetilde{R}^{3}_{j, \ep, M}
}_{L_T^2(\underline{B}^{\f d 2-\theta}_{2, 1})}  
\n{
\mathcal{P}\vue
}_{L_T^2(\underline{B}^{\f d 2}_{2, 1})} 
+
\n{
\widetilde{R}^{3}_{j, \ep, M}
}_{L_T^1(\underline{B}^{\f d 2-\theta}_{2, 1})} 
\n{\widehat{(\vue)}_0}_{L^\infty(0, T)}
\right.
\nonumber
\\
&
\left.
\qquad
+
\n{
\widetilde{R}^{3}_{j, \ep, M}
}_{L_T^2(\underline{B}^{\f d 2-\theta}_{2, 1})}  
\n{
\vw
}_{L_T^2(B^{\f d 2}_{2, 1})} 
+
\n{
\vw
}_{L_T^1(B^{\f d 2-\theta}_{2, 1})} 
\n{\widehat{(\widetilde{R}^{(3)}_{j, \ep, M})}_0}_{L^\infty(0, T)}
+
\n{
\widetilde{R}^{3}_{j, \ep, M}
}_{L_T^1(\underline{B}^{\f d 2+1-\theta}_{2, 1})} 
\nonumber
\right)
\\
& \quad 
\les
\ep
M^2
[T]^{\f 1 2}
\sp{E^{\ep}_T}^2
\sp{
E^{\ep}_T
+
\vU_{T^*_0}
+
1
}. 
\label{4.2.12}
\end{align}

\noindent $\bullet$ \textbf{Estimates for}  $\widetilde{R}^{1, t}_{j, \ep, M}$ and  $\widetilde{R}^{3, t}_{j, \ep, M}$.

From \eqref{tV}, \eqref{tU}, \eqref{4.2.7} and Lemma \ref{le8.2}, we get
\begin{align}
&
\ep
\sum\limits_{j=1}^2
\sp{
\n{\widetilde{R}^{1, t}_{j, \ep, M}}_{
L_T^1(B^{\f d 2 -1-\theta}_{2, 1})}
+
\n{\widetilde{R}^{3, t}_{j, \ep, M}}_{
L_T^1(B^{\f d 2 -1-\theta}_{2, 1})}
}
\nonumber
\\
&\quad 
\les
\ep
\n{
\mathbb{P}\mathcal{Q}\left(\underline{\p_t\vU_{\ep, M}}, \mathcal{L}\left(\df{t}{\ep}\right) \widetilde{\vV}_{\ep, M}\right)
}_{L_T^1(B^{\f d 2 -1-\theta}_{2, 1})}
+
\ep
\n{
\mathbb{P}\mathcal{Q}\left(\underline{\vU_{\ep, M}}, \mathcal{L}\left(\df{t}{\ep}\right) \p_t\widetilde{\vV}_{\ep, M}\right)
}_{L_T^1(B^{\f d 2 -1-\theta}_{2, 1})}
\nonumber
\\
&
\qquad
+
\ep
\n{
\mathbb{P}\mathcal{Q}\left(\mathbb{P}_0 \vU, \mathcal{L}\left(\df{t}{\ep}\right) \p_t\widetilde{\vV}_{\ep, M}\right)
}_{L_T^1(B^{\f d 2 -1-\theta}_{2, 1})}
\nonumber
\\
&\quad 
\les
\ep
M^{1-\theta}
\sp{
\n{
\p_t\vUe
}_{L_T^1(\underline{B}^{\f d 2 -1}_{2, 1})}
\n{
\vVe
}_{L_T^\infty(B^{\f d 2 -1}_{2, 1})}
+
\sp{
\n{
\vUe
}_{L_T^\infty(\underline{B}^{\f d 2 -1}_{2, 1})}
+
|\mathbb{P}_0 \vU|
}
\n{
\p_t \vVe
}_{L_T^1(B^{\f d 2 -1}_{2, 1})}
}
\nonumber
\\
&\quad 
\les
\ep
M^{1-\theta}
[T]
E_{T}^{\ep}
\sp{
\sp{
E_{T}^{\ep}
}^2
+
E_{T}^{\ep}
+
1}
\sp{
\vU_{T^*_0}
+
E_{T}^{\ep}
}.
\label{4.2.14}
\end{align}

\noindent $\bullet$ \textbf{Estimates for}  $r^3_\ep$.

We see from \eqref{3.1.16} that
\begin{align}\label{4.2.13}
\n{
r_\ep^3
}_{L^1_T(B^{\f d 2-1-\theta}_{2, 1})}
\leq
\n{
r_\ep
}_{L^1_T(B^{\f d 2-1-\theta}_{2, 1})}
\les
\ep^\theta
[T]
\sp{
E_{T}^{\ep}
}^2
\sp{
E_{T}^{\ep}+1
}.
\end{align}
Choosing $M=\ep^{-\f{1}{3+\theta}}$ and inserting \eqref{4.2.8}-\eqref{4.2.13} into \eqref{4.2.6}, we complete the proof.
\ \ $\Box$

\section{Linear and nonlinear estimates of equations \texorpdfstring{\eqref{1.1}}{} and \texorpdfstring{\eqref{1.2}}{}}

\subsection{Auxiliary lemmas}
In this section, we establish linear and nonlinear estimates for \eqref{1.1} and \eqref{1.2}. To this end, we prepare some auxiliary lemmas. 
\begin{Lemma}\text{(High-frequency estimates).}\label{lee6.1}
Let $0<\zeta$, $-\infty<s, s_1, s_2, s_3, s_4, s_5, s_6<\infty$ and $k\in\{1, 2, \cdots, d\}$. Then 
\begin{align}
&
\n{T_{f}g}^{h;\zeta}_{B_{2,1}^s}
\les
\n{f}_{B_{2,1}^{s_1}}
\n{g}^{h;\f\zeta 4}_{B_{2,1}^{s_2+\f d 2}},
\ \
\text{if }
s=s_1+s_2, \ \
s_1\leq \f d 2,
\nonumber
\\
&
\n{R(f, g)}^{h;\zeta}_{B_{2,1}^s}
\les
\n{f}^{h;\f\zeta {16}}_{\dot{B}_{2,1}^{s_1}}
\n{g}^{h;\f\zeta {64}}_{\dot{B}_{2,1}^{s_2+\f d 2}},
\ \ 
\text{if }
-\f d 2<s=s_1+s_2,
\nonumber
\\
&
\n{fg}^{h;\zeta}_{B_{2,1}^s}
\les
\n{f}_{B_{2,1}^{s_1}}
\n{g}^{h;\f\zeta 4}_{B_{2,1}^{s_2+\f d 2}}
+
\n{g}_{B_{2,1}^{s_3}}
\n{f}^{h;\f\zeta 4}_{B_{2,1}^{s_4+\f d 2}}
+
\n{f}^{h;\f\zeta {16}}_{B_{2,1}^{s_5}}
\n{g}^{h;\f\zeta {64}}_{B_{2,1}^{s_6+\f d 2}},
\nonumber
\\
&
\qquad
\qquad
\qquad
\text{if }
-\f d 2<s=s_1+s_2=s_3+s_4=s_5+s_6, \ \ s_1\leq \f d2,\ \ s_3\leq \f d 2,
\nonumber
\\
&
\sum\limits_{ 2^j \geq \zeta}
2^{js}
\n{
S_{j-2} f \De_j g
}_{L^2}
\les
\n{f}_{B^{\f d 2}_{2, 1}}
\n{g}^{h; \zeta}_{B^{s}_{2, 1}},
\label{6.1.0}
\\
&
\sum\limits_{ 2^j \geq \zeta}
2^{js}
\n{
\De_j T_f g-S_{j-2} f \De_j g
}_{L^2}
\les
\n{f}_{\underline{B}^{\f d 2+1}_{2, 1}}
\n{g}^{h; \f{\zeta}{4}}_{B^{s-1}_{2, 1}},
\label{6.1.1}
\\
&
\sum\limits_{ 2^j \geq \zeta}
2^{js}
\n{
\De_j \mathcal{P} (T_{\vu} \Grad \vv)-S_{j-2} \vu \Grad\De_j \mathcal{P} \vv
}_{L^2}
\les
\n{\vu}_{\underline{B}^{\f d 2+1}_{2, 1}}
\n{\vv}^{h; \f{\zeta}{4}}_{B^{s-1}_{2, 1}},
\label{6.1.2}
\\
&
\sum\limits_{ 2^j \geq \zeta}
2^{js}
\n{
\De_j \mathcal{P}^\perp (T_{\vu} \Grad \vv)-S_{j-2} \vu \Grad\De_j \mathcal{P}^\perp \vv
}_{L^2}
\les
\n{\vu}_{\underline{B}^{\f d 2+1}_{2, 1}}
\n{\vv}^{h; \f{\zeta}{4}}_{B^{s-1}_{2, 1}},
\label{6.1.3}
\\
&
\sum\limits_{ 2^j \geq \zeta}
2^{js}
\n{
\p_k \De_j \sp{T_f g}-S_{j-2} f \p_k\De_j g
}_{L^2}
\les
\n{f}_{\underline{B}^{\f d 2+1}_{2, 1}}
\n{g}^{h; \f{\zeta}{4}}_{B^{s}_{2, 1}},
\label{6.1.-1}
\end{align}
provided that the right-hand sides are finite.
\end{Lemma}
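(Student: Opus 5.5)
The plan is to reduce every estimate to standard paraproduct and commutator bounds from \cite{BCD11}, with attention to the support properties \eqref{ao1}--\eqref{ao2} of the dyadic blocks. These properties let us shift the frequency threshold $\zeta$ in the high-frequency norm to a fixed fraction of $\zeta$. The zero mode does not matter here: $\Delta_j$ annihilates constants, and $S_{j-2}f$ carries the mean only through a multiplicative constant. That constant commutes with $\Delta_j$, $\p_k$, $\mathcal{P}$ and $\mathcal{P}^\perp$, and it drops out of all commutator terms. This is why the right-hand sides of \eqref{6.1.1}--\eqref{6.1.-1} involve only $\underline{B}$-norms of $f$ and $\vu$.

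\textbf{Paraproduct, remainder and product bounds.} For $T_fg$, note that $\Delta_{j'}T_fg=\sum_{|j-j'|\le 2}\Delta_{j'}(S_{j-2}f\Delta_jg)$. Hence $2^{j'}\ge\zeta$ forces $2^j\ge\zeta/4$. We bound $\|S_{j-2}f\|_{L^\infty}$ by $\sum_{j''\le j-3}2^{j''d/2}\|\Delta_{j''}f\|_{L^2}$ (Bernstein), which is at most $2^{j(d/2-s_1)}\|f\|_{B^{s_1}_{2,1}}$ when $s_1\le d/2$. Summing over $j$ gives the first bound.

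For $R(f,g)$ we use \eqref{ao2}: $\Delta_{j''}(\Delta_jf\Delta_{j'}g)$ vanishes unless $j\ge j''-4$. So only blocks with $2^j,2^{j'}\gtrsim\zeta/64$ contribute. We estimate in $L^1$, then apply Bernstein $L^1\to L^2$ at cost $2^{j''d/2}$. The sum $\sum_{j''\le j+4}2^{j''(s+d/2)}$ converges because $s+d/2>0$, which yields the second bound. The product estimate follows from Bony's decomposition. The constant-mode product $\widehat f_0\widehat g_0$ is invisible at high frequency, since $\zeta>0$ and $\Delta_j1=0$. Thus the product is $T_fg+T_gf+R(f,g)$, and we combine the two previous bounds.

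\textbf{Commutator estimates.} Estimate \eqref{6.1.0} is immediate, since $\|S_{j-2}f\|_{L^\infty}\lesssim\|f\|_{B^{d/2}_{2,1}}$.

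For \eqref{6.1.1}, write $\Delta_jT_fg-S_{j-2}f\Delta_jg=\sum_{|j'-j|\le4}[\Delta_j,S_{j'-2}f]\Delta_{j'}g+\sum_{|j'-j|\le 4}(S_{j'-2}f-S_{j-2}f)\Delta_j\Delta_{j'}g$. For the commutator terms we use the mean value formula for the kernel of $\Delta_j$, which gives $\|[\Delta_j,a]b\|_{L^2}\lesssim2^{-j}\|\nabla a\|_{L^\infty}\|b\|_{L^2}$, together with $\|\nabla S_{j'-2}f\|_{L^\infty}\lesssim\|f\|_{\underline{B}^{d/2+1}_{2,1}}$. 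The second sum contains at most boundedly many blocks $\Delta_{j''}f$ with $j''\sim j$, and $\|\Delta_{j''}f\|_{L^\infty}\lesssim2^{-j}\|\nabla f\|_{L^\infty}$-type bounds hold for them. Together these give the gain $2^{-j}$, and the index shift gives the threshold $\zeta/4$.

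Estimate \eqref{6.1.-1} is obtained in the same way. Writing $\p_k\Delta_j(S_{j'-2}f\Delta_{j'}g)=\Delta_j(\p_kS_{j'-2}f\,\Delta_{j'}g)+\Delta_j(S_{j'-2}f\,\p_k\Delta_{j'}g)$, the first piece costs $\|\nabla f\|_{L^\infty}\|\Delta_{j'}g\|_{L^2}$, so no derivative loss appears on $g$ and $g$ is measured in $B^s$. The second piece is handled exactly as \eqref{6.1.1}, with $g$ replaced by $\p_kg$.

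For \eqref{6.1.2} and \eqref{6.1.3}, $\Delta_j\mathcal{P}$ is a Fourier multiplier localized on an annulus of size $2^j$, with a smooth symbol that is homogeneous of degree zero. Its kernel therefore satisfies the same first-moment bound $\int|x||K_j|\lesssim2^{-j}$ on the torus, obtained by periodization of the $\mathbb{R}^d$ kernel. Hence the commutator argument for \eqref{6.1.1} applies with $\Delta_j$ replaced by $\Delta_j\mathcal{P}$ and $g$ replaced by $\Grad\vv$. The derivative on $\vv$ is absorbed by the index $s-1$.

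\textbf{Main obstacle.} The only delicate point is the torus version of the commutator bound $\|[\Delta_j\mathcal{P},a]b\|_{L^2}\lesssim2^{-j}\|\nabla a\|_{L^\infty}\|b\|_{L^2}$. This needs care at low $j$, where the periodized kernel is not obviously localized. That issue does not arise here: by \eqref{zero}, only $j>j_{\mathbf{a}}$ contribute. For those $j$, Poisson summation expresses the periodic kernel as the periodization of a Schwartz function scaled by $2^j$, whose first moment over a period is bounded by that of the full-space kernel, namely $\lesssim2^{-j}$. Everything else is bookkeeping of the frequency thresholds $\zeta/4$, $\zeta/16$ and $\zeta/64$.
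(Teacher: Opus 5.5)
Your route is the paper's. You use the almost orthogonality \eqref{ao1}--\eqref{ao2} and Bernstein for $T_fg$, $R(f,g)$ and $fg$. For the commutator-type sums you split into $[\De_j,S_{j'-2}f]\De_{j'}g$ plus the difference terms $(S_{j'-2}f-S_{j-2}f)\De_j\De_{j'}g$, and control these by the first-moment bound on the kernel. That bound is Lemma~2.97 of [BCD11], which the paper quotes; your periodization remark supplies the torus justification the paper omits. Your product-rule splitting for \eqref{6.1.-1} is equivalent to the commutator bound the paper uses. For the paraproduct, remainder and product bounds, and for \eqref{6.1.0}, \eqref{6.1.1} and \eqref{6.1.-1}, this is correct.

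The step that fails is your last sentence on \eqref{6.1.2}--\eqref{6.1.3}: ``the derivative on $\vv$ is absorbed by the index $s-1$''. Running the argument for \eqref{6.1.1} with $g=\Grad\vv$ gives
\[
\sum_{2^j\ge\zeta}2^{js}\n{\De_j\mathcal{P}(T_{\vu}\Grad\vv)-S_{j-2}\vu\Grad\De_j\mathcal{P}\vv}_{L^2}\les\n{\vu}_{\underline{B}^{\f d2+1}_{2,1}}\n{\Grad\vv}^{h;\f\zeta4}_{B^{s-1}_{2,1}}\approx\n{\vu}_{\underline{B}^{\f d2+1}_{2,1}}\n{\vv}^{h;\f\zeta4}_{B^{s}_{2,1}} .
\]
This is one derivative short of the claim, and nothing better is true. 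The commutator of the order-zero multiplier $\De_j\mathcal{P}$ (or $\De_j\mathcal{P}^\perp$) with $S_{j'-2}\vu$ gains exactly $2^{-j}$, and the $\Grad$ on $\vv$ uses it up.

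Here is a counterexample. Take $\mathbf{a}=(1,\dots,1)$, $\vu=(\cos x_2,0,\dots,0)$ and $\vv=(\cos(Nx_1),0,\dots,0)$, with $N\sim 2^J$ large and $\varphi(2^{-J}\cdot)\equiv1$ on $[N,\sqrt{N^2+1}]$. Then:
\begin{enumerate}
\item $T_{\vu}\Grad\vv=\vu\cdot\Grad\vv$, $S_{J-2}\vu=\vu$, $\mathcal{P}\vv=0$ and $\mathcal{P}^\perp\vv=\vv$.
\item The second components of $\De_J\mathcal{P}^\perp(\vu\cdot\Grad\vv)$ and $-\De_J\mathcal{P}(\vu\cdot\Grad\vv)$ coincide and have $L^2$ norm $\frac{N^2}{\sqrt2(N^2+1)}\n{\vv}_{L^2}$.
\item $\vu\cdot\Grad\De_J\mathcal{P}^\perp\vv$ is parallel to $e_1$, so it has no second component.
\end{enumerate}
Hence the left sides of \eqref{6.1.2} and \eqref{6.1.3} are $\gtrsim 2^{Js}\n{\vv}_{L^2}$, while the right sides equal $\n{\vu}_{\underline{B}^{\f d2+1}_{2,1}}2^{J(s-1)}\n{\vv}_{L^2}$. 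As $J\to\infty$ the claimed inequality fails.

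The paper's proof makes the same slip. The commutator bound it quotes should read $2^{-j}\n{\Grad S_{j'-2}\vu}_{L^\infty}\n{\Grad\De_{j'}\vv}_{L^2}$, and its difference term has lost a $\Grad$.

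So you should state and prove \eqref{6.1.2}--\eqref{6.1.3} with $\n{\vv}^{h;\zeta/4}_{B^{s}_{2,1}}$ on the right. That is what Propositions~\ref{Pro6.1} and~\ref{Pro6.3} actually use: they apply the lemma after multiplying by $2^{j(s-1)}$ and obtain $B^{s-1}$ norms of the velocity. Nothing downstream changes.
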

\bProof
From \eqref{ao1} and \eqref{ao2} , we have
\begin{align*}
&
\n{T_{f}g}^{h;\zeta}_{B_{2,1}^s}
\les
\sum\limits_{ 2^j \geq \zeta} 
\sum\limits_{|j-j'|\leq 2}
2^{js}
\n{S_{j'-2}f}_{L^\infty} 
\n{
\De_{j'} g
}_{L^2}
\les
\n{f}_{B_{2,1}^{s_1}}
\n{g}^{h;\f\zeta 4}_{B_{2,1}^{s_2+\f d 2}},
\\
&
\n{R(f, g)}^{h;\zeta}_{B_{2,1}^s}
\les
\n{R(f, g)}^{h;\zeta}_{B_{1,1}^{s+\f d 2}}
\les
\sum\limits_{2^j \geq \zeta} 
\sum\limits_{j-j'\leq 4}
\sum\limits_{|j'-j''|\leq 2}
2^{j(s+\f d 2)}
\n{\De_{j'} f}_{L^2}
\n{\De_{j''} g}_{L^2}
\les
\n{f}^{h;\f\zeta {16}}_{\dot{B}_{2,1}^{s_1}}
\n{g}^{h;\f\zeta {64}}_{\dot{B}_{2,1}^{s_2+\f d 2}}.
\end{align*}
Hence we get
\begin{align*}
\n{fg}^{h;\zeta}_{B_{2,1}^s}  
&
\leq
\n{T_f g}^{h;\zeta}_{B_{2,1}^s}  
+
\n{T_g f}^{h;\zeta}_{B_{2,1}^s}  
+
\n{R(f, g)}^{h;\zeta}_{B_{2,1}^s}  
\\
&
\les
\n{f}_{B_{2,1}^{s_1}}
\n{g}^{h;\f\zeta 4}_{B_{2,1}^{s_2+\f d 2}}
+
\n{g}_{B_{2,1}^{s_3}}
\n{f}^{h;\f\zeta 4}_{B_{2,1}^{s_4+\f d 2}}
+
\n{f}^{h;\f\zeta {16}}_{\dot{B}_{2,1}^{s_5}}
\n{g}^{h;\f\zeta {64}}_{\dot{B}_{2,1}^{s_6+\f d 2}}.
\end{align*}
To show \eqref{6.1.0}, we have
\begin{align*}
\sum\limits_{ 2^j \geq \zeta}
2^{js}
\n{
S_{j-2} f \De_j g
}_{L^2}
\les
\n{f}_{L^\infty}
\n{g}^{h; \f{\zeta}{4}}_{B^{s-1}_{2, 1}}
\les
\n{f}_{B^{\f d 2}_{2, 1}}
\n{g}^{h; \f{\zeta}{4}}_{B^{s}_{2, 1}}.
\end{align*}

Next, we prove \eqref{6.1.1}-\eqref{6.1.3}. We only give the proof of \eqref{6.1.2}, since the others follow similarly. Observe that 
\begin{align*}
&
\sum\limits_{ 2^j \geq \zeta}
2^{js}
\n{
\De_j \mathcal{P} (T_{\vu} \Grad \vv)-S_{j-2} \vu \Grad\De_j \mathcal{P} \vv
}_{L^2} 
\\
& \quad 
=
\sum\limits_{ 2^j \geq \zeta}
2^{js}
\n{
\sum\limits_{|j-j'|\leq 2}
\De_j \mathcal{P} (S_{j'-2}\vu \Grad \De_{j'} \vv)
-
\sum\limits_{|j-j'|\leq 2}
S_{j-2} \vu \De_j\De_{j'}\mathcal{P} \vv
}_{L^2}
\\
& \quad 
\leq
\sum\limits_{ 2^j \geq \zeta}
\sum\limits_{|j-j'|\leq 2}
\left(
2^{js}
\n{
\De_j \mathcal{P} (S_{j'-2}\vu \Grad \De_{j'} \vv)
-
S_{j'-2}\vu \Grad \De_{j'} \De_j \mathcal{P}\vv
}_{L^2}
\right.
\\
&
\qquad
\qquad
\qquad
\left.
+
2^{js}
\n{(S_{j'-2}f-S_{j-2}\vu)\De_j\De_{j'} \mathcal{P} \vv}_{L^2}
\right)
\\
& \quad 
\les
\sum\limits_{ 2^j \geq \zeta}
\sum\limits_{|j-j'|\leq 2}
\sp{
2^{(j-1)s}
\n{\Grad S_{j'-2}\vu}_{L^\infty}
\n{\De_{j'} \vv}_{L^2}
+
2^{js}
\n{ S_{j'-2}\vu- S_{j-2}\vu}_{L^\infty}
\n{\De_{j'} \vv}_{L^2}
}
\\
& \quad 
\les
\n{\vu}_{\underline{B}^{\f d 2+1}_{2, 1}}
\n{\vv}^{h; \f{\zeta}{4}}_{B^{s-1}_{2, 1}},
\end{align*}
where we used the following commutator estimate
\begin{align*}
\n{
\De_j \mathcal{P} (S_{j'-2}\vu \Grad \De_{j'} \vv)
-
S_{j'-2}\vu \Grad \De_{j'} \De_j \mathcal{P}\vv
}_{L^2}
\les
2^{-j}
\n{\Grad S_{j'-2}\vu}_{L^\infty}
\n{\De_{j'} \vv}_{L^2},
\end{align*}
see Lemma 2.97 in \cite{BCD11}. For \eqref{6.1.-1}, it suffices to invoke the following commutator estimate
\begin{align*}
\n{
\p_k\De_j (S_{j'-2}f  \De_{j'} g)
-
S_{j'-2}f \p_k \De_j \De_{j'} g
}_{L^2}
\les
\n{\Grad S_{j'-2}f}_{L^\infty}
\n{\De_{j'} g}_{L^2},
\end{align*}
and perform similar calculations as \eqref{6.1.2}.   \ \ $\Box$

\begin{Lemma}\text{(Middle-frequency estimates).}\label{lee6.2}
Let $0<\zeta<\eta<\infty$, $-\infty<s, s_1, s_2, s_3, s_4, s_5, s_6<\infty$ and $k\in\{1, 2, \cdots, d\}$. Then 
\begin{align*}
&
\n{T_{f}g}^{m;\zeta, \eta}_{B_{2,1}^s}
\les
\n{f}_{B_{2,1}^{s_1}}
\n{g}^{m;\f\zeta 4, 4\eta}_{B_{2,1}^{s_2+\f d 2}},
\ \
\text{if }
s=s_1+s_2, \ \
s_1\leq \f d 2,
\\
&
\n{R(f, g)}^{m;\zeta, \eta}_{B_{2,1}^s}
\les
\n{f}^{h;\f\zeta {16}}_{\dot{B}_{2,1}^{s_1}}
\n{g}^{h;\f\zeta {64}}_{\dot{B}_{2,1}^{s_2+\f d 2}},
\ \ 
\text{if }
-\f d 2<s=s_1+s_2,
\\
&
\n{fg}^{m;\zeta, \eta}_{B_{2,1}^s}
\les
\n{f}_{B_{2,1}^{s_1}}
\n{g}^{m;\f\zeta 4, 4\eta}_{B_{2,1}^{s_2+\f d 2}}
+
\n{g}_{B_{2,1}^{s_3}}
\n{f}^{h;\f\zeta 4, 4\eta}_{B_{2,1}^{s_4+\f d 2}}
+
\n{f}^{h;\f\zeta {16}}_{B_{2,1}^{s_5}}
\n{g}^{h;\f\zeta {64}}_{B_{2,1}^{s_6+\f d 2}},
\\
&
\qquad
\qquad
\qquad
\text{if }
-\f d 2<s=s_1+s_2=s_3+s_4=s_5+s_6, \ \ s_1\leq \f d2,\ \ s_3\leq \f d 2,
\\
&
\sum\limits_{ \zeta \leq 2^j < \eta}
2^{js}
\n{
S_{j-2} f \De_j g
}_{L^2}
\les
\n{f}_{B^{\f d 2}_{2, 1}}
\n{g}^{m; \zeta, \eta}_{B^{s}_{2, 1}},
\\
&
\sum\limits_{ \zeta \leq 2^j < \eta}
2^{js}
\n{
\De_j T_f g-S_{j-2} f \De_j g
}_{L^2}
\les
\n{f}_{\underline{B}^{\f d 2+1}_{2, 1}}
\n{g}^{m; \f{\zeta}{4}, 4\eta}_{B^{s-1}_{2, 1}},
\\
&
\sum\limits_{ \zeta \leq 2^j < \eta}
2^{js}
\n{
\De_j \mathcal{P} (T_{\vu} \Grad \vv)-S_{j-2} \vu \Grad\De_j \mathcal{P} \vv
}_{L^2}
\les
\n{\vu}_{\underline{B}^{\f d 2+1}_{2, 1}}
\n{\vv}^{m; \f{\zeta}{4}, 4\eta}_{B^{s-1}_{2, 1}},
\\
&
\sum\limits_{ \zeta \leq 2^j < \eta}
2^{js}
\n{
\De_j \mathcal{P}^\perp (T_{\vu} \Grad \vv)-S_{j-2} \vu \Grad\De_j \mathcal{P}^\perp \vv
}_{L^2}
\les
\n{\vu}_{\underline{B}^{\f d 2+1}_{2, 1}}
\n{\vv}^{m; \f{\zeta}{4}, 4\eta}_{B^{s-1}_{2, 1}},
\\
&
\sum\limits_{ \zeta \leq 2^j < \eta}
2^{js}
\n{
\p_k \De_j \sp{T_f g}-S_{j-2} f \p_k\De_j g
}_{L^2}
\les
\n{f}_{\underline{B}^{\f d 2+1}_{2, 1}}
\n{g}^{m; \f{\zeta}{4}, 4\eta}_{B^{s}_{2, 1}},
\end{align*}
provided that the right-hand sides are finite.
\end{Lemma}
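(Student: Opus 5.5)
The middle-frequency estimates in Lemma \ref{lee6.2} are localized versions of the high-frequency ones in Lemma \ref{lee6.1}, and I would prove them by repeating that proof with the sum over $2^j\geq\zeta$ replaced by a sum over $\zeta\leq 2^j<\eta$. The only new ingredient is bookkeeping of the frequency window. The almost-orthogonality relations \eqref{ao1}--\eqref{ao2} are the tools for this. They show that when $\Delta_j$ acts on a paraproduct $T_fg$, only blocks $\Delta_{j'}g$ with $|j-j'|\leq 2$ contribute. Hence $\zeta\leq 2^j<\eta$ forces $\zeta/4\leq 2^{j'}<4\eta$, which accounts for the enlarged window $(\zeta/4,4\eta)$ on the right-hand sides.

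For the paraproduct bound I would write
\[
\n{T_fg}^{m;\zeta,\eta}_{B^s_{2,1}}\les\sum_{\zeta\leq 2^j<\eta}\sum_{|j-j'|\leq2}2^{js}\n{S_{j'-2}f}_{L^\infty}\n{\Delta_{j'}g}_{L^2}.
\]
The factor $\n{S_{j'-2}f}_{L^\infty}$ is controlled by $2^{j'(\frac d2-s_1)}\n{f}_{B^{s_1}_{2,1}}$ when $s_1\leq \frac d2$, by Bernstein and the embedding $B^{\frac d2}_{2,1}\hookrightarrow L^\infty$. The remaining sum over $j'$ is then restricted to the enlarged window.

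The remainder $R(f,g)$ cannot be localized from above, because high-high interactions feed low output frequencies. I would therefore bound it exactly as in Lemma \ref{lee6.1}: pass to $B^{s+\frac d2}_{1,1}$, use that $\Delta_j(\Delta_{j'}f\Delta_{j''}g)$ vanishes unless $j-j'\leq4$, and use $s+\frac d2>0$ to sum over $j$. This explains why the high-frequency norms with thresholds $\zeta/16$ and $\zeta/64$ appear. The product estimate then follows from Bony's decomposition by adding the bounds for $T_fg$, $T_gf$ and $R(f,g)$. The zero mode gives no contribution, since the middle-frequency norm excludes it (for $\zeta>0$, by \eqref{zero}).

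For the four commutator-type estimates I would argue as in the proof of \eqref{6.1.2}. First expand $\Delta_j\mathcal P(T_{\vu}\Grad\vv)=\sum_{|j-j'|\leq2}\Delta_j\mathcal P(S_{j'-2}\vu\,\Grad\Delta_{j'}\vv)$. Then split each summand into a genuine commutator, bounded via Lemma 2.97 of \cite{BCD11} by $2^{-j}\n{\Grad S_{j'-2}\vu}_{L^\infty}\n{\Delta_{j'}\vv}_{L^2}$, plus the difference term $(S_{j'-2}\vu-S_{j-2}\vu)\Delta_j\Delta_{j'}\mathcal P\vv$. The difference involves only finitely many dyadic blocks of $\vu$ near $2^j$, so it gains $2^{-j}\n{\vu}_{\underline B^{\frac d2+1}_{2,1}}$. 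Summing over $\zeta\leq2^j<\eta$ with $|j-j'|\leq2$ yields $\n{\vv}^{m;\zeta/4,4\eta}_{B^{s-1}_{2,1}}$. The estimates for $\Delta_jT_fg-S_{j-2}f\Delta_jg$, for $\mathcal P^\perp$, and for $\p_k\Delta_j(T_fg)$ are handled identically, the last one without the $2^{-j}$ gain. The simple bound on $\sum 2^{js}\n{S_{j-2}f\Delta_jg}_{L^2}$ is immediate from $\n{S_{j-2}f}_{L^\infty}\les\n{f}_{B^{\frac d2}_{2,1}}$, and here no enlargement of the window is needed.

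The only delicate point is to make sure that every localization loses at most a fixed dyadic factor, so that the windows $(\zeta/4,4\eta)$ and the high thresholds $\zeta/16$, $\zeta/64$ are indeed sufficient. This is routine given \eqref{ao1}--\eqref{ao2}.
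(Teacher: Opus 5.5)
Your strategy is the same as the paper's: the paper only says to rerun the proof of Lemma \ref{lee6.1} on the window $\zeta\le 2^j<\eta$. Your arguments are fine for the paraproduct, the remainder, the product, the $S_{j-2}f\De_jg$ term, the commutator $\De_jT_fg-S_{j-2}f\De_jg$, and the $\p_k$-commutator. The gap is in the two estimates with $\mathcal P$ and $\mathcal P^\perp$. Writing (summation over $i$)
\[
\De_j\mathcal P(T_{\vu}\Grad\vv)-S_{j-2}\vu\cdot\Grad\De_j\mathcal P\vv=\sum_{|j-j'|\le2}\Big([\De_j\mathcal P,S_{j'-2}u^i]\,\p_i\De_{j'}\vv+(S_{j'-2}u^i-S_{j-2}u^i)\,\p_i\De_j\De_{j'}\mathcal P\vv\Big),
\]
you see that the commutator acts on $\p_i\De_{j'}\vv$, not on $\De_{j'}\vv$. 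The commutator lemma therefore gives $2^{-j}\n{\Grad S_{j'-2}\vu}_{L^\infty}\n{\p_i\De_{j'}\vv}_{L^2}\approx\n{\Grad S_{j'-2}\vu}_{L^\infty}\n{\De_{j'}\vv}_{L^2}$. Likewise your difference term has lost its gradient: the factor $2^{-j}$ gained from $S_{j'-2}\vu-S_{j-2}\vu$ is exactly consumed by $\p_i$. So the argument only yields $\n{\vu}_{\underline B^{\f d2+1}_{2,1}}\n{\vv}^{m;\f\zeta4,4\eta}_{B^{s}_{2,1}}$. This matches your $\p_k$-estimate, which has the same structure (the derivative falls on the high-frequency factor) and where you correctly saw there is no gain.

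The $B^{s-1}$ version cannot be proved, because it is false with a constant independent of the window. Take $\mathbf a=(1,\dots,1)$, $\vu=(\cos x_1,0,\dots,0)$, and the divergence-free field $\vv=(0,\cos(Nx_1),0,\dots,0)$. For $N$ large and $2^j\sim N$ we have $T_{\vu}\Grad\vv=\vu\cdot\Grad\vv$ and $S_{j-2}\vu=\vu$. The left-hand block then equals $(0,F_{N,j}(x_1),0,\dots,0)$ with
\[
F_{N,j}=-\f N2\Big(\big[\varphi(\tfrac{N+1}{2^j})-\varphi(\tfrac{N}{2^j})\big]\sin((N+1)x_1)+\big[\varphi(\tfrac{N-1}{2^j})-\varphi(\tfrac{N}{2^j})\big]\sin((N-1)x_1)\Big),
\]
whose $L^2$ norm is $\approx N2^{-j}|\varphi'(2^{-j}N)|\sim 1$ when $2^{-j}N$ lies where $\varphi'\neq0$. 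Hence the left-hand side is $\gtrsim 2^{js}$, while the right-hand side is $\lesssim 2^{j(s-1)}$, and the ratio $2^j$ is unbounded. For $\mathcal P^\perp$, use $\vv=(\cos(Nx_1),0,\dots,0)$ instead. The paper's proof of \eqref{6.1.2} contains the same slip, so the correct statement in both lemmas has $B^{s}_{2,1}$ on the right. Nothing downstream breaks: in Propositions \ref{Pro6.1} and \ref{Pro6.3} the blocks are weighted by $2^{j(s-1)}$. The corrected estimate applied at index $s-1$ gives exactly the $B^{s-1}_{2,1}$ norms of the velocity that appear there.
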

The proof is similar to Lemma \ref{lee6.1}, and the details are omitted.

\subsection{Linear estimates of equations \texorpdfstring{\eqref{1.1}}{} and \texorpdfstring{\eqref{1.2}}{}}
Since $\mathbb{P}(a_\ep, b_\ep, \vue)^T$ and $\mathbb{P}^\perp(a_\ep, b_\ep, \vue)^T$ admit the following decompositions
\begin{align*}
&
\mathbb{P}(a_\ep, b_\ep, \vue)^T
=
\sp{\rho_\ep, -\df{c_1}{c_2}\rho_\ep, \mathcal{P}\underline{\vue}}^T
+
\sp{ \df{\widehat{(a_0)}_0}{\sqrt{|\Td|}}, \df{\widehat{(b_0)}_0}{\sqrt{|\Td|}}, \df{\widehat{(\vue)}_0}{\sqrt{|\Td|}} }^T,
\\
&
\mathbb{P}^\perp(a_\ep, b_\ep, \vue)^T
=
\sp{ \vthe, \df{Q^0}{R^0}\vthe, \mathcal{P}^\perp \vue }^T,
\end{align*}
it suffices to consider the evolution of $\rho_\ep$, $\vthe$, $\mathcal{P}\vue$ and $\mathcal{P}^\perp \vue$. Therefore, we consider the linearized equations
\begin{equation}\label{linea1}
\left\{\begin{aligned}
&
\p_t \rho_\ep 
+ 
T_{\vv}\Grad \rho_\ep  
+
c_{01}
\Div \mathcal{P}^\perp \vue
+
T_{\Grad \rho^*}\vue
+
T_{\rho^*}\Div \mathcal{P}^\perp \vue
=
F_1,
\\
&
\p_t
\mathcal{P}\vue
+
\mathcal{P}
\sp{
T_{\vv}\Grad \vue
}
-
\df{\mu}{R^0+Q^0}
\De \mathcal{P}\vue
=
G_1,
\\
&
(\rho_\ep (0), \mathcal{P}\vue (0))
=
(\rho_0, \mathcal{P}\vu_0),
\end{aligned}
\right.
\end{equation}
and 
\begin{equation}\label{linea2}
\left\{\begin{aligned}
&
\p_t 
\vthe 
+
\df{R^0}{\ep}
\Div \mathcal{P}^\perp \vue
+
T_{\vv} \Grad \vthe 
+
c_{02}
\Div \mathcal{P}^\perp \vue
=
F_2,
\\
&
\p_t
\mathcal{P}^\perp \vue
+
\df{c^2_0}{R^0}
\df{\Grad \vthe}{\ep}
+
\mathcal{P}^\perp
\sp{
T_{\vv}
\Grad \vue
}
-
\df{\nu}{R^0+Q^0}
\De
\mathcal{P}^\perp \vue
=
G_2,
\\
&
(\vthe (0), \mathcal{P}^\perp \vue (0))
=
(\vartheta_0, \mathcal{P}^\perp \vu_0),
\end{aligned}
\right.
\end{equation}
where 
\[
c_{01}:=\dfrac{Q^0\widehat{(a_0)}_0-R^0\widehat{(b_0)}_0}{(Q^0+R^0\f{c_1}{c_2})\sqrt{\T}}, \qquad 
c_{02}:=\dfrac{c_1\widehat{(a_0)}_0+c_2\widehat{(b_0)}_0}{(c_1+c_2\f{Q^0}{R^0})\sqrt{\T}} . 
\]

The terms $T_{\Grad \rho^*} \vue$ and $T_{\rho^*}\Div \mathcal{P}^\perp \vue$  are inserted in \eqref{linea1}$_1$ to facilitate the subsequent treatment of the terms $T_{\Grad \rho_\ep} \vue$ and
$T_{\rho_\ep}\Div \mathcal{P}^\perp \vue$. If $\n{\rho^* }_{L^\infty_T(B^{\f d 2}_{2,1})}<\infty$, the terms $T_{\Grad \rho^*} \vue$ and $T_{\rho^*}\Div \mathcal{P}^\perp \vue$  can be decomposed into three terms, one of which can be absorbed by the left-hand side of \eqref{6.2.9} and \eqref{6.2.-2}, see Remark \ref{Re6.1}, while the remaining two are small, see \eqref{6.3.4}. We then set $\rho^*=\rho$, so that $T_{\Grad \rho_\ep} \vue$ and $T_{\rho_\ep}\Div \mathcal P^\perp \vue$ are transformed into 
$T_{\Grad (\rho_\ep-\rho)} \vue$ and 
$T_{(\rho_\ep - \rho)}\Div \mathcal P^\perp \vue$, respectively, which can be handled successfully.

We now state the linear estimates as follows.

\begin{Proposition}\label{Pro6.1}
Let $1\leq \zeta<\infty$, $0<T\leq \infty$, $-\infty<s<\infty$ and $(\rho_0, \mathbb{P}\vu_0)\in B^{s}_{2, 1} \times B^{s-1}_{2, 1}$. Assume that $\n{\rho^* }_{L^\infty_T(B^{\f d 2}_{2,1})}<\infty$ and 
$(\rho_\ep, \vue)$ is a regular solution to \eqref{linea1} on $[0, T]$. Then, for any $t\in[0, T]$, we have  
\begin{align}
&
\f{1}{8C\sp{1 + \n{\rho^*}_{L^\infty_T(B^{\f d 2}_{2,1})}}}
\n{\rho_\ep}^{h;\zeta}_{\widetilde{L}^\infty_{t}(B^{s}_{2, 1})}
+
\n{ \mathcal{P}\vue}^{h; \zeta}_{\widetilde{L}^\infty_{t}(B^{s-1}_{2, 1})}
+
\f{7}{8}
\n{\mathcal{P}\vue}^{h; \zeta}_{L^1_{t}(B^{s+1}_{2, 1})}
\nonumber
\\
& \quad 
\leq
C
\left(
\n{\rho_0}^{h;\zeta}_{B^{s}_{2, 1}}
+\n{\mathcal{P}\vu_0}^{h; \zeta}_{B^{s-1}_{2, 1}}
+
\n{G_1}^{h; \zeta}_{L^1_{t}(B^{s-1}_{2, 1})}
+
\n{F_1}^{h; \zeta}_{L^1_{t}(B^{s}_{2, 1})}
+
\int_0^t 
\n{\vv}_{L^1_{t}(\underline{B}^{\f d 2+1}_{2, 1})}
\n{\rho_\ep}^{h; \f{\zeta}{4}}_{B^{s}_{2, 1}}   \dta
\right.
\nonumber
\\
&
\qquad
\left.
+
\int_0^t 
\n{\vv}_{L^1_{t}(\underline{B}^{\f d 2+1}_{2, 1})}
\n{\mathcal{P}\vue}^{h; \f{\zeta}{4}}_{B^{s-1}_{2, 1}}
\dta
+
\n{\mathcal{P} \vue}^{m;\f{\zeta}{4}, \zeta}_{L^1_{t}(B^{s+1}_{2, 1})}
+
\n{\mathcal{P}^\perp \vue}^{m;\f{\zeta}{4}, \zeta}_{L^1_{t}(B^{s+1}_{2, 1})}
\right)
+
\f 1 4
\n{\mathcal{P}^\perp\vue}^{h; \zeta}_{L^1_{t}(B^{s+1}_{2, 1})} . 
\label{6.2.-3}
\end{align}
\end{Proposition}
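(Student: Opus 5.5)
We prove Proposition \ref{Pro6.1} by a frequency-localized energy estimate on each block $\De_j$ with $2^j\geq \zeta$, treating the two equations of \eqref{linea1} separately and then summing with weights $2^{js}$ (for $\rho_\ep$) and $2^{j(s-1)}$ (for $\mathcal{P}\vue$). The two equations are coupled only weakly. The $\rho_\ep$ equation sees $\vue$ through $c_{01}\Div\mathcal{P}^\perp\vue$ and the paraproducts with $\rho^*$. The $\mathcal{P}\vue$ equation is a transported heat equation.

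\textbf{Velocity part.} Apply $\De_j$ to \eqref{linea1}$_2$ and write $\De_j\mathcal{P}(T_\vv\Grad\vue)=S_{j-2}\vv\cdot\Grad\De_j\mathcal{P}\vue+\mathcal{R}_j$. Take the $L^2$ inner product with $\De_j\mathcal{P}\vue$. The transport term contributes $-\f12\int\Div S_{j-2}\vv\,|\De_j\mathcal{P}\vue|^2$, which is bounded by $\n{\vv}_{\underline{B}^{\f d2+1}_{2,1}}\n{\De_j\mathcal{P}\vue}_{L^2}^2$; the zero mode of $\vv$ is divergence free and drops out. The commutator $\mathcal{R}_j$ is controlled by \eqref{6.1.2}. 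Bernstein's inequality on $\Td$ (valid since $2^j\geq\zeta\geq1$ kills the zero mode) gives the damping $c\,2^{2j}\n{\De_j\mathcal{P}\vue}_{L^2}$. The usual argument (divide by $\n{\De_j\mathcal{P}\vue}_{L^2}$, integrate in time, multiply by $2^{j(s-1)}$, sum over $2^j\ge\zeta$) then yields the $\widetilde L^\infty_t(B^{s-1})\cap L^1_t(B^{s+1})$ high-frequency bound. The commutator produces frequencies $\geq \zeta/4$. Those in $[\zeta/4,\zeta)$ are exactly the middle-frequency term $\n{\mathcal{P}\vue}^{m;\zeta/4,\zeta}_{L^1_t(B^{s+1})}$, and the rest are absorbed into the time integral $\int_0^t\n{\vv}\n{\mathcal{P}\vue}^{h;\zeta/4}\,d\tau$.

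\textbf{Density part.} Apply $\De_j$ to \eqref{linea1}$_1$ and treat $T_\vv\Grad\rho_\ep$ by the same commutator argument via \eqref{6.1.1}/\eqref{6.1.-1}. The remaining linear terms $c_{01}\Div\mathcal{P}^\perp\vue$, $T_{\Grad\rho^*}\vue$ and $T_{\rho^*}\Div\mathcal{P}^\perp\vue$ carry no smoothing for $\rho_\ep$. We bound them in $L^1_t(B^s)$ by $C(1+\n{\rho^*}_{L^\infty_T(B^{\f d2}_{2,1})})\n{\vue}^{h;\zeta/4}_{L^1_t(B^{s+1})}$, using the high-frequency paraproduct bound of Lemma \ref{lee6.1} with $s_1=\f d2$ for $\rho^*$. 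Here $\vue$ splits into $\mathcal{P}\vue$ and $\mathcal{P}^\perp\vue$, and $\mathcal{P}$ and $\mathcal{P}^\perp$ are zero-order, so $\n{\mathcal{P}\vue}$ and $\n{\mathcal{P}^\perp\vue}$ can be estimated separately. The frequencies in $[\zeta/4,\zeta)$ again give the middle-frequency terms.

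\textbf{Combining, and the main obstacle.} The key step is the weighting. Multiply the $\rho_\ep$ inequality by $\big(8C(1+\n{\rho^*}_{L^\infty_T(B^{\f d 2}_{2,1})})\big)^{-1}$ before adding it to the velocity inequality. The $\mathcal{P}\vue$ contribution from the density estimate then costs at most $\f18\n{\mathcal{P}\vue}^{h;\zeta}_{L^1_t(B^{s+1})}$, which is absorbed to leave the coefficient $\f78$. The $\mathcal{P}^\perp\vue$ contribution appears as $\f14\n{\mathcal{P}^\perp\vue}^{h;\zeta}_{L^1_t(B^{s+1})}$ on the right, to be absorbed later by the estimate of Proposition \ref{Pro6.2}/\eqref{linea2}. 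I expect this bookkeeping to be the delicate point: the constant in the $\rho^*$-paraproduct bound must match the weight exactly, and $\rho_\ep$ has no parabolic gain, so every term feeding velocity into the density must land in the $L^1_t(B^{s+1})$ velocity norm with a small prefactor.
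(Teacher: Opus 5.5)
Your proposal follows the paper's proof essentially step by step: blockwise $L^2$ energy estimates for the two equations of \eqref{linea1}, with the transport commutators and the $c_{01}$- and $\rho^*$-paraproduct terms handled through Lemma \ref{lee6.1}, and the density inequality multiplied by $\big(8C(1+\n{\rho^*}_{L^\infty_T(B^{\f d 2}_{2,1})})\big)^{-1}$ before adding, which gives the $\f{7}{8}$ and $\f{1}{4}$ exactly as you describe. Two small corrections, neither of which changes the approach. First, the middle-frequency terms come only from the $\rho^*$-paraproducts in the density equation; the velocity commutator belongs entirely in the time integral, because its $[\f{\zeta}{4},\zeta)$ part carries $\n{\vv}_{\underline{B}^{\f d 2+1}_{2,1}}$ and only $B^{s-1}$ regularity, so it is not controlled by $\n{\mathcal{P}\vue}^{m;\f{\zeta}{4},\zeta}_{L^1_t(B^{s+1}_{2,1})}$. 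Second, \eqref{6.1.2} bounds that commutator by the full $\vue$, so the piece $-\De_j\mathcal{P}\big(\sum_k \Grad S_{j'-2}\vv^k\,\De_{j'}(\mathcal{P}^\perp\vue)^k\big)$ produces an extra term $\int_0^t\n{\vv}_{\underline{B}^{\f d 2+1}_{2,1}}\n{\mathcal{P}^\perp\vue}^{h;\f{\zeta}{4}}_{B^{s-1}_{2,1}}\dta$; you drop it just as the paper does, and it is harmless downstream since it is of the same type as the terms later treated by Gr\"{o}nwall in Proposition \ref{Pro6.4}.
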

\begin{Remark}\label{Re6.1}
The term 
$
\f 1 4
\n{\mathcal{P}^\perp\vue}^{h; \zeta}_{L^1_{t}(B^{s+1}_{2, 1})}
$ will be absorbed by the left-hand sides of \eqref{6.2.9} and \eqref{6.2.-2}.
\end{Remark}

\begin{Proposition}\label{Pro6.2}
Let $1\leq \zeta<\infty$, 
$0<\ep \leq  \min\{1, \f{R^0}{8|c_{02}|}\}$\footnote{
When $c_{02}=0$, we define $\f{R^0}{8|c_{02}|}=\infty$, which implies $\min\{1, \f{R^0}{8|c_{02}|}\}=1$. The same convention applies throughout the rest of the paper.
}, 
$0<T\leq \infty$,
$-\infty<s<\infty$ and $(\vartheta_0, \mathcal{P}^\perp \vu_0)\in B^{s}_{2, 1}  \times B^{s-1}_{2, 1}$. Assume that 
$(\vthe, \vue)$ is a regular solution to \eqref{linea2} on $[0, T]$. Then, for any $t\in[0, T]$, we have 
\begin{align}
&
\ep
\n{\vthe}^{h;\f{\beta_0}{\ep}}_{\widetilde{L}^\infty_{t}(B^{s}_{2, 1})}
+
\n{ \mathcal{P}^\perp\vue}^{h; \f{\beta_0}{\ep}}_{\widetilde{L}^\infty_{t}(B^{s-1}_{2, 1})}
+
\f{1}{\ep}
\n{\vthe}^{h; \f{\beta_0}{\ep}}_{L^1_{t}(B^{s}_{2, 1})}
+
\n{\mathcal{P}^\perp\vue}^{h; \f{\beta_0}{\ep}}_{L^1_{t}(B^{s+1}_{2, 1})}
\nonumber
\\
& \quad 
\leq
C
\left(
\ep
\n{\vartheta_0}^{h;\f{\beta_0}{\ep}}_{B^{s}_{2, 1}}
+
\n{\mathcal{P}^\perp\vu_0}^{h;\f{\beta_0}{\ep}}_{B^{s-1}_{2, 1}}
+
\ep
\n{F_2}^{h; \f{\beta_0}{\ep}}_{L^1_{t}(B^{s}_{2, 1})}
+
\n{G_2}^{h; \f{\beta_0}{\ep}}_{L^1_{t}(B^{s-1}_{2, 1})}
+
\ep
\int_0^t
\n{\vv}_{\underline{B}^{\f d 2+1}_{2, 1}}
\n{\vthe}_{B^{s}_{2, 1}}
\dta
\right.
\nonumber
\\
&
\qquad
\left.
+
\int_0^t
\n{\vv}_{B^{\f d 2}_{2, 1}}
\n{\vthe}^{h; \f{\beta_0}{4\ep}}_{B^{s}_{2, 1}}
\dta
+
\int_0^t
\n{\vv}_{B^{\f d 2}_{2, 1}}
\n{\mathcal{P}^\perp\vue}^{h; \f{\beta_0}{4\ep}}_{B^{s}_{2, 1}}
\dta
\right), 
\label{6.2.9}
\end{align}
where $\beta_0$ is a positive constant.
\end{Proposition}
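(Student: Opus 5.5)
We localize \eqref{linea2} in frequency and run a hyperbolic--parabolic energy argument on each dyadic block with $2^j\geq \beta_0/\ep$. Apply $\De_j$ and set $\vartheta_j:=\De_j\vthe$, $\vu_j:=\De_j\mathcal{P}^\perp\vue$. The localized system reads
\begin{align*}
&\p_t\vartheta_j+\df{R^0}{\ep}\Div\vu_j+S_{j-2}\vv\cdot\Grad\vartheta_j+c_{02}\Div\vu_j=F_{2,j}+\mathcal{R}^1_j,\\
&\p_t\vu_j+\df{c_0^2}{R^0\ep}\Grad\vartheta_j+\mathcal{P}^\perp(S_{j-2}\vv\cdot\Grad\vu_j)-\df{\nu}{R^0+Q^0}\De\vu_j=G_{2,j}+\mathcal{R}^2_j,
\end{align*}
where $\mathcal{R}^i_j$ are the commutators $S_{j-2}\vv\cdot\Grad\De_j-\De_jT_{\vv}\Grad$. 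These commutators are controlled by \eqref{6.1.1}--\eqref{6.1.3} of Lemma \ref{lee6.1}, which gives $\n{\vv}_{\underline{B}^{\f d2+1}_{2,1}}$ times a high-frequency norm at threshold $\beta_0/(4\ep)$.

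Since the coupling is $1/\ep$ and the mode is high, we use the compressible Navier--Stokes high-frequency Lyapunov functional of Danchin type, rescaled by $\ep$. Set
\[
\mathcal{L}_j^2:=\ep^2\df{c_0^2}{(R^0)^2}\n{\vartheta_j}^2_{L^2}+\n{\vu_j}^2_{L^2}+2\kappa\ep^2\int\vu_j\cdot\Grad\vartheta_j\,dx .
\]
The weight $c_0^2/(R^0)^2$ cancels the $1/\ep$ cross terms in the basic energy identity. The cross term, with $\kappa$ small, generates $\kappa\ep\n{\Grad\vartheta_j}_{L^2}^2$ dissipation up to the errors $\ep^2\n{\De\vu_j}\n{\Grad\vartheta_j}$ and $\n{\Div\vu_j}^2$. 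When $2^j\geq\beta_0/\ep$ we have $\ep 2^j\geq\beta_0$, so $\ep\n{\Grad\vartheta_j}^2\gtrsim \beta_0^2\ep^{-1}\n{\vartheta_j}^2$, which is the $\ep^{-1}\n{\vthe}_{L^1}$ damping. Also $\ep^2|\!\int\vu_j\cdot\Grad\vartheta_j|\leq \ep\,\ep2^j\n{\vu_j}\n{\vartheta_j}$, and the mismatched terms are absorbed by $\ep2^j\gtrsim\beta_0$ and the parabolic term $\nu 2^{2j}\n{\vu_j}^2$. To keep $\mathcal{L}_j\approx \ep\n{\vartheta_j}+\n{\vu_j}$, $\kappa$ must be small relative to $\beta_0^{-1}$, and $\beta_0$ is fixed accordingly. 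The $c_{02}\Div\vu_j$ term is a perturbation of $\f{R^0}{\ep}\Div\vu_j$ of relative size $\ep|c_{02}|/R^0\leq 1/8$ by the hypothesis on $\ep$, so it is absorbed. The outcome is
\[
\f{d}{dt}\mathcal{L}_j+c\min\{\ep^{-1},2^{2j}\}\mathcal{L}_j\les \ep\n{F_{2,j}}+\n{G_{2,j}}+(\text{transport/commutator terms}).
\]

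For the transport terms we integrate by parts, which gives $\n{\Div S_{j-2}\vv}_{L^\infty}\mathcal{L}_j$. The cross term involving $\ep^2\int(S_{j-2}\vv\cdot\Grad\vu_j)\cdot\Grad\vartheta_j$ produces $\n{\vv}_{B^{d/2}_{2,1}}$ times $2^j$-weighted norms. The choice of norms on the right of \eqref{6.2.9} reflects this: $\ep\n{\vv}_{\underline B^{d/2+1}}\n{\vthe}$, $\n{\vv}_{B^{d/2}}\n{\vthe}^{h}$ and $\n{\vv}_{B^{d/2}}\n{\mathcal P^\perp\vue}^{h}_{B^{s}}$. We then integrate in time, multiply by $2^{j(s-1)}$ and sum over $2^j\geq\beta_0/\ep$. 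The $\widetilde L^\infty$ bounds come from taking the supremum per block before summation. The $L^1$ bounds come from $\min\{\ep^{-1},2^{2j}\}$: for $2^j\geq \beta_0/\ep$ we have $2^{2j}\geq\beta_0^2\ep^{-2}$, so the $\vu_j$ part carries $2^{2j}$ and the $\vartheta_j$ part carries $\ep^{-1}\cdot\ep=1$, i.e.\ $\ep^{-1}\n{\vthe}_{B^s}$ after the extra $2^j\sim$ weight is accounted for.

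The main obstacle is the hypocoercive step: choosing $\kappa$ and $\beta_0$ uniformly in $\ep$ so that the cross term is coercive without losing the $\ep$-scaling. This is delicate because the damping rate for $\vartheta$ ($\ep^{-1}$) and for $\vu$ ($2^{2j}$) differ, and the threshold $\beta_0/\ep$ must be exactly where they match. As a first attempt I would mimic the standard Danchin computation after the rescaling $t\mapsto t/\ep^2$, $x\mapsto x/\ep$. Under this rescaling \eqref{linea2} becomes an $\ep$-independent compressible linearized system, and $\beta_0$ is the usual absolute high-frequency threshold.
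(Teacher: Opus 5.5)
There is a genuine gap, and it sits at the core of your argument: the functional $\mathcal{L}_j$ is mis-scaled. In the range $2^j\geq\beta_0/\ep$ the quantity $\ep 2^j$ is bounded below but \emph{not} above, and none of the absorptions you invoke hold uniformly there. Write $\bar\nu:=\nu/(R^0+Q^0)$.

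(i) Putting the factor $\ep^2$ on $\f{c_0^2}{(R^0)^2}\n{\vartheta_j}^2_{L^2}$ destroys the skew-symmetric cancellation. The two equations then contribute $\f{2c_0^2}{R^0}(\ep-\ep^{-1})\int_{\Td}\vu_j\cdot\Grad\vartheta_j\dx$, which has size $\ep^{-1}2^j\n{\vu_j}_{L^2}\n{\vartheta_j}_{L^2}$. Your available dissipation $\kappa\ep 2^{2j}\n{\vartheta_j}^2_{L^2}+\bar\nu 2^{2j}\n{\vu_j}^2_{L^2}$ absorbs this only if $\kappa\ep^3 2^{2j}\gg1$.

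(ii) The time derivative of your cross term contains $2\kappa\bar\nu\ep^2\int_{\Td}\De\vu_j\cdot\Grad\vartheta_j\dx$. The only $\vartheta$-dissipation is of order $\kappa\ep\n{\Grad\vartheta_j}^2_{L^2}$, so Young's inequality leaves $C\kappa\ep^3 2^{2j}\n{\Grad\vu_j}^2_{L^2}$. The parabolic term absorbs this only if $\kappa\ep^3 2^{2j}\ll1$. Conditions (i) and (ii) are incompatible, so no dyadic block is covered, whatever $\kappa$ and $\beta_0$ are.

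(iii) The cross term itself has size $\kappa\ep 2^j\cdot\ep\n{\vartheta_j}_{L^2}\n{\vu_j}_{L^2}$. Once $\kappa\ep 2^j\gtrsim 1$ it dominates the diagonal part, and $\mathcal{L}_j^2$ can be negative (take $\vu_j$ parallel to $-\Grad\vartheta_j$). Taking ``$\kappa$ small relative to $\beta_0^{-1}$'' does not help, because $\ep2^j$ is unbounded.

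The bookkeeping shows the same defect. Summing $\mathcal{L}_j\approx\ep\n{\vartheta_j}_{L^2}+\n{\vu_j}_{L^2}$ with weight $2^{j(s-1)}$ gives $\ep\n{\vthe}_{B^{s-1}_{2,1}}$, not $\ep\n{\vthe}_{B^{s}_{2,1}}$. A damping rate $\ep^{-1}$ gives only $\n{\vthe}^{h;\f{\beta_0}{\ep}}_{L^1_t(B^{s-1}_{2,1})}$. That falls short of $\ep^{-1}\n{\vthe}^{h;\f{\beta_0}{\ep}}_{L^1_t(B^{s}_{2,1})}$ by a factor $2^j/\ep\geq\beta_0\ep^{-2}$, which ``$2^j\sim\ep^{-1}$'' cannot supply at high frequency.

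Your closing remark about rescaling $t=\ep^2\tau$, $x=\ep y$ points to the right fix. Carried out consistently, it puts a gradient on $\vartheta_j$, a single power of $\ep$ on the cross term, and a cross coefficient that is \emph{fixed} (not small) so that the third-order terms cancel exactly. Since $\vu_j$ is a gradient, $\De\vu_j=\Grad\Div\vu_j$, and (ignoring $c_{02}$, transport and source terms)
\[
\f{d}{dt}\Big(\f{\bar\nu\ep^2}{2R^0}\n{\Grad\vartheta_j}^2_{L^2}+\ep\int_{\Td}\vu_j\cdot\Grad\vartheta_j\dx\Big)+\f{c_0^2}{R^0}\n{\Grad\vartheta_j}^2_{L^2}=R^0\n{\Div\vu_j}^2_{L^2}.
\]
Adding $M\n{\vu_j}^2_{L^2}$ with $M\bar\nu>R^0$ gives a functional equivalent to $\ep^2\n{\Grad\vartheta_j}^2_{L^2}+\n{\vu_j}^2_{L^2}$. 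The leftover coupling $O(\ep^{-1}\n{\vu_j}_{L^2}\n{\Grad\vartheta_j}_{L^2})$ is absorbed because $\ep^{-2}\n{\vu_j}^2_{L^2}\les\beta_0^{-2}\n{\Grad\vu_j}^2_{L^2}$ for $2^j\geq\beta_0/\ep$. The $\vartheta$-part is then damped at rate $\ep^{-2}$, which is exactly what produces $\ep^{-1}\n{\vthe}^{h;\f{\beta_0}{\ep}}_{L^1_t(B^{s}_{2,1})}$.

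The paper reaches the same structure through the effective velocity $\vc{\varpi}_\ep=\mathcal{P}^\perp\vue+\f{(R^0+Q^0)c_0^2}{\ep\nu R^0}\Grad(-\De)^{-1}\vthe$. With it, $\vthe$ solves a transport equation damped at rate of order $\ep^{-2}$, forced by $-\ep^{-1}(R^0+\ep c_{02})\Div\vc{\varpi}_\ep$. Meanwhile $\vc{\varpi}_\ep$ solves a heat equation whose extra linear terms carry a factor $(2^j\ep)^{-2}$ relative to the dissipative terms, and $\beta_0$ is chosen to absorb them.

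The paper treats the transport terms in the $\vc{\varpi}_\ep$-equation as sources, which is why its right-hand side contains the $\n{\vv}_{B^{\f d2}_{2,1}}$-terms of \eqref{6.2.9}. A repaired commutator/Lyapunov argument like yours would instead naturally give terms of the form $\n{\vv}_{\underline{B}^{\f d2+1}_{2,1}}\big(\ep\n{\vthe}^{h}_{B^s_{2,1}}+\n{\vue}^{h}_{B^{s-1}_{2,1}}\big)$. You would therefore prove a variant of \eqref{6.2.9} rather than the displayed inequality.
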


\begin{Proposition}\label{Pro6.3}
Let $1\leq \zeta<\df{\beta_0}{\ep}$, $0<\ep \leq \min\{1, \f{R^0}{8|c_{02}|}\}$, $0<T\leq \infty$, $-\infty<s<\infty$ and $(\vartheta_0, \mathcal{P}^\perp \vu_0)\in B^{s-1}_{2, 1}  \times B^{s-1}_{2, 1}$. Assume that 
$(\vthe, \vue)$ is a regular solution to \eqref{linea2} on $[0, T]$. Then, for any $t\in[0, T]$, we have 
\begin{align}
&
\n{\vthe}^{m;\zeta, \f{\beta_0}{\ep}}_{\widetilde{L}^\infty_{t}(B^{s-1}_{2, 1})}
+
\n{ \mathcal{P}^\perp\vue}^{m;\zeta, \f{\beta_0}{\ep}}_{\widetilde{L}^\infty_{t}(B^{s-1}_{2, 1})}
+
\n{\vthe}^{m;\zeta, \f{\beta_0}{\ep}}_{L^1_{t}(B^{s+1}_{2, 1})}
+
\n{\mathcal{P}^\perp\vue}^{m;\zeta, \f{\beta_0}{\ep}}_{L^1_{t}(B^{s+1}_{2, 1})}
\nonumber
\\
& \quad 
\leq
C
\left(
\n{\vartheta_0}^{m;\zeta, \f{\beta_0}{\ep}}_{B^{s-1}_{2, 1}}
+
\n{\mathcal{P}^\perp\vu_0}^{m;\zeta, \f{\beta_0}{\ep}}_{B^{s-1}_{2, 1}}
+
\n{F_2}^{m;\zeta, \f{\beta_0}{\ep}}_{L^1_{t}(B^{s-1}_{2, 1})}
+
\n{G_2}^{m;\zeta, \f{\beta_0}{\ep}}_{L^1_{t}(B^{s-1}_{2, 1})}
\right.
\nonumber
\\
&
\qquad
\left.
+ 
\int_0^t
\n{\vv}_{\underline{B}^{\f d 2+1}_{2, 1}}
\n{\vthe}^{m; \f{\zeta}{4}, \f{4\beta_0}{\ep}}_{B^{s-1}_{2, 1}}
\dta
+
\int_0^t
\n{\vv}_{\underline{B}^{\f d 2+1}_{2, 1}}
\n{\mathcal{P}^\perp\vue}^{m; \f{\zeta}{4}, \f{4\beta_0}{\ep}}_{B^{s-1}_{2, 1}}
\dta
\right), 
\label{6.2.-2}
\end{align}
where $\beta_0$ is a positive constant.

\end{Proposition}

{\bf Proof of Proposition \ref{Pro6.1}:} Applying $\De_j$ to \eqref{linea1}, multiplying the resulting equation by $(\De_j \rho_\ep, \De_j \mathcal{P}\vue)$, and
integrating over $\Td$, we obtain
\begin{align*}
&
\f{1}{2}
\f{d}{\dt}
\n{\De_j \rho_\ep}^2_{L^2}
\\
&\quad 
=
-
\int_{\Td} 
\sp{
\De_j T_{\vv} \Grad \rho_\ep
-
S_{j-2} \vv  \Grad  \De_j \rho_\ep
}
\De_j \rho_\ep
\dx
-
\int_{\Td} 
S_{j-2} \vv  \Grad  \De_j \rho_\ep
\De_j \rho_\ep
\dx
+
\int_{\Td}
\De_j F_1
\De_j \rho_\ep
\dx
\\
&
\qquad
-
c_{01}
\int_{\Td}
\Div \De_j  \mathcal{P}^\perp \vue
\De_j \rho_\ep
\dx
+
\int_{\Td}
\De_j(T_{\Grad \rho^*}\vue)
\dx
+
\int_{\Td}
\De_j(T_{\rho^*}\Div \mathcal{P}^\perp \vue)
\De_j \rho_\ep
\dx,
\\
&
\f{1}{2}
\f{d}{\dt}
\n{\De_j \mathcal{P}\vue}^2_{L^2}
+
\f{\mu}{R^0+Q^0}
\n{\Grad \De_j \mathcal{P}\vue}^2_{L^2}
\\
& \quad 
=
-
\int_{\Td} 
\sp{
\De_j \mathcal{P}(T_{\vv} \Grad \vue)
-
S_{j-2} \vv \Grad \De_j \mathcal{P}\vue
}
\De_j \mathcal{P}\vue
\dx
-
\int_{\Td} 
S_{j-2} \vv  \Grad \De_j \mathcal{P}\vue
\De_j \mathcal{P}\vue
\dx
\\
&
\qquad
+
\int_{\Td}
\De_j G_1
\De_j \mathcal{P}\vue
\dx.
\end{align*}
Integration by parts yields
\begin{align*}
&\int_{\Td} 
S_{j-2} \vv  \Grad \De_j \rho_\ep
\De_j \rho_\ep
\dx
=
-
\f1 2
\int_{\Td}
\Div S_{j-2} \vv
\De_j \rho_\ep
\De_j \rho_\ep
\dx,
\\
&
\int_{\Td} 
S_{j-2} \vv \Grad \De_j \mathcal{P}\vue
\De_j \mathcal{P}\vue
\dx
=
-
\f1 2
\int_{\Td}
\Div S_{j-2} \vv
\De_j \mathcal{P}\vue
\De_j \mathcal{P}\vue
\dx
.
\end{align*}
Hence,
\begin{align}
&
\n{\De_j \rho_\ep}_{L^\infty_t(L^2)}
\nonumber
\leq
C
\left(
\n{\De_j \rho_0}_{L^2}
+
\n{\De_j T_{\vv} \Grad \rho_\ep
-
S_{j-2} \vv  \Grad \De_j \rho_\ep}_{L_t^1(L^2)}
+
\n{\De_j F_1}_{L_t^1(L^2)}
+
2^j \n{\De_j \mathcal{P}^\perp\vue}_{L_t^1(L^2)}
\right.
\nonumber
\\
&
\qquad  \qquad \qquad  \qquad 
\left.
+
\n{\De_j (T_{\Grad \rho^*}\vue)}_{L_t^1(L^2)}
+
\n{\De_j (T_{\rho^*}\Div \mathcal{P}^\perp\vue)}_{L_t^1(L^2)}
\right), 
\label{6.2.1}
\\
&
\n{\De_j \mathcal{P}\vue}_{L^\infty_t(L^2)}
+
2^{2j}
\n{\De_j \mathcal{P}\vue}_{L^1_t(L^2)}
\nonumber
\\
&
\quad
\leq
C
\sp{
\n{\De_j \mathcal{P}\vu_0}_{L^2}
+
\n{
\De_j \mathcal{P} \sp{T_{\vv} \Grad \vue}
-
S_{j-2} \vv  \Grad \De_j \mathcal{P}\vue
}_{L_t^1(L^2)}
+
\n{\De_j G_1}_{L_t^1(L^2)}
}.
\label{6.2.2}
\end{align}
Multiplying both sides of \eqref{6.2.1} by $2^{sj}$
and both sides of \eqref{6.2.2} by $2^{(s-1)j}$,  then summing over 
$j$ with $2^j\geq \zeta$ and applying Lemma \ref{lee6.1}, we obtain
\begin{align}
&
\n{\rho_\ep}^{h;\zeta}_{\widetilde{L}^\infty_{t}(B^{s}_{2, 1})}
\leq
C
\Bigg[
\n{\rho_0}^{h;\zeta}_{B^{s}_{2, 1}}
+
\n{F_1}^{h; \zeta}_{L^1_{t}(B^{s}_{2, 1})}
+
\int_0^t 
\n{\vv}_{L^1_{t}(\underline{B}^{\f d 2+1}_{2, 1})}
\n{\rho_\ep}^{h; \f{\zeta}{4}}_{B^{s}_{2, 1}}   \dta
+
\n{\mathcal{P}^\perp\vue}^{h;\zeta}_{L^1_{t}(B^{s+1}_{2, 1})}
\nonumber
\\
&
\qquad
\qquad
\qquad
\quad
+
\n{\rho^*}_{L^\infty_{t}(B^{\f d 2}_{2, 1})}
\sp{
\n{\mathcal{P}^\perp \vue}^{h;\zeta}_{L^1_{t}(B^{s+1}_{2, 1})}
+
\n{\mathcal{P}^\perp \vue}^{m;\f{\zeta}{4}, \zeta}_{L^1_{t}(B^{s+1}_{2, 1})}
+
\n{\mathcal{P}\vue}^{h;\f{\zeta}{4}}_{L^1_{t}(B^{s+1}_{2, 1})}
+
\n{\mathcal{P}\vue}^{m;\f{\zeta}{4}, \zeta}_{L^1_{t}(B^{s+1}_{2, 1})}
}
\Bigg],
\label{6.2.-4}
\\
&
\n{ \mathcal{P}\vue}^{h; \zeta}_{\widetilde{L}^\infty_{t}(B^{s-1}_{2, 1})}
+
\n{\mathcal{P}\vue}^{h; \zeta}_{L^1_{t}(B^{s+1}_{2, 1})}
\leq
C
\sp{
\n{\mathcal{P}\vu_0}^{h; \zeta}_{B^{s-1}_{2, 1}}
+
\n{G_1}^{h; \zeta}_{L^1_{t}(B^{s-1}_{2, 1})}
+
\int_0^t 
\n{\vv}_{L^1_{t}(\underline{B}^{\f d 2+1}_{2, 1})}
\n{\mathcal{P}\vue}^{h; \f{\zeta}{4}}_{B^{s-1}_{2, 1}}
\dta
}.
\label{6.2.-5}
\end{align}
Multiplying both sides of $\eqref{6.2.-4}$ by $
(8C)^{-1} (1 + \n{\rho^*}_{L^\infty_T(B^{\f d 2}_{2,1})} )^{-1}
$ and adding the resulting inequality to \eqref{6.2.-5} yields
\begin{align*}
&
\f{1}{8C\sp{1 + \n{\rho^*}_{L^\infty_T(B^{\f d 2}_{2,1})}}}
\n{\rho_\ep}^{h;\zeta}_{\widetilde{L}^\infty_{t}(B^{s}_{2, 1})}
+
\n{ \mathcal{P}\vue}^{h; \zeta}_{\widetilde{L}^\infty_{t}(B^{s-1}_{2, 1})}
+
\f{7}{8}
\n{\mathcal{P}\vue}^{h; \zeta}_{L^1_{t}(B^{s+1}_{2, 1})}
\\
& \quad 
\leq
C
\left(
\n{\rho_0}^{h;\zeta}_{B^{s}_{2, 1}}
+\n{\mathcal{P}\vu_0}^{h; \zeta}_{B^{s-1}_{2, 1}}
+
\n{G_1}^{h; \zeta}_{L^1_{t}(B^{s-1}_{2, 1})}
+
\n{F_1}^{h; \zeta}_{L^1_{t}(B^{s}_{2, 1})}
+
\int_0^t 
\n{\vv}_{L^1_{t}(\underline{B}^{\f d 2+1}_{2, 1})}
\n{\rho_\ep}^{h; \f{\zeta}{4}}_{B^{s}_{2, 1}}  \dta
\right.
\\
&
\qquad
\left.
+
\int_0^t 
\n{\vv}_{L^1_{t}(\underline{B}^{\f d 2+1}_{2, 1})}
\n{\mathcal{P}\vue}^{h; \f{\zeta}{4}}_{B^{s-1}_{2, 1}}
\dta
+
\n{\mathcal{P} \vue}^{m;\f{\zeta}{4}, \zeta}_{L^1_{t}(B^{s+1}_{2, 1})}
+
\n{\mathcal{P}^\perp \vue}^{m;\f{\zeta}{4}, \zeta}_{L^1_{t}(B^{s+1}_{2, 1})}
\right)
+
\f 1 4
\n{\mathcal{P}^\perp\vue}^{h; \f{\zeta}{4}}_{L^1_{t}(B^{s+1}_{2, 1})}.
\end{align*}
This completes the proof.    \ \ $\Box$

{\bf Proof of Proposition \ref{Pro6.2}: }Similar to \cite{DH16, F24, H11}, we introduce the effective velocity  
$
\vc{\varpi}_\ep
:=
\mathcal{P}^\perp \vue
+
\f{(R^0+Q^0)c^2_0} {\ep\nu R^0}
\Grad 
(-\Delta)^{-1}
\vthe.
$
Then, we see 
\begin{equation}\label{linea3}
\left\{
\begin{aligned}
&
\ep
\p_t 
\vthe 
+
\df{(R^0+\ep c_{02})(R^0+Q^0)c^2_0}{\ep \nu R^0}
\vthe
+
\ep
T_{\vv}
\Grad
\vthe
=
\ep
F_2
-
\sp{
R^0+\ep c_{02}
}
\Div
\vc{\varpi}_\ep,
\\
&
\p_t 
\vc{\varpi}_\ep
-
\df{\nu}{R^0+Q^0}
\De
\vc{\varpi}_\ep
\\
&
\quad
=
G_2
-
\mathcal{P}^\perp
\sp{
T_{\vv}\Grad \vue
}
+
\df{(R^0+\ep c_{02})(R^0+Q^0)c^2_0}{\ep^2\nu R^0}
\vc{\varpi}_\ep
-
\df{(R^0+\ep c_{02})(R^0+Q^0)^2c^4_0}{\ep^3\nu^2(R^0)^2}
\Grad
(-\De)^{-1}
\vthe
\\
&
\qquad
+
\df{(R^0+Q^0)c^2_0}{\ep\nu R^0}
\Grad (-\De)^{-1}
\sp{
F_2
-
T_{\vv}\Grad \vthe
},
\\
&
(\vthe (0), \vc{\varpi}_\ep (0))
=
(\vartheta_0, 
\mathcal{P}^\perp \vu_0
+
\f{(R^0+Q^0)c^2_0} {\ep\nu R^0}
\Grad 
(-\Delta)^{-1}
\vartheta_0). 
\end{aligned}
\right.
\end{equation}
Applying $\De_j$ to \eqref{linea3}, multiplying the resulting equation by $(\De_j \vthe, \De_j \vc{\varpi}_\ep)$ and
integrating over $\Td$, we obtain
\begin{align*}
&
\f{\ep}{2}
\f{d}{\dt}
\n{\De_j \vthe}^2_{L^2}
+
\df{(R^0+\ep c_{02})(R^0+Q^0)c^2_0}{\ep \nu R^0}
\n{\De_j \vthe}^2_{L^2}
\\
& \quad 
=
-
\ep
\int_{\Td} 
\sp{
\De_j T_{\vv} \Grad \vthe
-
S_{j-2} \vv  \Grad \De_j \vthe
}
\De_j \vthe
\dx
-
\ep
\int_{\Td} 
S_{j-2} \vv \Grad \De_j \vthe
\De_j \vthe
\dx
\\
&
\qquad
+
\ep
\int_{\Td}
\De_j F_2
\De_j \vthe
\dx
-
(R^0+\ep c_{02})
\int_{\Td}
\Div \De_j  \vc{\varpi}_\ep
\De_j \vthe
\dx,
\\
&
\f{1}{2}
\f{d}{\dt}
\n{\De_j \vc{\varpi}_\ep}^2_{L^2}
+
\f{\nu}{R^0+Q^0}
\n{\Grad \De_j \vc{\varpi}_\ep}^2_{L^2}
\\
& \quad 
=
\int_{\Td} 
\De_j G_2
\De_j \vc{\varpi}_\ep
\dx
-
\int_{\Td}
\De_j
\mathcal{P}^\perp
\sp{T_{\vv}\Grad \vue}
\De_j \vc{\varpi}_\ep
\dx
+
\df{(R^0+\ep c_{02})(R^0+Q^0)c^2_0}{\ep^2\nu R^0}
\int_{\Td}
\De_j \vc{\varpi}_\ep
\De_j \vc{\varpi}_\ep
\dx
\\
&
\qquad
+
\df{(R^0+\ep c_{02})(R^0+Q^0)^2c^4_0}{\ep^3\nu^2 (R^0)^2}
\int_{\Td}
\Grad (-\De)^{-1}
\De_j \vthe
\De_j \vc{\varpi}_\ep
\dx,
\\
&
\qquad
+
\df{(R^0+Q^0)c^2_0}{\ep\nu R^0}
\int_{\Td}
\Grad (-\De)^{-1}
\De_j
\sp{
F_2
-
T_{\vv}\Grad \vthe
}
\De_j \vc{\varpi}_\ep
\dx.
\end{align*}
Integration by parts gives 
\begin{align*}
\int_{\Td} 
S_{j-2} \vv \De_j \Grad \vthe
\De_j \vthe
\dx
=
-
\f1 2
\int_{\Td}
\Div S_{j-2} \vv
\De_j \vthe
\De_j \vthe
\dx.
\end{align*}
Hence
\begin{align}
&
\ep
2^j
\n{\De_j \vthe}_{L^\infty_t(L^2)}
+
\f{2^j}{\ep}
\n{\De_j \vthe}_{L^1_t(L^2)}
\nonumber
\\
& \quad 
\leq
C
\ep
2^j
\left(
\n{\De_j \vartheta_0 }_{L^2} 
+
\n{\De_j T_{\vv} \Grad \vthe
-
S_{j-2} \vv \De_j \Grad \vthe}_{L_t^1(L^2)}
+
\n{
\Div S_{j-2} \vv \De_j \vthe}_{L_t^1(L^2)}
\right.
\nonumber
\\
&
\left.
\qquad
+
\n{
 \De_j F_2}_{L_t^1(L^2)}
\right)
+
C
2^{2j}
\n{
 \De_j \vc{\varpi}_\ep}_{L_t^1(L^2)},
\label{6.2.5}
\\[4pt]
&
\n{\De_j \vc{\varpi}_\ep}_{L^\infty_t(L^2)}
+
2^{2j}
\n{\De_j \vc{\varpi}_\ep}_{L^1_t(L^2)}
\nonumber
\\
& \quad 
\leq
C
\sp{
\n{\De_j \vc{\varpi}_0 }_{L^2} 
+
\n{\De_j G_2}_{L_t^1(L^2)}
+
\n{
\De_j 
\sp{T_{\vv} \Grad \mathcal{P}^\perp \vue}
}_{L_t^1(L^2)}
+
\df{2^{2j}}{(2^j \ep)^2}
\n{\De_j \vc{\varpi}_\ep}_{L_t^1(L^2)}}
\nonumber
\\
&
\qquad
+
\df{C}{(2^j \ep)^2}
\df{2^j}{\ep}
\n{\De_j \vthe}_{L_t^1(L^2)}
+
\df{C}{2^j \ep}
\sp{
\n{\De_j F_2}_{L_t^1(L^2)}
+
\n{
\De_j 
\sp{T_{\vv} \Grad \vthe}
}_{L_t^1(L^2)}
}.
\label{6.2.6}
\end{align}
We set $\beta_0:=2\sqrt{\max\{2C^2, C\}}$ and consider $j$ such that $\beta_0 \leq 2^j \ep $. Multiplying both sides of \eqref{6.2.5} by $(4C)^{-1}$
and adding the resulting inequality to \eqref{6.2.6} yields
\begin{align}\label{6.2.7}
&
\ep
2^j
\n{\De_j \vthe}_{L^\infty_t(L^2)}
+
\n{\De_j \vc{\varpi}_\ep}_{L^\infty_t(L^2)}
+
\f{2^j}{\ep}
\n{\De_j \vthe}_{L^1_t(L^2)}
+
2^{2j}
\n{\De_j \vc{\varpi}_\ep}_{L^1_t(L^2)}
\nonumber
\\
& \quad 
\les
\ep
2^j
\left(
\n{\De_j \vartheta_0 }_{L^2} 
+
\n{\De_j T_{\vv} \Grad \vthe
-
S_{j-2} \vv \De_j \Grad \vthe}_{L_t^1(L^2)}
+
\n{
\Div S_{j-2} \vv \De_j \vthe}_{L_t^1(L^2)}
\right.
\nonumber
\\
&
\left.
\qquad
+
\n{
 \De_j F_2}_{L_t^1(L^2)}
\right)
+
\n{\De_j \vc{\varpi}_0 }_{L^2} 
+
\n{\De_j G_2}_{L_t^1(L^2)}
+
\n{
\De_j 
\sp{T_{\vv} \Grad \mathcal{P}^\perp \vue}
}_{L_t^1(L^2)}
\nonumber
\\
&
\qquad
+
\n{\De_j F_2}_{L_t^1(L^2)}
+
\n{
\De_j 
\sp{T_{\vv} \Grad \vthe}
}_{L_t^1(L^2)}.
\end{align}
For such $j$, we have
\begin{align}\label{6.2.8}
&
\ep
2^j
\n{\De_j \vthe}_{L^\infty_t(L^2)}
+
\n{\De_j \vc{\varpi}_\ep}_{L^\infty_t(L^2)}
\approx
\ep
2^j
\n{\De_j \vthe}_{L^\infty_t(L^2)}
+
\n{\De_j \mathcal{P}^\perp \vue}_{L^\infty_t(L^2)},
\nonumber
\\
&
\f{2^j}{\ep}
\n{\De_j \vthe}_{L^1_t(L^2)}
+
2^{2j}
\n{\De_j \vc{\varpi}_\ep}_{L^1_t(L^2)}
\approx
\f{2^j}{\ep}
\n{\De_j \vthe}_{L^1_t(L^2)}
+
2^{2j}
\n{\De_j \mathcal{P}^\perp \vue}_{L^1_t(L^2)} . 
\end{align}
Inserting \eqref{6.2.8} into \eqref{6.2.7}, multiplying both sides of the resulting inequality by $2^{j(s-1)}$, and summing over $j$ satisfying 
$2^j\ep \geq \beta_0$, and then applying Lemma \ref{lee6.1}, we obtain
\begin{align*}
&
\ep
\n{\vthe}^{h;\f{\beta_0}{\ep}}_{\widetilde{L}^\infty_{t}(B^{s}_{2, 1})}
+
\n{ \mathcal{P}^\perp\vue}^{h; \f{\beta_0}{\ep}}_{\widetilde{L}^\infty_{t}(B^{s-1}_{2, 1})}
+
\f{1}{\ep}
\n{\vthe}^{h; \f{\beta_0}{\ep}}_{L^1_{t}(B^{s}_{2, 1})}
+
\n{\mathcal{P}^\perp\vue}^{h; \f{\beta_0}{\ep}}_{L^1_{t}(B^{s+1}_{2, 1})}
\\
& \quad 
\les
\ep
\n{\vartheta_0}^{h;\f{\beta_0}{\ep}}_{B^{s}_{2, 1}}
+
\n{\mathcal{P}^\perp\vu_0}^{h;\f{\beta_0}{\ep}}_{B^{s-1}_{2, 1}}
+
\ep
\n{F_2}^{h; \f{\beta_0}{\ep}}_{L^1_{t}(B^{s}_{2, 1})}
+
\n{G_2}^{h; \f{\beta_0}{\ep}}_{L^1_{t}(B^{s-1}_{2, 1})}
+
\ep
\int_0^t
\n{\vv}_{\underline{B}^{\f d 2+1}_{2, 1}}
\n{\vthe}_{B^{s}_{2, 1}}
\dta
\\
&
\qquad
+
\int_0^t
\n{\vv}_{B^{\f d 2}_{2, 1}}
\n{\vthe}^{h; \f{\beta_0}{4\ep}}_{B^{s}_{2, 1}}
\dta
+
\int_0^t
\n{\vv}_{B^{\f d 2}_{2, 1}}
\n{\mathcal{P}^\perp\vue}^{h; \f{\beta_0}{4\ep}}_{B^{s}_{2, 1}}
\dta.
\end{align*}
The proof is finished.    \ \ $\Box$

{\bf Proof of Proposition \ref{Pro6.3}: }Applying $\De_j$ to \eqref{linea2}, multiplying the resulting equation by $(\De_j \vthe, \De_j \mathcal{P}^\perp\vue)$, and
integrating over $\Td$, we obtain
\begin{align}
&
\f{1}{2}
\f{d}{\dt}
\n{\De_j \vthe}^2_{L^2}
\nonumber
\\
&\quad 
=
-
\int_{\Td} 
\sp{
\De_j T_{\vv} \Grad \vthe
-
S_{j-2} \vv  \Grad \De_j \vthe
}
\De_j \vthe
\dx
+
\f1 2
\int_{\Td} 
\Div S_{j-2} \vv  \De_j \vthe
\De_j \vthe
\dx
\nonumber
\\
&
\qquad
+
\int_{\Td}
\De_j F_2
\De_j \vthe
\dx
-
\f{R^0+c_{0}^{2} \ep}{\ep}
\int_{\Td}
\Div \De_j \mathcal{P}^\perp\vue
\De_j \vthe
\dx,
\label{6.2.14}
\\
&
\f{1}{2}
\f{d}{\dt}
\n{\De_j \mathcal{P}^\perp\vue}^2_{L^2}
+
\f{\nu}{R^0+Q^0}
\n{\Grad \De_j \mathcal{P}^\perp\vue}^2_{L^2}
\nonumber
\\
& \quad 
=
-
\int_{\Td} 
\sp{
\De_j \mathcal{P}^\perp(T_{\vv} \Grad \vue)
-
S_{j-2} \vv \Grad \De_j \mathcal{P}^\perp\vue
}
\De_j \mathcal{P}^\perp\vue
\dx
+
\f1 2
\int_{\Td} 
\Div S_{j-2} \vv   \De_j \mathcal{P}^\perp\vue
\De_j \mathcal{P}^\perp\vue
\dx
\nonumber
\\
&
\qquad
-
\df{c^2_0}{R^0\ep}
\int_{\Td}
\De_j \Grad \vthe
\De_j \mathcal{P}^\perp\vue
\dx
+
\int_{\Td}
\De_j G_2
\De_j \mathcal{P}^\perp\vue
\dx.
\label{6.2.15}
\end{align}
Multiplying both sides of \eqref{6.2.14} by $\f{2c^2_0}{R^0}$
and both sides of \eqref{6.2.15} by $2(R^0+\ep c_{0}^2)$, then adding the resulting two equations yields
\begin{align}
&
\f{c^2_0}{R^0}
\f{d}{\dt}
\n{\De_j \vthe}^2_{L^2}
+
(R^0+\ep c_{0}^2)
\f{d}{\dt}
\n{\De_j \mathcal{P}^\perp\vue}^2_{L^2}
+
\f{\nu(R^0+\ep c_{02})}{R^0+Q^0}
\n{\Grad \De_j \mathcal{P}^\perp\vue}^2_{L^2}
\nonumber
\\
& \quad 
=
\f{2c^2_0}{R^0}
\left(
\f1 2
\int_{\Td} 
\Div S_{j-2} \vv  \De_j \vthe
\De_j \vthe
\dx
-
\int_{\Td} 
\sp{
\De_j T_{\vv} \Grad \vthe
-
S_{j-2} \vv  \Grad \De_j \vthe
}
\De_j \vthe
\dx
\right.
\nonumber
\\
&
\left.\qquad 
+
\int_{\Td}
\De_j F_2
\De_j \vthe
\dx
\right)
+
2(R^0+\ep c_{0}^2)
\Bigg[
\f1 2
\int_{\Td} 
\Div S_{j-2} \vv   \De_j \mathcal{P}^\perp\vue
\De_j \mathcal{P}^\perp\vue
\dx
\nonumber
\\
&
 \qquad 
-
\int_{\Td} 
\sp{
\De_j \mathcal{P}^\perp(T_{\vv} \Grad \vue)
-
S_{j-2} \vv \Grad \De_j \mathcal{P}^\perp\vue
}
\De_j \mathcal{P}^\perp\vue
\dx
+
\int_{\Td}
\De_j G_2
\De_j \mathcal{P}^\perp\vue
\dx
\Bigg],
\label{6.2.16}
\end{align}
where we used
\begin{align*}
\int_{\Td}
\Div \De_j \mathcal{P}^\perp\vue
\De_j \vthe
\dx
+
\int_{\Td}
\De_j \Grad \vthe
\De_j \mathcal{P}^\perp\vue
\dx
=
0.
\end{align*}
Applying $\De_j \Grad $ and $\De_j$ to equations \eqref{linea2}$_1$
and \eqref{linea2}$_2$, respectively, then multiplying the resulting equations by $\ep\De_j \mathcal{P}^\perp\vue$ and  $\ep\De_j \Grad \vthe$, respectively, and integrating over $\Td$, we obtain
\begin{align}
&
\f{d}{\dt}
\int_{\Td} \ep\Grad\vthe \mathcal{P}^\perp\vue\dx
+
\df{c^2_0}{R^0}
\n{\Grad \De_j \vthe}^2_{L^2}
\nonumber
\\
&
=
-
\ep
\int_{\Td} 
\sp{
\De_j\Grad (T_{\vv} \Grad \vthe)
-
S_{j-2} \vv \Grad^2 \De_j \vthe
}
\De_j \mathcal{P}^\perp\vue
\dx
\nonumber
\\
&
\quad
-
\ep
\int_{\Td} 
\sp{
\De_j \mathcal{P}^\perp(T_{\vv} \Grad \vue)
-
S_{j-2} \vv \Grad \De_j \mathcal{P}^\perp\vue
}
\Grad \De_j \vthe
\dx
+
\ep
\int_{\Td}
\Div S_{j-2} \vv 
\Grad \De_j \vthe
\De_j \mathcal{P}^\perp\vue
\dx
\nonumber
\\
&
\quad
+
\ep
\int_{\Td}
 \Grad \De_j F_2
\De_j \mathcal{P}^\perp\vue
\dx
-
(R^0+c_{02}\ep)
\int_{\Td}
\Grad \Div \De_j \mathcal{P}^\perp\vue
\De_j \mathcal{P}^\perp\vue
\dx
\nonumber
\\
&
\quad
+
\df{\nu \ep}{R^0+Q^0}
\int_{\Td}
\De \De_j \mathcal{P}^\perp\vue
\Grad \De_j \vthe
\dx
+
\ep
\int_{\Td}
\De_j G_2
\Grad \De_j \vthe
\dx,
\label{6.2.17}
\end{align}
where we used
\begin{align*}
&
\int_{\Td} 
S_{j-2} \vv  \Grad^2 \De_j \vthe
\De_j \mathcal{P}^\perp\vue
\dx
+
\int_{\Td} 
S_{j-2} \vv \Grad \De_j \mathcal{P}^\perp\vue
\Grad \De_j \vthe
\dx
\\
&
\qquad
=
-
\int_{\Td}
\Div S_{j-2} \vv 
\Grad \De_j \vthe
\De_j \mathcal{P}^\perp\vue
\dx.
\end{align*}
Next, we focus on 
\begin{align*}
(R^0+c_{02}\ep)
\int_{\Td}
\Grad \Div \De_j \mathcal{P}^\perp\vue
\De_j \mathcal{P}^\perp\vue
\dx,
\qquad 
\df{\nu \ep}{R^0+Q^0}
\int_{\Td}
\De \De_j \mathcal{P}^\perp\vue
\Grad \De_j \vthe
\dx,
\end{align*}
which satisfy
\begin{align}
&
\left|
(R^0+c_{02}\ep)
\int_{\Td}
\Grad \Div \De_j \mathcal{P}^\perp\vue
\De_j \mathcal{P}^\perp\vue
\dx
\right|
=
(R^0+c_{02}\ep)
\n{\Div \De_j \mathcal{P}^\perp\vue}^2_{L^2}
\leq
(R^0+c_{02}\ep)
\n{\Grad \De_j\mathcal{P}^\perp\vue}^2_{L^2}, 
\label{6.2.19}
\\
& 
\df{\nu \ep}{R^0+Q^0}
\left|
\int_{\Td}
\De \De_j \mathcal{P}^\perp\vue
\Grad \De_j \vthe
\dx 
\right|
\leq
C
\n{\Grad \De_j \mathcal{P}^\perp\vue}_{L^2}
\n{\Grad \De_j \vthe}_{L^2}
\nonumber
\\
& \qquad \qquad 
\leq
\df{C^2 R^0}{2c^2_0}
\n{\Grad \De_j \mathcal{P}^\perp\vue}^2_{L^2}
+
\df{c^2_0}{2R^0}
\n{\Grad \De_j \vthe}^2_{L^2},
\label{6.2.18}
\end{align}
where we used $2^j \ep < \beta_0$ and $\n{\De \De_j f}_{L^2}\approx 2^j \n{ \Grad \De_j f}_{L^2} $. Our idea is to multiply both sides of \eqref{6.2.19} and \eqref{6.2.18} by a small positive constant $\delta$, and then absorb it by the left-hand sides of \eqref{6.2.16} and \eqref{6.2.17}. On the other hand, for such $\delta$, we require that
\begin{align*}
&
\f{c^2_0}{R^0}
\n{\De_j \vthe}^2_{L^2}
+
(R^0+\ep c_{02})
\n{\De_j \mathcal{P}^\perp\vue}^2_{L^2}
+
\delta
\int_{\Td} \ep\Grad\vthe \mathcal{P}^\perp\vue\dx
\\
& \quad 
\approx
\f{c^2_0}{R^0}
\n{\De_j \vthe}^2_{L^2}
+
(R^0+\ep c_{02})
\n{\De_j \mathcal{P}^\perp\vue}^2_{L^2}.
\end{align*}
Observe that
\begin{align*}
\delta
\int_{\Td} \ep\Grad\vthe \mathcal{P}^\perp\vue\dx   
\leq
C
\delta
\sp{
\n{\De_j \vthe}^2_{L^2}
+
\n{\De_j \mathcal{P}^\perp\vue}^2_{L^2}
}.
\end{align*}
Based on the above considerations, we set
\begin{align*}
\delta=\min\left\{\f{\nu}{4(R^0+Q^0)}, \f{\nu c^2_0}{2C^2(R^0+Q^0)}, \f{c^2_0}{2CR^0}, \f{R^0}{2C} \right\}.
\end{align*}
Multiplying both sides of \eqref{6.2.17} by this $\delta$ and adding the resulting inequality to \eqref{6.2.16} yields
\begin{align*}
&
\n{\De_j \vthe}_{L^\infty_t(L^2)}
+
\n{\De_j \mathcal{P}^\perp\vue}_{L^\infty_t(L^2)}
+
2^{2j}
\n{\De_j \vthe}_{L^1_t(L^2)}
+
2^{2j}
\n{\De_j \mathcal{P}^\perp\vue}_{L^1_t(L^2)}
\\
& \quad 
\les
\n{\De_j \vartheta_0}_{L^2}
+
\n{\De_j \mathcal{P}^\perp\vu_0}_{L^2}
+
\n{
\Div S_{j-2} \vv  \De_j \vthe
}_{L^2}
+
\n{
\Div S_{j-2} \vv  \De_j \mathcal{P}^\perp\vue
}_{L^2}
\\
&
\qquad
+
\n{\De_j T_{\vv} \Grad \vthe
-
S_{j-2} \vv  \Grad \De_j \vthe}_{L^2}
+
\n{\De_j \mathcal{P}^\perp(T_{\vv} \Grad \vue)
-
S_{j-2} \vv \Grad \De_j \mathcal{P}^\perp\vue}_{L^2}
\\
&
\qquad
+
\ep
\n{\De_j\Grad (T_{\vv} \Grad \vthe)
-
S_{j-2} \vv \Grad^2 \De_j \vthe}_{L^2}
+
\n{\De_j F_2}_{L^2}
+
\n{\De_j G_2}_{L^2}.
\end{align*}
Multiplying both sides of the above inequality by $2^{j(s-1)}$ and summing over 
$j$ satisfying $\zeta<2^j \leq \f{\beta_0}{\ep}$, then applying Lemma \ref{lee6.2}, we get
\begin{align*}
&
\n{\vthe}^{m;\zeta, \f{\beta_0}{\ep}}_{\widetilde{L}^\infty_{t}(B^{s-1}_{2, 1})}
+
\n{ \mathcal{P}^\perp\vue}^{m;\zeta, \f{\beta_0}{\ep}}_{\widetilde{L}^\infty_{t}(B^{s-1}_{2, 1})}
+
\n{\vthe}^{m;\zeta, \f{\beta_0}{\ep}}_{L^1_{t}(B^{s+1}_{2, 1})}
+
\n{\mathcal{P}^\perp\vue}^{m;\zeta, \f{\beta_0}{\ep}}_{L^1_{t}(B^{s+1}_{2, 1})}
\\
& \quad 
\leq
C
\left(
\n{\vartheta_0}^{m;\zeta, \f{\beta_0}{\ep}}_{B^{s-1}_{2, 1}}
+
\n{\mathcal{P}^\perp\vu_0}^{m;\zeta, \f{\beta_0}{\ep}}_{B^{s-1}_{2, 1}}
+
\n{F_2}^{m;\zeta, \f{\beta_0}{\ep}}_{L^1_{t}(B^{s-1}_{2, 1})}
+
\n{G_2}^{m;\zeta, \f{\beta_0}{\ep}}_{L^1_{t}(B^{s-1}_{2, 1})}
\right.
\\
&
\qquad
\left.
+
\int_0^t
\n{\vv}_{\underline{B}^{\f d 2+1}_{2, 1}}
\n{\vthe}^{m; \f{\zeta}{4}, \f{4\beta_0}{\ep}}_{B^{s-1}_{2, 1}}
\dta
+
\int_0^t
\n{\vv}_{\underline{B}^{\f d 2+1}_{2, 1}}
\n{\mathcal{P}^\perp\vue}^{m; \f{\zeta}{4}, \f{4\beta_0}{\ep}}_{B^{s-1}_{2, 1}}
\dta
\right).
\end{align*}
This completes the proof.      \ \ $\Box$

\subsection{Nonlinear estimates of equations \texorpdfstring{\eqref{1.1}}{} and \texorpdfstring{\eqref{1.2}}{}}\label{nolin}

In this subsection, we establish nonlinear estimates for equations \eqref{1.1} and \eqref{1.2}. Our strategy is to exploit the dissipative terms on the left-hand sides of \eqref{6.2.-3}-\eqref{6.2.-2} to absorb the corresponding right-hand side terms, and then apply Gr\"{o}nwall's inequality to derive uniform estimates. In this process, two main difficulties arise. The first concerns the low-frequency coupling terms, which we decompose into five components: one can be reformulated into a form suitable for Gr\"{o}nwall's inequality, the second can be absorbed by the left-hand sides of \eqref{6.2.-3}-\eqref{6.2.-2}, and the remaining three can be made arbitrarily small by exploiting the smallness of the coefficient $\ep\zeta$ together with the decay properties of $\vZe$ and $\vWe$ , see Propositions \ref{Pro4.1} and \ref{Pro4.2}, as well as the high-frequency vanishing property of $\vV$ and $\vU$, i.e., 
\begin{align}\label{6.3.0}
\lim\limits_{\zeta \to \infty}
\sp{
\n{\vV}^{h;\zeta}_{\widetilde{L}^\infty_{T^*_0}(B^{\f d 2-1}_{2,1})
\cap
L^1_{T^*_0}(B^{\f d 2+1}_{2,1})
}
+
\n{\vw}^{h;\zeta}_{\widetilde{L}^\infty_{T^*_0}(B^{\f d 2-1}_{2,1})
\cap
L^1_{T^*_0}(B^{\f d 2+1}_{2,1})
}
+
\n{\rho}^{h;\zeta}_{\widetilde{L}^\infty_{T^*_0}(B^{\f d 2}_{2,1})}
}
=
0.
\end{align}
More details will be given in Lemma \ref{lee6.3}. The second difficulty comes from the pressure term, where the composition of functions makes Lemmas \ref{lee6.1} and \ref{lee6.2} inapplicable. To overcome this, we invoke \eqref{cons}: 
\begin{align}\label{6.3.1}
\ep
\sp{
\n{a_\ep}_{\widetilde{L}^\infty_t(B^{\f d 2}_{2,1})}
+
\n{b_\ep}_{\widetilde{L}^\infty_t(B^{\f d 2}_{2,1})}
}
\leq
C
\sp{
\ep
\n{\vthe}^{h;\f{\beta_0}{\ep}}_{\widetilde{L}^\infty_t(B^{\f d 2}_{2, 1})}
+
\n{\vthe}^{m; \zeta, \f{\beta_0}{\ep}}_{\widetilde{L}^\infty_t(B^{\f d 2-1}_{2, 1})}
+
\zeta\ep E^{\ep}_{t}
} , 
\end{align}
which enables the application of Gr\"{o}nwall's inequality.

We set
\begin{align*}
&
\vv=\vue,
\ \
\rho^*=\rho_\ep,
\ \
F_1
=
-
\sp{
T_{\Grad (\rho_\ep-\rho)}\vue
+
T_{(\rho_\ep-\rho)}\Div \mathcal{P}^\perp\vue
+
R(\Grad \rho_\ep, \vue)
+
T_{\Div \mathcal{P}^\perp\vue}\rho_\ep
+
R(\Div \mathcal{P}^\perp\vue, \rho_\ep)
},
\\
&
F_2
=
-
\sp{
T_{\Grad \vthe}\vue
+
R(\Grad \vthe, \vue)
+
\vthe 
\Div \mathcal{P}^\perp\vue
},
\\
&
G_1
=
-
\mathcal{P}
\sp{
T_{\Grad \vue} \vue
+
\sp{c_3+\f{c_5Q^0}{R^0}-\f{c_1c_4}{c_2}-\f{c_1c_6}{c_2}}
\rho_\ep
\Grad \vthe
+
\sp{c_3+\f{c_4Q^0}{R^0}-\f{c_1c_5}{c_2}-\f{c_6Q^0}{R^0}}
\vthe
\Grad \rho_\ep
}
+
\mathcal{P} r^3_\ep,
\\
&
G_2
=
-
\mathcal{P}^\perp
\sp{
T_{\Grad \vue} \vue
+
\sp{c_3+\f{c_5Q^0}{R^0}-\f{c_1c_4}{c_2}-\f{c_1c_6}{c_2}}
\rho_\ep
\Grad \vthe
+
\sp{c_3+\f{c_4Q^0}{R^0}-\f{c_1c_5}{c_2}-\f{c_6Q^0}{R^0}}
\vthe
\Grad \rho_\ep
}
\\
&
\qquad
\quad
-
\sp{
c_3+\f{c_4Q^0}{R^0}+\f{c_5Q^0}{R^0}+\f{c_6 (Q^0)^2}{(R^0)^2}
}
\vthe
\Grad \vthe
-
\sp{
c_3+\f{c_6 c^2_1}{c^2_2}-\f{c_1c_4}{c_2}-\f{c_1c_5}{c_2}
}
\rho_\ep
\Grad \rho_\ep
\\
&
\qquad
\quad
-
\sp{
\f{c_3 \widehat{(a_0)}_0}{\sqrt{\T}}
+
\f{c_4 \widehat{(b_0)}_0}{\sqrt{\T}}
+
\f{c_5 Q^0 \widehat{(a_0)}_0}{R^0\sqrt{\T}}
+
\f{c_6 Q^0\widehat{(b_0)}_0}{R^0\sqrt{\T}}
}
\sp{
\Grad \vthe +\Grad \rho_\ep
}
+
\mathcal{P}^\perp r^3_\ep
\end{align*}

\begin{Lemma}\label{lee6.3}
Let $T^*_0$, $\vU$ and $\vV$ be as in Theorem \ref{Th6.1} and Theorem \ref{Th7.2}. Let  $0<T<\infty$ with $T\leq T^*_0$, $0<\theta<1$,  $1\leq \zeta<\df{\beta_0}{\ep}$ and $0<\ep \leq \min\{1, \f{R^0}{8|c_{02}|}\}$. 
If $(\vae, \vbe, \vue)$ is a regular solution to \eqref{1.1.3} on $[0, T]$, then for any $0\leq t\leq T$ and any $0<\delta<1$, we have
\begin{align}
&
\n{\mathcal{P} \vue}^{m;\f{\zeta}{4}, \zeta}_{L^1_{t}(B^{s+1}_{2, 1})}
+
\n{\mathcal{P}^\perp \vue}^{m;\f{\zeta}{4}, \zeta}_{L^1_{t}(B^{s+1}_{2, 1})}
\leq
C
\sp{
\zeta^{1+2\theta}
[t]^{\f1 2}
\sp{
\vW^\ep_{t, \theta}
+
\vZ^\ep_{t, \theta}
}
+
\n{\vw}^{h; \f{\zeta}{4}}_{L^1_{T^*_0}(B^{\f d 2+1}_{2, 1})}
+
\n{\vV}^{h; \f{\zeta}{4}}_{L^1_{T^*_0}(B^{\f d 2+1}_{2, 1})}
}, \label{6.3.4}
\\[5pt]
&
\ep
\int_0^t
\n{\vue}_{\underline{B}^{\f d 2+1}_{2, 1}}
\n{\vthe}_{B^{\f d 2}_{2, 1}}
\dta
+
\int_0^t
\n{\vue}_{\underline{B}^{\f d 2+1}_{2, 1}}
\n{\vthe}^{h; \f{\beta_0}{\ep}}_{B^{\f d 2-1}_{2, 1}}
\dta
\leq
C
\int_0^t
\n{\vue}_{\underline{B}^{\f d 2+1}_{2, 1}}
\ep
\sp{
\n{a_\ep}_{\widetilde{L}^\infty_\tau(B^{\f d 2}_{2,1})}
+
\n{b_\ep}_{\widetilde{L}^\infty_\tau(B^{\f d 2}_{2,1})}
}
\dta,
\label{6.3.7}
\\[5pt]
&
\int_0^t
\sp{
\n{\vthe}_{B^{\f d 2}_{2, 1}}
+
\n{\rho_\ep}_{B^{\f d 2}_{2, 1}}
}
\n{\vthe}^{h; \f{\beta_0}{\ep}}_{B^{\f d 2}_{2, 1}}
\dta
\leq
C
\int_0^t
\ep
\sp{
\n{a_\ep}_{\widetilde{L}^\infty_\tau(B^{\f d 2}_{2,1})}
+
\n{b_\ep}_{\widetilde{L}^\infty_\tau(B^{\f d 2}_{2,1})}
}
\f{1}{\ep}
\n{\vthe}^{h; \f{\beta_0}{\ep}}_{B^{\f d 2-1}_{2, 1}}
\dta
\label{6.3.19},
\\
&
\int_0^t
\sp{
\n{\vthe}^{h;\zeta}_{B^{\f d 2}_{2, 1}}
+
\n{\rho_\ep}^{h;\zeta}_{B^{\f d 2}_{2, 1}}
}
\dta
\leq
\f{\delta}{2}
\n{\vthe}^{m; \zeta, \f{\beta_0}{\ep}}_{L^1_t(B^{\f d 2+1}_{2,1})}
+
\sp{
\f{1}{2\delta}
+1
}
\int_0^t
\n{\vthe}^{m; \zeta, \f{\beta_0}{\ep}}_{\widetilde{L}^\infty_\tau (B^{\f d 2-1}_{2, 1})}
+
\n{\rho_\ep}^{h;\zeta}_{\widetilde{L}^\infty_\tau (B^{\f d 2-1}_{2, 1})}
\dta
+
\ep
E_{t}^{\ep},
\label{6.3.22}
\\[4pt]
&
\int_0^t
\n{\vue}_{B^{\f d 2}_{2, 1}}
\sp{
\n{\vthe}^{m; \f{\zeta}{16}, \zeta}_{B^{\f d 2}_{2, 1}}
+
\n{\mathcal{P}^\perp\vue}^{m; \f{\zeta}{16}, \zeta}_{B^{\f d 2}_{2, 1}}
+
\n{\mathcal{P}\vue}^{m; \f{\zeta}{16}, \zeta}_{B^{\f d 2}_{2, 1}}
}
\dta
+
\int_0^t
\n{\vthe}_{B^{\f d 2}_{2, 1}}
\n{\vue}^{m; \f{\zeta}{16}, \zeta}_{B^{\f d 2}_{2, 1}}
\dta
\nonumber
\\
&
\quad
+
\int_0^t
\sp{
\n{\vthe}_{B^{\f d 2}_{2, 1}}
+
\n{\rho_\ep}_{B^{\f d 2}_{2, 1}}
}
\sp{
\n{\vthe}^{m; \f{\zeta}{16}, \zeta}_{B^{\f d 2}_{2, 1}}
+
\n{\rho_\ep}^{m; \f{\zeta}{16}, \zeta}_{B^{\f d 2}_{2, 1}}
}
\dta
\nonumber
\\
&
\leq
C
[t]
E_{t}^{\ep}
\sp{
\zeta^{2\theta}
\sp{
\vW^{\ep}_{t, \theta}
+
\vZ^{\ep}_{t, \theta}
}
+
\n{\vV}^{h;\f{\zeta}{16}}_{\widetilde{L}_{T^*_0}^\infty(B^{\f d 2}_{2, 1})
\cap
L_{T^*_0}^1(B^{\f d 2}_{2, 1})}
+
\n{\vw}^{h;\f{\zeta}{16}}_{\widetilde{L}_{T^*_0}^\infty(B^{\f d 2}_{2, 1})
\cap
L_{T^*_0}^1(B^{\f d 2}_{2, 1})}
+
\n{\rho}^{h;\f{\zeta}{16}}_{\widetilde{L}_{T^*_0}^\infty(B^{\f d 2}_{2, 1})
}
},
\label{6.3.8}
\\[5pt]
&
\int_0^t
\n{\vue}_{\underline{B}^{\f d 2+1}_{2, 1}}
\sp{
\n{\mathcal{P}^\perp\vue}^{m; \f{\zeta}{16}, \zeta}_{B^{\f d 2-1}_{2, 1}}
+
\n{\mathcal{P}\vue}^{m; \f{\zeta}{16}, \zeta}_{B^{\f d 2-1}_{2, 1}}
+
\n{\vthe}^{m; \f{\zeta}{16}, \zeta}_{B^{\f d 2-1}_{2, 1}}
+
\n{\rho_\ep}^{m; \f{\zeta}{16}, \zeta}_{B^{\f d 2}_{2, 1}}
}
\dta
\nonumber
\\
&
\leq
C
E^{\ep}_{T}
\sp{
\zeta^{1+\theta}
\sp{
\vW^{\ep}_{t, \theta}
+
\vZ^{\ep}_{t, \theta}
}
+
\n{\vV}^{h;\f{\zeta}{16}}_{\widetilde{L}_{T^*_0}^\infty(B^{\f d 2-1}_{2, 1})
\cap
L_{T^*_0}^1(B^{\f d 2+1}_{2, 1})}
+
\n{\vw}^{h;\f{\zeta}{16}}_{\widetilde{L}_{T^*_0}^\infty(B^{\f d 2-1}_{2, 1})
\cap
L_{T^*_0}^1(B^{\f d 2+1}_{2, 1})}
+
\n{\rho}^{h;\f{\zeta}{16}}_{\widetilde{L}_{T^*_0}^\infty(B^{\f d 2}_{2, 1})
}
},
\label{6.3.13}
\\[5pt]
&
\int_0^t
\n{\vue}_{B^{\f d 2}_{2, 1}}
\sp{
\n{\vthe}^{m; \zeta, \f{\beta_0}{\ep}}_{B^{\f d 2}_{2, 1}}
+
\n{\mathcal{P}\vue}^{h; \zeta}_{B^{\f d 2}_{2, 1}}
+
\n{\mathcal{P}^\perp\vue}^{h; \zeta}_{B^{\f d 2}_{2, 1}}
}
\dta
+
\int_0^t
\n{\vthe}_{B^{\f d 2}_{2, 1}}
\n{\vue}^{h; \zeta}_{B^{\f d 2}_{2, 1}}
\dta
\nonumber
\\
&
\quad
+
\int_0^t
\sp{
\n{\vthe}_{B^{\f d 2}_{2, 1}}
+
\n{\rho_\ep}_{B^{\f d 2}_{2, 1}}
}
\n{\vthe}^{m; \zeta, \f{\beta_0}{\ep}}_{B^{\f d 2}_{2, 1}}
\dta
\nonumber
\\
&
\leq
\f{1}{\delta}
\int_0^t
\sp{
\n{\vue}^2_{B^{\f d 2}_{2, 1}}
+
\n{\vthe}^2_{B^{\f d 2}_{2, 1}}
+
\n{\rho_\ep}^2_{B^{\f d 2}_{2, 1}}
}
\sp{
\n{\vthe}^{m; \zeta, \f{\beta_0}{\ep}}_{\widetilde{L}^\infty_\tau(B^{\f d 2-1}_{2, 1})}
+
\n{\mathcal{P}\vue}^{h; \zeta}_{\widetilde{L}^\infty_\tau(B^{\f d 2-1}_{2, 1})}
+
\n{\mathcal{P}^\perp\vue}^{h; \zeta}_{\widetilde{L}^\infty_\tau(B^{\f d 2-1}_{2, 1})}
}
\dta
\nonumber
\\
&
\quad
+
\f{3\delta}{2}
\sp{
\n{\vthe}^{m;\zeta, \f{\beta_0}{\zeta}}_{L^1_t(B^{\f d 2+1}_{2, 1})}
+
\n{\mathcal{P}^\perp\vue}^{h;\zeta}_{L^1_t(B^{\f d 2+1}_{2, 1})}
+
\n{\mathcal{P}\vue}^{h;\zeta}_{L^1_t(B^{\f d 2+1}_{2, 1})}
},
\label{6.3.9}
\end{align}

\begin{align}
&
\int_0^t
\n{\vue}^{l;\zeta}_{B^{\f d 2}_{2, 1}}
\n{\vthe}^{h; \f{\beta_0}{\ep}}_{B^{\f d 2}_{2, 1}}
\dta
\leq
\zeta
\ep
\sp{
E_{t}^{\ep}
}^2,
\label{6.3.10}
\\[5pt]
&
\int_0^t
\n{\vue}^{h;\zeta}_{B^{\f d 2}_{2, 1}}
\n{\vthe}^{h; \f{\beta_0}{\ep}}_{B^{\f d 2}_{2, 1}}
\dta
\nonumber
\\
&
\leq
\f{1}{2\delta}
\int_0^t
\sp{
\n{\mathcal{P}\vue}^{h;\zeta}_{\widetilde{L}^\infty_\tau(B^{\f d 2-1}_{2, 1})}
+
\n{\mathcal{P}^\perp \vue}^{h;\zeta}_{\widetilde{L}^\infty_\tau(B^{\f d 2-1}_{2, 1})}
}
\sp{
\n{\vthe}^{h; \f{\beta_0}{\ep}}_{B^{\f d 2}_{2, 1}}
}^2
\dta
+
\f{\delta}{2}
\sp{
\n{\mathcal{P}\vue}^{h;\zeta}_{L^1_t(B^{\f d 2+1}_{2, 1})}
+
\n{\mathcal{P}^\perp \vue}^{h;\zeta}_{L^1_t(B^{\f d 2+1}_{2, 1})}
}
\label{6.3.11},
\\
&
\int_0^t
\n{\vue}_{B^{\f d 2}_{2, 1}}
\sp{
\n{\vthe}^{h; \f{\zeta}{16}}_{B^{\f d 2}_{2, 1}}
+
\n{\mathcal{P}^\perp\vue}^{h; \f{\zeta}{16}}_{B^{\f d 2}_{2, 1}}
+
\n{\mathcal{P}\vue}^{h; \f{\zeta}{16}}_{B^{\f d 2}_{2, 1}}
}
\dta
+
\int_0^t
\n{\vthe}_{B^{\f d 2}_{2, 1}}
\n{\vue}^{h; \f{\zeta}{16}}_{B^{\f d 2}_{2, 1}}
\dta
\nonumber
\\
&
\quad
+
\int_0^t
\sp{
\n{\vthe}_{B^{\f d 2}_{2, 1}}
+
\n{\rho_\ep}_{B^{\f d 2}_{2, 1}}
}
\sp{
\n{\vthe}^{h; \f{\zeta}{16}}_{B^{\f d 2}_{2, 1}}
+
\n{\rho_\ep}^{h; \f{\zeta}{16}}_{B^{\f d 2}_{2, 1}}
}
\dta
\nonumber
\\
&
\leq
\sp{
\f{3}{2\delta}
+
C
+
1
}
\int_0^t
\sp{
\n{\mathcal{P}\vue}^{h;\zeta}_{\widetilde{L}^\infty_\tau(B^{\f d 2-1}_{2, 1})}
+
\n{\mathcal{P}^\perp \vue}^{h;\zeta}_{\widetilde{L}^\infty_\tau(B^{\f d 2-1}_{2, 1})}
+
\n{\vthe}^{m; \zeta, \f{\beta_0}{\ep}}_{\widetilde{L}^\infty_\tau(B^{\f d 2-1}_{2, 1})}
+
\n{\rho_\ep}^{h; \zeta}_{\widetilde{L}^\infty_\tau(B^{\f d 2}_{2, 1})}
+
\ep
\sp{
\n{a_\ep}_{\widetilde{L}^\infty_\tau(B^{\f d 2}_{2,1})}
+
\n{b_\ep}_{\widetilde{L}^\infty_\tau(B^{\f d 2}_{2,1})}
}
}
\nonumber
\\
&
\quad
\cdot
\sp{
\n{\vthe}_{B^{\f d 2}_{2, 1}}
+
\n{\rho}_{B^{\f d 2}_{2, 1}}
+
\n{\vthe}^2_{B^{\f d 2}_{2, 1}}
+
\n{\vue}^2_{B^{\f d 2}_{2, 1}}
+
\n{\rho_\ep}^2_{B^{\f d 2}_{2, 1}}
+
\f{1}{\ep}
\n{\vthe}^{h;\f{\beta_0}{\ep}}_{B^{\f d 2}_{2, 1}}
}
\dta
\nonumber
\\
&
\quad
+
2\delta
\sp{
\n{\vthe}^{m;\zeta, \f{\beta_0}{\zeta}}_{L^1_t(B^{\f d 2+1}_{2, 1})}
+
\n{\mathcal{P}^\perp\vue}^{h;\zeta}_{L^1_t(B^{\f d 2+1}_{2, 1})}
+
\n{\mathcal{P}\vue}^{h;\zeta}_{L^1_t(B^{\f d 2+1}_{2, 1})}
}
+
\zeta
\ep
\sp{
E_{t}^{\ep}
}^2
\nonumber
\\
&
\quad
+
C
[t]
E_{t}^{\ep}
\sp{
\zeta^{2\theta}
\sp{
\vW^{\ep}_{t, \theta}
+
\vZ^{\ep}_{t, \theta}
}
+
\n{\vV}^{h;\f{\zeta}{16}}_{\widetilde{L}_{T^*_0}^\infty(B^{\f d 2}_{2, 1})
\cap
L_{T^*_0}^1(B^{\f d 2}_{2, 1})}
+
\n{\vw}^{h;\f{\zeta}{16}}_{\widetilde{L}_{T^*_0}^\infty(B^{\f d 2}_{2, 1})
\cap
L_{T^*_0}^1(B^{\f d 2}_{2, 1})}
+
\n{\rho}^{h;\f{\zeta}{16}}_{\widetilde{L}_{T^*_0}^\infty(B^{\f d 2}_{2, 1})}
},
\label{6.3.12}
\\[5pt]
&
\int_0^t 
\n{(\rho_\ep-\rho)}_{B^{\f d 2}_{2, 1}}
\n{\vue}_{\underline{B}^{\f d 2+1}_{2, 1}}
\dta
+
\int_0^t 
\n{\rho_\ep}^{h; \f{\zeta}{16}}_{B^{\f d 2}_{2, 1}}
\n{\vue}_{\underline{B}^{\f d 2+1}_{2, 1}}
\dta
\nonumber
\\
&
\leq
C
\int_0^t
\n{\rho}^{h; \zeta}_{\widetilde{L}^\infty_\tau(B^{\f d 2}_{2, 1})}
\n{\vue}_{\underline{B}^{\f d 2+1}_{2, 1}}
\dta
\nonumber
\\
&
\quad
+
C
E^{\ep, \zeta}_{t, 1}
\sp{
\zeta^{1+\theta}
\sp{
\vW^{\ep}_{t, \theta}
+
\vZ^{\ep}_{t, \theta}
}
+
\n{\vV}^{h;\f{\zeta}{16}}_{\widetilde{L}_{T^*_0}^\infty(B^{\f d 2-1}_{2, 1})
\cap
L_{T^*_0}^1(B^{\f d 2+1}_{2, 1})}
+
\n{\vw}^{h;\f{\zeta}{16}}_{\widetilde{L}_{T^*_0}^\infty(B^{\f d 2-1}_{2, 1})
\cap
L_{T^*_0}^1(B^{\f d 2+1}_{2, 1})}
+
\n{\rho}^{h;\f{\zeta}{16}}_{\widetilde{L}_{T^*_0}^\infty(B^{\f d 2}_{2, 1})
}
}.
\label{6.3.14}
\end{align}
\end{Lemma}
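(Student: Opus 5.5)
The lemma is a collection of frequency-localized bookkeeping inequalities. My plan is to prove each one separately using three elementary mechanisms.

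The first mechanism is \emph{frequency splitting against the reference solutions} $\vV$, $\vU$. Any middle or low frequency piece of $\vue$, $\vthe$, $\rho_\ep$ is written as the corresponding piece of $\vZe$ or $\vWe$, plus the same piece of $\vV$, $\vw$ or $\rho$. Note that $\mathcal{P}^\perp\vue$ and $\vthe$ are recovered from $\vVe$ through $\mathcal{L}(t/\ep)$, which is norm-equivalent by \eqref{3.1.3}.

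The second mechanism is \emph{Bernstein-type interpolation between frequency bands}. On a band where $2^j\sim\zeta$, derivatives cost powers of $\zeta$, and negative-index norms gain them. This is how \eqref{new8.16} was proved.

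The third mechanism is \emph{Young's inequality} with the parameter $\delta$. It trades $L^1_t(B^{\f d2})$ against $L^\infty_t(B^{\f d2-1})$ times $L^1_t(B^{\f d2+1})$, through $\n{f}_{B^{\f d2}}^2\le\n{f}_{B^{\f d2-1}}\n{f}_{B^{\f d2+1}}$.

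\textbf{Estimate \eqref{6.3.4}.} I bound $\n{\mathcal{P}\vue}^{m;\zeta/4,\zeta}_{L^1_t(B^{s+1})}$ (with $s=\f d2$) by two pieces. The first is $\n{\mathcal{P}\vue-\vw}^{l;\zeta}$, which \eqref{new8.16} controls by $[t]^{1/2}\zeta^{1+2\theta}\vW^\ep_{t,\theta}$. The second is $\n{\vw}^{h;\zeta/4}$. I treat $\mathcal{P}^\perp\vue$ the same way, via $\vVe-\vV$ and \eqref{cons19}.

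\textbf{Estimates \eqref{6.3.7}, \eqref{6.3.19}, \eqref{6.3.10}.} These follow directly from \eqref{cons1} and \eqref{cons3}. Indeed $\ep\n{\vthe}_{B^{\f d2}}\lesssim\ep(\n{a_\ep}+\n{b_\ep})$. Also $\n{\vthe}^{h;\beta_0/\ep}_{B^{\f d2-1}}\le\f{\ep}{\beta_0}\n{\vthe}^{h}_{B^{\f d2}}$, and $\n{\vue}^{l;\zeta}_{B^{\f d2}}\le\zeta\n{\vue}_{B^{\f d2-1}}$, which pairs with $\f1\ep\n{\vthe}^h_{L^1}$ to produce the factor $\zeta\ep(E^\ep_t)^2$.

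\textbf{Estimate \eqref{6.3.22}.} I split $\vthe^{h;\zeta}$ into its middle band $[\zeta,\beta_0/\ep)$ and its high band. On the high band, $\n{\vthe}^{h;\beta_0/\ep}_{L^1(B^{\f d2})}\le\ep E^\ep_t$. On the middle band, interpolation and Young give $\f\delta2\n{\cdot}_{L^1B^{\f d2+1}}+\f1{2\delta}\int\n{\cdot}_{\widetilde L^\infty B^{\f d2-1}}$. The $\rho_\ep$ term is handled in the same way.

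\textbf{Estimates \eqref{6.3.8}, \eqref{6.3.13}.} The middle bands $[\zeta/16,\zeta)$ lie inside the low region $2^j<\zeta$. I replace each solution component there by its difference with $\vV$, $\vw$ or $\rho$, plus that reference function's band, which is bounded by its $h;\zeta/16$ norm. The difference is bounded by $\zeta^{2\theta}$ or $\zeta^{1+\theta}$ times $\vW+\vZ$, exactly as in \eqref{cons10}–\eqref{cons12}. The other factor is put into $L^2_t$ or $L^\infty_t$ using \eqref{cons2} and \eqref{cons16}; this costs $[t]$.

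\textbf{Estimates \eqref{6.3.9}, \eqref{6.3.11}.} These are pure Young interpolation. For example, $\n{u}_{B^{\f d2}}\n{f}^{h}_{B^{\f d2}}\le\n{u}_{B^{\f d2}}\n{f}^{h,1/2}_{B^{\f d2-1}}\n{f}^{h,1/2}_{B^{\f d2+1}}\le\f1\delta\n{u}^2\n{f}^h_{\widetilde L^\infty B^{\f d2-1}}+\delta\n{f}^h_{B^{\f d2+1}}$. In \eqref{6.3.11} the roles are swapped, with $\vue^{h;\zeta}$ interpolated.

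\textbf{Estimate \eqref{6.3.12}.} This is a sum of the previous ones. The band $h;\zeta/16$ splits into $[\zeta/16,\zeta)$, which is handled by \eqref{6.3.8}, and $h;\zeta$, which is handled by \eqref{6.3.9}, \eqref{6.3.10}, \eqref{6.3.11}. Here the high part of $\vthe$ is estimated via \eqref{6.3.19}.

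\textbf{Estimate \eqref{6.3.14}.} I write $\rho_\ep-\rho=(\rho_\ep-\rho)^{l;\zeta}+\rho_\ep^{h;\zeta}-\rho^{h;\zeta}$. The low part is bounded via \eqref{new8.16} times $\n{\vue}_{L^1(\underline B^{\f d2+1})}\le E$. The $\rho^{h;\zeta}$ part is kept as it stands in the integrand. The $\rho_\ep^{h;\zeta}$ part, together with the $\rho_\ep^{h;\zeta/16}$ term, is absorbed into $E^{\ep,\zeta}_{t,1}$, and its band $[\zeta/16,\zeta)$ is treated as in \eqref{6.3.13}.

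\textbf{Main obstacle.} The main difficulty is bookkeeping rather than any single inequality. Each term must acquire exactly the claimed powers of $\zeta$, $[t]$, $\ep$, and the $\delta$-weights must not exceed the stated constants. This is most delicate in \eqref{6.3.12}, where several sub-estimates combine. I would therefore check it last, by collecting the constants $\f{3}{2\delta}+C+1$ and $2\delta$ term by term.
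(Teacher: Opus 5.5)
Your plan follows the paper's proof almost item by item: middle and low bands are split against $\vV$, $\vw$, $\rho$ as in \eqref{cons10}--\eqref{cons12}, interpolation plus Young handles the $\delta$-terms, and \eqref{cons1}, \eqref{cons3} handle the $\ep$-weighted terms. There is, however, a genuine gap in how you treat the high frequencies of $\rho_\ep$, most visibly in \eqref{6.3.14}.

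On the right of \eqref{6.3.14}, $E^{\ep,\zeta}_{t,1}$ appears only multiplied by the small quantities $\zeta^{1+\theta}(\vW^\ep_{t,\theta}+\vZ^\ep_{t,\theta})$ and the high-frequency tails of $\vV$, $\vw$, $\rho$. None of these controls $\rho_\ep^{h;\zeta}$, so $\int_0^t\n{\rho_\ep}^{h;\zeta}_{B^{\f d2}_{2,1}}\n{\vue}_{\underline{B}^{\f d2+1}_{2,1}}\dta$ cannot be ``absorbed into $E^{\ep,\zeta}_{t,1}$''. The most that route gives is $\n{\rho_\ep}^{h;\zeta}_{\widetilde{L}^\infty_t(B^{\f d2}_{2,1})}\n{\vue}_{L^1_t(\underline{B}^{\f d2+1}_{2,1})}\lesssim E^{\ep,\zeta}_tE^\ep_t$. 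Such a term cannot be absorbed into the left side of Proposition \ref{Pro6.4}, because $E^\ep_t$ is of size $\vV_{T^*_0}+\vU_{T^*_0}$ and the data are large.

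The missing point is that $\rho_\ep$ solves only a transport equation, so its high frequencies must be carried as a Gr\"onwall integrand. Bound $\n{\rho_\ep(\tau)}^{h;\zeta}_{B^{\f d2}_{2,1}}\le\n{\rho_\ep}^{h;\zeta}_{\widetilde{L}^\infty_\tau(B^{\f d2}_{2,1})}$ and keep $\int_0^t\n{\rho_\ep}^{h;\zeta}_{\widetilde{L}^\infty_\tau(B^{\f d2}_{2,1})}\n{\vue}_{\underline{B}^{\f d2+1}_{2,1}}\dta$ on the right. This term then feeds the exponential in Proposition \ref{Pro6.4}, and it is what the paper's proof does.

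The $\rho$ in the first term on the right of \eqref{6.3.14} is a misprint for $\rho_\ep$ (compare \eqref{6.3.24}). Read literally with $\rho$, the inequality is out of reach, since nothing on its right controls $\rho_\ep-\rho$ at high frequencies in $B^{\f d2}_{2,1}$. The $\rho^{h;\zeta/16}$ piece belongs in the small bracket.

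The same issue arises in \eqref{6.3.22}. ``The $\rho_\ep$ term is handled in the same way'' cannot mean interpolation and Young, because no $L^1_t(B^{\f d2+1}_{2,1})$ norm of $\rho_\ep$ is available. That term is bounded trivially by $\n{\rho_\ep}^{h;\zeta}_{\widetilde{L}^\infty_\tau(B^{\f d2}_{2,1})}$, which is what the coefficient $+1$ there accounts for. The index $\f d2-1$ in that norm is a misprint for $\f d2$.

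Two smaller bookkeeping points remain.
\begin{itemize}
\item In \eqref{6.3.8}, as you yourself note, the band of $\rho_\ep-\rho$ costs $\zeta^{1+\theta}$. Since $\theta<1$, this exceeds the claimed $\zeta^{2\theta}$. Your argument, like the paper's (whose penultimate line carries $\zeta^{1+\theta}$), therefore proves \eqref{6.3.8}, and hence \eqref{6.3.12}, only with $\zeta^{1+\theta}$. This is harmless downstream, where everything is multiplied by $\zeta^{1+2\theta}$.
\item In \eqref{6.3.14}, bound the low part $(\rho_\ep-\rho)^{l;\zeta}$ by \eqref{cons12}, i.e.\ by $\zeta^{1+\theta}\vW^\ep_{t,\theta}$, not by \eqref{new8.16}. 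The latter introduces a factor $[t]^{\f12}\zeta^{1+2\theta}$ that is absent from the statement.
\end{itemize}
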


We postpone the proof of Lemma \ref{lee6.3} to the end of this subsection. The nonlinear estimates read as follows.

\begin{Proposition}\label{Pro6.4}
Let $T^*_0$, $\vU$ and $\vV$ be as in Theorem \ref{Th6.1} and Theorem \ref{Th7.2}. Let  $0<T<\infty$ with $T\leq T^*_0$, $0<\theta<1$,  $1\leq \zeta<\df{\beta_0}{\ep}$ and $0<\ep \leq \min\{1, \f{R^0}{8|c_{02}|}\}$. 
If $(\vae, \vbe, \vue)$ is a regular solution to \eqref{1.1.1} on $[0, T]$ satisfying
\begin{align*}
\ep 
\sp{
\n{\vae}_{L^\infty_T(L^\infty)}
+
\n{\vbe}_{L^\infty_T(L^\infty)}
}
\leq
\f{\min\{R^0, Q^0\}}{2}.
\end{align*}
Then 
\begin{align*}
&
\ep
\sp{
\n{a_\ep}_{\widetilde{L}^\infty_T(B^{\f d 2}_{2,1})}
+
\n{b_\ep}_{\widetilde{L}^\infty_T(B^{\f d 2}_{2,1})}
}
+
\ep
\n{\vthe}^{h;\f{\beta_0}{\ep}}_{\widetilde{L}^\infty_{T}(B^{\f d 2}_{2, 1})}
+
\f{1}{\ep}
\n{\vthe}^{h;\f{\beta_0}{\ep}}_{L^1_{T}(B^{\f d 2}_{2, 1})}
+
\n{\vthe}^{m;\zeta, \f{\beta_0}{\ep}}_{L^1_{T}(B^{\f d 2+1}_{2, 1})}
+
\n{ \mathcal{P}^\perp\vue}^{h; \zeta}_{\widetilde{L}^\infty_{T}(B^{\f d 2-1}_{2, 1})
\cap
L^1_{T}(B^{\f d 2+1}_{2, 1})
} 
\\
&
\quad
+
\n{\rho_\ep}^{h;\zeta}_{\widetilde{L}^\infty_{T}(B^{\f d 2}_{2, 1})}
+
\n{ \mathcal{P}\vue}^{h; \zeta}_{\widetilde{L}^\infty_{T}(B^{\f d 2-1}_{2, 1})
\cap
L^1_{T}(B^{\f d 2+1}_{2, 1})
} 
\\
&
\leq
C
\text{e}^{
C
[T]
\sp{
1
+
E_{T}^{\ep}
+
\sp{
E_{T}^{\ep}
}^2
}
}
\Bigg[
\n{a_0}^{h;\zeta}_{B^{\f d 2}_{2, 1}}
+
\n{b_0}^{h;\zeta}_{B^{\f d 2}_{2, 1}}
+
\n{\vu_0}^{h; \zeta}_{B^{\f d 2-1}_{2, 1}}
+
\zeta
\ep
\sp{
\sp{
E_{T}^{\ep}
}^2
+
E_{T}^{\ep}
}
\\
&
\quad
+
[T]
E_{T}^{\ep}
\sp{
\zeta^{1+2\theta}
\sp{
\vW^{\ep}_{T, \theta}
+
\vZ^{\ep}_{T, \theta}
}
+
\n{\vV}^{h;\f{\zeta}{16}}_{\widetilde{L}_{T^*_0}^\infty(B^{\f d 2-1}_{2, 1})
\cap
L_{T^*_0}^1(B^{\f d 2+1}_{2, 1})}
+
\n{\vw}^{h;\f{\zeta}{16}}_{\widetilde{L}_{T^*_0}^\infty(B^{\f d 2-1}_{2, 1})
\cap
L_{T^*_0}^1(B^{\f d 2+1}_{2, 1})}
+
\n{\rho}^{h;\f{\zeta}{16}}_{\widetilde{L}_{T^*_0}^\infty(B^{\f d 2}_{2, 1})
}
}
\Bigg].
\end{align*}

\end{Proposition}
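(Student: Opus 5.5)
The plan is to apply the three linear estimates with $s=\f d2$ and the specific choices of $\vv,\rho^*,F_1,F_2,G_1,G_2$ listed before Lemma \ref{lee6.3}. These estimates are Proposition \ref{Pro6.1} (high frequencies $2^j\ge\zeta$ of $(\rho_\ep,\mathcal P\vue)$), Proposition \ref{Pro6.2} (very high frequencies $2^j\ge\beta_0/\ep$ of $(\vthe,\mathcal P^\perp\vue)$) and Proposition \ref{Pro6.3} (middle frequencies $\zeta\le 2^j<\beta_0/\ep$). We add the three resulting inequalities with suitable weights. First, the term $\f14\n{\mathcal P^\perp\vue}^{h;\zeta}_{L^1_t(B^{\f d2+1}_{2,1})}$ on the right of \eqref{6.2.-3} is absorbed by the dissipation on the left of \eqref{6.2.9} and \eqref{6.2.-2}, as in Remark \ref{Re6.1}. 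For $2^j\ge\beta_0/\ep$, the $L^1(B^{\f d2+1})$ norm of $\mathcal P^\perp\vue$ is read off directly from \eqref{6.2.9}; for the middle range it comes from \eqref{6.2.-2}. The low-middle coupling term $\n{\mathcal P\vue}^{m;\zeta/4,\zeta}+\n{\mathcal P^\perp\vue}^{m;\zeta/4,\zeta}$ is bounded by \eqref{6.3.4}. The factor $(8C(1+\n{\rho_\ep}_{L^\infty_T(B^{\f d2}_{2,1})}))^{-1}$ in front of $\rho_\ep$ is harmless, since $\n{\rho_\ep}_{L^\infty_T(B^{\f d2}_{2,1})}\le E^\ep_T$ enters only through the final exponential constant. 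The contribution $\ep(\n{a_\ep}+\n{b_\ep})_{\widetilde L^\infty_T(B^{\f d2}_{2,1})}$ on the left is then recovered from \eqref{cons3} (equivalently \eqref{6.3.1}): it is controlled by the high and middle norms of $\vthe$ plus $\zeta\ep E^\ep_T$.

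The bulk of the work is estimating the source terms $F_1,F_2,G_1,G_2$ (at regularity $\f d2$, resp. $\f d2-1$) in the high and middle norms. This is done with the high/middle product laws Lemma \ref{lee6.1} and Lemma \ref{lee6.2}, and each product is split according to which factor carries the localized frequency. We sort the terms into four classes.
\begin{enumerate}[(i)]
\item Terms with a high/middle-localized factor of $\vthe,\rho_\ep,\vue$ times a full $B^{\f d2}_{2,1}$ or $\underline B^{\f d2+1}_{2,1}$ norm. These are put in Gr\"onwall form or partially absorbed using \eqref{6.3.9}, \eqref{6.3.11}, \eqref{6.3.12} and \eqref{6.3.22}, with $\delta$ small enough that the $\delta$-multiples of the dissipation norms are absorbed into the left-hand side.
\item Terms localized in the frequency annulus $[\zeta/16,\zeta)$, where the frequency bands overlap. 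These are bounded via \eqref{6.3.8} and \eqref{6.3.13} by $[T]E^\ep_T\big(\zeta^{1+2\theta}(\vW^\ep+\vZ^\ep)+\text{high-frequency tails of }\vV,\vw,\rho\big)$.
\item Terms involving the very high part of $\vthe$ (frequencies $\ge\beta_0/\ep$). These are handled by \eqref{6.3.7}, \eqref{6.3.10} and \eqref{6.3.19}, which produce either $\zeta\ep(E^\ep_T)^2$ or integrands of the form $\ep(\n{a_\ep}+\n{b_\ep})\cdot\f1\ep\n{\vthe}^{h;\beta_0/\ep}$; the latter are again Gr\"onwall-admissible by \eqref{6.3.1}.
\item The paralinearization terms $T_{\Grad(\rho_\ep-\rho)}\vue$ and $T_{\rho_\ep-\rho}\Div\mathcal P^\perp\vue$, handled by \eqref{6.3.14}.
\end{enumerate}

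The linear terms $L_1$ (proportional to $\widehat{(a_0)}_0,\widehat{(b_0)}_0$) and the quadratic term $N_1=\rho_\ep\Grad\rho_\ep$ in $G_2$ produce the $[T]$ growth. They are bounded by $\int_0^t\n{\rho_\ep}^{h;\zeta}_{B^{\f d2}_{2,1}}\,d\tau$ times $E^\ep_T$ plus $t$-integrals of the solution norms. The remainder $r^3_\ep$ is treated as in \eqref{3.1.16}, but now restricted to high frequencies and using Lemma \ref{Le8.6} with the $L^\infty$ smallness hypothesis. The factor $\ep\n{a_\ep}_{B^{\f d2}_{2,1}}$ coming from the composition is again controlled through \eqref{6.3.1}, which is what makes this term Gr\"onwall-compatible rather than merely small.

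Once all terms are collected, the left-hand side quantity $X(t)$ satisfies
\[
X(t)\le C\Big(\text{data}+\zeta\ep\big((E^\ep_T)^2+E^\ep_T\big)+[T]E^\ep_T(\cdots)\Big)+C\int_0^t\big(1+E^\ep_T+(E^\ep_T)^2+f(\tau)\big)X(\tau)\,d\tau,
\]
where $\int_0^T f\le C[T](1+E^\ep_T+(E^\ep_T)^2)$. Since every norm in $X$ is a $\widetilde L^\infty_\tau$ norm, $X$ is nondecreasing, and Gr\"onwall's inequality gives the claimed exponential. The initial data are bounded in high frequencies by $\n{a_0}^{h;\zeta}$, $\n{b_0}^{h;\zeta}$ and $\n{\vu_0}^{h;\zeta}$ through the definitions of $\rho_\ep$ and $\vthe$, with $\ep\n{\vartheta_0}^{h;\beta_0/\ep}_{B^{\f d2}}$ dominated by the same quantities.

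The main obstacle I expect is the bookkeeping of the pressure and $G_2$ terms, namely $N_1$, $L_1$ and the $\rho_\ep\Grad\vthe$ type cross terms. Here the $\rho_\ep$ factor has no time integrability, so every occurrence must be placed either as a $\widetilde L^\infty_\tau$ high/middle norm inside a Gr\"onwall integral with an $L^1$ weight, or into the $[T]E^\ep_T\zeta^{1+2\theta}(\vW^\ep+\vZ^\ep)$ small term. The weights must be chosen consistently across the three propositions so that absorption with small $\delta$ works uniformly in $\ep$ and $\zeta$.
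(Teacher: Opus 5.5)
Your proposal is correct and follows essentially the same route as the paper. Both apply Propositions~\ref{Pro6.1}--\ref{Pro6.3} with $s=\frac d2$ and the listed choices of $\vv,\rho^*,F_1,F_2,G_1,G_2$, absorb the $\frac14\|\mathcal{P}^\perp\vue\|^{h;\zeta}_{L^1_t(B^{\frac d2+1}_{2,1})}$ term, and control the coupling and source terms with Lemmas~\ref{lee6.1}, \ref{lee6.2} and the estimates \eqref{6.3.4}--\eqref{6.3.14} of Lemma~\ref{lee6.3}, using \eqref{6.3.1} to make the composite pressure remainder Gr\"onwall-compatible. They then fix $\delta$ small to absorb the dissipative pieces and close with Gr\"onwall; your treatment of the weight $\big(8C(1+\|\rho_\ep\|_{L^\infty_T(B^{\frac d2}_{2,1})})\big)^{-1}$ plays the role of the paper's \eqref{6.3.5}.
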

{\bf Proof of Proposition \ref{Pro6.4}: }Notice that
\begin{align}\label{6.3.5}
0
<
\f{1}{8C\sp{1 + \n{\rho_\ep}_{\widetilde{L}^\infty_{T^*_0}(B^{\f d 2}_{2,1})}}}
\leq 
\f{1}{8C\sp{1 + \n{\rho_\ep}_{L^\infty_T(B^{\f d 2}_{2,1})}}}.
\end{align}
By taking $s=\f d 2$ and setting $\vv$, $\rho^*$, $F_1$, $F_2$, $G_1$, $G_2$ as above, and substituting them into Propositions \ref{Pro6.1}-\ref{Pro6.3}, and combining with \eqref{6.3.1} and  \eqref{6.3.5}, we obtain
\begin{align}
&
\ep
\sp{
\n{a_\ep}_{\widetilde{L}^\infty_T(B^{\f d 2}_{2,1})}
+
\n{b_\ep}_{\widetilde{L}^\infty_T(B^{\f d 2}_{2,1})}
}
+
\ep
\n{\vthe}^{h;\f{\beta_0}{\ep}}_{\widetilde{L}^\infty_{T}(B^{\f d 2}_{2, 1})}
+
\f{1}{\ep}
\n{\vthe}^{h;\f{\beta_0}{\ep}}_{L^1_{T}(B^{\f d 2}_{2, 1})}
+
\n{\vthe}^{m;\zeta, \f{\beta_0}{\ep}}_{L^1_{t}(B^{\f d 2+1}_{2, 1})}
+
\n{ \mathcal{P}^\perp\vue}^{h; \zeta}_{\widetilde{L}^\infty_{T}(B^{\f d 2-1}_{2, 1})
\cap
L^1_{T}(B^{\f d 2+1}_{2, 1})
} 
\nonumber
\\
&
\quad
+
\n{\rho_\ep}^{h;\zeta}_{\widetilde{L}^\infty_{T}(B^{\f d 2}_{2, 1})}
+
\n{ \mathcal{P}\vue}^{h; \zeta}_{\widetilde{L}^\infty_{T}(B^{\f d 2-1}_{2, 1})
\cap
L^1_{T}(B^{\f d 2+1}_{2, 1})
} 
\nonumber
\\
&
\leq
C
\left(
\ep
\n{\vartheta_0}^{h;\f{\beta_0}{\ep}}_{B^{\f d 2}_{2, 1}}
+
\n{\vartheta_0}^{m;\zeta, \f{\beta_0}{\ep}}_{B^{\f d 2-1}_{2, 1}}
+
\n{\rho_0}^{h;\zeta}_{B^{\f d 2}_{2, 1}}
+
\n{\vu_0}^{h; \zeta}_{B^{\f d 2-1}_{2, 1}}
+
\n{F_1}^{h; \zeta}_{L^1_{t}(B^{\f d 2}_{2, 1})}
+
\n{G_1}^{h; \zeta}_{L^1_{t}(B^{\f d 2-1}_{2, 1})}
+
\n{G_2}^{h; \zeta}_{L^1_{t}(B^{\f d 2-1}_{2, 1})}
+
\ep
\n{F_2}^{h; \f{\beta_0}{\ep}}_{L^1_{t}(B^{\f d 2}_{2, 1})}
\right.
\nonumber
\\
&
\quad
\left.
+
\n{F_2}^{m; \zeta, \f{\beta_0}{\ep}}_{L^1_{t}(B^{\f d 2-1}_{2, 1})}
+
\int_0^t 
\n{\vue}_{L^1_{t}(\underline{B}^{\f d 2+1}_{2, 1})}
\n{\rho_\ep}^{h; \f{\zeta}{4}}_{B^{\f d 2}_{2, 1}}
\dta
+
\int_0^t 
\n{\vue}_{L^1_{t}(\underline{B}^{\f d 2+1}_{2, 1})}
\n{\mathcal{P}\vue}^{h; \f{\zeta}{4}}_{B^{\f d 2-1}_{2, 1}}
\dta
+
\ep
\int_0^t
\n{\vue}_{\underline{B}^{\f d 2+1}_{2, 1}}
\n{\vthe}_{B^{\f d 2}_{2, 1}}
\dta
\right.
\nonumber
\\
&
\quad
+
\int_0^t
\n{\vue}_{B^{\f d 2}_{2, 1}}
\n{\vthe}^{h; \f{\beta_0}{4\ep}}_{B^{\f d 2}_{2, 1}}
\dta
+
\int_0^t
\n{\vue}_{B^{\f d 2}_{2, 1}}
\n{\mathcal{P}^\perp\vue}^{h; \f{\beta_0}{4\ep}}_{B^{\f d 2}_{2, 1}}
\dta
+
\int_0^t
\n{\vue}_{\underline{B}^{\f d 2+1}_{2, 1}}
\n{\vthe}^{m; \f{\zeta}{4}, \f{4\beta_0}{\ep}}_{B^{\f d 2-1}_{2, 1}}
\dta
\nonumber
\\
&
\quad
\left.
+
\int_0^t
\n{\vue}_{\underline{B}^{\f d 2+1}_{2, 1}}
\n{\mathcal{P}^\perp\vue}^{m; \f{\zeta}{4}, \f{4\beta_0}{\ep}}_{B^{\f d 2-1}_{2, 1}}
\dta
+
\zeta\ep E^{\ep}_{t}
+
\n{\mathcal{P}\vue}^{m; \f{\zeta}{4}, \zeta}_{L^1_{t}(B^{\f d 2+1}_{2, 1})}
+
\n{\mathcal{P}^\perp\vue}^{m; \f{\zeta}{4}, \zeta}_{L^1_{t}(B^{\f d 2+1}_{2, 1})}
\right).
\label{6.3.6}
\end{align}
We proceed to estimate each term on the right-hand side of \eqref{6.3.6}.  To this end, we shall repeatedly invoke \eqref{minski}, \eqref{minski2}, together with the following elementary inequalities, which hold for all $0<\zeta<\eta<\infty$, $1\leq p, r\leq \infty$, $-\infty<s<\infty$, $0<s_1<\infty$,
\begin{align*}
&
\n{f}^{l;\zeta}_{B^{s}_{p, r}}
\leq
\n{f}_{B^{s}_{p, r}},
\quad  
\n{f}^{h;\zeta}_{B^{s}_{p, r}}
\leq 
\n{f}_{B^{s}_{p, r}},
\quad 
\n{f}^{m;\zeta, \eta}_{B^{s}_{p, r}}
\leq
\min\{\n{f}^{l;\eta}_{\underline{B}^{s}_{p, r}}, \n{f}^{h;\zeta}_{B^{s}_{p, r}}\},
\\
&
\n{f}^{l;\zeta}_{B^{s+s_1}_{p, r}}
\leq
\zeta^{s_1}
\n{f}^{l;\zeta}_{B^{s}_{p, r}},
\quad 
\n{f}^{h;\zeta}_{B^{s-s_1}_{p, r}}
\leq
\zeta^{-s_1}
\n{f}^{h;\zeta}_{B^{s}_{p, r}}.
\end{align*}

\noindent $\bullet$ \textbf{Estimates for the initial data.}  

Seeing that
\begin{align*}
&
\ep
\n{\vartheta_0}^{h;\f{\beta_0}{\ep}}_{B^{\f d 2}_{2, 1}}
\les
\ep
\sp{
\n{a_0}^{h;\f{\beta_0}{\ep}}_{B^{\f d 2}_{2, 1}}
+
\n{b_0}^{h;\f{\beta_0}{\ep}}_{B^{\f d 2}_{2, 1}}
}
,
\ \
\n{\vartheta_0}^{m;\zeta, \f{\beta_0}{\ep}}_{B^{\f d 2-1}_{2, 1}}
\les
\sp{
\n{a_0}^{m;\zeta, \f{\beta_0}{\ep}}_{B^{\f d 2-1}_{2, 1}}
+
\n{b_0}^{m;\zeta, \f{\beta_0}{\ep}}_{B^{\f d 2-1}_{2, 1}}
}
,
\ \
\n{\rho_0}^{h;\zeta}_{B^{\f d 2}_{2, 1}}
\les
\sp{
\n{a_0}^{h;\zeta}_{B^{\f d 2}_{2, 1}}
+
\n{b_0}^{h;\zeta}_{B^{\f d 2}_{2, 1}}
},
\end{align*}
which implies
\begin{align}\label{6.3.15}
C
\sp{
\ep
\n{\vartheta_0}^{h;\f{\beta_0}{\ep}}_{B^{\f d 2}_{2, 1}}
+
\n{\vartheta_0}^{m;\zeta, \f{\beta_0}{\ep}}_{B^{\f d 2-1}_{2, 1}}
+
\n{\rho_0}^{h;\zeta}_{B^{\f d 2}_{2, 1}}
+
\n{\vu_0}^{h; \zeta}_{B^{\f d 2-1}_{2, 1}}
}
\leq
C^2
\sp{
\n{a_0}^{h;\zeta}_{B^{\f d 2}_{2, 1}}
+
\n{b_0}^{h;\zeta}_{B^{\f d 2}_{2, 1}}
+
\n{\vu_0}^{h; \zeta}_{B^{\f d 2-1}_{2, 1}}
}.
\end{align}

\noindent $\bullet$ \textbf{Estimates for the integral terms.}  

From Lemma \ref{lee6.3}, we get
\begin{align}\label{6.3.16}
&
C
\left(
\int_0^t 
\n{\vue}_{L^1_{t}(\underline{B}^{\f d 2+1}_{2, 1})}
\n{\rho_\ep}^{h; \f{\zeta}{4}}_{B^{\f d 2}_{2, 1}}
\dta
+
\int_0^t 
\n{\vue}_{L^1_{t}(\underline{B}^{\f d 2+1}_{2, 1})}
\n{\mathcal{P}\vue}^{h; \f{\zeta}{4}}_{B^{\f d 2-1}_{2, 1}}
\dta
+
\ep
\int_0^t
\n{\vue}_{\underline{B}^{\f d 2+1}_{2, 1}}
\n{\vthe}_{B^{\f d 2}_{2, 1}}
\dta
+
\int_0^t
\n{\vue}_{B^{\f d 2}_{2, 1}}
\n{\vthe}^{h; \f{\beta_0}{4\ep}}_{B^{\f d 2}_{2, 1}}
\dta
\right.
\nonumber
\\
&
\left.
\quad
+
\int_0^t
\n{\vue}_{B^{\f d 2}_{2, 1}}
\n{\mathcal{P}^\perp\vue}^{h; \f{\beta_0}{4\ep}}_{B^{\f d 2}_{2, 1}}
\dta
+
\int_0^t
\n{\vue}_{\underline{B}^{\f d 2+1}_{2, 1}}
\n{\vthe}^{m; \f{\zeta}{4}, \f{4\beta_0}{\ep}}_{B^{\f d 2-1}_{2, 1}}
\dta
+
\int_0^t
\n{\vue}_{\underline{B}^{\f d 2+1}_{2, 1}}
\n{\mathcal{P}^\perp\vue}^{m; \f{\zeta}{4}, \f{4\beta_0}{\ep}}_{B^{\f d 2-1}_{2, 1}}
\dta
\right)
\nonumber
\\
&
\leq
C
\int_0^t
\n{\vue}_{\underline{B}^{\f d 2+1}_{2, 1}}
\left(
\n{\rho_\ep}^{h; \zeta}_{\widetilde{L}^\infty_\tau(B^{\f d 2}_{2, 1})}
+
\n{\mathcal{P}^\perp\vue}^{h; \zeta}_{\widetilde{L}^\infty_\tau(B^{\f d 2-1}_{2, 1})}
+
\n{\mathcal{P}\vue}^{h; \zeta}_{\widetilde{L}^\infty_\tau(B^{\f d 2-1}_{2, 1})}
+
\n{\vthe}^{m; \zeta, \f{\beta_0}{\ep}}_{\widetilde{L}^\infty_\tau(B^{\f d 2-1}_{2, 1})}
\right)
\dta
\nonumber
\\
&
\quad
+
C
\int_0^t
\n{\vue}_{\underline{B}^{\f d 2+1}_{2, 1}}
\sp{
\ep
\n{\vthe}_{B^{\f d 2}_{2, 1}}
+
\n{\vthe}^{h; \f{\beta_0}{\ep}}_{B^{\f d 2}_{2, 1}}
}
\dta
+
C
\int_0^t
\n{\vue}_{B^{\f d 2}_{2, 1}}
\sp{
\n{\mathcal{P}^\perp \vue}^{h; \f{\zeta}{16}}_{B^{\f d 2}_{2, 1}}
+
\n{\vthe}^{h; \f{\zeta}{16}}_{B^{\f d 2}_{2, 1}}
}
\dta
\nonumber
\\
&
\quad
+
C
\int_0^t
\n{\vue}_{\underline{B}^{\f d 2+1}_{2, 1}}
\sp{
\n{\mathcal{P}^\perp\vue}^{m; \f{\zeta}{16}, \zeta}_{B^{\f d 2-1}_{2, 1}}
+
\n{\mathcal{P}^\perp\vue}^{m; \f{\zeta}{16}, \zeta}_{B^{\f d 2-1}_{2, 1}}
+
\n{\vthe}^{m; \f{\zeta}{16}, \zeta}_{B^{\f d 2-1}_{2, 1}}
+
\n{\rho_\ep}^{m; \f{\zeta}{16}, \zeta}_{B^{\f d 2}_{2, 1}}
}
\dta
\nonumber
\\
&
\leq
2
C
\sp{
\f{3}{2\delta}
+
C
+
1
}
\int_0^t
\sp{
\n{\vthe}_{B^{\f d 2}_{2, 1}}
+
\n{\rho_\ep}_{B^{\f d 2}_{2, 1}}
+
\n{\vue}_{\underline{B}^{\f d 2+1}_{2, 1}}
+
\n{\vue}^2_{B^{\f d 2}_{2, 1}}
+
\n{\vthe}^2_{B^{\f d 2}_{2, 1}}
+
\n{\rho_\ep}^2_{B^{\f d 2}_{2, 1}}
+
\f{1}{\ep}
\n{\vthe}^{h;\f{\beta_0}{\ep}}_{B^{\f d 2}_{2, 1}}
}
\nonumber
\\
&
\quad
\cdot
\left(
\n{\rho_\ep}^{h; \zeta}_{\widetilde{L}^\infty_\tau(B^{\f d 2}_{2, 1})}
+
\n{\mathcal{P}^\perp\vue}^{h; \zeta}_{\widetilde{L}^\infty_\tau(B^{\f d 2-1}_{2, 1})}
+
\n{\mathcal{P}\vue}^{h; \zeta}_{\widetilde{L}^\infty_\tau(B^{\f d 2-1}_{2, 1})}
+
\n{\vthe}^{m; \zeta, \f{\beta_0}{\ep}}_{\widetilde{L}^\infty_\tau(B^{\f d 2-1}_{2, 1})}
+
\ep
\sp{
\n{a_\ep}_{\widetilde{L}^\infty_\tau(B^{\f d 2}_{2,1})}
+
\n{b_\ep}_{\widetilde{L}^\infty_\tau(B^{\f d 2}_{2,1})}
}
\right)
\dta
\nonumber
\\
&
\quad
+
2C^2[t]
E^{\ep, \zeta}_{t, 1}
\sp{
\zeta^{1+\theta}
\sp{
\vW^{\ep}_{t, \theta}
+
\vZ^{\ep}_{t, \theta}
}
+
\n{\vV}^{h;\f{\zeta}{16}}_{\widetilde{L}_{T^*_0}^\infty(B^{\f d 2-1}_{2, 1})
\cap
L_{T^*_0}^1(B^{\f d 2+1}_{2, 1})}
+
\n{\vw}^{h;\f{\zeta}{16}}_{\widetilde{L}_{T^*_0}^\infty(B^{\f d 2-1}_{2, 1})
\cap
L_{T^*_0}^1(B^{\f d 2+1}_{2, 1})}
+
\n{\rho}^{h;\f{\zeta}{16}}_{\widetilde{L}_{T^*_0}^\infty(B^{\f d 2}_{2, 1})
}
}
\nonumber
\\
&
\quad
+
C
\zeta
\ep
\sp{
E_{t}^{\ep}
}^2
+
2
C
\delta
\sp{
\n{\vthe}^{m;\zeta, \f{\beta_0}{\zeta}}_{L^1_t(B^{\f d 2+1}_{2, 1})}
+
\n{\mathcal{P}^\perp\vue}^{h;\zeta}_{L^1_t(B^{\f d 2+1}_{2, 1})}
+
\n{\mathcal{P}\vue}^{h;\zeta}_{L^1_t(B^{\f d 2+1}_{2, 1})}
},
\end{align}
where $\delta$ will be chosen sufficiently small so that this term can be absorbed by the left-hand side of \eqref{6.3.6}. 

\noindent $\bullet$ \textbf{Estimates for $F_1$} 

We infer from Lemma \ref{lee6.1} that 
\begin{align*}
\n{F_1}^{h;\zeta}_{L^1_{t}(B^{\f d 2}_{2, 1})}
\leq
C
\sp{
\int_0^t 
\n{(\rho_\ep-\rho)}_{B^{\f d 2}_{2, 1}}
\n{\vue}_{\underline{B}^{\f d 2+1}_{2, 1}}
\dta
+
\int_0^t 
\n{\rho_\ep}^{h; \f{\zeta}{16}}_{B^{\f d 2}_{2, 1}}
\n{\vue}_{\underline{B}^{\f d 2+1}_{2, 1}}
\dta
},
\end{align*}
which combined with Lemma \ref{lee6.3} yields
\begin{align}\label{6.3.17}
&
 C
\n{F_1}^{h;\zeta}_{L^1_{t}(B^{\f d 2}_{2, 1})}  
\nonumber
\\
&
\leq
C^3
\int_0^t
\n{\rho}^{h; \zeta}_{\widetilde{L}^\infty_\tau(B^{\f d 2}_{2, 1})}
\n{\vue}_{\underline{B}^{\f d 2+1}_{2, 1}}
\dta
\nonumber
\\
&
\quad
+
C^3
E^{\ep, \zeta}_{t, 1}
\sp{
\zeta^{1+\theta}
\sp{
\vW^{\ep}_{t, \theta}
+
\vZ^{\ep}_{t, \theta}
}
+
\n{\vV}^{h;\f{\zeta}{16}}_{\widetilde{L}_{T^*_0}^\infty(B^{\f d 2-1}_{2, 1})
\cap
L_{T^*_0}^1(B^{\f d 2+1}_{2, 1})}
+
\n{\vw}^{h;\f{\zeta}{16}}_{\widetilde{L}_{T^*_0}^\infty(B^{\f d 2-1}_{2, 1})
\cap
L_{T^*_0}^1(B^{\f d 2+1}_{2, 1})}
+
\n{\rho}^{h;\f{\zeta}{16}}_{\widetilde{L}_{T^*_0}^\infty(B^{\f d 2}_{2, 1})
}
}. 
\end{align}

\noindent $\bullet$ \textbf{Estimates for $F_2$} 

On the one hand, we see from Lemma \ref{lee6.1} that 
\begin{align*}
\ep
\n{F_2}^{h; \f{\beta_0}{\ep}}_{L^1_{t}(B^{\f d 2}_{2, 1})}
\leq
C
\ep
\int_0^t
\n{\vue}_{\underline{B}^{\f d 2+1}_{2, 1}}
\n{\vthe}_{B^{\f d 2}_{2, 1}}
\dta.
\end{align*}
On the other hand, Lemma \ref{lee6.2} ensures 
\begin{align*}
\n{F_2}^{m; \zeta, \f{\beta_0}{\ep}}_{L^1_{t}(B^{\f d 2-1}_{2, 1})}
\leq
C
\sp{
\int_0^t
\n{\vue}_{B^{\f d 2}_{2, 1}}
\n{\vthe}^{h; \f{\zeta}{16}}_{B^{\f d 2}_{2, 1}}
\dta
+
\int_0^t
\n{\vthe}_{B^{\f d 2}_{2, 1}}
\n{\vue}^{h; \f{\zeta}{16}}_{B^{\f d 2}_{2, 1}}
\dta
}.
\end{align*}
With the help of Lemma \ref{lee6.3}, we conclude 
\begin{align}\label{6.3.18}
&
C
\sp{
\ep
\n{F_2}^{h; \f{\beta_0}{\ep}}_{L^1_{t}(B^{\f d 2}_{2, 1})}
+
\n{F_2}^{m; \zeta, \f{\beta_0}{\ep}}_{L^1_{t}(B^{\f d 2-1}_{2, 1})}
}
\nonumber
\\
&
\leq
2
C^2
\sp{
\f{3}{2\delta}
+
C
+
1
}
\int_0^t
\sp{
\n{\vthe}_{B^{\f d 2}_{2, 1}}
+
\n{\rho_\ep}_{B^{\f d 2}_{2, 1}}
+
\n{\vue}_{\underline{B}^{\f d 2+1}_{2, 1}}
+
\n{\vue}^2_{B^{\f d 2}_{2, 1}}
+
\n{\vthe}^2_{B^{\f d 2}_{2, 1}}
+
\n{\rho_\ep}^2_{B^{\f d 2}_{2, 1}}
+
\f{1}{\ep}
\n{\vthe}^{h;\f{\beta_0}{\ep}}_{B^{\f d 2}_{2, 1}}
}
\nonumber
\\
&
\quad
\cdot
\left(
\n{\rho_\ep}^{h; \zeta}_{\widetilde{L}^\infty_\tau(B^{\f d 2}_{2, 1})}
+
\n{\mathcal{P}^\perp\vue}^{h; \zeta}_{\widetilde{L}^\infty_\tau(B^{\f d 2-1}_{2, 1})}
+
\n{\mathcal{P}\vue}^{h; \zeta}_{\widetilde{L}^\infty_\tau(B^{\f d 2-1}_{2, 1})}
+
\n{\vthe}^{m; \zeta, \f{\beta_0}{\ep}}_{\widetilde{L}^\infty_\tau(B^{\f d 2-1}_{2, 1})}
+
\ep
\sp{
\n{a_\ep}_{\widetilde{L}^\infty_\tau(B^{\f d 2}_{2,1})}
+
\n{b_\ep}_{\widetilde{L}^\infty_\tau(B^{\f d 2}_{2,1})}
}
\right)
\dta
\nonumber
\\
&
\quad
+
C^3
[t]
E_{t}^{\ep}
\sp{
\zeta^{2\theta}
\sp{
\vW^{\ep}_{t, \theta}
+
\vZ^{\ep}_{t, \theta}
}
+
\n{\vV}^{h;\f{\zeta}{16}}_{\widetilde{L}_{T^*_0}^\infty(B^{\f d 2}_{2, 1})
\cap
L_{T^*_0}^1(B^{\f d 2}_{2, 1})}
+
\n{\vw}^{h;\f{\zeta}{16}}_{\widetilde{L}_{T^*_0}^\infty(B^{\f d 2}_{2, 1})
\cap
L_{T^*_0}^1(B^{\f d 2}_{2, 1})}
+
\n{\rho}^{h;\f{\zeta}{16}}_{\widetilde{L}_{T^*_0}^\infty(B^{\f d 2}_{2, 1})
}
}
\nonumber
\\
&
\quad
+
C^2
\zeta
\ep
\sp{
E_{t}^{\ep}
}^2
+
2
C^2
\delta
\sp{
\n{\vthe}^{m;\zeta, \f{\beta_0}{\zeta}}_{L^1_t(B^{\f d 2+1}_{2, 1})}
+
\n{\mathcal{P}^\perp\vue}^{h;\zeta}_{L^1_t(B^{\f d 2+1}_{2, 1})}
+
\n{\mathcal{P}\vue}^{h;\zeta}_{L^1_t(B^{\f d 2+1}_{2, 1})}
}.
\end{align}

\noindent $\bullet$ \textbf{Estimates for $G_1$} 

We have already carried out a detailed analysis of the pressure $r_\ep$, see subsection \ref{pressure}. In particular, 
\begin{align*}
\n{r^3_\ep}_{L^1_t(B^{\f d 2-1}_{2, 1})}   
&
\les
\int_0^t
\ep
\sp{
\n{\vae}_{B^{\f d 2}_{2, 1}}
+
\n{\vbe}_{B^{\f d 2}_{2, 1}}
}
\sp{
\n{\vue}_{\underline{B}^{\f d 2+1}_{2, 1}}
+
\n{\vae}^2_{B^{\f d 2}_{2, 1}}
+
\n{\vbe}^2_{B^{\f d 2}_{2, 1}}
}
\dta
\\
&
\les
\int_0^t
\ep
\sp{
\n{\vae}_{\widetilde{L}^\infty_\tau(B^{\f d 2}_{2, 1})}
+
\n{\vbe}_{L^\infty_\tau(B^{\f d 2}_{2, 1})}
}
\sp{
\n{\vue}_{\underline{B}^{\f d 2+1}_{2, 1}}
+
\n{\vthe}^2_{B^{\f d 2}_{2, 1}}
+
\n{\rho_\ep}^2_{B^{\f d 2}_{2, 1}}
+
|\widehat{(a_0)}_0|^2
+
|\widehat{(b_0)}_0|^2
}
\dta . 
\end{align*}
Hence
\begin{align}\label{6.3.21}
& \n{\mathcal{P}r^3_\ep}^{h;\zeta}_{L^1_t(B^{\f d 2-1}_{2, 1})}
\leq C 
\n{r^3_\ep}_{L^1_t(B^{\f d 2-1}_{2, 1})}
\nonumber
\\
& \quad 
\leq
C
\int_0^t
\ep
\sp{
\n{\vae}_{\widetilde{L}^\infty_\tau(B^{\f d 2}_{2, 1})}
+
\n{\vbe}_{L^\infty_\tau(B^{\f d 2}_{2, 1})}
}
\sp{
\n{\vue}_{\underline{B}^{\f d 2+1}_{2, 1}}
+
\n{\vthe}^2_{B^{\f d 2}_{2, 1}}
+
\n{\rho_\ep}^2_{B^{\f d 2}_{2, 1}}
+
|\widehat{(a_0)}_0|^2
+
|\widehat{(b_0)}_0|^2
}
\dta.
\end{align}
In view of Lemma \ref{lee6.1}, 
\begin{align*}
&
\n{G_1-\mathcal{P}r^3_\ep}^{h; \zeta}_{L^1_t(B^{\f d 2-1}_{2, 1})}
\\
&  \quad 
\leq
C
\left[ 
\int_0^t
\n{\vue}_{B^{\f d 2}_{2, 1}}
\sp{
\n{\mathcal{P}^\perp\vue}^{h; \f{\zeta}{16}}_{B^{\f d 2}_{2, 1}}
+
\n{\mathcal{P}\vue}^{h; \f{\zeta}{16}}_{B^{\f d 2}_{2, 1}}
}
\dta
+
\int_0^t
\n{\vthe}_{B^{\f d 2}_{2, 1}}
\n{\rho_\ep}^{h; \f{\zeta}{16}}_{B^{\f d 2}_{2, 1}} \dta 
+
\int_0^t
\n{\rho_\ep}_{B^{\f d 2}_{2, 1}}
\n{\vthe}^{h; \f{\zeta}{16}}_{B^{\f d 2}_{2, 1}}
\dta
\right]. 
\end{align*}
By applying Lemma \ref{lee6.3},  we obtain
\begin{align}\label{6.3.20}
&
C
\n{G_1}^{h;\zeta}_{L^1_t(B^{\f d 2-1}_{2, 1})}
\nonumber
\\
& \quad 
\leq
2
C^2
\sp{
\f{3}{2\delta}
+
C
+
1
}
\int_0^t
\Bigg[
\n{\vthe}_{B^{\f d 2}_{2, 1}}
+
\n{\rho_\ep}_{B^{\f d 2}_{2, 1}}
+
\n{\vue}_{\underline{B}^{\f d 2+1}_{2, 1}}
+
\n{\vue}^2_{B^{\f d 2}_{2, 1}}
+
\n{\vthe}^2_{B^{\f d 2}_{2, 1}}
\nonumber
\\
&
\qquad
+
\n{\rho_\ep}^2_{B^{\f d 2}_{2, 1}}
+
\f{1}{\ep}
\n{\vthe}^{h;\f{\beta_0}{\ep}}_{B^{\f d 2}_{2, 1}}
+
|\widehat{(a_0)}_0|^2
+
|\widehat{(b_0)}_0|^2
\Bigg]
\Bigg[
\n{\rho_\ep}^{h; \zeta}_{\widetilde{L}^\infty_\tau(B^{\f d 2}_{2, 1})}
+
\n{\mathcal{P}^\perp\vue}^{h; \zeta}_{\widetilde{L}^\infty_\tau(B^{\f d 2-1}_{2, 1})}
+
\n{\mathcal{P}\vue}^{h; \zeta}_{\widetilde{L}^\infty_\tau(B^{\f d 2-1}_{2, 1})}
\nonumber
\\
&
\qquad
+
\n{\vthe}^{m; \zeta, \f{\beta_0}{\ep}}_{\widetilde{L}^\infty_\tau(B^{\f d 2-1}_{2, 1})}
+
\ep
\sp{
\n{a_\ep}_{\widetilde{L}^\infty_\tau(B^{\f d 2}_{2,1})}
+
\n{b_\ep}_{\widetilde{L}^\infty_\tau(B^{\f d 2}_{2,1})}
}
\Bigg]
\dta
\nonumber
\\
&
\qquad
+
C^3
[t]
E_{t}^{\ep}
\sp{
\zeta^{2\theta}
\sp{
\vW^{\ep}_{t, \theta}
+
\vZ^{\ep}_{t, \theta}
}
+
\n{\vV}^{h;\f{\zeta}{16}}_{\widetilde{L}_{T^*_0}^\infty(B^{\f d 2}_{2, 1})
\cap
L_{T^*_0}^1(B^{\f d 2}_{2, 1})}
+
\n{\vw}^{h;\f{\zeta}{16}}_{\widetilde{L}_{T^*_0}^\infty(B^{\f d 2}_{2, 1})
\cap
L_{T^*_0}^1(B^{\f d 2}_{2, 1})}
+
\n{\rho}^{h;\f{\zeta}{16}}_{\widetilde{L}_{T^*_0}^\infty(B^{\f d 2}_{2, 1})
}
}
\nonumber
\\
&
\qquad
+
C^2
\zeta
\ep
\sp{
E_{t}^{\ep}
}^2
+
2
C^2
\delta
\sp{
\n{\vthe}^{m;\zeta, \f{\beta_0}{\zeta}}_{L^1_t(B^{\f d 2+1}_{2, 1})}
+
\n{\mathcal{P}^\perp\vue}^{h;\zeta}_{L^1_t(B^{\f d 2+1}_{2, 1})}
+
\n{\mathcal{P}\vue}^{h;\zeta}_{L^1_t(B^{\f d 2+1}_{2, 1})}
}.
\end{align}

\noindent $\bullet$ \textbf{Estimates for $G_2$} 

Similar to \eqref{6.3.21}, it holds 
\begin{align*}
\n{\mathcal{P}^\perp r^3_\ep}^{h;\zeta}_{L^1_t(B^{\f d 2-1}_{2, 1})}
\leq
C
\int_0^t
\ep
\sp{
\n{\vae}_{\widetilde{L}^\infty_\tau(B^{\f d 2}_{2, 1})}
+
\n{\vbe}_{L^\infty_\tau(B^{\f d 2}_{2, 1})}
}
\sp{
\n{\vue}_{\underline{B}^{\f d 2+1}_{2, 1}}
+
\n{\vthe}^2_{B^{\f d 2}_{2, 1}}
+
\n{\rho_\ep}^2_{B^{\f d 2}_{2, 1}}
+
|\widehat{(a_0)}_0|^2
+
|\widehat{(b_0)}_0|^2
}
\dta.
\end{align*}
Due to Lemma \ref{lee6.1}, 
\begin{align*}
&
\n{G_2-\mathcal{P}^\perp r^3_\ep}^{h; \zeta}_{L^1_t(B^{\f d 2-1}_{2, 1})}
\\
&\quad 
\leq
C
\left[
\int_0^t
\n{\vue}_{B^{\f d 2}_{2, 1}}
\sp{
\n{\mathcal{P}^\perp\vue}^{h; \f{\zeta}{16}}_{B^{\f d 2}_{2, 1}}
+
\n{\mathcal{P}\vue}^{h; \f{\zeta}{16}}_{B^{\f d 2}_{2, 1}}
}
\dta
+
\int_0^t
\sp{
\n{\vthe}_{B^{\f d 2}_{2, 1}}
+
\n{\rho_\ep}_{B^{\f d 2}_{2, 1}}
}
\sp{
\n{\rho_\ep}^{h; \f{\zeta}{16}}_{B^{\f d 2}_{2, 1}}
+
\n{\vthe}^{h; \f{\zeta}{16}}_{B^{\f d 2}_{2, 1}}
}
\dta
\right]
\\
&
\qquad
+
C
\int_0^t
\sp{
\n{\vthe}^{h;\zeta}_{B^{\f d 2}_{2, 1}}
+
\n{\rho_\ep}^{h;\zeta}_{B^{\f d 2}_{2, 1}}
}
\dta.
\end{align*}
We further deduce from Lemma \ref{lee6.3} that 
\begin{align}\label{6.3.23}
&
C
\n{G_2}^{h; \zeta}_{L^1_t(B^{\f d 2-1}_{2, 1})} 
\nonumber
\\
&
\leq
3
C^2
\sp{
\f{3}{2\delta}
+
C
+
1
}
\int_0^t
\left(
1
+
\n{\vthe}_{B^{\f d 2}_{2, 1}}
+
\n{\rho_\ep}_{B^{\f d 2}_{2, 1}}
+
\n{\vue}_{\underline{B}^{\f d 2+1}_{2, 1}}
+
\n{\vue}^2_{B^{\f d 2}_{2, 1}}
+
\n{\vthe}^2_{B^{\f d 2}_{2, 1}}
\right.
\nonumber
\\
&
\quad
\left.
+
\n{\rho_\ep}^2_{B^{\f d 2}_{2, 1}}
+
\f{1}{\ep}
\n{\vthe}^{h;\f{\beta_0}{\ep}}_{B^{\f d 2}_{2, 1}}
+
|\widehat{(a_0)}_0|^2
+
|\widehat{(b_0)}_0|^2
\right)
\left(
\n{\rho_\ep}^{h; \zeta}_{\widetilde{L}^\infty_\tau(B^{\f d 2}_{2, 1})}
+
\n{\mathcal{P}^\perp\vue}^{h; \zeta}_{\widetilde{L}^\infty_\tau(B^{\f d 2-1}_{2, 1})}
+
\n{\mathcal{P}\vue}^{h; \zeta}_{\widetilde{L}^\infty_\tau(B^{\f d 2-1}_{2, 1})}
\right.
\nonumber
\\
&
\left.
\quad
+
\n{\vthe}^{m; \zeta, \f{\beta_0}{\ep}}_{\widetilde{L}^\infty_\tau(B^{\f d 2-1}_{2, 1})}
+
\ep
\sp{
\n{a_\ep}_{\widetilde{L}^\infty_\tau(B^{\f d 2}_{2,1})}
+
\n{b_\ep}_{\widetilde{L}^\infty_\tau(B^{\f d 2}_{2,1})}
}
\right)
\dta
\nonumber
\\
&
\quad
+
C^3
[t]
E_{t}^{\ep}
\sp{
\zeta^{2\theta}
\sp{
\vW^{\ep}_{t, \theta}
+
\vZ^{\ep}_{t, \theta}
}
+
\n{\vV}^{h;\f{\zeta}{16}}_{\widetilde{L}_{T^*_0}^\infty(B^{\f d 2}_{2, 1})
\cap
L_{T^*_0}^1(B^{\f d 2}_{2, 1})}
+
\n{\vw}^{h;\f{\zeta}{16}}_{\widetilde{L}_{T^*_0}^\infty(B^{\f d 2}_{2, 1})
\cap
L_{T^*_0}^1(B^{\f d 2}_{2, 1})}
+
\n{\rho}^{h;\f{\zeta}{16}}_{\widetilde{L}_{T^*_0}^\infty(B^{\f d 2}_{2, 1})
}
}
\nonumber
\\
&
\quad
+
C^2
\zeta
\ep
\sp{
\sp{
E_{t}^{\ep}
}^2
+
E_{t}^{\ep}
}
+
\f{5C^2}{2}
\delta
\sp{
\n{\vthe}^{m;\zeta, \f{\beta_0}{\zeta}}_{L^1_t(B^{\f d 2+1}_{2, 1})}
+
\n{\mathcal{P}^\perp\vue}^{h;\zeta}_{L^1_t(B^{\f d 2+1}_{2, 1})}
+
\n{\mathcal{P}\vue}^{h;\zeta}_{L^1_t(B^{\f d 2+1}_{2, 1})}
}.
\end{align}

\medskip
\medskip

Inserting \eqref{6.3.4}, \eqref{6.3.15}, \eqref{6.3.16}, \eqref{6.3.17}, \eqref{6.3.18}, 
\eqref{6.3.20} and \eqref{6.3.23} into \eqref{6.3.6} yields
\begin{align}\label{6.3.24}
&
\ep
\sp{
\n{a_\ep}_{\widetilde{L}^\infty_T(B^{\f d 2}_{2,1})}
+
\n{b_\ep}_{\widetilde{L}^\infty_T(B^{\f d 2}_{2,1})}
}
+
\ep
\n{\vthe}^{h;\f{\beta_0}{\ep}}_{\widetilde{L}^\infty_{T}(B^{\f d 2}_{2, 1})}
+
\f{1}{\ep}
\n{\vthe}^{h;\f{\beta_0}{\ep}}_{L^1_{T}(B^{\f d 2}_{2, 1})}
+
\n{\vthe}^{m;\zeta, \f{\beta_0}{\ep}}_{L^1_{t}(B^{s+1}_{2, 1})}
+
\n{ \mathcal{P}^\perp\vue}^{h; \zeta}_{\widetilde{L}^\infty_{T}(B^{\f d 2-1}_{2, 1})
\cap
L^1_{T}(B^{\f d 2+1}_{2, 1})
} 
\nonumber
\\
&
\quad
+
\n{\rho_\ep}^{h;\zeta}_{\widetilde{L}^\infty_{T}(B^{\f d 2}_{2, 1})}
+
\n{ \mathcal{P}\vue}^{h; \zeta}_{\widetilde{L}^\infty_{T}(B^{\f d 2-1}_{2, 1})
\cap
L^1_{T}(B^{\f d 2+1}_{2, 1})
} 
\nonumber
\\
&
\leq
C^2
\sp{
\n{a_0}^{h;\zeta}_{B^{\f d 2}_{2, 1}}
+
\n{b_0}^{h;\zeta}_{B^{\f d 2}_{2, 1}}
+
\n{\vu_0}^{h; \zeta}_{B^{\f d 2-1}_{2, 1}}
}
\nonumber
\\
&
+
\sp{
8
C^2
+
2
C
}
\sp{
\f{3}{2\delta}
+
C
+
1
}
\int_0^t
\left(
1
+
\n{\vthe}_{B^{\f d 2}_{2, 1}}
+
\n{\rho_\ep}_{B^{\f d 2}_{2, 1}}
+
\n{\vue}_{\underline{B}^{\f d 2+1}_{2, 1}}
+
\n{\vue}^2_{B^{\f d 2}_{2, 1}}
+
\n{\vthe}^2_{B^{\f d 2}_{2, 1}}
\right.
\nonumber
\\
&
\quad
\left.
+
\n{\rho_\ep}^2_{B^{\f d 2}_{2, 1}}
+
\f{1}{\ep}
\n{\vthe}^{h;\f{\beta_0}{\ep}}_{B^{\f d 2}_{2, 1}}
+
|\widehat{(a_0)}_0|^2
+
|\widehat{(b_0)}_0|^2
\right)
\left(
\n{\rho_\ep}^{h; \zeta}_{\widetilde{L}^\infty_\tau(B^{\f d 2}_{2, 1})}
+
\n{\mathcal{P}^\perp\vue}^{h; \zeta}_{\widetilde{L}^\infty_\tau(B^{\f d 2-1}_{2, 1})}
+
\n{\mathcal{P}\vue}^{h; \zeta}_{\widetilde{L}^\infty_\tau(B^{\f d 2-1}_{2, 1})}
\right.
\nonumber
\\
&
\left.
\quad
+
\n{\vthe}^{m; \zeta, \f{\beta_0}{\ep}}_{\widetilde{L}^\infty_\tau(B^{\f d 2-1}_{2, 1})}
+
\ep
\sp{
\n{a_\ep}_{\widetilde{L}^\infty_\tau(B^{\f d 2}_{2,1})}
+
\n{b_\ep}_{\widetilde{L}^\infty_\tau(B^{\f d 2}_{2,1})}
}
\right)
\dta
\nonumber
\\
&
\quad
+
\sp{3C^2+4C^3}
[t]
E^{\ep, \zeta}_{t, 1}
\sp{
\zeta^{1+2\theta}
\sp{
\vW^{\ep}_{t, \theta}
+
\vZ^{\ep}_{t, \theta}
}
+
\n{\vV}^{h;\f{\zeta}{16}}_{\widetilde{L}_{T^*_0}^\infty(B^{\f d 2-1}_{2, 1})
\cap
L_{T^*_0}^1(B^{\f d 2+1}_{2, 1})}
+
\n{\vw}^{h;\f{\zeta}{16}}_{\widetilde{L}_{T^*_0}^\infty(B^{\f d 2-1}_{2, 1})
\cap
L_{T^*_0}^1(B^{\f d 2+1}_{2, 1})}
+
\n{\rho}^{h;\f{\zeta}{16}}_{\widetilde{L}_{T^*_0}^\infty(B^{\f d 2}_{2, 1})
}
}
\nonumber
\\
&
\quad
+
\sp{
4
C^2
+
C
}
\zeta
\ep
\sp{
\sp{
E_{t}^{\ep}
}^2
+
E_{t}^{\ep}
}
+
\sp{
2
C
+
\f{13C^2}{2}
}
\delta
\sp{
\n{\vthe}^{m;\zeta, \f{\beta_0}{\zeta}}_{L^1_t(B^{\f d 2+1}_{2, 1})}
+
\n{\mathcal{P}^\perp\vue}^{h;\zeta}_{L^1_t(B^{\f d 2+1}_{2, 1})}
+
\n{\mathcal{P}\vue}^{h;\zeta}_{L^1_t(B^{\f d 2+1}_{2, 1})}
}.
\end{align}
By choosing $\delta=(4C+13 C^2)^{-1}$, the last term above is absorbed into the left-hand side of \eqref{6.3.24}. Hence, we arrive at 
\begin{align*}
&
\ep
\sp{
\n{a_\ep}_{\widetilde{L}^\infty_T(B^{\f d 2}_{2,1})}
+
\n{b_\ep}_{\widetilde{L}^\infty_T(B^{\f d 2}_{2,1})}
}
+
\ep
\n{\vthe}^{h;\f{\beta_0}{\ep}}_{\widetilde{L}^\infty_{T}(B^{\f d 2}_{2, 1})}
+
\f{1}{\ep}
\n{\vthe}^{h;\f{\beta_0}{\ep}}_{L^1_{T}(B^{\f d 2}_{2, 1})}
+
\n{ \mathcal{P}^\perp\vue}^{h; \zeta}_{\widetilde{L}^\infty_{T}(B^{\f d 2-1}_{2, 1})
\cap
L^1_{T}(B^{\f d 2+1}_{2, 1})
} 
\nonumber
\\
&
\quad
+
\n{\rho_\ep}^{h;\zeta}_{\widetilde{L}^\infty_{T}(B^{\f d 2}_{2, 1})}
+
\n{ \mathcal{P}\vue}^{h; \zeta}_{\widetilde{L}^\infty_{T}(B^{\f d 2-1}_{2, 1})
\cap
L^1_{T}(B^{\f d 2+1}_{2, 1})
}  
\\
&
\leq
C
\sp{
\n{a_0}^{h;\zeta}_{B^{\f d 2}_{2, 1}}
+
\n{b_0}^{h;\zeta}_{B^{\f d 2}_{2, 1}}
+
\n{\vu_0}^{h; \zeta}_{B^{\f d 2-1}_{2, 1}}
}
+
C
\int_0^t
\left(
1
+
\n{\vthe}_{B^{\f d 2}_{2, 1}}
+
\n{\rho_\ep}_{B^{\f d 2}_{2, 1}}
+
\n{\vue}_{\underline{B}^{\f d 2+1}_{2, 1}}
+
\n{\vue}^2_{B^{\f d 2}_{2, 1}}
+
\n{\vthe}^2_{B^{\f d 2}_{2, 1}}
\right.
\nonumber
\\
&
\quad
\left.
+
\n{\rho_\ep}^2_{B^{\f d 2}_{2, 1}}
+
\f{1}{\ep}
\n{\vthe}^{h;\f{\beta_0}{\ep}}_{B^{\f d 2}_{2, 1}}
+
|\widehat{(a_0)}_0|^2
+
|\widehat{(b_0)}_0|^2
\right)
\left(
\n{\rho_\ep}^{h; \zeta}_{\widetilde{L}^\infty_\tau(B^{\f d 2}_{2, 1})}
+
\n{\mathcal{P}^\perp\vue}^{h; \zeta}_{\widetilde{L}^\infty_\tau(B^{\f d 2-1}_{2, 1})}
+
\n{\mathcal{P}\vue}^{h; \zeta}_{\widetilde{L}^\infty_\tau(B^{\f d 2-1}_{2, 1})}
\right.
\nonumber
\\
&
\left.
\quad
+
\n{\vthe}^{m; \zeta, \f{\beta_0}{\ep}}_{\widetilde{L}^\infty_\tau(B^{\f d 2-1}_{2, 1})}
+
\ep
\sp{
\n{a_\ep}_{\widetilde{L}^\infty_\tau(B^{\f d 2}_{2,1})}
+
\n{b_\ep}_{\widetilde{L}^\infty_\tau(B^{\f d 2}_{2,1})}
}
\right)
\dta
+
C
\zeta
\ep
\sp{
\sp{
E_{t}^{\ep}
}^2
+
E_{t}^{\ep}
}
\nonumber
\\
&
\quad
+
[t]
E^{\ep, \zeta}_{t, 1}
\sp{
\zeta^{1+\theta}
\sp{
\vW^{\ep}_{t, \theta}
+
\vZ^{\ep}_{t, \theta}
}
+
\n{\vV}^{h;\f{\zeta}{16}}_{\widetilde{L}_{T^*_0}^\infty(B^{\f d 2-1}_{2, 1})
\cap
L_{T^*_0}^1(B^{\f d 2+1}_{2, 1})}
+
\n{\vw}^{h;\f{\zeta}{16}}_{\widetilde{L}_{T^*_0}^\infty(B^{\f d 2-1}_{2, 1})
\cap
L_{T^*_0}^1(B^{\f d 2+1}_{2, 1})}
+
\n{\rho}^{h;\f{\zeta}{16}}_{\widetilde{L}_{T^*_0}^\infty(B^{\f d 2}_{2, 1})
}
}.
\end{align*}
Application of Gr\"{o}nwall's inequality shows 
\begin{align*}
&
\ep
\sp{
\n{a_\ep}_{\widetilde{L}^\infty_T(B^{\f d 2}_{2,1})}
+
\n{b_\ep}_{\widetilde{L}^\infty_T(B^{\f d 2}_{2,1})}
}
+
\ep
\n{\vthe}^{h;\f{\beta_0}{\ep}}_{\widetilde{L}^\infty_{T}(B^{\f d 2}_{2, 1})}
+
\f{1}{\ep}
\n{\vthe}^{h;\f{\beta_0}{\ep}}_{L^1_{T}(B^{\f d 2}_{2, 1})}
+
\n{ \mathcal{P}^\perp\vue}^{h; \zeta}_{\widetilde{L}^\infty_{T}(B^{\f d 2-1}_{2, 1})
\cap
L^1_{T}(B^{\f d 2+1}_{2, 1})
} 
\\
&
\quad
+
\n{\rho_\ep}^{h;\zeta}_{\widetilde{L}^\infty_{T}(B^{\f d 2}_{2, 1})}
+
\n{ \mathcal{P}\vue}^{h; \zeta}_{\widetilde{L}^\infty_{T}(B^{\f d 2-1}_{2, 1})
\cap
L^1_{T}(B^{\f d 2+1}_{2, 1})
} 
\\
&
\leq
C
\text{e}^{
C
[T]
\sp{
1
+
E_{T}^{\ep}
+
\sp{
E_{T}^{\ep}
}^2
}
}
\Bigg[
\n{a_0}^{h;\zeta}_{B^{\f d 2}_{2, 1}}
+
\n{b_0}^{h;\zeta}_{B^{\f d 2}_{2, 1}}
+
\n{\vu_0}^{h; \zeta}_{B^{\f d 2-1}_{2, 1}}
+
\zeta
\ep
\sp{
\sp{
E_{T}^{\ep}
}^2
+
E_{T}^{\ep}
}
\\
&
\quad
+
[T]
E_{T}^{\ep}
\sp{
\zeta^{1+\theta}
\sp{
\vW^{\ep}_{T, \theta}
+
\vZ^{\ep}_{T, \theta}
}
+
\n{\vV}^{h;\f{\zeta}{16}}_{\widetilde{L}_{T^*_0}^\infty(B^{\f d 2-1}_{2, 1})
\cap
L_{T^*_0}^1(B^{\f d 2+1}_{2, 1})}
+
\n{\vw}^{h;\f{\zeta}{16}}_{\widetilde{L}_{T^*_0}^\infty(B^{\f d 2-1}_{2, 1})
\cap
L_{T^*_0}^1(B^{\f d 2+1}_{2, 1})}
+
\n{\rho}^{h;\f{\zeta}{16}}_{\widetilde{L}_{T^*_0}^\infty(B^{\f d 2}_{2, 1})
}
}
\Bigg].
\end{align*}
The completes the proof. \ \ $\Box$

{\bf Proof of Lemma \ref{lee6.3}: }To show \eqref{6.3.4}, we observe 
\begin{align*}
&
\n{\mathcal{P}\vue}^{m; \f{\zeta}{4}, \zeta}_{L^1_{t}(B^{\f d 2+1}_{2, 1})}
+
\n{\mathcal{P}^\perp\vue}^{m; \f{\zeta}{4}, \zeta}_{L^1_{t}(B^{\f d 2+1}_{2, 1})}
\nonumber
\\
& \quad
\leq
\n{\mathcal{P}\vue-\vw}^{m; \f{\zeta}{4}, \zeta}_{L^1_{t}(B^{\f d 2+1}_{2, 1})}
+
\n{\vw}^{m; \f{\zeta}{4}, \zeta}_{L^1_{t}(B^{\f d 2+1}_{2, 1})}
+
\n{
\mathcal{L}\left(\dfrac{t}{\ep}\right)\vVe
-
\mathcal{L}\left(\dfrac{t}{\ep}\right)\vV
}^{m; \f{\zeta}{4}, \zeta}_{L^1_{t}(B^{\f d 2+1}_{2, 1})}
+
\n{
\mathcal{L}\left(\dfrac{t}{\ep}\right)\vV
}^{m; \f{\zeta}{4}, \zeta}_{L^1_{t}(B^{\f d 2+1}_{2, 1})}
\\
&\quad
\les
\zeta^{\theta}
\n{\mathcal{P}\vue-\vw}^{l; \zeta}_{L^1_{t}(B^{\f d 2+1-\theta}_{2, 1})}
+
\n{\vw}^{h; \f{\zeta}{4}}_{L^1_{t}(B^{\f d 2+1}_{2, 1})}
+
\zeta^{1+2\theta}
\n{
\vVe
-
\vV
}^{l; \zeta}_{L^1_{t}(B^{\f d 2-2\theta}_{2, 1})}
+
\n{\vV}^{h; \f{\zeta}{4}}_{L^1_{t}(B^{\f d 2+1}_{2, 1})}
\\
&\quad
\les
\zeta^{\theta}
\n{\mathcal{P}\vue-\vw}^{l; \zeta}_{L^1_{t}(B^{\f d 2+1-\theta}_{2, 1})}
+
\n{\vw}^{h; \f{\zeta}{4}}_{L^1_{t}(B^{\f d 2+1}_{2, 1})}
+
\zeta^{1+2\theta}
[t]^{\f 1 2}
\n{
\vVe
-
\vV
}^{l; \zeta}_{L^2_{t}(B^{\f d 2-2\theta}_{2, 1})}
+
\n{\vV}^{h; \f{\zeta}{4}}_{L^1_{t}(B^{\f d 2+1}_{2, 1})}
\\
&\quad
\les
\zeta^{1+2\theta}
[t]^{\f1 2}
\sp{
\vW^\ep_{t, \theta}
+
\vZ^\ep_{t, \theta}
}
+
\n{\vw}^{h; \f{\zeta}{4}}_{L^1_{T^*_0}(B^{\f d 2+1}_{2, 1})}
+
\n{\vV}^{h; \f{\zeta}{4}}_{L^1_{T^*_0}(B^{\f d 2+1}_{2, 1})}.
\end{align*}
To obtain \eqref{6.3.7}, we have
\begin{align*}
 \ep
\int_0^t
\n{\vue}_{\underline{B}^{\f d 2+1}_{2, 1}}
\n{\vthe}_{B^{\f d 2}_{2, 1}}
\dta
+
\int_0^t
\n{\vue}_{\underline{B}^{\f d 2+1}_{2, 1}}
\n{\vthe}^{h; \f{\beta_0}{\ep}}_{B^{\f d 2-1}_{2, 1}}
\dta
&
\les
 \ep
\int_0^t
\n{\vue}_{\underline{B}^{\f d 2+1}_{2, 1}}
\n{\vthe}_{B^{\f d 2}_{2, 1}}
\dta
\\
&\quad
\les
\int_0^t
\n{\vue}_{\underline{B}^{\f d 2+1}_{2, 1}}
\ep
\sp{
\n{\vae}_{B^{\f d 2}_{2, 1}}
+
\n{\vbe}_{B^{\f d 2}_{2, 1}}
}
\dta
\\
&\quad
\les
\int_0^t
\n{\vue}_{\underline{B}^{\f d 2+1}_{2, 1}}
\ep
\sp{
\n{a_\ep}_{\widetilde{L}^\infty_\tau(B^{\f d 2}_{2,1})}
+
\n{b_\ep}_{\widetilde{L}^\infty_\tau(B^{\f d 2}_{2,1})}
}
\dx.
\end{align*}
To obtain \eqref{6.3.19}, we notice that 
\begin{align*}
\int_0^t
\sp{
\n{\vthe}_{B^{\f d 2}_{2, 1}}
+
\n{\rho_\ep}_{B^{\f d 2}_{2, 1}}
}
\n{\vthe}^{h; \f{\beta_0}{\ep}}_{B^{\f d 2}_{2, 1}}
\dta
&
\les
\int_0^t
\ep
\sp{
\n{\vthe}_{B^{\f d 2}_{2, 1}}
+
\n{\rho_\ep}_{B^{\f d 2}_{2, 1}}
}
\f{1}{\ep}
\n{\vthe}^{h; \f{\beta_0}{\ep}}_{B^{\f d 2-1}_{2, 1}}
\dta
\\
&
\les
\int_0^t
\ep
\sp{
\n{\vae}_{B^{\f d 2}_{2, 1}}
+
\n{\vbe}_{B^{\f d 2}_{2, 1}}
}
\f{1}{\ep}
\n{\vthe}^{h; \f{\beta_0}{\ep}}_{B^{\f d 2-1}_{2, 1}}
\dta
\\
&
\les
\int_0^t
\ep
\sp{
\n{a_\ep}_{\widetilde{L}^\infty_\tau(B^{\f d 2}_{2,1})}
+
\n{b_\ep}_{\widetilde{L}^\infty_\tau(B^{\f d 2}_{2,1})}
}
\f{1}{\ep}
\n{\vthe}^{h; \f{\beta_0}{\ep}}_{B^{\f d 2-1}_{2, 1}}
\dx.
\end{align*}
For \eqref{6.3.22}, we have
\begin{align*}
&
\int_0^t
\sp{
\n{\vthe}^{h;\zeta}_{B^{\f d 2}_{2, 1}}
+
\n{\rho_\ep}^{h;\zeta}_{B^{\f d 2}_{2, 1}}
}
\dta
\\
& \quad
\leq
\int_0^t
\n{\vthe}^{m;\zeta, \f{\beta_0}{\ep}}_{B^{\f d 2}_{2, 1}}
\dta
+
\int_0^t
\n{\vthe}^{h;\f{\beta_0}{\ep}}_{B^{\f d 2}_{2, 1}}
\dta
+ 
\int_0^t
\n{\rho_\ep}^{h;\zeta}_{B^{\f d 2}_{2, 1}}
\dta
\\
&\quad
\leq
\int_0^t
\sp{
\n{\vthe}^{m;\zeta, \f{\beta_0}{\ep}}_{B^{\f d 2-1}_{2, 1}}
}^{\f1 2}
\sp{
\n{\vthe}^{m;\zeta, \f{\beta_0}{\ep}}_{B^{\f d 2+1}_{2, 1}}
}^{\f1 2}
\dta
+
\ep
\int_0^t
\f{1}{\ep}
\n{\vthe}^{h;\f{\beta_0}{\ep}}_{B^{\f d 2}_{2, 1}}
\dta
+
\int_0^t
\n{\rho_\ep}^{h;\zeta}_{B^{\f d 2}_{2, 1}}
\dta
\\
&\quad
\leq
\f{\delta}{2}
\n{\vthe}^{m; \zeta, \f{\beta_0}{\ep}}_{L^1_t(B^{\f d 2+1}_{2,1})}
+
\f{1}{2\delta}
\int_0^t
\n{\vthe}^{m; \zeta, \f{\beta_0}{\ep}}_{\widetilde{L}^\infty_\tau (B^{\f d 2-1}_{2, 1})}
\dta
+
\int_0^t
\n{\rho_\ep}^{h;\zeta}_{\widetilde{L}^\infty_\tau (B^{\f d 2-1}_{2, 1})}
\dta
+
\ep
E^\ep_{T} . 
\end{align*}
To derive \eqref{6.3.8},  
\begin{align*}
&
\int_0^t
\n{\vue}_{B^{\f d 2}_{2, 1}}
\sp{
\n{\vthe}^{m; \f{\zeta}{16}, \zeta}_{B^{\f d 2}_{2, 1}}
+
\n{\mathcal{P}^\perp\vue}^{m; \f{\zeta}{16}, \zeta}_{B^{\f d 2}_{2, 1}}
+
\n{\mathcal{P}\vue}^{m; \f{\zeta}{16}, \zeta}_{B^{\f d 2}_{2, 1}}
}
\dta
+
\int_0^t
\n{\vthe}_{B^{\f d 2}_{2, 1}}
\n{\vue}^{m; \f{\zeta}{16}, \zeta}_{B^{\f d 2}_{2, 1}}
\dta
\\
&
\quad
+
\int_0^t
\sp{
\n{\vthe}_{B^{\f d 2}_{2, 1}}
+
\n{\rho_\ep}_{B^{\f d 2}_{2, 1}}
}
\sp{
\n{\vthe}^{m; \f{\zeta}{16}, \zeta}_{B^{\f d 2}_{2, 1}}
+
\n{\rho_\ep}^{m; \f{\zeta}{16}, \zeta}_{B^{\f d 2}_{2, 1}}
}
\dta
\\
&
\leq
\int_0^t
\sp{
\n{\vue}_{B^{\f d 2}_{2, 1}}
+
2
\n{\vthe}_{B^{\f d 2}_{2, 1}}
+
\n{\rho_\ep}_{B^{\f d 2}_{2, 1}}
}
\n{\mathcal{L}\left(\dfrac{t}{\ep}\right)\vVe
-
\mathcal{L}\left(\dfrac{t}{\ep}\right)\vV}^{m; \f{\zeta}{16}, \zeta}_{B^{\f d 2}_{2, 1}}
\dta
\\
&
\quad
+
\int_0^t
\sp{
\n{\vue}_{B^{\f d 2}_{2, 1}}
+
2
\n{\vthe}_{B^{\f d 2}_{2, 1}}
+
\n{\rho_\ep}_{B^{\f d 2}_{2, 1}}
}
\n{
\mathcal{L}\left(\dfrac{t}{\ep}\right)\vV}^{m; \f{\zeta}{16}, \zeta}_{B^{\f d 2}_{2, 1}}
\dta
\\
&
\quad
+
\int_0^t
\sp{
\n{\vue}_{B^{\f d 2}_{2, 1}}
+
\n{\vthe}_{B^{\f d 2}_{2, 1}}
}
\n{\mathcal{P}\vue-\vw}^{m;\f{\zeta}{16}, \zeta}_{B^{\f d 2}_{2, 1}}
\dta
+
\int_0^t
\sp{
\n{\rho_\ep}_{B^{\f d 2}_{2, 1}}
+
\n{\vthe}_{B^{\f d 2}_{2, 1}}
}
\n{\rho_\ep-\rho}^{m;\f{\zeta}{16}, \zeta}_{B^{\f d 2}_{2, 1}}
\dta
\\
&
\quad
+
\int_0^t
\sp{
\n{\vue}_{B^{\f d 2}_{2, 1}}
+
\n{\vthe}_{B^{\f d 2}_{2, 1}}
}
\n{\vw}^{m;\f{\zeta}{16}, \zeta}_{B^{\f d 2}_{2, 1}}
\dta
+
\int_0^t
\sp{
\n{\rho_\ep}_{B^{\f d 2}_{2, 1}}
+
\n{\vthe}_{B^{\f d 2}_{2, 1}}
}
\n{\rho}^{m;\f{\zeta}{16}, \zeta}_{B^{\f d 2}_{2, 1}}
\dta
\\
&
\les
\zeta^{2\theta}
\int_0^t
\sp{
\n{\vue}_{B^{\f d 2}_{2, 1}}
+
\n{\vthe}_{B^{\f d 2}_{2, 1}}
+
\n{\rho_\ep}_{B^{\f d 2}_{2, 1}}
}
\n{\vVe
-
\vV}^{l; \zeta}_{B^{\f d 2-2\theta}_{2, 1}}
\dx
\\
&
\quad
+
\zeta^{\theta}
\int_0^t
\sp{
\n{\vue}_{B^{\f d 2}_{2, 1}}
+
\n{\vthe}_{B^{\f d 2}_{2, 1}}
}
\n{\mathcal{P}\vue-\vw}^{l;\zeta}_{\underline{B}^{\f d 2-\theta}_{2, 1}}
\dta
+
\zeta^{1+\theta}
\int_0^t
\sp{
\n{\rho_\ep}_{B^{\f d 2}_{2, 1}}
+
\n{\vthe}_{B^{\f d 2}_{2, 1}}
}
\n{\rho_\ep-\rho}^{l;\zeta}_{B^{\f d 2-1-\theta}_{2, 1}}
\dta
\\
&
\quad
+
\int_0^t
\sp{
\n{\vue}_{B^{\f d 2}_{2, 1}}
+
\n{\vthe}_{B^{\f d 2}_{2, 1}}
+
\n{\rho_\ep}_{B^{\f d 2}_{2, 1}}
}
\sp{
\n{\vV}^{h; \f{\zeta}{16}}_{B^{\f d 2}_{2, 1}}
+
\n{\vw}^{h;\f{\zeta}{16}}_{B^{\f d 2}_{2, 1}}
+
\n{\rho_\ep}^{h;\f{\zeta}{16}}_{B^{\f d 2}_{2, 1}}
}
\dta
\\
&
\les
[t]
E_{t}^{\ep}
\sp{
\zeta^{2\theta}
\sp{
\vW^{\ep}_{t, \theta}
+
\vZ^{\ep}_{t, \theta}
}
+
\n{\vV}^{h;\f{\zeta}{16}}_{\widetilde{L}_{T^*_0}^\infty(B^{\f d 2}_{2, 1})
\cap
L_{T^*_0}^1(B^{\f d 2}_{2, 1})}
+
\n{\vw}^{h;\f{\zeta}{16}}_{\widetilde{L}_{T^*_0}^\infty(B^{\f d 2}_{2, 1})
\cap
L_{T^*_0}^1(B^{\f d 2}_{2, 1})}
+
\n{\rho}^{h;\f{\zeta}{16}}_{\widetilde{L}_{T^*_0}^\infty(B^{\f d 2}_{2, 1})
}
}
\end{align*}

The same argument used for \eqref{6.3.8} yields \eqref{6.3.13}. For \eqref{6.3.9}, we only treat the term $\int_0^t
\n{\vue}_{B^{\f d 2}_{2, 1}}
\n{\vthe}^{m; \zeta, \f{\beta_0}{\ep}}_{B^{\f d 2}_{2, 1}}
\dta$, since the remaining two terms follow similarly. It holds 
\begin{align*}
\int_0^t
\n{\vue}_{B^{\f d 2}_{2, 1}}
\n{\vthe}^{m; \zeta, \f{\beta_0}{\ep}}_{B^{\f d 2}_{2, 1}}
\dta
&
\leq
\int_0^t
\n{\vue}_{B^{\f d 2}_{2, 1}}
\sp{
\n{\vthe}^{m; \zeta, \f{\beta_0}{\ep}}_{B^{\f d 2-1}_{2, 1}}
}^{\f1 2}
\sp{
\n{\vthe}^{m; \zeta, \f{\beta_0}{\ep}}_{B^{\f d 2+1}_{2, 1}}
}^{\f1 2}
\dta
\\
&
\leq
\f{1}{2\delta}
\int_0^t
\n{\vue}^2_{B^{\f d 2}_{2, 1}}
\n{\vthe}^{m; \zeta, \f{\beta_0}{\ep}}_{\widetilde{L}^\infty_\tau(B^{\f d 2-1}_{2, 1})}
\dta
+
\f{\delta}{2}
\n{\vthe}^{m;\zeta, \f{\beta_0}{\zeta}}_{L^1_t(B^{\f d 2+1}_{2, 1})}.
\end{align*}
For \eqref{6.3.10}, we have
\begin{align*}
\int_0^t
\n{\vue}^{l;\zeta}_{B^{\f d 2}_{2, 1}}
\n{\vthe}^{h; \f{\beta_0}{\ep}}_{B^{\f d 2}_{2, 1}}
\dta   
\leq
\zeta
\ep
\int_0^t
\n{\vue}^{l;\zeta}_{B^{\f d 2-1}_{2, 1}}
\f{1}{\ep}
\n{\vthe}^{h; \f{\beta_0}{\ep}}_{B^{\f d 2}_{2, 1}}
\dta 
\leq
\zeta
\ep
\sp{
E_{t}^{\ep}
}^2.
\end{align*}
For \eqref{6.3.11}, we notice 
\begin{align*}
&
\int_0^t
\n{\vue}^{h;\zeta}_{B^{\f d 2}_{2, 1}}
\n{\vthe}^{h; \f{\beta_0}{\ep}}_{B^{\f d 2}_{2, 1}}
\dta
\\
&
\leq
\int_0^t
\sp{
\n{\vue}^{h;\zeta}_{B^{\f d 2-1}_{2, 1}}
}^{\f1 2}
\sp{
\n{\vue}^{h;\zeta}_{B^{\f d 2+1}_{2, 1}}
}^{\f1 2}
\n{\vthe}^{h; \f{\beta_0}{\ep}}_{B^{\f d 2}_{2, 1}}
\dta
\\
&
\leq
\f{1}{2\delta}
\int_0^t
\sp{
\n{\mathcal{P}\vue}^{h;\zeta}_{\widetilde{L}^\infty_\tau(B^{\f d 2-1}_{2, 1})}
+
\n{\mathcal{P}^\perp \vue}^{h;\zeta}_{\widetilde{L}^\infty_\tau(B^{\f d 2-1}_{2, 1})}
}
\sp{
\n{\vthe}^{h; \f{\beta_0}{\ep}}_{B^{\f d 2}_{2, 1}}
}^2
\dta
+
\f{\delta}{2}
\sp{
\n{\mathcal{P}\vue}^{h;\zeta}_{L^1_t(B^{\f d 2+1}_{2, 1})}
+
\n{\mathcal{P}^\perp \vue}^{h;\zeta}_{L^1_t(B^{\f d 2+1}_{2, 1})}
}.
\end{align*}
To obtain \eqref{6.3.12}, we invoke \eqref{6.3.19}, \eqref{6.3.8}, \eqref{6.3.9}, \eqref{6.3.10}, \eqref{6.3.11} and use  
\begin{align*}
&
\int_0^t
\n{\vue}_{B^{\f d 2}_{2, 1}}
\sp{
\n{\vthe}^{h; \f{\zeta}{16}}_{B^{\f d 2}_{2, 1}}
+
\n{\mathcal{P}^\perp \vue}^{h; \f{\zeta}{16}}_{B^{\f d 2}_{2, 1}}
+
\n{\mathcal{P}\vue}^{h; \f{\zeta}{16}}_{B^{\f d 2}_{2, 1}}
} \dta 
+
\int_0^t
\n{\vthe}_{B^{\f d 2}_{2, 1}}
\n{\vue}^{h; \f{\zeta}{16}}_{B^{\f d 2}_{2, 1}}
\dta
\\
&
\quad
+
\int_0^t
\sp{
\n{\vthe}_{B^{\f d 2}_{2, 1}}
+
\n{\rho_\ep}_{B^{\f d 2}_{2, 1}}
}
\sp{
\n{\vthe}^{h; \f{\zeta}{16}}_{B^{\f d 2}_{2, 1}}
+
\n{\rho_\ep}^{h; \f{\zeta}{16}}_{B^{\f d 2}_{2, 1}}
}
\dta
\\
&
\leq
\int_0^t
\n{\vue}_{B^{\f d 2}_{2, 1}}
\n{\vthe}^{m; \f{\zeta}{16}, \zeta}_{B^{\f d 2}_{2, 1}}
\dta
+
\int_0^t
\n{\vue}_{B^{\f d 2}_{2, 1}}
\n{\vthe}^{m; \zeta, \f{\beta_0}{\ep}}_{B^{\f d 2}_{2, 1}}
\dta
+
\int_0^t
\n{\vue}^{l;\zeta}_{B^{\f d 2}_{2, 1}}
\n{\vthe}^{h; \f{\beta_0}{\ep}}_{B^{\f d 2}_{2, 1}}
\dta
+
\int_0^t
\n{\vue}^{h;\zeta}_{B^{\f d 2}_{2, 1}}
\n{\vthe}^{h; \f{\beta_0}{\ep}}_{B^{\f d 2}_{2, 1}}
\dta
\\
&
\quad
+
\int_0^t
\n{\vue}_{B^{\f d 2}_{2, 1}}
\sp{
\n{\mathcal{P}\vue}^{m; \f{\zeta}{16}, \zeta}_{B^{\f d 2}_{2, 1}}
+
\n{\vthe}^{m; \f{\zeta}{16}, \zeta}_{B^{\f d 2}_{2, 1}}
}
\dta
+
\int_0^t
\n{\vue}_{B^{\f d 2}_{2, 1}}
\sp{
\n{\mathcal{P}\vue}^{h; \zeta}_{B^{\f d 2}_{2, 1}}
+
\n{\vthe}^{h; \zeta}_{B^{\f d 2}_{2, 1}}
}
\dta
\\
&
\quad
+
\int_0^t
\n{\vthe}_{B^{\f d 2}_{2, 1}}
\n{\vue}^{m; \f{\zeta}{16}, \zeta}_{B^{\f d 2}_{2, 1}}
\dta
+
\int_0^t
\n{\vthe}_{B^{\f d 2}_{2, 1}}
\n{\vue}^{h; \zeta}_{B^{\f d 2}_{2, 1}}
\dta
+
\int_0^t
\sp{
\n{\vthe}_{B^{\f d 2}_{2, 1}}
+
\n{\rho_\ep}_{B^{\f d 2}_{2, 1}}
}
\sp{
\n{\vthe}^{m; \f{\zeta}{16}, \zeta}_{B^{\f d 2}_{2, 1}}
+
\n{\rho_\ep}^{m; \f{\zeta}{16}, \zeta}_{B^{\f d 2}_{2, 1}}
}
\dta
\\
&
\quad
+
\int_0^t
\sp{
\n{\vthe}_{B^{\f d 2}_{2, 1}}
+
\n{\rho_\ep}_{B^{\f d 2}_{2, 1}}
}
\n{\vthe}^{m; \zeta, \f{\beta_0}{\ep}}_{B^{\f d 2}_{2, 1}}
\dta
+
\int_0^t
\sp{
\n{\vthe}_{B^{\f d 2}_{2, 1}}
+
\n{\rho_\ep}_{B^{\f d 2}_{2, 1}}
}
\n{\rho_\ep}^{h; \zeta}_{B^{\f d 2}_{2, 1}}
\dta
\\
&
\quad
+
\int_0^t
\sp{
\n{\vthe}_{B^{\f d 2}_{2, 1}}
+
\n{\rho_\ep}_{B^{\f d 2}_{2, 1}}
}
\n{\vthe}^{h; \f{\beta_0}{\ep}}_{B^{\f d 2}_{2, 1}}
\dta.
\end{align*}
It remains to consider \eqref{6.3.14}. Indeed, 
\begin{align*}
&
\int_0^t 
\n{(\rho_\ep-\rho)}_{B^{\f d 2}_{2, 1}}
\n{\vue}_{\underline{B}^{\f d 2+1}_{2, 1}}
\dta
+
\int_0^t 
\n{\rho_\ep}^{h; \f{\zeta}{16}}_{B^{\f d 2}_{2, 1}}
\n{\vue}_{\underline{B}^{\f d 2+1}_{2, 1}}
\dta
\\
&
\leq
\int_0^t 
\n{(\rho_\ep-\rho)}^{l; \f{\zeta}{16}}_{B^{\f d 2}_{2, 1}}
\n{\vue}_{\underline{B}^{\f d 2+1}_{2, 1}}
\dta
+
2
\int_0^t 
\n{\rho_\ep}^{m; \f{\zeta}{16}, \zeta}_{B^{\f d 2}_{2, 1}}
\n{\vue}_{\underline{B}^{\f d 2+1}_{2, 1}}
\dta
+
2
\int_0^t 
\n{\rho_\ep}^{h; \zeta}_{B^{\f d 2}_{2, 1}}
\n{\vue}_{\underline{B}^{\f d 2+1}_{2, 1}}
\dta
+
\int_0^t 
\n{\rho}^{h; \f{\zeta}{16}, \zeta}_{B^{\f d 2}_{2, 1}}
\n{\vue}_{\underline{B}^{\f d 2+1}_{2, 1}}
\dta
\\
&
\leq
E_{t}^{\ep}
\sp{
\zeta^{1+\theta}
\vW^{\ep}_{t, \theta}
+
\n{\rho}^{h;\f{\zeta}{16}}_{\widetilde{L}_{T^*_0}^\infty(B^{\f d 2}_{2, 1})
}
}
+
2
\int_0^t
\n{\rho}^{h; \zeta}_{\widetilde{L}^\infty_\tau(B^{\f d 2}_{2, 1})}
\n{\vue}_{\underline{B}^{\f d 2+1}_{2, 1}}
\dta
+
2
\int_0^t 
\n{\rho_\ep}^{m; \f{\zeta}{16}, \zeta}_{B^{\f d 2}_{2, 1}}
\n{\vue}_{\underline{B}^{\f d 2+1}_{2, 1}}
\dta,
\end{align*}
which together with \eqref{6.3.13} completes the proof.     \ \ $\Box$

\section{Well-posedness of the limiting equations \texorpdfstring{\eqref{1.3}}{} and \texorpdfstring{\eqref{1.4}}{}}

\subsection{Well-posedness of the limiting equation \texorpdfstring{\eqref{1.3}}{}}

Integrating \eqref{1.3} over $\Td$
and dividing by $\sqrt{|\Td|}$ yields $\p_t \widehat{\vU}_0\equiv 0$, i.e., 
\begin{align}\label{5.1}
\widehat{\vU}_0 
\equiv 
\widehat{(\vU(0))}_0
=
(\widehat{(a_0)}_0, \widehat{(b_0)}_0, \widehat{(\vu_0)}_0).
\end{align}
By $\vU\in \text{Ker}\, L$ and \eqref{5.1}, there exist $\rho$ and $\vw$ satisfying $\widehat{\rho}_0=\widehat{\vw}_0=0$ and $\Div \vw=0$
and 
\begin{align}\label{5.2}
\vU
=
(\vU^1, \vU^2, \vU^3)
=
\sp{ \rho, -\df{c_1}{c_2}\rho, \vw}^T
+
\sp{
\df{\widehat{(a_0)}_0}{\sqrt{|\Td|}},
\df{\widehat{(b_0)}_0}{\sqrt{|\Td|}},
\df{\widehat{(\vu_0)}_0}{\sqrt{|\Td|}}
}^T.
\end{align}
Writing \eqref{1.3} componentwise as $(\vU^1, \vU^2, \vU^3)$, we see 
\begin{equation}\label{5.3}
\left\{\begin{aligned}
&
\p_t \vU^1
+ 
\Grad\vU^1\cdot \vU^3
=0,
\\
&
\p_t \vU^2
+ 
\Grad\vU^2\cdot \vU^3
=0,
\\
&
\p_t \vU^3
+ 
\mathcal{P}
\sp{
\vU^3\cdot \Grad \vU^3
}
-
\df{\mu}{R^0+Q^0}
\De
\vU^3
=0,
\quad 
\Div \vU^3=0,
\\
&
(\vU^1(0), \vU^2(0), \vU^3(0))^T
=
\mathbb{P}
(a_0, b_0, \vu_0)^T.
\end{aligned}
\right.
\end{equation}
Observe that \eqref{5.3}$_1$ and \eqref{5.3}$_2$ are the transport equations, and \eqref{5.3}$_3$ is the incompressible Navier--Stokes system. By the classical theory of transport equations and the incompressible Navier--Stokes equations, we have 
\begin{Theorem}\label{Th6.1}
If $(a_0, b_0, \vu_0)\in B^{\f d 2}_{2,1}\times B^{\f d 2}_{2,1} \times B^{\f d 2-1}_{2,1}$, then there exists $0<T^*_0\leq \infty$ such that equations \eqref{5.3} admits a unique solution
\begin{align*}
\vU=
(\vU^1, \vU^2, \vU^3)
\in 
\widetilde{C}_{T^*_0}(B^{\f d 2}_{2, 1})
\times
\widetilde{C}_{T^*_0}(B^{\f d 2}_{2, 1})
\times
\sp{
\widetilde{C}_{T^*_0}(B^{\f d 2-1}_{2, 1})
\cap
L_{T^*_0}^1(\underline{B}^{\f d 2+1}_{2, 1})
},
\end{align*}
where
\begin{align*}
\widetilde{C}_{T^*_0}(B^{s}_{2, 1})
:=
\widetilde{L}_{T^*_0}^\infty(B^{s}_{2, 1})
\cap
C([0, T^*_0]; B^{s}_{2, 1}).
\end{align*}
\end{Theorem}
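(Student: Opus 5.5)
The system \eqref{5.3} decouples in a triangular way. The velocity $\vU^3$ solves the incompressible Navier--Stokes equations on its own. Once $\vU^3$ is known, $\vU^1$ and $\vU^2$ solve linear transport equations with a divergence-free field. I would therefore solve \eqref{5.3}$_3$ first and then the transport equations, and finally assemble $\vU$ through \eqref{5.2}.

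\textbf{Step 1: the velocity.} Write $\vU^3=\vw+\widehat{(\vu_0)}_0/\sqrt{\T}$. Since the mean is constant by \eqref{5.1}, $\vw$ solves
\[
\p_t\vw+\mathcal{P}\big((\vw+\bar u)\cdot\Grad\vw\big)-\tfrac{\mu}{R^0+Q^0}\De\vw=0, \qquad \bar u:=\widehat{(\vu_0)}_0/\sqrt{\T},
\]
with $\Div\vw=0$ and $\widehat{\vw}_0=0$.
\begin{itemize}
\item The constant drift $\bar u\cdot\Grad$ is skew-adjoint and commutes with $\De_j$, so it drops out of the localized $L^2$ energy estimates.
\item The maximal regularity estimate \eqref{4.7} then gives $\vw\in\widetilde L^\infty(B^{\f d2-1}_{2,1})\cap L^1(\underline B^{\f d2+1}_{2,1})$.
\item The product law \eqref{4.8} bounds $\n{\vw\cdot\Grad\vw}_{B^{\f d2-1}_{2,1}}\les\n{\vw}_{B^{\f d2}_{2,1}}^2$, and interpolation places $\vw$ in $\widetilde L^2(B^{\f d2}_{2,1})$.
\end{itemize}
A standard Friedrichs (Fourier truncation) scheme combined with a fixed point gives a local solution. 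The existence time is chosen so that $\n{e^{t\De}\vw_0}_{L^1_T(B^{\f d2+1}_{2,1})}$ is small, and this is possible for large data because $\ell^1$ summability makes the high frequencies small. A contraction in $\widetilde L^\infty(B^{\f d2-2}_{2,1})\cap L^1(B^{\f d2}_{2,1})$ gives uniqueness. Define $T^*_0$ as the maximal time; it equals $\infty$ when the data are small, and the torus case has no low-frequency issue because $\widehat{\vw}_0=0$. Time continuity follows from the equation as usual.

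\textbf{Step 2: the densities.} Solve $\p_t\rho+\vU^3\cdot\Grad\rho=0$ with $\rho(0)=\frac{Q^0\underline{a_0}-R^0\underline{b_0}}{Q^0+R^0c_1/c_2}$, and set $\vU^1=\rho+\widehat{(a_0)}_0/\sqrt{\T}$, $\vU^2=-\frac{c_1}{c_2}\rho+\widehat{(b_0)}_0/\sqrt{\T}$. This matches \eqref{5.2}, and $\vU^2$ solves \eqref{5.3}$_2$ by linearity. The transport estimate in $B^{\f d2}_{2,1}$ (\cite{BCD11}, Theorem 3.14) gives
\[
\n{\rho}_{\widetilde L^\infty_t(B^{\f d2}_{2,1})}\le \n{\rho(0)}_{B^{\f d2}_{2,1}}\exp\Big(C\int_0^t\n{\vw}_{\underline B^{\f d2+1}_{2,1}}\Big).
\]
The constant part of the velocity again drops out, since only $\Grad\vU^3=\Grad\vw$ enters the commutator estimates; these are available from \eqref{6.1.1}. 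Because $\vw\in L^1_{T^*_0}(\underline B^{\f d2+1}_{2,1})$ on every compact subinterval, $\rho$ exists on the whole of $[0,T^*_0)$. Uniqueness follows from the linear estimate at regularity $\f d2-1$, and the mean of $\rho$ stays zero because $\Div\vw=0$.

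\textbf{Main obstacle.} The delicate point is the global-in-time meaning of the norms when $T^*_0=\infty$, in particular the bound $\n{\rho}_{\widetilde L^\infty_{T^*_0}}<\infty$, which needs $\int_0^\infty\n{\vw}_{\underline B^{\f d2+1}_{2,1}}<\infty$. I would handle it as follows. If $T^*_0<\infty$, take $T^*_0$ strictly below the blow-up time, so that every norm is finite. If the NS solution is global, exponential decay of $\vw$ on the torus (mean zero, spectral gap) gives $\vw\in L^1(0,\infty;\underline B^{\f d2+1}_{2,1})$. Fixing the meaning of $T^*_0$ this way ensures that the finiteness of $\vU_{T^*_0}$, which is used throughout Section~\ref{construct}, is consistent with the statement.
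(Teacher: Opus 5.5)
Your proposal is correct and is essentially the paper's argument. The paper only observes that \eqref{5.3} consists of the incompressible Navier--Stokes system for $\vU^3$ and two transport equations driven by $\vU^3$, and cites the classical theory for each; you carry this out in detail (conservation of the means, the constant drift, the transport estimate controlled by $\int_0^t\n{\vw}_{\underline{B}^{\f d2+1}_{2,1}}$, and the reconstruction via \eqref{5.2}).

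One side claim should be dropped. You assert that every global (possibly large) solution lies in $L^1(0,\infty;\underline{B}^{\f d2+1}_{2,1})$ ``by the spectral gap''. The gap only gives decay once the solution is already small in the critical norm, and getting there needs a separate argument. Nothing depends on this, though: the statement only asks for some $T^*_0\in(0,\infty]$, so you may take $T^*_0=\infty$ for small data and a finite $T^*_0$ below the lifespan otherwise.
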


\subsection{Well-posedness of the limiting equation \texorpdfstring{\eqref{1.4}}{}}

Masmoudi \cite{M01} studied the low Mach number limits of the compressible Navier--Stokes equations and derived the limiting system \eqref{1.4}, where the expressions of the operators $\mathcal{Q}_1$ and $\mathcal{Q}_2$
differ slightly from those in the present paper. He was the first to prove the well-posedness of \eqref{1.4} for Sobolev initial data. Subsequently, Danchin \cite{D02} extended this result to Besov initial data. Following their approach, we investigate the well-posedness of \eqref{1.4} on $[0, T^*_0]$, where $\vU$ and $T^*_0$ are as in Theorem \ref{Th6.1}. 

\begin{Theorem}\label{Th7.2}
Let $0\leq s$ and $\vV(0) \in \text{Ker} \, L \cap  B^{s}_{2, 1}$ be real-valued. Then \eqref{1.4} admits a unique real-valued solution 
$\vV\in \text{Ker}\, L \cap \widetilde{C}_{T^*_0}(B^{s}_{2, 1})\cap L^1_{T^*_0}(B^{s+2}_{2, 1})$. Moreover, 
\begin{align*}
&
\n{\vV}_{L^\infty_{T^*_0}(L^2)}   
+
\n{\vV}_{L_{T^*_0}^2(H^1)}   
\leq
C
\n{\vV(0)}_{L^2},
\\
&
\n{\vV}_{\widetilde{L}^\infty_{T^*_0}(B^s_{2, 1})}
+
\n{\vV}_{L^1_{T^*_0}(B^{s+2}_{2, 1})}
\leq
C
\text{e}^{C \n{\vV(0)}^2_{L^2}}
\n{\vV(0)}_{B^{s}_{2, 1}}.
\end{align*}

\end{Theorem}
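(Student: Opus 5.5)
We follow Masmoudi and Danchin: treat \eqref{1.4} as a linear-in-$\vU$, quadratic-in-$\vV$ parabolic system on $\text{Im}\,L$, with $\vU$ fixed by Theorem \ref{Th6.1}. The structural facts needed come from the explicit limits \eqref{Q_1}, \eqref{Q_2}, \eqref{Q_21}. We also note that the statement writes $\text{Ker}\,L$ for $\vV$, but $\vV$ lives in $\text{Im}\,L$; we work there.

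\textbf{Step 1: $L^2$ energy identity.} Pair \eqref{1.4} with $\vV$ in $\langle\cdot,\cdot\rangle_{\mathbb{H}}$. The diffusion gives $\f{\nu}{2(R^0+Q^0)}\|\Grad \vV\|_{L^2}^2$. We need three cancellations.
\begin{itemize}
\item $\mathrm{Re}\,\langle \mathcal{Q}_1(\vV,\vV),\vV\rangle_{\mathbb{H}}=0$. In \eqref{Q_1} only resonant triads with $k,l,m$ collinear contribute, and the coefficient is a constant times $i\lambda_k^\alpha$. Since $\lambda_k^\alpha=\lambda_l^\beta+\lambda_m^\gamma$, symmetrizing over the triad gives $\sum(\lambda_k^\alpha-\lambda_l^\beta-\lambda_m^\gamma)\cdot(\dots)=0$, using reality of $\vV$ ($\widehat{\vV}^{\alpha}_{-k}$ is the conjugate of $\widehat{\vV}^{\alpha}_k$ up to the sgn convention).
\item For $\mathcal{Q}_2(\mathbb{P}_0\vU,\vV)$: the coefficients in \eqref{Q_21} are diagonal and purely imaginary times real multipliers, so their real part vanishes.
\item For $\mathcal{Q}_2(\underline{\vU},\vV)$: resonance forces $|k|=|m|$. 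The $\vw$-part reduces to a transport term with divergence-free $\vw$, which is skew. The $\rho$-part is skew only up to a term bounded by $\|\Grad\rho\|_{L^\infty}\|\vV\|_{L^2}^2$, and this is not integrable in time without further regularity. So we instead bound it by $\|\rho\|_{B^{d/2}_{2,1}}\|\vV\|_{L^2}\|\Grad\vV\|_{L^2}$ and absorb it via Young.
\end{itemize}
Grönwall then gives the $L^2$ bound, with constants depending on $\vU_{T^*_0}$ and $T^*_0$. This explains the constant $C$ in the statement; for $T^*_0=\infty$ one needs the $\rho$ term to be genuinely skew, and I expect to check that via the symmetry of the $\rho$ coefficient under $(k,\alpha)\leftrightarrow(m,\gamma)$.

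\textbf{Step 2: Besov estimate.} Localize with $\De_j$. The structure of $\mathcal{Q}_1$ in 2D-like resonant sets (collinear triads) lets us use Danchin's argument: the $\mathcal{Q}_1$ term is essentially a one-dimensional Burgers-type interaction along each line. Use Lemma \ref{le4.4} with a time-$L^2$ bound on $\|\vV\|_{H^1}$ to get
\[
\f{d}{dt}\|\De_j\vV\|_{L^2}+c2^{2j}\|\De_j\vV\|_{L^2}\les c_j(t)\big(\|\vV\|_{H^1}^2+\|\vU\|_{\underline{B}^{d/2+1}_{2,1}}+\|\rho\|_{B^{d/2}_{2,1}}\big)2^{-js}\|\vV\|_{B^s_{2,1}}.
\]
Summing over $j$ and applying Grönwall, the exponent is $\int\|\vV\|_{H^1}^2\les\|\vV(0)\|_{L^2}^2$, which yields $e^{C\|\vV(0)\|_{L^2}^2}$.

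\textbf{Step 3: existence, uniqueness, reality.} Build approximate solutions by Friedrichs truncation $|k|\le n$. The estimates above are uniform in $n$, and compactness passes to the limit. For uniqueness, estimate the difference in $L^2$, using the same cancellations plus Grönwall. Reality is preserved because the equation commutes with the conjugation symmetry.

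The main obstacle is Step 2. The estimate of $\mathcal{Q}_1$ has to be linear in the high norm, with only $\|\vV\|_{H^1}^2$ in the coefficient, and this must hold for arbitrary torus periods $\mathbf{a}$, where resonant sets may be degenerate. I would first try to reproduce Danchin's \cite{D02} decomposition verbatim, since the coefficient in \eqref{Q_1} is a constant multiple of the Navier--Stokes one.
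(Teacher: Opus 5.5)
Your handling of $\mathcal{Q}_2(\underline{\vU},\vV)$ is where the argument breaks. In Step 1 you absorb its $\rho$-part by Young's inequality. In Step 2 you carry $\n{\vU}_{\underline{B}^{\f d2+1}_{2,1}}+\n{\rho}_{B^{\f d2}_{2,1}}$ as a Gr\"{o}nwall coefficient. Neither produces the stated bounds.

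The Gr\"{o}nwall exponent then contains $\int_0^{T^*_0}\n{\rho}^2_{B^{\f d2}_{2,1}}\dt$ (or the same integral with the first power). Since $\rho$ is transported by the divergence-free field $\vU^3$, the norm $\n{\rho(t)}_{L^2}$ is conserved. So this integral is at least of order $T^*_0\n{\rho(0)}^2_{L^2}$. It is therefore infinite when $T^*_0=\infty$, which Theorem \ref{Th6.1} allows, and in any case incompatible with an exponent depending only on $\n{\vV(0)}_{L^2}$.

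Two further problems:
\begin{itemize}
\item $\n{\vU}_{\underline{B}^{\f d2+1}_{2,1}}$ is not finite in general, because the density components of $\vU$ lie only in $\widetilde{C}_{T^*_0}(B^{\f d2}_{2,1})$.
\item Your claim that the exponent reduces to $\int\n{\vV}^2_{H^1}$ does not follow from the differential inequality you wrote.
\end{itemize}

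The missing idea is that the $\mathcal{Q}_2$ terms vanish \emph{exactly}, both in the $L^2$ pairing and after dyadic localization (Lemma \ref{le6.1}). On the resonant set $\lambda_k^\alpha=\lambda_m^\gamma$ one has $|k|=|m|$ and $\alpha\,\tsgn(k)=\gamma\,\tsgn(m)$. Consider the relabelling $(k,m,\alpha,\gamma)\mapsto(-m,-k,\gamma,\alpha)$.
\begin{itemize}
\item It preserves the resonant set, fixes $l=k-m$, and fixes the product $\widehat{\vV}^\gamma_m\widehat{\vV}^\alpha_{-k}$.
\item It reverses the sign of the coefficient in \eqref{Q_2}, up to a multiple of $\widehat{\vw}_l\cdot l=0$ in the velocity part and of $l\cdot(k+m)=|k|^2-|m|^2=0$ in the $\rho$-part.
\end{itemize}
So the $\rho$-part is genuinely skew, with no remainder of size $\n{\Grad\rho}_{L^\infty}\n{\vV}^2_{L^2}$. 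Since $|k|=|m|$, the weight $\varphi(2^{-j}|k|)^2$ is also invariant under the relabelling. Hence $\langle \De_j\mathcal{Q}_2(\underline{\vU},\vV),\De_j\vV\rangle_{\mathbb{H}}=0$, and the $\mathbb{P}_0\vU$ term drops out because it is odd in $k$.

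Thus $\vU$ disappears entirely from both estimates. The $L^2$ estimate becomes the identity $\f12\f{d}{dt}\n{\vV}^2_{L^2}+\f{\nu}{2(R^0+Q^0)}\n{\Grad\vV}^2_{L^2}=0$. The dyadic estimate has only $\De_j\mathcal{Q}_1(\vV,\vV)$ on the right.

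That remaining term is bounded by $\n{\vV}_{B^{s+1}_{2,1}}\n{\vV}_{H^1}$ through the reduction to one-dimensional products along primitive directions (Lemmas \ref{le6.2} and \ref{le6.3}). Lemma \ref{le4.4} alone would not suffice here. It needs $\ell^1$ control of Fourier coefficients on $\widetilde{\mathbb{Z}}^d$, so it does not give a Gr\"{o}nwall coefficient integrable under the $L^2_tH^1$ bound. Interpolating $B^{s+1}_{2,1}$ between $B^{s}_{2,1}$ and $B^{s+2}_{2,1}$, then Young and Gr\"{o}nwall, gives the factor $e^{C\n{\vV(0)}^2_{L^2}}$.

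Your remark that $\vV$ lies in $\text{Im}\,L$ rather than $\text{Ker}\,L$ is correct; that is a typo in the statement.
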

\bProof
Since the proof is similar to that of Theorem 8.1 and Theorem 8.2 in \cite{D02}, we only present a sketch here.
Taking the $\mathbb{H}$-inner product of \eqref{1.4} with $\vV$ and applying Lemma \ref{le6.1} and Lemma \ref{le6.2}, we obtain
\begin{align*}
\df{1}{2}
\df{d}{dt}
\n{\vV}^2_{L^2}
+
\df{\nu}{2(R^0+Q^0)}
\n{\Grad\vV}^2_{L^2}
=0.
\end{align*}
Hence 
\begin{align}\label{5.19}
\n{\vV}_{L^\infty_{T^*_0}(L^2)}   
+
\n{\vV}_{L_{T^*_0}^2(H^1)}   
\les
\n{\vV(0)}_{L^2}.
\end{align}
Applying $\De_j$ to \eqref{1.4}, taking the $\mathbb{H}$-inner product with $\De_j \vV$, and using Lemma \ref{le6.1}, we obtain
\begin{align*}
\df{d}{dt}
\n{\De_j\vV}^2_{L^2}
+
2^{2j}
\n{\De_j\vV}^2_{L^2}
\les
\n{\De_j \mathcal{Q}_1(\vV, \vV)}_{L^2}
\n{\De_j\vV}_{L^2},
\end{align*}
which gives 
\begin{align*}
\n{\De_j \vV}_{L_{t}^\infty(L^2)}
+
2^{2j}\n{\De_j \vV}_{L_{t}^1(L^2)}
\les
\n{\De_j \vV (0)}_{L^2}
+
\n{\De_j \mathcal{Q}_1(\vV, \vV)}_{L_{t}^1(L^2)}.
\end{align*}
Multiplying both sides above by $2^{js}$
and summing over $j$, we see 
\begin{align}\label{5.22}
\n{\vV}_{\widetilde{L}^\infty_t(B^s_{2, 1})}
+
\n{\vV}_{L^1_t(B^{s+2}_{2, 1})}
\leq
C
\n{\vV(0)}_{B^s_{2, 1}}
+
C
\n{\mathcal{Q}_1(\vV, \vV)}_{L^1_t(B^s_{2, 1})} . 
\end{align}
Using \eqref{minski2} and Lemma \ref{le6.2}, 
\begin{align}\label{5.23}
C
\n{\mathcal{Q}_1(\vV, \vV)}_{L^1_t(B^s_{2, 1})}
&
\leq
C
\int_0^t
\n{\vV}_{B^{s+1}_{2,1}}
\n{\vV}_{H^1}
\dta
\leq
C
\int_0^t
\n{\vV}^{\f1 2}_{B^{s}_{2,1}}
\n{\vV}^{\f1 2}_{B^{s+2}_{2,1}}
\n{\vV}_{H^1}
\dta
\nonumber
\\
&
\leq
\f1 2
\n{\vV}_{L^1_t(B^{s+2}_{2, 1})}
+
\f{C^2}{2}
\int_0^t
\n{\vV}_{\widetilde{L}^\infty_\tau(B^s_{2, 1})}
\n{\vV}^2_{H^1}
\dta.
\end{align}
Inserting \eqref{5.23} into \eqref{5.22}, applying Gr\"{o}nwall's inequality, and combining with \eqref{5.19}, we conclude 
\begin{align*}
\n{\vV}_{\widetilde{L}^\infty_{T^*_0}(B^s_{2, 1})}
+
\n{\vV}_{L^1_{T^*_0}(B^{s+2}_{2, 1})}
\leq
C
\text{e}^{C \n{\vV(0)}^2_{L^2}}
\n{\vV(0)}_{B^{s}_{2, 1}}.
\end{align*}
With these a priori estimates at hand, a standard argument yields the existence of a solution in the corresponding space.

Next, we prove the uniqueness of the solution in the space 
$\widetilde{C}_{T^*_0}(B^{s}_{2, 1})\cap L^1_{T^*_0}(B^{s+2}_{2, 1})$.
Indeed, uniqueness holds in the larger space 
$C([0, T^*_0];L^2) \cap L^2_{T^*_0}(H^1) \supseteq \widetilde{C}_{T^*_0}(B^{s}_{2, 1})\cap L^1_{T^*_0}(B^{s+2}_{2, 1})$.
Let $\vV_1$, $\vV_2 \in C([0, T^*_0];L^2) \cap L^2_{T^*_0}(H^1)$
be two real-valued solutions to \eqref{1.4} emanating from the same initial data. Hence, $\delta \vV:=\vV_1-\vV_2$ satisfies 
\begin{align}\label{5.24}
\p_t \delta\vV 
+
\mathcal{Q}_1(\delta\vV, \delta\vV)
+
2\mathcal{Q}_1(\vV_2, \delta\vV)
+
\mathcal{Q}_2(\underline{\vU}, \delta\vV)
+
\mathcal{Q}_2(\mathbb{P}_0 \vU, \delta\vV)
- 
\overline{\mathcal{D}}(\delta\vV)=0
\end{align}
Taking the $\mathbb{H}$-inner product of \eqref{5.24} with $\delta\vV$, and applying Lemma \ref{le6.1} and Lemma \ref{le6.2}, we obtain
\begin{align*}
\df{d}{dt}
\n{\delta\vV}^2_{L^2}
+
\n{\Grad\delta\vV}^2_{L^2}
\leq
C
\n{\vV_2}_{H^1}
\n{\delta\vV}_{L^2}
\n{\delta\vV}_{H^1}
\leq
\f{C^2}{2}
\n{\vV_2}^2_{H^1}
\n{\delta\vV}^2_{L^2}
+
\f1 2
\n{\delta\vV}^2_{H^1}.
\end{align*}
This gives $\delta \vV \equiv 0$ by applying Gronwall's inequality.  \ \ $\Box$

We now establish the estimates for $\mathcal{Q}_1$ and $\mathcal{Q}_2$
used in the proof above.

\begin{Lemma}\label{le6.1}
Let $0<\theta<1$, $1\leq r \leq \infty$, $s\in \mathbb{R}$, $j\in \mathbb{Z}$, $\vE \in \text{Ker } L$, $\vA\in \text{Im} \,L$ be real-valued and $\vB\in \text{Im} \, L$. Then
\begin{align}
&
\langle 
\mathcal{Q}_2(\underline{\vE}, \vA), \vA
\rangle_{\mathbb{H} }
=
0,\label{5.12}
\\
&
\langle 
\De_j \mathcal{Q}_2(\underline{\vE}, \vA), \De_j \vA
\rangle_{\mathbb{H} }
=
0,\label{5.5}
\\
&
\langle 
\mathcal{Q}_2(\mathbb{P}_0 \vE, \vA), \vA
\rangle_{\mathbb{H}}
=
0, \label{5.13}
\\
&
\langle 
\De_j \mathcal{Q}_2(\mathbb{P}_0 \vE, \vA), \De_j \vA
\rangle_{\mathbb{H}}
=
0,
\label{5.7}
\\
&
\n{\mathcal{Q}_2(\underline{\vE}, \vB)}_{H^{\f d 2-1-\theta}}
\les
\n{\vB}_{H^{\f d 2}}\n{\vE}_{\underline{H}^{\f d 2}}
\label{5.25},
\\
&
\n{\mathcal{Q}_2(\mathbb{P}_0 \vE, \vB)}_{B^{s}_{2, r}}
\les
|\mathbb{P}_0 \vE|
\n{\vB}_{B^{s+1}_{2, r}}.
\end{align}
\end{Lemma}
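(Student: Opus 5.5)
The plan is to work entirely on the Fourier side, using the explicit formulas \eqref{Q_2} and \eqref{Q_21} for $\mathcal{Q}_2$ together with the orthonormality of $\{\Phi^\alpha_k\}$ in $\langle\cdot,\cdot\rangle_{\mathbb{H}}$.

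\textbf{Reduction for \eqref{5.12}.} Writing $\vE=(\rho^*,-\f{c_1}{c_2}\rho^*,\vw^*)^T+\mathbb{P}_0\vE$, we have
\[
\langle \mathcal{Q}_2(\underline{\vE},\vA),\vA\rangle_{\mathbb{H}}
=\sum_{(k,\alpha),(m,\gamma)} i\,K_{\vE}(k,\alpha;m,\gamma)\,\widehat{\vA}^\gamma_m\,\overline{\widehat{\vA}^\alpha_k}.
\]
Here $K_{\vE}$ is supported on the resonant set $\lambda^\alpha_k=\lambda^\gamma_m$, that is $|k|=|m|$ and $\alpha\tsgn(k)=\gamma\tsgn(m)$. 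It is linear in $\widehat{\vw}^*_l$ and $\widehat{\rho}^*_l$ with $l=k-m$.

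I will show that the matrix $iK_{\vE}$ is skew-Hermitian. The required identity is
\[
\overline{K_{\vE}(m,\gamma;k,\alpha)}=K_{\vE}(k,\alpha;m,\gamma).
\]
Under the swap $(k,\alpha)\leftrightarrow(m,\gamma)$ the index $l$ becomes $-l$. Since $\vE$ is real-valued, $\widehat{\vw}^*_{-l}=\overline{\widehat{\vw}^*_l}$ and $\widehat{\rho}^*_{-l}=\overline{\widehat{\rho}^*_l}$. So it suffices to check that the real geometric factors are invariant under the swap.

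\textbf{The geometric identities.} The identities to use on the resonant set are the following.
\begin{itemize}
\item $\Div\vw^*=0$ gives $\widehat{\vw}^*_l\cdot l=0$, hence $\widehat{\vw}^*_l\cdot k=\widehat{\vw}^*_l\cdot m$. The factor $2(\widehat{\vw}^*_l\cdot k)(m\cdot k)/(|m||k|)$ is therefore symmetric.
\item $|k|=|m|$ gives $l\cdot(k+m)=0$. Hence $l\cdot k=-l\cdot m$, which makes the $(l\cdot k)$ term symmetric after $l\to-l$.
\item $\alpha\tsgn(k)/|k|=\gamma\tsgn(m)/|m|$ on the resonant set, so the prefactor of the $\widehat{\rho}^*_l$-terms is symmetric.
\item $|k|^2=|m|^2$ and $m\cdot k$ is symmetric, which handles the remaining $\widehat{\rho}^*_l$-terms.
\end{itemize}

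\textbf{Conclusion of the identities.} Once $iK_{\vE}$ is skew-Hermitian, the inner product is purely imaginary. Since $\vA$ and $\mathcal{Q}_2(\underline{\vE},\vA)$ are real, the inner product is also real, so it vanishes.

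For \eqref{5.5}, note that $\De_j$ acts as the multiplier $\varphi(2^{-j}|k|)$ on $\Phi^\alpha_k$. On the resonant set $|k|=|m|$, so the kernel is only multiplied by the symmetric real weight $\varphi(2^{-j}|k|)^2$, and the same argument applies.

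For \eqref{5.13} and \eqref{5.7}, the formula \eqref{Q_21} is diagonal in $(k,\alpha)$. Its coefficient is $i$ times a real number, namely $\lambda^\alpha_k$ times real constants built from the (real) zero modes, plus $k\cdot\widehat{(\vE^3)}_0$. Hence the operator is skew-adjoint, and the same reasoning gives zero, with or without $\De_j$.

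\textbf{The estimates.} From \eqref{Q_2}, the coefficient of $\widehat{\vB}^\gamma_m$ in $\mathcal{Q}_2(\underline{\vE},\vB)$ is bounded by $C|k|\bigl(|\widehat{\vw}^*_l|+|\widehat{\rho}^*_l|\bigr)$. I apply Lemma \ref{le4.4} with $s=\f d2-1-\theta$, $s_1=1$, $s_2=s_3=0$ and $s_4=s_5=\theta$. This gives
\[
\n{\mathcal{Q}_2(\underline{\vE},\vB)}_{H^{\f d2-1-\theta}}\les
\n{\vB}_{H^{\f d2}}\n{|k|^{-\theta}\widehat{\vE}_k}_{\ell^1}
+\n{\vE}_{\underline H^{\f d2}}\n{|k|^{-\theta}\widehat{\vB}_k}_{\ell^1}.
\]
By Cauchy--Schwarz,
\[
\n{|k|^{-\theta}\widehat{f}_k}_{\ell^1(\widetilde{\mathbb{Z}}^d\setminus\{0\})}\les
\Bigl(\sum_{k\neq0}|k|^{-d-2\theta}\Bigr)^{\f12}\n{f}_{H^{\f d2}}.
\]
The lattice sum converges since $\theta>0$, which yields \eqref{5.25}.

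The last bound follows from the diagonal form of \eqref{Q_21}. Each mode is multiplied by a factor of size at most $C|\mathbb{P}_0\vE|\,|k|$, and this yields the $B^{s}_{2,r}$ estimate directly.

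\textbf{Main obstacle.} The only delicate step is the bookkeeping of the $\widehat{\rho}^*_l$ coefficients in \eqref{Q_2} under the swap. I expect to use the resonance relations ($|k|=|m|$, $l\perp(k+m)$, and the sign identity) to rewrite every term in a manifestly symmetric form before concluding.
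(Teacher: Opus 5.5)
Your argument is correct and is essentially the paper's. Both work from \eqref{Q_2} and \eqref{Q_21} using $\widehat{\vw}^*_l\cdot l=0$, $|k|=|m|$, $l\cdot(k+m)=0$ and $\alpha\tsgn(k)=\gamma\tsgn(m)$ on the resonant set, and both obtain \eqref{5.25} from Lemma \ref{le4.4}; your Cauchy--Schwarz bound with $\sum_{k\neq0}|k|^{-d-2\theta}<\infty$ is exactly the step the paper leaves implicit.

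The only difference is the symmetrization. The paper uses $\overline{\widehat{\vA}^\alpha_k}=\widehat{\vA}^\alpha_{-k}$ and the substitution $(k,m,\alpha,\gamma)\mapsto(-m,-k,\gamma,\alpha)$, which fixes $l$ and needs only $\vA$ real. Your swap $(k,\alpha)\leftrightarrow(m,\gamma)$ sends $l\mapsto-l$, so it also needs $\vE$ real and needs $\mathcal{Q}_2(\underline{\vE},\vA)$ to be real-valued. You assert the latter without proof. It does follow at once from $K_{\vE}(-k,\alpha;-m,\gamma)=-\overline{K_{\vE}(k,\alpha;m,\gamma)}$ together with $\overline{\Phi^\alpha_k}=\Phi^\alpha_{-k}$, and you should state this.
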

\bProof 
Observe first that, since $\vA$ is real-valued, 
\begin{align}\label{5.11}
\widehat{\vA}_{k}^\alpha=\overline{\widehat{\vA}_{-k}^\alpha} . 
\end{align}
Then, we make use of \eqref{Q_2} to obtain 
\begin{align*}
&
\langle 
\mathcal{Q}_2(\underline{E}, \vA), \vA
\rangle_{\mathbb{H} }
\\
& \quad 
=
\sum_{
\substack{k, m, l, \alpha, \gamma
\\ 
k=l+m
\\ \lambda_k^\alpha=\lambda_m^\gamma
}
}
\Bigg(
\df{2(\widehat{\vw}^*_l\cdot k)(m\cdot k)}{|m||k|}
+
\df{\widehat{\rho}^*_l\alpha\tsgn(k)}{c_0|k|}
\left(
|k|^2\left(c_3R^0-\df{c_6c_1Q^0}{c_2}\right)
\right.
\nonumber
\\
&\qquad\left.
+
Q^0(m\cdot k)(c_4+c_5)
+
\df{c_1R^0}{c_2}(m\cdot k)(c_4+c_5)
-
\df{c_1R^0}{c_2}(k\cdot k)(c_4+c_5)
\right)
\Bigg) 
\cdot 
\df{
i \widehat{\vA}_m^\gamma
\widehat{\vA}_{-k}^\alpha
}
{2\sqrt{|\Td|}}
.
\end{align*}
Replacing $k$ by $-m$, $m$ by $-k$, $\alpha$ by $\gamma$, and $\gamma$ by $\alpha$, we get
\begin{align*}
\langle 
\mathcal{Q}_2(\underline{E}, \vA), \vA
\rangle_{\mathbb{H}^s }
=
\f 1 2
\sum_{
\substack{k, m, l, \alpha, \gamma
\\ 
k=l+m
\\ \lambda_k^\alpha=\lambda_m^\gamma
}
} 
\df{(\widehat{\vw}^*_l\cdot l)(m\cdot k)}{|m||k|}
\df{
i  \widehat{\vA}_m^\gamma
\widehat{\vA}_{-k}^\alpha
}
{\sqrt{|\Td|}}
=
0.
\end{align*}
In the same spirit, we obtain
\begin{align*}
 \langle 
\De_j \mathcal{Q}_2(\underline{E}, \vA), \De_j \vA
\rangle_{\mathbb{H} }   
=
\f 1 2
\sum_{
\substack{k, m, l, \alpha, \gamma
\\ 
k=l+m
\\ \lambda_k^\alpha=\lambda_m^\gamma
}
} 
\df{(\widehat{\vw}^*_l\cdot l)(m\cdot k)}{|m||k|}
\df{
i \sp{\varphi(2^{-j}|k|)}^2 \widehat{\vA}_m^\gamma
\widehat{\vA}_{-k}^\alpha
}
{\sqrt{|\Td|}}
=
0.
\end{align*}
It follows from \eqref{Q_21} and \eqref{5.11} that 
\begin{align*}
& \langle 
\mathcal{Q}_2(\mathbb{P}_0 \vE, \vA), \vA
\rangle_{\mathbb{H}}  
=
\sum\limits_{k,\alpha}
\left(
\widehat{(\vE^1)}_0
\left(c_1+c_3R^0+c_5Q^0\right)
+
\widehat{(\vE^2)}_0
\left(c_2+c_4R^0+c_6Q^0\right)
\right)
\df{i\lambda_k^\alpha\widehat{\vA}^\alpha_k \widehat{\vA}^\alpha_{-k}}
{2\sqrt{|\Td|}c_0}
\nonumber
\\
&
\qquad \qquad \qquad \qquad \qquad
+
\sum\limits_{k,\alpha} 
k\cdot \widehat{(\vE^3)}_0
\df{i \widehat{\vA}_k^\alpha \widehat{\vA}_{-k}^\alpha}{\sqrt{|\Td|}}.
\end{align*}
Replacing $k$ by $-k$, we get
\begin{align*}
 \langle 
\mathcal{Q}_2(\mathbb{P}_0 \vE, \vA), \vA
\rangle_{\mathbb{H}^s} 
=
0.
\end{align*}
By the same token, we get \eqref{5.7}. 

To proceed, we show \eqref{5.25}. By \eqref{Q_2}, we have $|k=|m|$ and $|l|\leq 2|k|$. Lemma \ref{le4.4} implies 
\begin{align*}
\n{\mathcal{Q}_2(\underline{\vE}, \vB)}_{H^{\f d 2-1-\theta}}
&
\les  
\n{\vE}_{\underline{H}^{\f d 2}}
\n{|k|^{-\theta}\widehat{\vB}^{\alpha}_k}_{\ell^1
\sp{
\widetilde{\mathbb{Z}}^d\setminus\{0\}}}
+
\n{\vB}_{H^{\f d 2}}
\n{|k|^{-\theta}\widehat{\vE}_k}_{\ell^1
\sp{
\widetilde{\mathbb{Z}}^d\setminus\{0\}}}
\\
&
\les
\n{\vB}_{H^{\f d 2}}\n{\vE}_{\underline{H}^{\f d 2}}.
\end{align*}
The estimate for $\n{\mathcal{Q}_2(\mathbb{P}_0 \vE, \vB)}_{B^{s}_{2, r}}$ is straightforward.         \ \ $\Box$

\begin{Lemma}\label{le6.2}
Let $s>-\f 3 2$,
$
\vA
=
\sum\limits_{k,\alpha} 
\widehat{\vA}^{\alpha}_k 
\Phi^\alpha_k,
\vB
=
\sum\limits_{k,\alpha} 
\widehat{\vB}^{\alpha}_k 
\Phi^\alpha_k
\in \text{Im}\, L$. Then 
\begin{align*}
&
\langle 
\mathcal{Q}_1(\vA, \vA), \vA
\rangle_\mathbb{H} 
=
0,
\ \
\text{if } \vA \text{ is real-valued},
\\
&
|\langle 
\mathcal{Q}_1(\vA, \vB), \vB
\rangle_\mathbb{H}|
\les
\n{\vA}_{H^1}
\n{\vB}_{L^2}
\n{\vB}_{B^{\f1 2}_{2, 1}},
\ \
\text{if } \vB \text{ is real-valued},
\\
&
|\langle 
\mathcal{Q}_1(\vA, \vB), \vB
\rangle_\mathbb{H}|
\les
\sp{
\n{\vA}_{H^1}
\n{\vB}_{L^2}
+
\n{\vB}_{H^1}
\n{\vA}_{L^2}
}
\n{\vB}_{B^{\f1 2}_{2, 1}},
\\
&
\n{
\mathcal{Q}_1(\vA, \vB)
}_{B^{s}_{2, 1}}
\les
\n{\vA}_{B^{s+1}_{2, 1}}
\n{\vB}_{B^{\f1 2}_{2, 1}}
+
\n{\vB}_{B^{s+1}_{2, 1}}
\n{\vA}_{B^{\f1 2}_{2, 1}}.
\end{align*}
\end{Lemma}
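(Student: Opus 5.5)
The plan is to work directly on the explicit resonant representation \eqref{Q_1} of $\mathcal{Q}_1$, in the orthonormal basis $\Phi^\alpha_k$ of $\text{Im}\,L$. Write
\[
\mathcal{Q}_1(\vA,\vB)=\sum_{k=l+m,\ \lambda_k^\alpha=\lambda_l^\beta+\lambda_m^\gamma} K\, i\lambda_k^\alpha \widehat{\vA}^\beta_l\widehat{\vB}^\gamma_m \Phi_k^\alpha ,
\]
where $K$ is the fixed constant in \eqref{Q_1}. Then
\[
\langle \mathcal{Q}_1(\vA,\vB),\vB\rangle_\mathbb{H}=\sum K i\lambda_k^\alpha \widehat{\vA}^\beta_l\widehat{\vB}^\gamma_m\overline{\widehat{\vB}^\alpha_k}.
\]

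For the first identity, take $\vB=\vA$ real-valued. Use $\overline{\widehat{\vA}^\alpha_k}=\widehat{\vA}^{\alpha}_{-k}$ from \eqref{5.11}, after checking how $\Phi_{-k}^\alpha$ relates to $\overline{\Phi_k^\alpha}$: the sign function flips, so $\lambda_{-k}^\alpha=-\lambda_k^{\alpha}$. With this, rewrite the sum as a symmetric trilinear sum over triples $(k_1,k_2,k_3)$ with $k_1+k_2+k_3=0$ and $\lambda^{\alpha_1}_{k_1}+\lambda^{\alpha_2}_{k_2}+\lambda^{\alpha_3}_{k_3}=0$, with weight $\lambda^{\alpha_3}_{k_3}$ attached to the output index. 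Symmetrizing over the three indices gives the factor $\lambda_{k_1}+\lambda_{k_2}+\lambda_{k_3}$, which vanishes on the resonant set. This is the analogue of Danchin's cancellation \cite{D02}.

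For the second estimate (with $\vB$ real), apply the same symmetrization only in the two $\vB$ slots. The weight $\lambda_k^\alpha$ on the output can then be replaced by $-\lambda_l^\beta$, using $\lambda_k-\lambda_m=\lambda_l$ on the resonant set. This moves the derivative onto $\vA$, and it is the key step. One gets
\[
|\langle\cdot\rangle|\les\sum_{k=l+m}|l||\widehat{\vA}_l||\widehat{\vB}_m||\widehat{\vB}_k|.
\]
Since $k$, $l$, $m$ are collinear and the resonance forces $|k|=|l|\pm|m|$ with restricted geometry, bound this by $\n{\vA}_{H^1}\n{\vB}_{L^2}\sup_k\sum_{m\parallel k}|\widehat{\vB}_m|$. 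I expect this last factor to be the main obstacle. A plain Young/Cauchy–Schwarz bound in $\ell^2$ loses $d/2$ derivatives, so the collinearity has to be exploited: the sum runs over a one-dimensional line, which suggests a loss of only $1/2$, i.e.\ the space $B^{1/2}_{2,1}$. I would try to make this rigorous through a dyadic decomposition in $|m|$, applying Cauchy–Schwarz along each line family, and I anticipate some care with the lattice $\widetilde{\mathbb{Z}}^d$ for general $\mathbf{a}$.

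The third estimate (no reality assumption) follows without symmetrization. Split the weight $\lambda_k=\lambda_l+\lambda_m$ and put the derivative on $\vA$ or on $\vB$ respectively; each piece is handled by the same line-sum bound. The last (product) estimate is Lemma \ref{le4.4}-style. Bound $|C(k,l,m)|\les|k|\le|l|+|m|$, then run the dyadic Bony-type splitting $|l|\ge|m|$ versus $|l|<|m|$. Place the high-frequency factor in $B^{s+1}_{2,1}$ and control the low one through the one-dimensional $\ell^1$ line sums by $B^{1/2}_{2,1}$. The condition $s>-3/2$ enters to make the remainder (comparable-frequency) sum converge; I would treat it as in \cite{D02}.
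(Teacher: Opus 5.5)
Your argument is correct. It rests on the same mechanism as the paper's proof: resonant triples $k=l+m$, $\lambda^\alpha_k=\lambda^\beta_l+\lambda^\gamma_m$ lie on a single line $\{j\mathbf p\}$ through the origin, and in fact have $\alpha=\beta=\gamma$. So $\mathcal{Q}_1$ is a one-dimensional Burgers nonlinearity on each line. The two proofs differ in how this is executed.

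The paper makes the reduction explicit. It writes $\vA=\sum_{\mathbf p\in\mathbf P}\vA^{\mathbf p}$, notes $\mathcal{Q}_1(\vA^{\mathbf p},\vB^{\mathbf q})=0$ for $\mathbf p\neq\mathbf q$, and identifies $\mathcal{Q}_1(\vA^{\mathbf p},\vB^{\mathbf p})$ with $|\mathbf p|\,\partial_z(a^{\mathbf p}b^{\mathbf p})$ on $\mathbb T$. It then imports one-dimensional facts:
\begin{itemize}
\item $\int_{\mathbb T}\partial_z((a^{\mathbf p})^2)a^{\mathbf p}\,dz=0$ and an integration by parts give the cancellations;
\item $L^\infty(\mathbb T)\hookrightarrow B^{1/2}_{2,1}(\mathbb T)$ gives the loss of half a derivative;
\item the product law in $B^{s+1}_{2,1}(\mathbb T)$, valid for $s+1>-\frac12$, gives the last estimate.
\end{itemize}
Lemma \ref{le6.3} transfers norms between $\vA$ and $(a^{\mathbf p})_{\mathbf p}$.

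You stay in Fourier variables instead:
\begin{itemize}
\item Symmetrization over $\{k_1+k_2+k_3=0,\ \sum_i\lambda^{\alpha_i}_{k_i}=0\}$ replaces the integrations by parts. Symmetrizing only the two $\vB$ slots turns the weight $\lambda^\alpha_k$ into $\frac12\lambda^\beta_l$, not $-\lambda^\beta_l$; this slip is harmless.
\item A line-by-line discrete Young inequality, followed by Cauchy--Schwarz over lines, replaces the $L^2\times L^2\times L^\infty$ bound.
\item The obstacle you flag is settled by counting lattice points. A line carries $\lesssim 2^q/|\mathbf p|$ frequencies with $|k|\sim 2^q$, so
\[
\sup_{\mathbf p}\sum_{j\neq0}|\widehat{\vB}_{j\mathbf p}|\lesssim\sum_q\Big(\frac{2^q}{|\mathbf p|}\Big)^{1/2}\|\Delta_q\vB\|_{L^2}\lesssim\|\vB\|_{B^{1/2}_{2,1}},
\]
uniformly in $\mathbf p$, since $|\mathbf p|\geq\min_i 1/\mathbf a^i$.
\end{itemize}

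Your route avoids the adapted cut-offs $\varphi_{\mathbf p}$ and Lemma \ref{le6.3}, and keeps the constants explicit. The paper's route gets the product estimate for free from one-dimensional theory, whereas you must redo it by hand.

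When you do, treat the remainder with the same counting. Take $|l|\sim|m|\sim 2^{q'}$ and output $|k|\sim 2^q$ with $q\leq q'+4$.
\begin{itemize}
\item If you put $\vB$ into an $\ell^1$ line sum, you get $2^{q}2^{q'/2}\|\Delta_{q'}\vA\|_{L^2}\|\Delta_{q'}\vB\|_{L^2}$. This closes only for $s>-1$.
\item If you keep both inputs in $\ell^2$ along the line and count the output frequencies instead, you get $2^{3q/2}\|\Delta_{q'}\vA\|_{L^2}\|\Delta_{q'}\vB\|_{L^2}$. Then $\sum_{q\leq q'+4}2^{q(s+\frac32)}\lesssim 2^{q'(s+\frac32)}$ holds precisely when $s>-\frac32$, which yields the stated range.
\end{itemize}
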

\bProof
Similarly to \cite{D02, M01}, we introduce
\begin{align*}
&
\mathbf{P}
:=
\left\{
\mathbf{p}=(\mathbf{p}^1, \mathbf{p}^2, \cdots, \mathbf{p}^d) \in \widetilde{\mathbb{Z}}^d\setminus\{0\}: 
\text{the integers }
\mathbf{p}^1\mathbf{a}^1, 
\cdots
\mathbf{p}^d\mathbf{a}^d
\text{ have greatest common divisor }
1
\right\},
\\
&
\vA^{\mathbf{p}}
:=
\sum\limits_{j\in \mathbb{Z}\setminus\{0\}}
\widehat{\vA}^{\text{sgn}(\mathbf{p})}_{j\mathbf{p}} 
\Phi^{\text{sgn}(\mathbf{p})}_{j\mathbf{p}},
\quad 
a^{\mathbf{p}}
:=
\sum\limits_{j\in \mathbb{Z}\setminus\{0\}}
\widehat{\vA}^{\text{sgn}(\mathbf{p})}_{j\mathbf{p}} 
\df{\text{e}^{ijz}}{\sqrt{2\pi}},
\end{align*}
where $a^{\mathbf{p}}$ is a periodic function on $\mathbb{T}$.
A direct calculation shows that
$\mathcal{Q}_1(\vA^{\mathbf{p}}, \vB^{\mathbf{q}})=0$, if $\mathbf{p}\neq \mathbf{q}$. Therefore, we deduce from \eqref{Q_1} that 
\begin{align}\label{5.14}
\mathcal{Q}_1(\vA, \vB)
&
=
\sum\limits_{\mathbf{p}, \mathbf{q} \in \mathbf{P}}
\mathcal{Q}_1(\vA^{\mathbf{p}}, \vB^{\mathbf{q}})
=
\sum\limits_{\mathbf{p} \in \mathbf{P}}
\mathcal{Q}_1(\vA^{\mathbf{p}}, \vB^{\mathbf{p}})
\nonumber
\\
&
=
\sqrt{2\pi}
c_{d, \mathbf{a}}
\sum\limits_{\mathbf{p} \in \mathbf{P}}
\sum\limits_{j\in \mathbb{Z}\setminus\{0\}}
|\mathbf{p} |
\Phi^{\text{sgn}(\mathbf{p})}_{j\mathbf{p}}
\sum_{
\substack{
j', j'' \in \mathbb{Z}\setminus\{0\}
\\ 
j=j'+j''
}
}
\df{ij}{\sqrt{2\pi}}
\widehat{\vA}^{\text{sgn}(\mathbf{p})}_{j'\mathbf{p}} 
\widehat{\vB}^{\text{sgn}(\mathbf{p})}_{j''\mathbf{p}} ,
\end{align}
where 
\begin{align*}
   c_{d, \mathbf{a}}
:=
\df{
\df{3c_0}{R^0}
+
\df{c_3R^0}{c_0}
+
\df{(c_4+c_5)Q^0}{c_0}
+
\df{c_6(Q^0)^2}{c_0R^0}
}
{2c_{\mathbf{a}}}. 
\end{align*}
Hence, 
\begin{align}\label{5.15}
\langle 
\mathcal{Q}_1(\vA, \vB), \vB
\rangle_\mathbb{H}  
=
\sqrt{2\pi}
c_{d, \mathbf{a}}
\sum\limits_{\mathbf{p} \in \mathbf{P}}
|\mathbf{p}|
\int_{\mathbb{T}}
\p_z
\sp{
a^{\mathbf{p}}
b^{\mathbf{p}}
}
\overline{b^{\mathbf{p}}}
\,
dz.
\end{align}
In particular, when $\vA = \vB$, \eqref{5.15} gives
\begin{align*}
\langle 
\mathcal{Q}_1(\vA, \vA), \vA
\rangle_\mathbb{H}  
=
0.
\end{align*}
If $\vB$ is real-valued, integration by parts yields
\begin{align*}
\int_{\mathbb{T}}
\p_z
\sp{
a^{\mathbf{p}}
b^{\mathbf{p}}
}
\overline{b^{\mathbf{p}}}
\,
dz
=
\int_{\mathbb{T}}
\p_z
\sp{
a^{\mathbf{p}}
b^{\mathbf{p}}
}
b^{\mathbf{p}}
\,
dz
=
-
\f1 2
\int_{\mathbb{T}}
\p_z
a^{\mathbf{p}}
b^{\mathbf{p}}
b^{\mathbf{p}}
\,
dz.
\end{align*}
In light of Lemma \ref{le6.3}, we get 
\begin{align*}
|\langle 
\mathcal{Q}_1(\vA, \vB), \vB
\rangle_\mathbb{H}|
&
\les
\sum\limits_{\mathbf{p} \in \mathbf{P}}
|\mathbf{p}|
\n{\p_z a^{\mathbf{p}}}_{L^2(\mathbb{T})}
\n{ b^{\mathbf{p}}}_{L^2(\mathbb{T})}
\n{ b^{\mathbf{p}}}_{L^\infty(\mathbb{T})}  
\\
&
\les
\sp{
\sum\limits_{\mathbf{p} \in \mathbf{P}} 
|\mathbf{p}|^2
\n{a^{\mathbf{p}}}_{H^1(\mathbb{T})}^2
}^{\f1 2}
\sp{
\sum\limits_{\mathbf{p} \in \mathbf{P}} 
\n{b^{\mathbf{p}}}_{L^2(\mathbb{T})}^2
}^{\f1 2}
\sup_{\mathbf{p} \in \mathbf{P}}
\n{b^{\mathbf{p}}}_{B^{\f1 2}_{2,1}(\mathbb{T})}
\\
&
\les
\n{\vA}_{H^1}
\n{\vB}_{L^2}
\n{\vB}_{B^{\f1 2}_{2, 1}}.
\end{align*}
For the general case, we have
\begin{align*}
|\langle 
\mathcal{Q}_1(\vA, \vB), \vB
\rangle_\mathbb{H}|
&
\les
\sum\limits_{\mathbf{p} \in \mathbf{P}}
|\mathbf{p}|
\sp{
\n{\p_z a^{\mathbf{p}}}_{L^2(\mathbb{T})}
\n{ b^{\mathbf{p}}}_{L^2(\mathbb{T})}
+
\n{\p_z b^{\mathbf{p}}}_{L^2(\mathbb{T})}
\n{ a^{\mathbf{p}}}_{L^2(\mathbb{T})}
}
\n{ b^{\mathbf{p}}}_{L^\infty(\mathbb{T})}
\\
&
\les
\sp{
\sum\limits_{\mathbf{p} \in \mathbf{P}} 
|\mathbf{p}|^2
\n{a^{\mathbf{p}}}_{H^1(\mathbb{T})}^2
}^{\f1 2}
\sp{
\sum\limits_{\mathbf{p} \in \mathbf{P}} 
\n{b^{\mathbf{p}}}_{L^2(\mathbb{T})}^2
}^{\f1 2}
\sup_{\mathbf{p} \in \mathbf{P}}
\n{b^{\mathbf{p}}}_{B^{\f1 2}_{2,1}(\mathbb{T})}
\\
&
\quad
+
\sp{
\sum\limits_{\mathbf{p} \in \mathbf{P}} 
|\mathbf{p}|^2
\n{b^{\mathbf{p}}}_{H^1(\mathbb{T})}^2
}^{\f1 2}
\sp{
\sum\limits_{\mathbf{p} \in \mathbf{P}} 
\n{a^{\mathbf{p}}}_{L^2(\mathbb{T})}^2
}^{\f1 2}
\sup_{\mathbf{p} \in \mathbf{P}}
\n{b^{\mathbf{p}}}_{B^{\f1 2}_{2,1}(\mathbb{T})}
\\
&
\les
\sp{
\n{\vA}_{H^1}
\n{\vB}_{L^2}
+
\n{\vB}_{H^1}
\n{\vA}_{L^2}
}
\n{\vB}_{B^{\f1 2}_{2, 1}}.
\end{align*}

Finally, we prove the estimates for  
$\n{\mathcal{Q}_{1}(\vA, \vA)}_{B^{s}_{2, 1}}$. 
To this end, we use a different cutoff function $\varphi_{\mathbf{p}}$.  For each $\mathbf{p} \in \mathbf{P}$, there exist a unique $j_\mathbf{p} \in \mathbb{Z}$ such that $2^{j_\mathbf{p}} \leq |\mathbf{p}| < 2^{j_\mathbf{p}+1} $. We define 
\begin{align}\label{5.18}
\varphi_{\mathbf{p}}(\zeta):=\varphi(|\mathbf{p}|^{-1}2^{j_{\mathbf{p}}}\zeta), \quad  \zeta \in \mathbb{R}.
\end{align}
The change of cutoff functions only introduces harmless positive constants, i.e.,
\begin{align*}
\n{g}_{B^{s}_{p, r}} 
\approx 
\left(
\sum\limits_{j\in\mathbb{Z}}2^{jsr}\|\Delta_{j}^{\mathbf{p} }g\|^{r}_{L^p(\Td)}
+
\left\|\f{\widehat{g}_0}{\sqrt{\T}}\right\|^{r}_{L^p(\Td)}
\right)^{\f1 r},
\qquad 
\Delta_j^{\mathbf{p}} g  :
= 
\sum\limits_{k \in \widetilde{\mathbb{Z}}^d}
\varphi_{\mathbf{p}}(2^{-j}|k|)
\widehat{g}_k
\f{\text{e}^{i k \cdot x}} {\sqrt{|\Td|}}.
\end{align*}
We conclude from \eqref{5.14} and Lemma \ref{le6.3} that 
\begin{align*}
&
\n{
\mathcal{Q}_1(\vA, \vB)
}_{B^s_{2,1}}   
\\
& \quad 
\les
\sum\limits_{\mathbf{p}\in \mathbf{P}}
\n{
\sum\limits_{j'\in \mathbb{Z}\setminus\{0\}}
\df{j'}{\sqrt{2\pi}}
|\mathbf{p} |
\Phi^{\text{sgn}(\mathbf{p})}_{j'\mathbf{p}}
\sum_{
\substack{
j'', j''' \in \mathbb{Z}\setminus\{0\}
\\ 
j'=j''+j'''
}
}
\widehat{\vA}^{\text{sgn}(\mathbf{p})}_{j'\mathbf{p}} 
\widehat{\vB}^{\text{sgn}(\mathbf{p})}_{j''\mathbf{p}}
}_{B^s_{2,1}}
\\
& \quad 
\les
\sum\limits_{\mathbf{p}\in \mathbf{P}}
\sum\limits_{j\in \mathbb{Z}}
2^{js}
\sp{
\sum\limits_{j'\in \mathbb{Z}\setminus\{0\}}
|j'\mathbf{p}|^2
\sp{
\varphi_{\mathbf{p}}(2^{-j}|j'\mathbf{p}|)
}^2
|
\sum_{
\substack{
j', j'' \in \mathbb{Z}\setminus\{0\}
\\ 
j=j'+j''
}
}
\df{1}{\sqrt{2\pi}}
\widehat{\vA}^{\text{sgn}(\mathbf{p})}_{j'\mathbf{p}} 
\widehat{\vB}^{\text{sgn}(\mathbf{p})}_{j''\mathbf{p}}
|^2
}^{\f1 2}
\\
&  \quad 
\les
\sum\limits_{\mathbf{p}\in \mathbf{P}}
\sum\limits_{j\in \mathbb{Z}}
2^{j(s+1)}
\n{\De_{j-j_{\mathbf{p}}}(a^{\mathbf{p}}b^{\mathbf{p}})}_{L^2(\mathbb{T})}
\\
& \quad 
\les
\sum\limits_{\mathbf{p}\in \mathbf{P}}
|\mathbf{p}|^{s+1}
\n{
a^{\mathbf{p}}b^{\mathbf{p}}
}_{B^{s+1}_{2,1}(\mathbb{T})}
\\
& \quad 
\les
\sum\limits_{\mathbf{p}\in \mathbf{P}}
|\mathbf{p}|^{s+1}
\sp{
\n{a^{\mathbf{p}}}_{B^{s+1}_{2,1}(\mathbb{T})}
\n{b^{\mathbf{p}}}_{B^{\f 1 2}_{2, 1}(\mathbb{T})}
+
\n{b^{\mathbf{p}}}_{B^{s+1}_{2, 1}(\mathbb{T})}
\n{a^{\mathbf{p}}}_{B^{\f 1 2}_{2, 1}(\mathbb{T})}
}
\\
& \quad 
\les
\n{\vA}_{B^{s+1}_{2,1}}
\n{\vB}_{B^{\f 1 2}_{2, 1}}
+
\n{\vB}_{B^{s+1}_{2,1}}
\n{\vA}_{B^{\f 1 2}_{2, 1}}.
\end{align*}
The proof is finished.       \ \ $\Box$

\begin{Lemma}\label{le6.3}
Let $s\in \mathbb{R}$ and 
$
\vA
=
\sum\limits_{k,\alpha} 
\widehat{\vA}^{\alpha}_k 
\Phi^\alpha_k
\in \text{Im } L
$. Then 
\begin{align}
&
\sum\limits_{\mathbf{p}\in \mathbf{P}}
|\mathbf{p}|^{2s}
\n{a^{\mathbf{p}}}^2_{H^{s}(\mathbb{T})}
\approx
\n{\vA}_{H^s}^2,
\label{5.16}
\\
&
\sp{
\sum\limits_{\mathbf{p}\in \mathbf{P}}
|\mathbf{p}|^{2s}
\n{a^{\mathbf{p}}}^2_{B^{s}_{2,1}(\mathbb{T})}
}^{\f1 2}
\les
\n{\vA}_{B^{s}_{2, 1}}.
\label{5.17}
\end{align}
\end{Lemma}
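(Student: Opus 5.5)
The plan is to reduce both estimates to a reorganization of the Fourier series of $\vA$ in the orthonormal basis $\{\Phi^\alpha_k\}$ of $\text{Im}\,L$, grouped along lattice lines through the origin. I will then use Plancherel, plus Minkowski's inequality for the Besov estimate.

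\emph{Step 1: a bijection of index sets.} I first check that $(\mathbf{p}, j)\mapsto (k,\alpha)=(j\mathbf{p}, \tsgn(\mathbf{p}))$, for $\mathbf{p}\in\mathbf{P}$ and $j\in\mathbb{Z}\setminus\{0\}$, is a bijection onto $(\widetilde{\mathbb{Z}}^d\setminus\{0\})\times\{1,-1\}$. To invert it, take $k\neq 0$ and $\alpha$. The vector $(k^1\mathbf{a}^1,\dots,k^d\mathbf{a}^d)$ has integer entries. Dividing $k$ by the gcd $g$ of these entries gives a primitive direction $\mathbf{p}_0\in\mathbf{P}$ with $k=g\mathbf{p}_0$. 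Exactly one of $\pm\mathbf{p}_0$ has generalized sign $\alpha$; call it $\mathbf{p}$, and set $j=\pm g$ accordingly. Uniqueness follows because a primitive direction along a given line is unique up to sign. Consequently
\begin{align*}
\vA=\sum_{\mathbf{p}\in\mathbf{P}}\vA^{\mathbf{p}},
\end{align*}
and the pieces $\vA^{\mathbf{p}}$ are mutually orthogonal in $\langle\cdot,\cdot\rangle_{\mathbb{H}}$, since they involve disjoint sets of basis vectors $\Phi^\alpha_k$.

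\emph{Step 2: proof of \eqref{5.16}.} The $\mathbb{H}$-inner product is equivalent to the $L^2$ one. The $\Phi^\alpha_k$ are orthonormal in $\mathbb{H}$, and each $\Phi^\alpha_k$ has a single frequency $k$. Hence
\begin{align*}
\n{\vA}_{H^s}^2\approx\sum_{k\neq0,\alpha}|k|^{2s}|\widehat{\vA}^\alpha_k|^2 ,
\end{align*}
where the zero mode is absent because $\vA\in\text{Im}\,L$ and $|k|$ is bounded below on $\widetilde{\mathbb{Z}}^d\setminus\{0\}$. Reindexing with Step 1 and using $|j\mathbf{p}|^{2s}=|\mathbf{p}|^{2s}|j|^{2s}$ gives
\begin{align*}
\sum_{\mathbf{p}\in\mathbf{P}}|\mathbf{p}|^{2s}\sum_{j\neq0}|j|^{2s}|\widehat{\vA}^{\tsgn(\mathbf{p})}_{j\mathbf{p}}|^2 .
\end{align*}
The inner sum is exactly $\n{a^{\mathbf{p}}}^2_{H^s(\mathbb{T})}$, because $a^{\mathbf{p}}$ has mean zero on $\mathbb{T}$.

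\emph{Step 3: proof of \eqref{5.17}.} A one-dimensional dyadic block of $a^{\mathbf{p}}$ at frequency $2^{j'}$ corresponds to the frequencies $k=j\mathbf{p}$ with $|k|\sim 2^{j'}|\mathbf{p}|\sim 2^{j'+j_{\mathbf{p}}}$. Up to the rescaled cutoff $\varphi_{\mathbf{p}}$ of \eqref{5.18}, Plancherel along the line therefore gives
\begin{align*}
|\mathbf{p}|^{s}\n{a^{\mathbf{p}}}_{B^s_{2,1}(\mathbb{T})}\lesssim\sum_{J}2^{Js}\n{\Delta^{\mathbf{p}}_J\vA^{\mathbf{p}}}_{L^2}.
\end{align*}
The step I expect to be the main obstacle is that the cutoff $\varphi_{\mathbf{p}}$ varies with $\mathbf{p}$, so no single Littlewood--Paley decomposition serves all lines at once. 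I will get around this with the support property: since $|\mathbf{p}|^{-1}2^{j_{\mathbf{p}}}\in(1/2,1]$, the symbol $\varphi_{\mathbf{p}}(2^{-J}\cdot)$ is supported in an annulus of size about $2^J$. Hence
\begin{align*}
\n{\Delta^{\mathbf{p}}_J\vA^{\mathbf{p}}}_{L^2}\lesssim\sum_{|J'-J|\le 2}\n{\Delta_{J'}\vA^{\mathbf{p}}}_{L^2},
\end{align*}
with a constant independent of $\mathbf{p}$.

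It then remains to take the $\ell^2$ norm over $\mathbf{p}$. The left-hand side of \eqref{5.17} is an $\ell^2_{\mathbf{p}}(\ell^1_J)$ quantity, and Minkowski's inequality bounds it by
\begin{align*}
\sum_J 2^{Js}\Big(\sum_{\mathbf{p}}\n{\Delta_J\vA^{\mathbf{p}}}_{L^2}^2\Big)^{1/2}.
\end{align*}
The operator $\Delta_J$ preserves the line decomposition, so by the orthogonality of Step 1 the inner sum is comparable to $\n{\Delta_J\vA}_{L^2}^2$. This yields $\lesssim\n{\vA}_{B^s_{2,1}}$. Only the one-sided inequality survives here, because Minkowski's inequality goes in one direction only.
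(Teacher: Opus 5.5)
Your proposal is correct and follows essentially the paper's argument: Parseval after regrouping the modes $\Phi^\alpha_k$ along the lines $\{j\mathbf{p}\}$ gives \eqref{5.16}, and for \eqref{5.17} a $\mathbf{p}$-dependent rescaling of the cutoff identifies the one-dimensional dyadic blocks of $a^{\mathbf{p}}$ with $d$-dimensional blocks of $\vA^{\mathbf{p}}$, after which Minkowski's inequality ($\ell^2_{\mathbf{p}}\ell^1_J\le\ell^1_J\ell^2_{\mathbf{p}}$) and orthogonality across lines finish the proof. The only difference is where the rescaling is placed: the paper rescales the one-dimensional cutoff ($\varphi_{\mathbf{p}^*}$) so that each block of $a^{\mathbf{p}}$ coincides exactly with $\Delta_{j+j_{\mathbf{p}}}\vA^{\mathbf{p}}$, appealing to equivalence of Besov norms under change of cutoff, whereas you keep the standard one-dimensional cutoff and compare $\Delta^{\mathbf{p}}_J$ with $\Delta_{J'}$, $|J'-J|\le 2$, by the support argument, which makes the uniformity in $\mathbf{p}$ explicit.
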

\bProof
Since the proof is similar to that of Lemma 9.3 in \cite{D02}, we only give a brief outline. By Parseval's identity, we obtain \eqref{5.16}. Like \eqref{5.18}, we define 
\begin{align*}
\varphi_{\mathbf{p}^*}(\zeta):=\varphi(|\mathbf{p}|2^{-j_{\mathbf{p}}}\zeta), 
\quad  \zeta \in \mathbb{R}.
\end{align*}
Thanks to the Minkowski's inequality, 
\begin{align*}
\sp{
\sum\limits_{\mathbf{p}\in \mathbf{P}}
|\mathbf{p}|^{2s}
\n{a^{\mathbf{p}}}_{B^{s}_{2,1}(\mathbb{T})}^2  
}^{\f1 2}
&
\approx
\sp{
\sum\limits_{\mathbf{p}\in \mathbf{P}}
|\mathbf{p}|^{2s}
\sp{
\sum\limits_{j\in \mathbb{Z}}
2^{js}
\sp{
\sum\limits_{j'\in \mathbb{Z}\setminus\{0\}}
\sp{
\varphi_{\mathbf{p}^*}(2^{-j}|j'|)
}^2
|\widehat{\vA}^{\text{sgn}(\mathbf{p})}_{j'\mathbf{p}}|^2
}^{\f1 2}
}^2
}^{\f1 2}
\\
&
\approx
\sp{
\sum\limits_{\mathbf{p}\in \mathbf{P}}
\sp{
\sum\limits_{j\in \mathbb{Z}}
2^{(j+j_\mathbf{p})s}
\sp{
\sum\limits_{j'\in \mathbb{Z}\setminus\{0\}}
\sp{
\varphi(2^{-(j+j_{\mathbf{p}})}|j'\mathbf{p}|)
}^2
|\widehat{\vA}^{\text{sgn}(\mathbf{p})}_{j'\mathbf{p}}|^2
}^{\f1 2}
}^2
}^{\f1 2}
\\
&
\approx
\sp{
\sum\limits_{\mathbf{p}\in \mathbf{p}}
\sp{
\sum\limits_{j\in \mathbb{Z}}
2^{js}
\n{\De_j \vA^p}_{L^2}
}^2
}^{\f1 2}
\\
&
\les
\sum\limits_{j\in \mathbb{Z}}
2^{js}
\sp{
\sum\limits_{\mathbf{p}\in \mathbf{p}}
\n{\De_j \vA^p}^2_{L^2}
}^{\f1 2}
\\
&
\approx
\n{\vA}_{B^{s}_{2, 1}},
\end{align*}
which yields \eqref{5.17}.     \ \ $\Box$

\section{Proof of Theorem \ref{main}}
In this section, we complete the proof of Theorem \ref{main}. We define
\begin{align*}
T^*_\ep
:=
\sup
&
\biggl\{
0<T<\infty: \eqref{1.1.1} \text{ admits a solution } (\vae, \vbe, \vue) \text{ on } [0, T] \text{ such that } 
\\
& \quad 
(\vae, \vbe, \vue) \in
\widetilde{C}_{T}(B^{\f d 2}_{2, 1})
\times
\widetilde{C}_{T}(B^{\f d 2}_{2, 1})
\times
\sp{
\widetilde{C}_{T}(B^{\f d 2-1}_{2, 1})
\cap
L_{T}^1(B^{\f d 2+1}_{2, 1})
}
\biggr\}.
\end{align*}
It follows from the local well-posedness theory of two-phase flows that
$T^*_\ep>0$. We now prove by contradiction that $T_0< T^*_\ep$  for any $0<\ep\leq \ep_0$,  where $0<\ep_0\leq \min\{1, \f{R^0}{8|c_{02}|}\}$ will be chosen later. We assume that there exists a $0<\ep<\ep_0$ such that $T^*_\ep\leq T_0$. Then we define
\begin{align*}
T^{**}_\ep
:=
\sup
\left\{
0<T<T^{*}_\ep: 
E^\ep_{T}\leq 2 C\sp{\vV_{T^*_0}+\vU_{T^*_0}},
\quad 
E^{\ep, \zeta_0}_{T}\leq 2\delta_0
\right\},
\end{align*}
where $\zeta_0$ is sufficiently large and $\delta_0$ is sufficiently small, satisfying $\zeta_0< \beta_0 /\ep$. Now we let $0<T<T^{**}_\ep$.
There exists $C_2>0$ such that
\begin{align*}
\ep 
\sp{
\n{\vae}_{L^\infty_T(L^\infty)}
+
\n{\vbe}_{L^\infty_T(L^\infty)}
}
\leq 
C_2
\ep
\sp{
\n{a_\ep}_{\widetilde{L}^\infty_T(B^{\f d 2}_{2,1})}
+
\n{b_\ep}_{\widetilde{L}^\infty_T(B^{\f d 2}_{2,1})}
}.
\end{align*}
From \eqref{cons3}, we get
\begin{align*} 
&
\ep 
\sp{
\n{\vae}_{L^\infty_T(L^\infty)}
+
\n{\vbe}_{L^\infty_T(L^\infty)}
}
\leq
C
C_2
(E^{\ep, \zeta_0}_{T}+\zeta_0\ep E^{\ep}_{T})
\leq
C
C_2
(E^{\ep, \zeta_0}_{T}+\zeta_0\ep_0 E^{\ep}_{T})
\\
& \quad 
\leq
C
C_2
\sp{
2\delta_0
+
2 \zeta_0\ep_0 C\sp{\vV_{T^*_0}+\vU_{T^*_0}}
}.
\end{align*}
We temporarily assume that $\delta_0$, $\zeta_0$ and $\ep_0$ satisfy 
\begin{align}\label{assu1}
C
C_2
\sp{
2\delta_0
+
2 \zeta_0\ep_0 C\sp{\vV_{T^*_0}+\vU_{T^*_0}}
}
\leq
\f{\min\{R^0, Q^0\}}{2} .
\end{align}
From \eqref{cons20}, \eqref{cons5},  Proposition \ref{Pro4.1}, Proposition \ref{Pro4.2} and Proposition \ref{Pro6.4},  we get
\begin{align*}
&
\vW^{\ep}_{T, \theta}\leq C_4 {\ep_0}^{\f{\theta}{3+\theta}},
\quad 
\vZ^{\ep}_{T, \theta}
\leq
C_5
\tau^*(\ep_0),
\\
&
E_{T}^{\ep, \zeta_0}
\leq
C
[T_0]^{\f1 2}
\zeta_0^{1+2\theta}
\sp{C_4+C_5}\tau^*(\ep_0)
+
C_6
\sp{
\n{a_0}^{h;\zeta_0}_{B^{\f d 2}_{2, 1}}
+
\n{b_0}^{h;\zeta_0}_{B^{\f d 2}_{2, 1}}
+
\n{\vu_0}^{h; \zeta_0}_{B^{\f d 2-1}_{2, 1}}
+
(C^2_3+C_3)
\ep_0\zeta_0
}
\\
&
\qquad \quad 
+
C_7
\sp{
{\zeta_0}^{1+2\theta}
\sp{C_4+C_5}\tau^*(\ep_0)
+
\n{\vV}^{h;\f{\zeta}{16}}_{\widetilde{L}_{T^*_0}^\infty(B^{\f d 2-1}_{2, 1})
\cap
L_{T^*_0}^1(B^{\f d 2+1}_{2, 1})}
+
\n{\vw}^{h;\f{\zeta_0}{16}}_{\widetilde{L}_{T^*_0}^\infty(B^{\f d 2-1}_{2, 1})
\cap
L_{T^*_0}^1(B^{\f d 2+1}_{2, 1})}
+
\n{\rho}^{h;\f{\zeta_0}{16}}_{\widetilde{L}_{T^*_0}^\infty(B^{\f d 2}_{2, 1})
}
},
\\
&
E^{\ep}_{T}
\leq
C
\sp{
E^{\ep, \zeta_0}_T
+
[T_0]^{\f1 2}
{\zeta_0}^{1+2\theta}
\sp{C_4+C_5}\tau^*(\ep_0)
+
\vV_{T^*_0}
+
\vU_{T^*_0}
},
\end{align*}
with
\begin{align*}
&
\tau^*(\ep)=\max\left\{\ep^{\f{\theta^2}{(1+\theta)(3+\theta)}},  \tau(\ep)\right\},
\quad 
C_3
=
2 C\sp{\vV_{T^*_0}+\vU_{T^*_0}},
\\
&
C_4
=
C
[T_0]
\exp{ \left[ C
\sp{
C_3
+
[T_0]
}
\sp{
\vU_{T^*_0}
+
\vU^2_{T^*_0}
} \right]
}
\sp{
C_3
+
E^2_0
}
\sp{
C^2_3
+
C_3
+
1}
\sp{
\vU_{T^*_0}
+
C_3
},
\\
&
C_5
=
C
\exp{ \left[
C\sp{\vV^2_{T^*_0}+[T_0]C^2_3}
\right]
}
\\
&
\qquad
\cdot
\Big[
\sp{\vU_{T^*_0}+C_3}^{\f{1}{1+\theta}}\vV_{T^*_0}
+
\vV_{T^*_0}
+
[T_0]^{\f{3}{2}}
\sp{E_0+E^2_0+\vV_{T^*_0}+\vU_{T^*_0}+C_3+1}
\sp{\vV^2_{T^*_0}+\vU_{T^*_0}+\sp{C^2_3+C_3+1}^2}
\Big], 
\\
&
C_6
=
C^2
\exp{\left[
C
[T_0]
\sp{
1
+
C_3
+
C^2_3
} \right] 
},
\quad 
C_7
=
C^2
C_3
[T_0]
\exp{\left[
C
[T_0]
\sp{
1
+
C_3
+
C^2_3
} \right] 
}.
\end{align*}
Now we set $\delta_0=\min\{\f{2\min\{R^0, Q^0\}}{16CC_2}, \f{C_3}{4C}, \f{C_3}{4}\}$ and choose $\zeta_0 \geq 1$ suitably large such that
\begin{align}\label{assu2}
&
C_6
\sp{
\n{a_0}^{h;\zeta_0}_{B^{\f d 2}_{2, 1}}
+
\n{b_0}^{h;\zeta_0}_{B^{\f d 2}_{2, 1}}
+
\n{\vu_0}^{h; \zeta_0}_{B^{\f d 2-1}_{2, 1}}
}
\nonumber
\\
& \quad 
+
C_7
\sp{
\n{\vV}^{h;\f{\zeta}{16}}_{\widetilde{L}_{T^*_0}^\infty(B^{\f d 2-1}_{2, 1})
\cap
L_{T^*_0}^1(B^{\f d 2+1}_{2, 1})}
+
\n{\vw}^{h;\f{\zeta_0}{16}}_{\widetilde{L}_{T^*_0}^\infty(B^{\f d 2-1}_{2, 1})
\cap
L_{T^*_0}^1(B^{\f d 2+1}_{2, 1})}
+
\n{\rho}^{h;\f{\zeta_0}{16}}_{\widetilde{L}_{T^*_0}^\infty(B^{\f d 2}_{2, 1})
}
}
\leq
\f{\delta_0}{2} .
\end{align}
For such $\delta_0$ and $\zeta_0$,  we choose $0<\ep_0<\min\{1, \f{\beta_0}{\zeta_0}, \f{R^0}{8|c_{02}|}\}$ such that
\begin{align}\label{assu3}
&
2
C
C_2
\zeta_0\ep_0 C\sp{\vV_{T^*_0}+\vU_{T^*_0}}
\leq
\f{\min\{R^0, Q^0\}}{16},
\nonumber
\\
&
C
[T_0]^{\f1 2}
\zeta_0^{1+2\theta}
\sp{C_4+C_5}\tau^*(\ep_0)
+
C_7
{\zeta_0}^{1+2\theta}
\sp{C_4+C_5}\tau^*(\ep_0)
+
C_6
(C^2_3+C_3)
\zeta_0
\ep_0
\leq
\f{\delta_0}{2} .
\end{align}
For these fixed $\delta_0$, $\zeta_0$ and $\ep_0$, we see that \eqref{assu1} holds and 
\begin{align*}
E^{\ep}_T\leq \f{14C}{8}\sp{\vV_{T^*_0}+\vU_{T^*_0}}, \quad 
E_{T}^{\ep, \zeta_0}\leq \delta_0 .
\end{align*}
Hence, $T^{**}_{\ep}=T^{*}_{\ep}$ and for any $0<T<T^{*}_{\ep}$ we have
\begin{align*}
E^\ep_{T}\leq 2 C\sp{\vV_{T^*_0}+\vU_{T^*_0}},
\quad  
E^{\ep, \zeta_0}_{T}\leq 2\delta_0,
\end{align*}
which implies
\begin{align*}
\n{\vue}_{L^1_{T^*_\ep}(\underline{B}^{\f d 2+1}_{2, 1})}<\infty,
\ \ 
\f{R^0}{2}\leq R^0+\ep\vae(t, x) \leq \f{3R^0}{2},
\quad 
\f{Q^0}{2}\leq Q^0+\ep\vbe(t, x) \leq \f{3Q^0}{2},
\quad 
\text{ for all } (t,x)\in[0,T^*_\ep) \times \Td.
\end{align*}
Therefore, the solution can be continuously extended beyond $T^*_\ep$, which contradicts the definition  of $T^*_\ep$. Therefore, for any $0<\ep \leq \ep_0$,  we obtain  $T_0< T^*_\ep$. 

Finally, we derive the critical convergence. For any $0<\delta \leq \delta_0$, we can choose $\zeta_\delta \geq \zeta_0$ such that \eqref{assu2} holds with
$(\zeta_0, \delta_0)$ replaced by $(\zeta_\delta, \delta)$ such that
\begin{align*}
\n{\vV}^{h;\zeta_\delta}_{
\widetilde{L}^\infty_{T^*_0}(B^{\f d 2-1}_{2, 1})
\cap
L^1_{T^*_0}(B^{\f d 2+1}_{2, 1})
}
+
\n{\rho}^{h;\zeta_\delta}_{
\widetilde{L}^\infty_{T^*_0}(B^{\f d 2}_{2, 1})}
+
\n{\vw}^{h;\zeta_\delta}_{
\widetilde{L}^\infty_{T^*_0}(B^{\f d 2-1}_{2, 1})
\cap
L^1_{T^*_0}(B^{\f d 2+1}_{2, 1})
}
\leq 
\delta.
\end{align*}
Then we choose $0<\ep_\delta\leq \ep_0$ such that \eqref{assu1} and \eqref{assu3} are satisfied with 
$(\zeta_0, \delta_0, \ep_0)$ with $(\zeta_\delta, \delta, \ep_\delta)$ such that 
\begin{align*}
C
\zeta_\delta 
\ep_\delta 
C_3 
\leq 
\delta  
\end{align*}
Consequently, for any $0<\ep\leq \ep_\delta$ we have $E^{\ep, \zeta_\delta}_{T_0}\leq \delta$. Moreover, we infer from \eqref{cons3} and \eqref{cons20} that for any $\ep\leq \ep_\delta$
\begin{align*}
&
\ep
\sp{
\n{a_\ep}_{\widetilde{L}^\infty_T(B^{\f d 2}_{2,1})}
+
\n{b_\ep}_{\widetilde{L}^\infty_T(B^{\f d 2}_{2,1})}
}
+
\n{\vVe-\vV}_{
\widetilde{L}^\infty_{T_0}(B^{\f d 2-1}_{2, 1})
\cap
\widetilde{L}^2_{T_0}(B^{\f d 2}_{2, 1})
}
+
\n{\rho_\ep-\rho}_{
\widetilde{L}^\infty_{T_0}(B^{\f d 2}_{2, 1})}
\\
&
\qquad
+
\n{\mathcal{P}\vue-\vw}_{
\widetilde{L}^\infty_{T_0}(\underline{B}^{\f d 2-1}_{2, 1})
\cap
L^1_{T_0}(\underline{B}^{\f d 2+1}_{2, 1})
}
+
\n{\widehat{(\vue)}_0-\widehat{(\vu_0)}_0}_{L^\infty(0, T_0)}
\\
& \quad 
\leq 
C
\sp{
E^{\ep, \zeta_\delta}_{T_0}
+
\n{\vV}^{h;\zeta_\delta}_{
\widetilde{L}^\infty_{T^*_0}(B^{\f d 2-1}_{2, 1})
\cap
L^1_{T^*_0}(B^{\f d 2+1}_{2, 1})
}
+
\n{\rho}^{h;\zeta_\delta}_{
\widetilde{L}^\infty_{T^*_0}(B^{\f d 2}_{2, 1})}
+
\n{\vw}^{h;\zeta_\delta}_{
\widetilde{L}^\infty_{T^*_0}(B^{\f d 2-1}_{2, 1})
\cap
L^1_{T^*_0}(B^{\f d 2+1}_{2, 1})
}
}
+
C
\sp{
E^{\ep, \zeta_\delta}_{T_0}
+
\zeta_\delta
\ep_\delta
E^{\ep}_{T_0}
}
\\
& \quad 
\leq
4C\delta.
\end{align*}
This completes the proof.        \ \ $\Box$

\bigskip

\noindent{\bf{Acknowledgement.}} The work of Y.L. was supported by National Natural Science Foundation of China (12571228), Natural Science Foundation of Anhui Province (2408085MA018). The work of S.L. was supported by the Tianyuan Fund for Mathematics of the National Natural Science Foundation of China (12526541).

\vspace{4mm}
\noindent{\bf{Data Availability.}} Data sharing is not applicable to this article as no datasets were generated or analyzed
during the current study.
\vspace{4mm}

\section*{Declarations}

\noindent{\bf{Conflicts of interest.}} All authors certify that there are no conflicts of interest for this work.

\vspace{4mm}

\noindent{\bf{Ethics approval.}} This article does not contain any studies involving humans or animals.

\appendix
\section{Some useful lemmas}

In this appendix, we present some useful lemmas.
\begin{Lemma}\label{le4.3}
Let $0<\theta<1$, $\vA(0)\in B^{\f d 2-1-\theta}_{2, 2} \cap \text{Im}\, L$, $\vB\in \text{Im} \, L$, and $\vE\in \text{Ker}\, L$. If $\vA\in \text{Im}\, L$ is a solution to the following equation on $[0, T]$: 
\begin{equation}\label{eq4.1}
\left\{\begin{aligned}
&\p_t \vA
+
\mathcal{Q}^\ep_1(\vB, \vA)
+
\mathcal{Q}^\ep_2(\underline{\vE}, \vA)
+
\mathcal{Q}^\ep_2(\mathbb{P}_0\vE, \vA)-\mu\De \vA
=F+G, \\[4pt]
&\vA(0)=\vA_0.
\end{aligned} \right.
\end{equation}
Then it holds 
\begin{align*}
\n{\vA}_{\widetilde{L}_T^\infty(B^{\f d 2 -1-\theta}_{2, 2})\cap L_T^2(B^{\f d 2-\theta}_{2, 2})}
&
\lesssim 
\exp{ \left[ C
\sp{
\n{\vB}^2_{L_T^2(B^{\f d 2}_{2, 1})}
+
\n{\vE}^2_{L_T^2(\underline{B}^{\f d 2}_{2, 1})}
+
\n{\mathbb{P}_0 \vE}^2_{L^2(0, T)}
} \right] 
}
\\
&
\quad
\cdot
\sp{
\n{\vA_0}_{B^{\f d 2 -1-\theta}_{2, 2}}
+
\n{F}_{L_T^2(\underline{B}^{\f d 2 -2-\theta}_{2, 2})}
+
\n{G}_{L_T^1(\underline{B}^{\f d 2 -1-\theta}_{2, 2})}
}. 
\end{align*}

\end{Lemma}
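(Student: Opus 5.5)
We prove the estimate by localizing in frequency with $\De_j$ and using the structure of $\mathcal{Q}^\ep_1$ and $\mathcal{Q}^\ep_2$ computed in Section 4, in the $\mathbb{H}$-inner product, in the spirit of the proof of Theorem \ref{Th7.2}. Apply $\De_j$ to \eqref{eq4.1} and take the $\mathbb{H}$-inner product with $\De_j\vA$. The dissipation gives $\f{d}{dt}\n{\De_j\vA}^2_{L^2}+c2^{2j}\n{\De_j\vA}^2_{L^2}$ on the left. The source terms are handled by duality:
\begin{itemize}
\item $|\langle\De_j F,\De_j\vA\rangle|\le 2^{-j}\n{\De_jF}_{L^2}\,2^{j}\n{\De_j\vA}_{L^2}$, which is absorbed by Young's inequality into the dissipation and produces $\n{F}^2_{L^2_T(B^{\f d2-2-\theta}_{2,2})}$ after weighting with $2^{2j(\f d2-1-\theta)}$ and summing;
\item $|\langle\De_j G,\De_j\vA\rangle|\le\n{\De_jG}_{L^2}\n{\De_j\vA}_{L^2}$, which yields the $L^1_T$ norm of $G$ after taking square roots in the usual way.
\end{itemize}

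For $\mathcal{Q}^\ep_2(\mathbb{P}_0\vE,\vA)$ we use the explicit formula. The part that does not oscillate in time is skew, exactly as in \eqref{5.7}. For $\ep>0$ there is also the oscillating term $\mathrm{e}^{2itc_0\lambda_k^\alpha/\ep}\widehat{\vA}^{-\alpha}_k$, and this is not skew in general. We therefore simply bound it by $|\mathbb{P}_0\vE|\,2^{j}\n{\De_j\vA}_{L^2}\n{\De_j\vA}_{L^2}$ (Lemma \ref{le4.2}), and Young's inequality gives $\f14 2^{2j}\n{\De_j\vA}^2+C|\mathbb{P}_0\vE|^2\n{\De_j\vA}^2$. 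This is the source of the $\n{\mathbb{P}_0\vE}^2_{L^2(0,T)}$ term in the exponent.

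The terms $\mathcal{Q}^\ep_1(\vB,\vA)$ and $\mathcal{Q}^\ep_2(\underline{\vE},\vA)$ are treated without any cancellation, by paraproduct bounds at the level $B^{\f d2-1-\theta}_{2,2}$. Since $\mathcal{L}(\pm t/\ep)$ is an $L^2$-isometry commuting with $\De_j$, Lemma \ref{le4.2} and Lemma \ref{le8.2} give the inequalities needed for the weighted sum. Precisely, after multiplying by $2^{2j(\f d2-1-\theta)}$ and summing over $j$,
$$\sum_j 2^{2j(\f d2-1-\theta)}|\langle\De_j\mathcal{Q}^\ep_1(\vB,\vA),\De_j\vA\rangle|\lesssim \n{\mathcal{Q}^\ep_1(\vB,\vA)}_{B^{\f d2-2-\theta}_{2,2}}\n{\vA}_{B^{\f d2-\theta}_{2,2}}.$$
We need $\n{\mathcal{Q}(\cdot,\vA)}_{B^{\f d2-2-\theta}_{2,2}}\lesssim\n{\vB}_{B^{\f d2}_{2,1}}\n{\vA}_{B^{\f d2-1-\theta}_{2,2}}$. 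This follows from \eqref{4.9} with $s_1=\f d2-1-\theta<\f d2$ and $s_2=0$, after writing the products in divergence or derivative form, since $s=\f d2-2-\theta>-\f d2$ requires $d\ge2$ only marginally. The same bound holds for $\underline{\vE}$, as recorded in the fifth estimate of Lemma \ref{le4.2}. Young's inequality then gives $\f14\n{\vA}^2_{B^{\f d2-\theta}}+C\n{\vB}^2_{B^{\f d2}_{2,1}}\n{\vA}^2_{B^{\f d2-1-\theta}_{2,2}}$.

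Summing over $j$ (here $r=2$, so summing the squared energy inequalities is natural) and integrating in time, we arrive at
\begin{align*}
\n{\vA(t)}^2_{B^{\f d2-1-\theta}_{2,2}}+\int_0^t\n{\vA}^2_{B^{\f d2-\theta}_{2,2}}
&\lesssim \n{\vA_0}^2+\n{F}^2_{L^2_t}+\n{G}_{L^1_t}\sup_{\tau\le t}\n{\vA(\tau)}\\
&\quad+\int_0^t\big(\n{\vB}^2_{B^{\f d2}_{2,1}}+\n{\vE}^2_{\underline{B}^{\f d2}_{2,1}}+|\mathbb{P}_0\vE|^2\big)\n{\vA}^2_{B^{\f d2-1-\theta}_{2,2}}.
\end{align*}
Absorbing the $G$ term and applying Gronwall's inequality gives the stated bound in $L^\infty_T$. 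Finally, the $\widetilde L^\infty_T(B_{2,2})$ norm is controlled by the same computation performed blockwise, taking the supremum in time before summing in $j$. The coefficient is then outside the sum, and one obtains the same Gronwall factor.

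The main obstacle is the product estimate at negative regularity $\f d2-2-\theta$ for the terms $a\nabla a$ appearing in $\mathcal{Q}$, which are not in divergence form. When $d=2$ we have $s=-\theta$, and we must check that $-\theta>-\f d2=-1$ holds, which it does since $\theta<1$. With this check the low regularity index stays admissible in \eqref{4.9}.
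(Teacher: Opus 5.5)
Most of your scheme matches the paper's: frequency localization, absorbing $F$ into the dissipation, $G$ in $L^1_T$, the crude bound $|\mathbb{P}_0\vE|\,2^{j}\n{\De_j\vA}^2_{L^2}$ for $\mathcal{Q}^\ep_2(\mathbb{P}_0\vE,\vA)$, a blockwise supremum in time for the $\widetilde{L}^\infty_T$ norm, and Gr\"onwall.

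The gap is in how you treat $\mathcal{Q}^\ep_1(\vB,\vA)$ and $\mathcal{Q}^\ep_2(\underline{\vE},\vA)$. You pair them in $B^{\f d2-2-\theta}_{2,2}$ against $\vA$ in $B^{\f d2-\theta}_{2,2}$. This needs $\n{\mathcal{Q}(\cdot,\vA)}_{B^{\f d2-2-\theta}_{2,2}}\lesssim\n{\vB}_{B^{\f d2}_{2,1}}\n{\vA}_{B^{\f d2-1-\theta}_{2,2}}$. For a product that is not an exact derivative, \eqref{4.9} then requires $\f d2-2-\theta>-\f d2$, i.e. $d>2+\theta$. At $d=2$ this index is $-1-\theta<-1$, not $-\theta$ as you wrote.

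For $\mathcal{Q}^\ep_1$ you are rescued by structure you did not use. Both arguments lie in $\text{Im}\,L$, so the velocities are curl-free and $b=\f{Q^0}{R^0}a$. Hence every entry of $\mathcal{Q}$ is an exact derivative of a product: $\f12(\vu_1\cdot\Grad\vu_2+\vu_2\cdot\Grad\vu_1)=\f12\Grad(\vu_1\cdot\vu_2)$, and all the $c_3,\dots,c_6$ terms combine into multiples of $\Grad(a_1a_2)$.

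$\mathcal{Q}^\ep_2(\underline{\vE},\vA)$ is different. It contains $\vu\cdot\Grad\vw^*$ with $\vu$ a gradient and $\Div\vw^*=0$, which is not an exact derivative, and for $d=2$ the bound is false. Take $\mathbb{T}^2$ with $\mathbf{a}=(1,1)$:
\begin{itemize}
\item let $\mathcal{L}(t/\ep)\vA$ be a single mode at $k=(N,0)$ with amplitude $N^\theta$;
\item let $\vw^*$ be a single mode at $(-N,1)$ with amplitude $N^{-1}$, pointing perpendicular to $(-N,1)$.
\end{itemize}
Both norms on the right-hand side are then $O(1)$. The high--high interaction produces an output at frequency $(0,1)$, pointing almost along $(0,1)$, so it survives $\mathbb{P}^\perp$. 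Its size in $B^{-1-\theta}_{2,2}$ is about $N^\theta$. So the fifth line of Lemma \ref{le4.2}, which you invoke, cannot be used when $d=2$.

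The fix is the paper's pairing. Keep these terms at the energy level, $\int_0^t\n{\mathcal{Q}}_{B^{\f d2-1-\theta}_{2,2}}\n{\vA}_{B^{\f d2-1-\theta}_{2,2}}\dta$. Then use
$\n{\mathcal{Q}^\ep_1(\vB,\vA)}_{B^{\f d2-1-\theta}_{2,2}}+\n{\mathcal{Q}^\ep_2(\underline{\vE},\vA)}_{B^{\f d2-1-\theta}_{2,2}}\lesssim\big(\n{\vB}_{B^{\f d2}_{2,1}}+\n{\vE}_{\underline{B}^{\f d2}_{2,1}}\big)\n{\vA}_{B^{\f d2-\theta}_{2,2}}$,
which follows from Lemma \ref{le8.2} and \eqref{3.1.3}. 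This only needs $\f d2-1-\theta>-\f d2$, which holds for every $d\ge2$ since $\theta<1$. The extra derivative now falls on $\vA$ and is absorbed by Young into $\n{\vA}^2_{L^2_t(B^{\f d2-\theta}_{2,2})}$. The Gr\"onwall factor is exactly the one you obtain. For $d\ge3$ your own pairing also works.
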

\bProof 
The proof is similar to that of Proposition 4.1 in \cite{D02}; whence we only provide a sketch. Since $\vA\in \text{Im}\, L$, we have $\widehat{\vA}_0=0$, and consequently, for $-\infty<s<\infty$ and $1\leq q\leq \infty$
\begin{align}\label{eq4.2}
    \n{\vA}_{\widetilde{L}_T^q(B^s_{2, 2})}
=
\n{\vA}_{\widetilde{L}_T^q(\underline{B}^s_{2, 2})}.
\end{align}
Applying $\De_j$ to \eqref{eq4.1}$_1$, multiplying the resulting equation by $\De_j \vA$, integrating over $(0, t)\times \Td$, then multiplying by $2^{2j(\f d 2 -1-\theta)}$, we obtain by summation over $j$ that 
\begin{align}\label{eq4.6}
&
\n{\vA}^2_{\widetilde{L}^\infty_t(B^{\f d 2 -1-\theta}_{2, 2})}
+
\n{\vA}^2_{L^2_t(B^{\f d 2-\theta}_{2, 2})}\nonumber
\\
\quad
 &
\leq 
C
\Bigg[
\n{\vA_0}^2_{B^{\f d 2 -1-\theta}_{2, 2}}
+
\int_{0}^t
\n{F}_{\underline{B}^{\f d 2 -2-\theta}_{2, 2}}
\n{\vA}_{\underline{B}^{\f d 2 -\theta}_{2, 2} }\dta
+
\int_{0}^t
\n{G}_{\underline{B}^{\f d 2 -1-\theta}_{2, 2}}
\n{\vA}_{\underline{B}^{\f d 2 -1-\theta}_{2, 2}}\dta
\\
&
\quad
+
\int_{0}^t
\sp{
\n{\mathcal{Q}^\ep_1(\vB, \vA)}_{\underline{B}^{\f d 2 -1-\theta}_{2, 2}}
+
\n{\mathcal{Q}^\ep_2(\underline{\vE}, \vA)}_{\underline{B}^{\f d 2 -1-\theta}_{2, 2}}
+
\n{\mathcal{Q}^\ep_2(\mathbb{P}_0\vE, \vA)}_{\underline{B}^{\f d 2 -1-\theta}_{2, 2}}
}
\n{\vA}_{\underline{B}^{\f d 2 -1-\theta}_{2, 2}}\dta
\Bigg]    \nonumber,
\end{align}
where we used \eqref{eq4.2}. Hence 
\begin{align}
C \int_{0}^t
\n{F}_{\underline{B}^{\f d 2 -2-\theta}_{2, 2}}
\n{\vA}_{\underline{B}^{\f d 2-\theta}_{2, 2}} \dta
&
\leq
C
\n{F}_{L^2_t(\underline{B}^{\f d 2 -2-\theta}_{2, 2})}
\n{\vA}_{L^2_t(\underline{B}^{\f d 2 -\theta}_{2, 2})}\nonumber
\\
&
\leq
C^2
\n{F}^2_{L^2_t(\underline{B}^{\f d 2 -2-\theta}_{2, 2})}
+
\f 1 4 
\n{\vA}^2_{L^2_t(B^{\f d 2 -\theta}_{2, 2})}, \label{eq4.3} \\
C
\int_{0}^t
\n{G}_{\underline{B}^{{\f d 2 -1-\theta}}_{2, 2}}
\n{\vA}_{\underline{B}^{\f d 2 -1-\theta}_{2, 2}}\dta
&
\leq
C
\n{G}_{L^1_t(\underline{B}^{\f d 2 -1-\theta}_{2, 2})}
\n{\vA}_{L^\infty_t(\underline{B}^{\f d 2 -1-\theta}_{2, 2})}\nonumber
\\
&
\leq
C^2
\n{G}^2_{L^1_t(\underline{B}^{\f d 2 -1-\theta}_{2, 2})}
+
\f 1 4 
\n{\vA}^2_{\widetilde{L}^\infty_t(B^{\f d 2 -1-\theta}_{2, 2})}. \label{eq4.4}
\end{align} 
It follows from Lemma \ref{le4.2} that 
\begin{align}\label{eq4.5}
&
C
\int_{0}^t
\sp{
\n{\mathcal{Q}^\ep_1(\vB, \vA)}_{\underline{B}^{\f d 2 -1-\theta}_{2, 2}}
+
\n{\mathcal{Q}^\ep_2(\underline{\vE}, \vA)}_{\underline{B}^{\f d 2 -1-\theta}_{2, 2}}
+
\n{\mathcal{Q}^\ep_2(\mathbb{P}_0\vE, \vA)}_{\underline{B}^{\f d 2 -1-\theta}_{2, 2}}
}
\n{\vA}_{\underline{B}^{\f d 2 -1-\theta}_{2, 2}}\dta\nonumber
\\
& \quad 
\leq
C
\int_{0}^t
\sp{
\n{\vB}_{B^{\f d 2 -1}_{2, 1}}
+
\n{\vE}_{\underline{B}^{\f d 2 -1}_{2, 1}}
+
|\mathbb{P}_0\vE|
}
\n{\vA}_{B^{\f d 2 -1-\theta}_{2, 2}}
\n{\vA}_{B^{\f d 2 -\theta}_{2, 2}}
\dta\nonumber
\\
& \quad 
\leq
C^2
\int_{0}^t
\sp{
\n{\vB}_{B^{\f d 2 -1}_{2, 1}}
+
\n{\vE}_{\underline{B}^{\f d 2 -1}_{2, 1}}
+
|\mathbb{P}_0\vE|
}^2
\n{\vA}^2_{\widetilde{L}_\tau^\infty (B^{\f d 2 -1-\theta}_{2, 2})}\dta
+
\f1 4
\n{\vA}^2_{L^2_t(B^{\f d 2-\theta}_{2, 2})} .
\end{align}

Inserting \eqref{eq4.3}-\eqref{eq4.5} into \eqref{eq4.6}, and applying Gr\"{o}nwall's inequality, followed by taking the square root, completes the proof. \ \ $\Box$

\begin{Lemma}\label{le6.4} Assume that 
$0\leq\theta<1$, 
$(a_0, \vv_0)\in B^{\f d 2-1-\theta}_{2, 1}\times B^{\f d 2-1-\theta}_{2, 1}$ with $\Div \vv_0=0$, 
and
$\Div \vu=\Div \vw^*=0$ .
Let $(a, \vv)$
be a solution to the following equation on $[0, T]$,
\begin{equation}\label{eq4.7}
\left\{\begin{aligned}
&
\p_t a
+
\Div(\underline{a}\underline{\vu})
+
\Div(\underline{a}\df{\widehat{\vu}_0}{\sqrt{|\Td|}})
=
-
\Div(\underline{\vv} b)
-
\Div(\df{\widehat{\vv}_0}{\sqrt{|\Td|}}b)
+
F,
\\
&
\p_t
\vv
+
\mathcal{P}
\sp{
\underline{\vu}\cdot \Grad\vv
+
\df{\widehat{\vu}_0}{\sqrt{|\Td|}}\cdot \Grad\vv
}
-
\mu
\De
\vv
=
\mathcal{P}
\left(
-
\underline{\vv}\cdot \Grad\vw^*
-
\df{\widehat{\vv}_0}{\sqrt{\Td}} 
\cdot
\Grad
\vw^*
+G
\right),
\\
&
\Div\vv=0,
\\
&
a(0)= a_0, \ \ \vv(0)=\vv_0.
\end{aligned}
\right.
\end{equation}
Then it holds
\begin{align*}
&
\n{a}_{\widetilde{L}_T^\infty(B^{\f d 2 -1-\theta}_{2, 1})}
+
\n{\vv}_{\widetilde{L}_T^\infty(B^{\f d 2 -1-\theta}_{2, 1})
\cap
L_T^1(\underline{B}^{\f d 2 +1-\theta}_{2, 1})}
\\
&
\quad
\leq
C
\exp{  \left[
C
\int_{0}^T 
\sp{
\n{\vu}_{\underline{B}^{\f d 2+1}_{2, 1}} 
+
\n{\vw^*}_{\underline{B}^{\f d 2+1}_{2, 1}} 
+
\n{b}^2_{\underline{B}^{\f d 2}_{2, 1}}
+
\n{b}_{\underline{B}^{\f d 2}_{2, 1}}
}
\dt \right]
}
\\
&
\qquad
\cdot
\sp{
\n{a_0}_{\underline{B}^{\f d 2 -1-\theta}_{2, 1}}
+
\n{\vv_0}_{\underline{B}^{\f d 2 -1-\theta}_{2, 1}}
+
\n{F}_{L_T^1(B^{\f d 2 -1-\theta}_{2, 1})}
+
\n{G}_{L_T^1(B^{\f d 2 -1-\theta}_{2, 1})}
}.
\end{align*}
\end{Lemma}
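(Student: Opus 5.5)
The plan is a frequency-localized energy estimate in $B^{\f d 2-1-\theta}_{2,1}$ for the coupled transport--Stokes system \eqref{eq4.7}, followed by Gr\"onwall's inequality. I would first treat the zero modes separately. Integrating \eqref{eq4.7}$_1$ over $\Td$ shows that $\widehat a_0$ changes only through $\widehat F_0$, since every divergence term has zero mean; hence $|\widehat a_0(t)|\le|\widehat{(a_0)}_0|+\int_0^t|\widehat F_0|$. For the velocity, $\mathcal P$ kills constants, so $\widehat{\vv}_0\equiv\widehat{(\vv_0)}_0$ (it is constant, and in the application it is $O(\ep)$). It is then enough to control the nonzero modes, written in the $\underline B$ norms.

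For the velocity I apply $\De_j$ to \eqref{eq4.7}$_2$ and write $\underline{\vu}\cdot\Grad\vv=T_{\underline\vu}\Grad\vv+T'_{\Grad\vv}\underline\vu$. I then commute $\De_j\mathcal P$ with $S_{j-2}\underline\vu\cdot\Grad$ using \eqref{6.1.2} (its full-frequency version), and integrate by parts the transport term $\int S_{j-2}\underline\vu\cdot\Grad\De_j\vv\cdot\De_j\vv=0$, which holds because $\Div\vu=0$. The constant drift $\widehat{\vu}_0\cdot\Grad$ commutes with $\De_j$ and $\mathcal P$ and is skew, so it contributes nothing. This gives
\[
\|\De_j\vv\|_{L^\infty_t L^2}+2^{2j}\|\De_j\vv\|_{L^1_tL^2}\lesssim\|\De_j\vv_0\|_{L^2}+\int_0^t c_j2^{-j(\f d2-1-\theta)}\|\vu\|_{\underline B^{\f d2+1}_{2,1}}\|\vv\|_{B^{\f d2-1-\theta}_{2,1}}+\cdots.
\]
The remainder $T'_{\Grad\vv}\underline\vu$ is bounded in $B^{\f d2-1-\theta}_{2,1}$ by $\|\vv\|_{B^{\f d2-1-\theta}_{2,1}}\|\vu\|_{\underline B^{\f d2+1}_{2,1}}$ via \eqref{4.4}--\eqref{4.5}; this uses $\f d2-\theta>0$ and the loss is harmless because $\theta<1$. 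The forcing $\underline\vv\cdot\Grad\vw^*$ is handled by \eqref{4.9} with the same bound, with $\vw^*$ in place of $\vu$. The term $\widehat\vv_0\cdot\Grad\vw^*$ is bounded by $|\widehat\vv_0|\,\|\vw^*\|_{\underline B^{\f d2-\theta}_{2,1}}$, and this factor is absorbed in the exponential, using $|\widehat{\vv}_0|\le\|\vv\|_{B^{\f d2-1-\theta}_{2,1}}$ and interpolation of $\vw^*$ between $L^\infty_t(B^{\f d2-1})$ and $L^1_t(B^{\f d2+1})$.

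For the density I run the same scheme on \eqref{eq4.7}$_1$. The transport part $\Div(\underline a\,\underline\vu)=\underline\vu\cdot\Grad a$ uses the commutator \eqref{6.1.1}/\eqref{6.1.-1} and the integration by parts against $\Div\vu=0$, while the constant drift again drops out. This yields $\|a\|_{\widetilde L^\infty_t(B^{\f d2-1-\theta}_{2,1})}\lesssim\|a_0\|+\int_0^t\|\vu\|_{\underline B^{\f d2+1}_{2,1}}\|a\|+\|F\|_{L^1_t}+\|\Div(\vv b)\|_{L^1_t(B^{\f d2-1-\theta}_{2,1})}$. The coupling term is the one I expect to be the main obstacle, because it has no time integrability to spare. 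By \eqref{4.9}, $\|\Div(\underline\vv\, b)\|_{B^{\f d2-1-\theta}_{2,1}}\lesssim\|\vv\|_{B^{\f d2-\theta}_{2,1}}\|b\|_{\underline B^{\f d2}_{2,1}}$. I would interpolate $\|\vv\|_{B^{\f d2-\theta}_{2,1}}\le\|\vv\|_{B^{\f d2-1-\theta}_{2,1}}^{1/2}\|\vv\|_{B^{\f d2+1-\theta}_{2,1}}^{1/2}$ and apply Young's inequality, which gives $\eta\|\vv\|_{L^1_t(\underline B^{\f d2+1-\theta}_{2,1})}+C_\eta\int_0^t\|b\|^2_{\underline B^{\f d2}_{2,1}}\|\vv\|_{\widetilde L^\infty_\tau(B^{\f d2-1-\theta}_{2,1})}$. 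The $\eta$-piece is absorbed by the parabolic gain of $\vv$, and this is exactly what produces the $\|b\|^2$ in the exponent. The term $\Div(\widehat\vv_0 b)$ gives $|\widehat\vv_0|\,\|b\|_{\underline B^{\f d2}_{2,1}}$, and this accounts for the linear $\|b\|$ in the exponent.

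Finally I sum over $j$ with weights $2^{j(\f d2-1-\theta)}$ and add the $a$- and $\vv$-inequalities, with the $a$-inequality multiplied by a small constant so that the absorption works. Setting $X(t)=\|a\|_{\widetilde L^\infty_t}+\|\vv\|_{\widetilde L^\infty_t\cap L^1_t}$, I arrive at $X(t)\le C(X_0+\|F\|_{L^1_t}+\|G\|_{L^1_t})+C\int_0^t(\|\vu\|+\|\vw^*\|+\|b\|^2+\|b\|)X(\tau)\,d\tau$, and Gr\"onwall's inequality concludes.
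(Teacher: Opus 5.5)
Your proposal is correct and follows essentially the paper's route. The shared ingredients are $\Delta_j$-localized $L^2$ energy estimates with commutator bounds for the divergence-free transport terms, separate treatment of the zero modes, and interpolation plus Young's inequality on $\Div(\underline{\vv}\,b)$, absorbed by the parabolic gain of $\underline{\vv}$, which yields $\|b\|^2_{\underline{B}^{\frac d2}_{2,1}}$ in the exponent. Likewise $\Div(\widehat{\vv}_0 b)$ yields the linear $\|b\|_{\underline{B}^{\frac d2}_{2,1}}$, and Gr\"onwall closes the argument.

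Two small corrections:
\begin{enumerate}
\item $\mathcal{P}=\mathrm{Id}-\nabla\Delta^{-1}\Div$ preserves constants, so $\widehat{\vv}_0$ is not constant. It satisfies $\partial_t\widehat{\vv}_0=\widehat{G}_0$, as in the paper. This is harmless because $\int_0^t|\widehat{G}_0|\,d\tau\lesssim\|G\|_{L^1_t(B^{\frac d2-1-\theta}_{2,1})}$.
\item For $\widehat{\vv}_0\cdot\nabla\vw^*$ no interpolation in time is needed. On the torus there are no frequencies below $2^{j_{\mathbf{a}}}$, so $\|\vw^*\|_{\underline{B}^{\frac d2-\theta}_{2,1}}\lesssim\|\vw^*\|_{\underline{B}^{\frac d2+1}_{2,1}}$. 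This keeps the exponent in the stated form, whereas your interpolation would bring in norms of $\vw^*$ that do not appear there.
\end{enumerate}
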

\bProof
Applying $\De_j$ to \eqref{eq4.7} and taking $L^2$ inner product gives 
\begin{align*}
&
\f1 2 
\df{d}{\dt}
\n{\De_j a}^2_{L^2} 
=
-
\int_{\Td}
[\De_j, \underline{\vv}]\Grad a
\De_j a
\dx
-
\int_{\Td}
\sp{
\Div\De_j(\underline{\vv} b)
+
\Div\De_j(\df{\widehat{\vv}_0}{\sqrt{|\Td|}}b)
-
\De_j F
}
\De_j a\dx,
\\
&
\f1 2 
\df{d}{\dt}
\n{\De_j \vv}^2_{L^2} 
+
\mu
\n{\Grad\De_j \vv}^2_{L^2}
\\
&
\quad
=
-
\int_{\Td}
[\De_j, \underline{\vu}]\Grad \vv
\De_j \vv
\dx
-
\int_{\Td}
\sp{
\De_j
(\underline{\vv}\cdot \Grad\vw^*)
+
\De_j
(\df{\widehat{\vv}_0}{\sqrt{\Td}}
\cdot
\Grad
\vw^*)
-
\De_j G
}
\De_j \vv \dx.
\end{align*}
Hence, we obtain
\begin{align*}
&
\n{\De_j a}_{L^\infty_t(L^2)}
+
\n{\De_j \vv}_{L^\infty_t(L^2)}
+
2^{2j}
\n{\De_j \vv}_{L^1_t(L^2)}
\\
&
\les
\n{\De_j a_0}_{L^2}
+
\n{\De_j \vv_0}_{L^2}
+
\int_{0}^t 
\sp{
\n{[\De_j, \underline{\vv}]\Grad a}_{L^2}
+
\n{[\De_j, \underline{\vu}]\Grad \vv}_{L^2}}
\dta
+
\int_{0}^t 
\sp{
\n{[\De_j F}_{L^2}
+
\n{[\De_j G}_{L^2}}
\dta
\\
& \quad  
+
\int_0^t
\n{\Div\De_j(\underline{\vv} b)}_{L^2}
+
\n{\Div\De_j(\df{\widehat{\vv}_0}{\sqrt{|\Td|}}b)}_{L^2}
+
\n{\De_j
(\underline{\vv}\cdot \Grad\vw^*)}_{L^2}
+
\n{\De_j
(\df{\widehat{\vv}_0}{\sqrt{\Td}}
\cdot
\Grad
\vw^*)}_{L^2}
\dta,
\end{align*}
Multiplying the above inequality by $2^{j(\f d 2 -1-\theta)}$
and summing over $j$, and applying Lemma \ref{le4.1} together with the following commutator estimate (see Lemma A.2 in \cite{D01}):
\begin{align*}
\sum\limits_{j\in \mathbb{Z}}
2^{j(\f d 2-1-\theta)}
\n{[\De_j, \underline{g}]\Grad f}_{L^2}
\les
\n{g}_{\underline{B}^{\f d 2+1}_{2, 1}}
\n{f}_{\underline{B}^{\f d 2-1-\theta}_{2, 1}},
\end{align*}
we obtain
\begin{align*}
&
\n{a}_{\widetilde{L}_t^\infty(\underline{B}^{\f d 2 -1-\theta}_{2, 1})}
+
\n{\vv}_{\widetilde{L}_t^\infty(\underline{B}^{\f d 2 -1-\theta}_{2, 1})
\cap
L_T^1(\underline{B}^{\f d 2 +1-\theta}_{2, 1})}
\\
&
\leq
C
\left[
\n{a_0}_{\underline{B}^{\f d 2 -1-\theta}_{2, 1}}
+
\n{\vv_0}_{\underline{B}^{\f d 2 -1-\theta}_{2, 1}}
+
\int_{0}^t \n{\vu}_{\underline{B}^{\f d 2+1}_{2, 1}}  \n{a}_{\underline{B}^{\f d 2-1-\theta}_{2, 1}}\dta
+
\int_{0}^t 
|\widehat{\vv}_0| 
\sp{
\n{b}_{\underline{B}^{\f d 2-\theta}_{2, 1}}
+
\n{\vw^*}_{\underline{B}^{\f d 2-\theta}_{2, 1}}
} \dta 
\right]  
\\
&
\qquad
+
C
\sp{
\int_{0}^t 
\sp{
\n{\vu}_{\underline{B}^{\f d 2+1}_{2, 1}}
+
\n{\vw^*}_{\underline{B}^{\f d 2+1}_{2, 1}}
}
\n{\vv}_{\underline{B}^{\f d 2-1-\theta}_{2, 1}}\dta
+
\n{F}_{L_t^1(\underline{B}^{\f d 2 -1-\theta}_{2, 1})}
+
\n{G}_{L_t^1(\underline{B}^{\f d 2 -1-\theta}_{2, 1})}
}
\\
&
\qquad
+
C\int_{0}^t \n{\vv}_{\underline{B}^{\f d 2-\theta}_{2, 1}}  \n{b}_{\underline{B}^{\f d 2}_{2, 1}}\dta.
\end{align*}
By the standard interpolation inequality, 
\begin{align*}
C\int_{0}^t \n{\vv}_{\underline{B}^{\f d 2-\theta}_{2, 1}}  \n{b}_{\underline{B}^{\f d 2}_{2, 1}}\dta
&
\leq
C\int_{0}^t \n{\vv}^{\f1 2}_{\underline{B}^{\f d 2-1-\theta}_{2, 1}}  
\n{\vv}^{\f1 2}_{\underline{B}^{\f d 2+1-\theta}_{2, 1}} \n{b}_{\underline{B}^{\f d 2}_{2, 1}}\dta
\\
&
\leq 
C^2
\int_{0}^t \n{\vv}_{\underline{B}^{\f d 2-1-\theta}_{2, 1}}  \n{b}^2_{\underline{B}^{\f d 2}_{2, 1}}\dta
+
\f1 4
\n{\vv}_{
L_t^1(\underline{B}^{\f d 2 +1-\theta}_{2, 1})}.
\end{align*}
Integrating \eqref{eq4.7} over $(0, t)\times \Td$ and dividing by $\sqrt{|\Td|}$ yields
\begin{align*}
|\widehat{a}_0|
+
|\widehat{\vv}_0|
\leq
|\widehat{(a_0)}_0|
+
|\widehat{(\vv_0)}_0|
+
\int_0^t \sp{ |\widehat{F}_0|+|\widehat{G}_0| }  \dta.
\end{align*}
Collecting the above estimates, 
\begin{align*}
&
\n{a}_{\widetilde{L}_t^\infty(B^{\f d 2 -1-\theta}_{2, 1})}
+
\n{\vv}_{\widetilde{L}_t^\infty(B^{\f d 2 -1-\theta}_{2, 1})
\cap
L_T^1(\underline{B}^{\f d 2 +1-\theta}_{2, 1})}
\\
&
\quad
\leq
C
\int_{0}^t 
\sp{
\n{\vu}_{\underline{B}^{\f d 2+1}_{2, 1}} 
+
\n{\vw^*}_{\underline{B}^{\f d 2+1}_{2, 1}} 
+
\n{b}^2_{\underline{B}^{\f d 2}_{2, 1}}
+
\n{b}_{\underline{B}^{\f d 2}_{2, 1}}
}
\sp{
\n{a}_{\widetilde{L}_\tau^\infty(B^{\f d 2 -1-\theta}_{2, 1})}
+
\n{\vv}_{\widetilde{L}_\tau^\infty(B^{\f d 2 -1-\theta}_{2, 1})}
}
\dta
\\
&
\quad
\quad
+
C
\sp{
\n{a_0}_{\underline{B}^{\f d 2 -1-\theta}_{2, 1}}
+
\n{\vv_0}_{\underline{B}^{\f d 2 -1-\theta}_{2, 1}}
+
\n{F}_{L_t^1(B^{\f d 2 -1-\theta}_{2, 1})}
+
\n{G}_{L_t^1(B^{\f d 2 -1-\theta}_{2, 1})}
},
\end{align*}
which completes the proof by Gr\"{o}nwall's inequality.      \ \ $\Box$

\end{document}